\numberwithin{equation}{subsection}
\newtheorem{theorem}[subsubsection]{Theorem}
\newtheorem{lemma}[subsubsection]{Lemma}
\newtheorem{proposition}[subsubsection]{Proposition}
\newtheorem{corollary}[subsubsection]{Corollary}
\newenvironment{customtheorem}[1]
  {\innercustomthm}
  {\endinnercustomthm}
\newenvironment{customconjecture}[1]
  {\innercustomconj}
  {\endinnercustomconj}  
\theoremstyle{definition}
\newtheorem{definition}[subsubsection]{Definition}
\theoremstyle{remark}
\newtheorem{remark}[subsubsection]{Remark}
\let\realequation\equation
\def\equation{\setcounter{equation}{\arabic{subsubsection}}%
   \refstepcounter{subsubsection}%
   \realequation}
\newcommand*\notwithin[2]{%
  \@removefromreset{#1}{#2}%
}
\newcommand{\abs}[1]{\lvert#1\rvert}
\newcommand{\Isog}{\mathrm{Isog}}
\DeclareMathOperator{\defect}{def}
\newcommand{\Gr}{\mathrm{Gr}}
\newcommand{\Sh}{\mathrm{Sh}}
\newcommand{\ord}{\mathrm{ord}}
\newcommand{\Frob}{\mathrm{Frob}}
\newcommand{\sA}{\mathscr{A}}
\newcommand{\sG}{\mathscr{G}}
\newcommand{\sO}{\mathscr{O}}
\newcommand{\sS}{\mathscr{S}}
\newcommand{\bbD}{\mathbb{D}}
\newcommand{\bbF}{\mathbb{F}}
\newcommand{\bbR}{\mathbb{R}}
\newcommand{\bbQ}{\mathbb{Q}}
\newcommand{\bbX}{\mathbb{X}}
\newcommand{\bbZ}{\mathbb{Z}}
\newcommand{\bbMV}{\mathbb{MV}}
\newcommand{\calA}{\mathcal{A}}
\newcommand{\calC}{\mathcal{C}}
\newcommand{\calD}{\mathcal{D}}
\newcommand{\calE}{\mathcal{E}}
\newcommand{\calF}{\mathcal{F}}
\newcommand{\calH}{\mathcal{H}}
\newcommand{\calS}{\mathcal{S}}
\newcommand{\calO}{\mathcal{O}}
\newcommand{\calX}{\mathcal{X}}
\newcommand{\calY}{\mathcal{Y}}
\newcommand{\calZ}{\mathcal{Z}}
\newcommand{\frakD}{\mathfrak{D}}
\newcommand{\frakE}{\mathfrak{E}}
\newcommand{\frakM}{\mathfrak{M}}
\newcommand{\frakS}{\mathfrak{S}}
\newcommand{\GSp}{\mathrm{GSp}}
\newcommand{\RZ}{\mathrm{RZ}}
\begin{document}

\title{Eichler-Shimura Relations for Shimura Varieties of Hodge Type}

\author{Si Ying Lee}
\address{Department of Mathematics, Harvard University, Cambridge, Massachusetts 02138}
\email{sylee@math.harvard.edu}

\begin{abstract}
We show Eicher-Shimura relations for some Shimura varieties of Hodge type, proving a conjecture of Blasius and Rogawski in this case. We show this using a parabolic reduction strategy on the Hecke action on irreducible components of affine Deligne-Lusztig varieties.
\end{abstract}

\maketitle
\setcounter{tocdepth}{1}
\tableofcontents
\section{Introduction}
The classical Eichler-Shimura relation for modular curves relates the Frobenius $(\Frob)$ and Verschiebung $(\mathrm{Ver})$ correspondences with the Hecke correspondence $T_p$ over the special fiber of the modular curve, in the following well known way:
\begin{equation*}
    T_p=\Frob+\mathrm{Ver}.
\end{equation*}

Rearranging terms, we see that this implies that $\Frob$ is a root of the polynomial
\begin{equation*}
    x^2-T_px+p\langle p\rangle.
\end{equation*}
In \cite{BR1994}, Blasius and Rogawski conjectured a generalization of this result for arbitrary Shimura varieties $\Sh(G,X)$ with good reduction. Let $G$ be a connected reductive group over $\mathbb{Q}$, and let $(G,X)$ be a Shimura datum. Fix a prime $p>2$. Let $E$ be the reflex field of the Shimura datum, and $v$ a prime of $E$ above $p$. Let $O_{E,v}$ be the ring of integers of the completion $E_v$ of $E$. We assume the Shimura datum has good reduction at $p$, so $G$ has a reductive model $G_{\mathbb{Z}_p}$ over $\mathbb{Z}_p$. We let $K_p=G_{\mathbb{Z}_p}(\mathbb{Z}_p)$ be the hyperspecial subgroup of $G_{\mathbb{Q}_p}$. We choose some sufficiently small level structure $K^p$, and let $K:=K_pK^p$. $X$ defines a conjugacy class of cocharacters over $E_v$, and we choose a representative $\mu$ which is dominant with respect to some choice of Borel subgroup and maximal torus.

Blasius and Rogawski defined the Hecke polynomial $H_{G,X}$, as a renormalized characteristic polynomial of the irreducible representation of $\hat{G}$ with highest weight $\hat{\mu}$. This is a polynomial with coefficients in the local Hecke algebra $\mathcal{H}(G(\mathbb{Q}_p)//K_p,\mathbb{Q})$. Blasius and Rogawski then conjectured the following generalization of the Eichler-Shimura relation.

\begin{customconjecture}{1.1}
\label{BRconj}
  Consider the $\mathrm{Gal}(\overline{\mathbb{Q}}/E)$-module $V=H^i_{\acute{e}t}(\Sh_K(G,X)_{\bar{E}},\mathbb{Q}_l)$ for some prime $l\neq p$. Let $\Frob$ be the geometric Frobenius at $v$. Then we have 
  \begin{equation*}
    H_{G,X}(\Frob|_V)=0.
  \end{equation*}
\end{customconjecture}

We remark here that Blasius and Rogawski actually conjectured this result for the intersection cohomology of the Baily-Borel compactification of $\Sh_K(G,X)$, but we use this formulation in line with earlier work (\cite{FC1990},\cite{B2002},\cite{Kos2014}, etc.) on the conjecture. In this paper, we prove the congruence relation on the level of algebraic correspondences, which also implies the analogous theorem for cohomology with compact support and intersection cohomology (see Section \ref{subsubsection:7.2} for more details).

The main theorem of this paper is the following.
\begin{customtheorem}{1.2}
  \label{mainthm}
  Conjecture \ref{BRconj} holds if $(G,X)$ is of Hodge type, and satisfies either of the two conditions:
  \begin{enumerate}
    \item The Shimura variety is a Hilbert-Blumenthal modular variety, and $p$ is inert, or
     \item For every unramified $\sigma$-conjugacy class $[b]$ in $B(G,\upsilon)$, the pair $([b],\upsilon)$ is Hodge-Newton decomposable for $M_{[b]}$.
  \end{enumerate}
\end{customtheorem}

We define Hodge-Newton decomposability in \eqref{defnHN}. We state some situations in which Condition (2) holds. For example, if the group $G$ is split over $\bbQ_p$, then this condition holds, since in this case there is only one unramified $\sigma$-conjugacy class. Another situation in which Condition (2) holds is when the Shimura variety is fully Hodge-Newton decomposable (see \cite{GHN19} for definition), in which case there are always two $\sigma$-conjugacy classes, namely the $\mu$-ordinary one and the basic one. 

In order to show Theorem \ref{mainthm}, we generalize a construction of Faltings-Chai \cite{FC1990} for Shimura varieties of Hodge type. In particular, following the work of Kisin (c.f. \cite{K2010}), for the Shimura variety $\Sh_K(G,X)$ over $E$ we have a canonical integral model $\mathscr{S}_K(G,X)$ over $O_{E,(v)}$. Thus, we can define a moduli stack $p-\Isog$ of $p$-quasi-isogenies (with extra conditions) between points on $\mathscr{S}_K(G,X)$.

Let $\kappa$ denote the residue field of $E_v$, and $n$ be the degree $[\kappa:\bbF_p]$. For a scheme $S$ over $O_{E,(v)}$, let $\mathbb{Q}[p-\Isog\otimes S]$ be the algebra of top dimensional irreducible components of the moduli stack  $p-\Isog\otimes S$, with multiplication given by composition of isogenies. Generalizing the constructions of Chai and Faltings, we have a map
\begin{equation*}
  h:\mathcal{H}(G(\mathbb{Q}_p)//K_p,\mathbb{Q})\rightarrow\mathbb{Q}[p-\Isog\otimes E].
\end{equation*}
Composing with the specialisation of cycles map gives a map, which we also denote by $h$,
\begin{equation*}
  h:\mathcal{H}(G(\mathbb{Q}_p)//K_p,\mathbb{Q})\rightarrow\mathbb{Q}[p-\Isog\otimes \kappa].
\end{equation*}

In this paper, we prove the following result, which implies Theorem \ref{mainthm}. 

\begin{customtheorem}{1.3}
  \label{thm:algcycles}  
  Let $(G,X)$ be a Shimura datum of Hodge type, with the same assumptions as in Theorem 1.2. Consider the polynomial $H_{G,X}$, viewed as a polynomial with coefficients inside $\mathbb{Q}[p-\operatorname{Isog}\otimes \kappa]$ via the morphism $h$. Then the element $\Frob$ lies in the center of this ring and the following relation holds in $\mathbb{Q}[p-\operatorname{Isog}\otimes \kappa]$:
  \begin{equation}
  \label{eqn:BR}
    H_{G,X}(\Frob)=0,
  \end{equation}
  where $\Frob\in\mathbb{Q}[p-\operatorname{Isog}\otimes \kappa]$ is the Frobenius section, i.e. the formal sum of irreducible components of $p-\operatorname{Isog}\otimes \kappa$ consisting of isogenies which are the relative Frobenius morphism.
\end{customtheorem}

\subsection*{Previously known results}
Other than the case of Siegel modular varieties, where the conjecture was proved by Chai and Faltings, this conjecture was previously known only under more restrictive conditions. Bu\"{e}ltel proved this conjecture when the Shimura variety satisfies a no vertical components (NVC) condition, while Wedhorn \cite{W2000} verified this conjecture in the case where the Shimura variety is of PEL type and the group $G$ is split over $\mathbb{Q}_p$. All of these results focused on the ordinary locus, and in particular require that the ordinary locus $p-\Isog^{ord}\otimes\kappa$ is dense in $p-\Isog\otimes\kappa$, a condition which does not hold in general. Moreover, the study of the Hecke action on the ordinary locus is considerably simpler, because every ordinary point admits a canonical lift, and in fact every quasi-isogeny mod $p$ lifts to characteristic 0. Such a property does not hold for other Newton strata.

Beyond the aforementioned results, two more cases were previously known, namely the case of the unitary Shimura variety with signature $(1,n-1)$ was verified in \cite{BW2006} (even $n$) and \cite{Kos2014} (odd $n$). More recently, \cite{Li18} provided a proof for the Shimura variety attached to the spinor similitude group $\mathrm{GSpin}(2,n)$ for any $n$. Both these cases relied on an explicit description of the basic locus. We remark here that all these cases are covered by condition (2).

\subsection*{Method of proof}
We first introduce a stratification on $p-\Isog\otimes\kappa$ indexed by $\sigma$-conjugacy classes $B(G)$, similar to the Newton stratification of the special fiber $\sS_\kappa$ of $\sS_K(G,X)$. For any top-dimensional irreducible component $C$ of $p-\Isog\otimes\kappa$, let $C^{[b]}$ be the subscheme of points corresponding to isogenies between $p$-divisible groups with $\sigma$-conjugacy class $[b]$. If $C^{[b]}$ is dense in $C$, we say $C$ is \emph{$[b]$-dense}.

We consider now the set of all $[b]\in B(G)$ such that there exists some top-dimensional irreducible component $C$ of $p-\Isog\otimes\kappa$ which is $[b]$-dense. It turns out that such $[b]$ must be \emph{unramified}, a condition we define in Section \ref{section:unramifiedclasses}. Observe that if the restriction $H_{G,X}(\Frob)^{[b]}$ is known to be zero after restricting to the $[b]$-strata for every unramified $[b]$, then by taking the closure we see that $H_{G,X}(\Frob)$ is zero. 

We fix for now an unramified $[b]$. Our first observation is that a union of irreducible components $D\subset p-\Isog\otimes\kappa$ defines a cohomological correspondence from $R\Gamma_c(\sS_\kappa,\bbQ_l)$ to itself. Choosing a base-point, we have a map $\pi_\infty:\RZ(G,b,\upsilon)^{red}\rightarrow \sS_\kappa$, and we may pullback the cohomological correspondence on $\sS_\kappa$ to the Rapoport-Zink space $\RZ(G,b,\upsilon)$ via $\pi_\infty$. We denote this cohomological correspondence by $u_D$. Let $r$ be the dimension of $\RZ(G,b,\upsilon)^{red}$, and let $M:=M_{[b]}$.

The key theorem we have is the following:
\begin{customtheorem}{1.4}
Let $(G,X)$ be a Shimura datum of Hodge type. For any unramified $[b]\in B(G,\upsilon)$, let $C$ be a $[b]$-dense irreducible component of $p-\Isog\otimes\kappa$. There exists a map $\bar{h}$ (depending on $C$)
\begin{equation*}
    \bar{h}:\calH(M_{[b]}(\bbQ_p)//M_{[b]}(\bbZ_p),\bbQ)\rightarrow \bbQ[p-\Isog\otimes\kappa]
\end{equation*}
such that for any element $f\in\mathcal{H}(G(\mathbb{Q}_p)//K_p,\mathbb{Q})$, as cohomological correspondences acting on $H^{2r}_c(\RZ(G,b,\upsilon)^{red})$, we have
\begin{equation}
\label{eqn:1equal}
    u_{C\cdot h(f)}=u_{\bar{h}(\dot{\calS}^G_M(f))},
\end{equation}
where $\dot{\calS}^G_M$ is the twisted Satake homomorphism. Moreover, for any $f_1,f_2\in\mathcal{H}(G(\mathbb{Q}_p)//K_p,\mathbb{Q})$, we have 
\begin{equation*}
    u_{C\cdot h(f_1\cdot f_2)}=u_{\bar{h}(\dot{\calS}^G_M(f_1\cdot f_2))}.
\end{equation*}

If in addition we also assume that the Shimura datumn satisfies the conditions in Theorem 1.2, then we have an equality in $\bbQ[p-\Isog\otimes\kappa]$
\begin{equation}
\label{eqn:2equal}
    C\cdot h(f)=\bar{h}(\dot{\calS}^G_M(f)).
\end{equation}
\end{customtheorem}
To construct the map $\bar{h}$ in Theorem 1.4, we observe that the action of $\mathcal{H}(G(\mathbb{Q}_p)//K_p,\mathbb{Q})$ on the cohomology of $\RZ(G,b,\upsilon)$ extends to an action of $\mathcal{H}(M(\mathbb{Q}_p)//M(\bbZ_p),\mathbb{Q})$. Indeed, we have the following theorem.
\begin{customtheorem}{1.5}
In the Grothedieck group of representations of $\calH(G(\bbQ_p)//G(\bbZ_p),\bbQ)\times J_b(\bbQ_p)\times W_E$, we have the equality
\begin{equation}
\label{eqn:1.6}
    H^{\bullet}_c(\RZ(G,b,\upsilon)^{rig})=\sum_{\upsilon'\in I}H^{\bullet}_c(\RZ(M,b,\upsilon')^{rig}),
\end{equation}
where $H^\bullet_c$ denotes the alternating sum of cohomology,
\begin{equation*}
    I=\{\upsilon': \upsilon' \text{ is a dominant cocharacter of $M$ conjugate to }\upsilon \text{ in }G, \text{with }[b]\in B(M,\upsilon')\},
\end{equation*}
and on the right-hand side $\calH(G(\bbQ_p)//G(\bbZ_p),\bbQ)$ acts via the twisted Satake homomorphism $\dot{S}^G_M$.
\end{customtheorem}
Note that the compactly supported cohomology of the rigid analytic generic fiber $\RZ(G,b,\upsilon)^{rig}$ and the compactly supported cohomology of the special fiber $\RZ(G,b,\upsilon)^{red}$ are closely related; one is the dual and shift of the other.

For any $f_M\in \calH(M_{[b]}(\bbQ_p)//M_{[b]}(\bbZ_p),\bbQ)$, we construct the map in Theorem 1.4 by defining $\bar{h}(f_M)$ to be such that the cohomological correspondence $u_{\bar{h}(f_M)}$ is equal to $u_{C}\circ u_{f_M}$, where $u_{f_M}$ is the cohomological correspondence on $H^{2r}_c(\RZ(G,b,\upsilon)^{red})$, via the equality \eqref{eqn:1.6}.

Returning to Theorem 1.4, if the Shimura datumn is one of the cases in Theorem 1.2, we show in \eqref{section:nonzerob} that for any irreducible component $D\subset p-\Isog\otimes\kappa$, $u_D$ acts non-trivially on $H^{2r}_c(\RZ(G,b,\upsilon)^{red})$. For any irreducible component $[X_1]\in H^{2r}_c(\RZ(G,b,\upsilon)^{red})$,  let $\tilde{X_1}$ be the closed subvariety in $\sS_\kappa$ give by the image of $X_1$ under $\pi_\infty$. The cycle class map gives a cohomology class $\tilde{[X_1]}\in H_c^{2d-2r}(\sS_\kappa)$, where $d$ is the dimension of the Shimura variety. If we denote the cohomological correspondence associated to $D$ on $H_c^{2d-2r}(\sS_\kappa)$ by $\tilde{u}_D$, to show that $u_D([X_1])$ is non-zero, it suffices to show that $\tilde{u}_D(\tilde{[X_1]})$ is non-zero. $D$ determines an algebraic correspondence in $\sS_\kappa\times \sS_\kappa$, and acts on the Chow group $A_{r}(\sS_\kappa)$ of dimension $r$-cycles in $\sS_\kappa$. We show explicitly calculate this action of $D$ using intersection theory, and show that it is non-trivial. The key ingredient needed for this calculation is the (non-zero) intersection numbers of certain cycles. In case (1) we apply the results of Tian-Xiao \cite{TX19}. In the situation of case (2), the Hodge-Newton decomposable condition implies an isomorphism
\begin{equation*}
    \RZ(G,b,\upsilon)^{red}\simeq \RZ(M_{[b]},b,\upsilon)^{red},
\end{equation*}
and we hence can do this calculation on another Hodge-type Shimura variety $\Sh(M',X')$ whose basic locus gives rise to the Rapoport-Zink space $\RZ(M_{[b]},b,\upsilon)$. Here the intersection numbers are provided by the main theorem of Xiao-Zhu \cite{XZ17}.

Theorem 1.2 follows from Theorem 1.4 and an induction on the set of unramified elements in $B(G,\upsilon)$. Let $\mathrm{res}^{[b]}$ (resp. $\mathrm{res}^{\succeq[b]}$) be the maps on $\bbQ[p-\Isog\otimes\kappa]$ given by restricting to the irreducible components which are $[b]$-dense (resp. $[b']$-dense for $[b']\succeq [b]$). Observe that it suffices to show that for all unramified $[b]$ we have
\begin{equation}
\label{eqn:succeqb}
    \mathrm{res}^{\succeq[b]}(H_{G,X}(\Frob))=0.
\end{equation}
We will show this inductively, starting from the $\mu$-ordinary element, where the corresponding result is known \eqref{prop:muordinary}. If we know \eqref{eqn:succeqb} holds for all unramified $[b']\succ [b]$, then to show \eqref{eqn:succeqb} for $[b]$, it suffices to show that   
\begin{equation}
\label{eqn:b-congruence}
        \mathrm{res}^{[b]}(H_{G,X}(\Frob))=0.
\end{equation}
We will show \eqref{eqn:b-congruence} in two steps. Firstly, $\mathrm{res}^{[b]}(H_{G,X}(\Frob))$ can be factorized as 
\begin{equation}
\label{eqn:1.9}
    \mathrm{res}^{[b]}(H_{G,X}(\Frob))=H_{[b]}(\Frob)\cdot P(\Frob)
\end{equation}
for some polynomials $H_{[b]}(x),P(x)\in \bbQ[p-\Isog\otimes\kappa][x]$, and such that $P(\Frob)$ is $[b]$-dense. $H_{[b]}(x)$ is the image via $h$ of $\tilde{H}_{[b]}(x)$, a polynomial with coefficients in $\calH(G(\bbQ_p)//G(\bbZ_p),\bbQ)$. $\tilde{H}_{[b]}(x)$ has the property that via $\dot{S}^G_M$ it has a factor $H'_{[b]}(x)$ is of the form
\begin{equation}
\label{eqn:1.8}
    H'_{[b]}(x)=x^i-p^j1_{s\nu_{[b]}(p)M_c}
\end{equation}
for some positive integers $i,j$ and $s$ (all depending on $[b]$). We then apply the construction of $\bar{h}$ in Theorem 1.4, and observe that for any top-dimensional irreducible component $C\subset p-\Isog\otimes\kappa$ such that $C$ is $[b]$-dense, we have
\begin{equation}
\label{eqn:Cdot1}
    C\cdot \Frob^i=p^j\bar{h}(1_{s\nu_{[b]}(p)M_c}).
\end{equation}
To see this, we show that for a fixed $p$-divisible group $\sG/\bar{\bbF}_p$, the $i$-th power of Frobenius and the mod $p$ reduction of the quasi-isogeny given by $s\nu_{[b]}(p)$ lie in the same irreducible component of the associated affine Deligne-Lusztig variety (ADLV). We prove this by utilizing the explicit description of the set of irreducible components of ADLVs associated to unramified $[b]$, as shown by Xiao and Zhu (c.f. \cite[\S4]{XZ17}).

Finally, \eqref{eqn:Cdot1} shows that
\begin{equation*}
    C\cdot H_{[b]}(\Frob)=0,
\end{equation*}
since $C\cdot \Frob^i-p^j\bar{h}(1_{s\nu_{[b]}(p)M_c})$ will be a factor of the left-hand side by the compatibility with $\dot{\calS}^G_M$ described in Theorem 1.4. We observe that \eqref{eqn:1.9} is zero by applying the above result to all irreducible components $C$ which lie in the support of $P(\Frob)$. 

\subsection*{Structure of article}
We now describe the structure of this article. Section 2 contains all the necessary results about the Hecke polynomial, including the factors which we need to construct $H'_{[b]}(\Frob)$ and $H_{[b]}(\Frob)$. Section 3 contains the main technical results about $p$-divisible groups with $G$-structure, and we recall the construction of Rapoport-Zink spaces with $P$-structure for a parabolic subgroup $P$ of $G$. In Section 4, we recall results of \cite{HV2018} and \cite{XZ17} about the irreducible components of ADLVs, and reduction of isogenies mod $p$. We also show that the irreducible components of the ADLV are invariant under $\sigma^m$ for large enough $m$. In Section 5, we recall the results of \cite{K2010} on the construction of integral models of Hodge type Shimura varieties, as well as results about Newton strata. We also recall the results in \cite{K2017} about mod $p$ isogenies, and special point liftings (up to isogeny) of points in $\mathscr{S}_{K_p}(G,X)(\overline{\mathbb{F}}_p)$. Subsequently, in Section 6, we define the moduli space of $p$-quasi-isogenies $p-\Isog$, define the stratification of the special fiber $p-\Isog\otimes\kappa$ by $[b]\in B(G,\upsilon)$, and prove results about the dimensions of various strata, as well as an almost product structure on each strata. In Section 7, we describe how to pullback Hecke correspondences to the associated Rapoport-Zink spaces, and show a parabolic reduction result on the cohomology of Rapoport-Zink spaces (\ref{thm:pr1},\ref{thm:pr2}), and use these to prove Theorem 1.4 in this section. Finally, in Section 8, we complete the proof of Theorem \ref{mainthm}.

\subsection*{Acknowledgements}
I would like to thank my advisor, Mark Kisin, for suggesting this problem to me, and for his constant encouragement and advice. Many thanks also to Oliver B\"{u}ltel, Christophe Cornut, Pol van Hoften, Liang Xiao and Rong Zhou for helpful conversations about this project and comments on an earlier draft.

\section{The Hecke Polynomial $H_{G,X}(x)$}
\label{section:Heckepoly}
In this section, we recall results of Blasius and Rogawski \cite{BR1994} and Wedhorn \cite{W2000} about the construction and properties of the Hecke polynomial.
\subsection{Preliminaries and Definition}
Let $G$ be a connected algebraic group that is unramified over $\mathbb{Q}_p$, i.e. quasi-split over $\mathbb{Q}_p$ and split over an unramified extension. Equivalently, the base change $G_{\mathbb{Q}_p}$ has a reductive model $G_{\mathbb{Z}_p}$ over $\mathbb{Z}_p$. Let $K_p= G_{\mathbb{Z}_p}(\mathbb{Z}_p)$; this is a hyperspecial subgroup of $\mathbb{G}(\mathbb{Q}_p)$. 

Fix a pair $(T,B)$ where $T$ is a maximal torus of $G$ and $B$ is a Borel subgroup of $G$ containing $T$. Let $\Delta$ be the choice of simple roots of $G$ defined by $(T,B)$. $T$ contains a unique maximal subtorus which is split over $\mathbb{Q}_p$, which we denote by $S$. Let $\Omega$ denote the Weyl group $N_G(T)/T$, where $N_G(T)$ is the normalizer in $G$ of $T$. Let $\rho$ be the half-sum of positive roots of $G$.

Let $\hat{G}$ denote the dual of $G$, which is an algebraic group defined over $\mathbb{C}$. Let $\hat{T}$ denote the dual torus of $T$, and fix a Borel subgroup $\hat{B}$ of $\hat{G}$ containing $\hat{T}$. Moreover, we fix a splitting $(\hat{T}, \hat{B}, (X_{\hat{\alpha}})_{\hat{\alpha}})$ of $\hat{G}$, where for each simple root $\hat{\alpha}$ of $\hat{G}$, $X_{\hat{\alpha}}$ is an $\hat{\alpha}$-root vector in $\operatorname{Lie}(\hat{G})$. 

Let $\Gamma^{nr}=\text{Gal}(\mathbb{Q}_p^{ur}/\mathbb{Q}_p)$, where $\mathbb{Q}_p^{ur}$ is the maximal unramified extension of $\mathbb{Q}_p$. We let $\sigma$ denote the geometric Frobenius in $\Gamma^{nr}$. $\Gamma^{nr}$ acts on the root datum of $G$ (and hence the root datum of $\hat{G}$). This action fixes the splitting $(\hat{T}, \hat{B}, (X_{\hat{\alpha}})_{\hat{\alpha}})$, and hence induces an action of $\Gamma^{nr}$ on $\hat{G}$.

Define the $L$-group of $G$, denoted by ${^L}G$, as the semidirect product of $\Gamma^{nr}$ and $\hat{G}$, such that $\Gamma^{nr}$ acts on $\hat{G}$ via the action described above. Note that ${^L}G$ depends on the choice of splitting, but the $L$-groups obtained from different splittings are isomorphic.
\subsubsection{}
For any groups $H_1\subset H_2$, and any $\mathbb{Z}$-algebra $A$, let $\mathcal{H}(H_2//H_1,A)$ denote the Hecke algebra of locally constant $H_1$ bi-invariant $A$-valued functions. If $A=\bbQ$, for notational simplicity, we will simply write $\calH(H_2//H_1)$ for $\calH(H_2//H_1,\bbQ)$.

Recall that we have the Cartan decomposition
\begin{equation*}
G(\mathbb{Q}_p)=\coprod_{\lambda\in X_*(S)^+} K_p p^\lambda K_p,
\end{equation*}
where $X_*(S)^+$ consists of cocharacters of $S$ which are dominant with respect to $(T,B)$. Hence, $\mathcal{H}(G(\mathbb{Q}_p)//K,\mathbb{C})$ is generated by indicator functions of the form $1_{K_pgK_p}$, for some $g\in G(\mathbb{Q}_p)$.

\subsubsection{}

\label{subsection:hecke-poly-def}
Let $X$ be any $G(\mathbb{R})$-conjugacy class of morphisms $h:\mathbb{S}\rightarrow G_{\mathbb{C}}$ such that $(G,X)$ forms a Shimura datum, with reflex field $E$. In this section, we do not assume the Shimura datum is of Hodge type unless specifically mentioned. 

Any such morphism $h$ defines a cocharacter $\mu_h$, defined over complex points as follows:
\begin{align*}
  \mu_{h,\mathbb{C}}: \mathbb{C}^\times&\hookrightarrow\mathbb{C}^\times\times\mathbb{C}^\times\xrightarrow{h} G(\mathbb{C})\\
  z&\mapsto (z,1).
\end{align*} 

Note that the conjugacy class of $\mu_h$ is defined over $E_v$, where $v$ is a prime of $E$ above $p$, and in particular we can choose a representative cocharacter $\mu:\mathbb{G}_m\rightarrow G$ that is defined over $E_v$, and dominant with respect to the choice of $(T,B)$. Note that $E_v$ must be an unramified extension of $\mathbb{Q}_p$, since $G$ is unramified over $\mathbb{Q}_p$. Moreover, $\mu$ must be minuscule. 

Let $\lambda$ be the unique Weyl conjugate of $\mu^{-1}$ which is dominant with respect to $(T,B)$. There exists a unique representation 
\begin{equation*}
  r: {^L} G\rightarrow GL(V)
\end{equation*} such that 
\begin{enumerate}[(a)]
	\item The restriction of $r$ to $\hat{G}$ is irreducible with highest weight $\hat{\lambda}$
	\item The subgroup $\Gamma^{nr}$ acts trivially on the highest weight space.
\end{enumerate}

Define the polynomial
\begin{equation*}
	H_{G,X}(x)=\det(x-p^{dn}r(\sigma\ltimes \hat{g})^n),
\end{equation*}
where  $\hat{g}$ is any element in $\hat{G}(\mathbb{C})$, $d=\langle \rho,\lambda\rangle$, $\sigma\in\Gamma^{nr}$ is the geometric Frobenius and $[E_v:\mathbb{Q}_p]=n$.

\begin{remark}
  Note that this definition is different from the one given in \cite{BR1994}, where the Hecke polynomial is defined to be the characteristic polynomial of the highest weight representation associated to $\mu$, instead of $\mu^{-1}$. We believe that this is the correct polynomial to take, because it is consistent with the reciprocity law on points. See also the remarks in \cite[A4]{Nek2018} correcting \cite[Sec. 2]{W2000}.
\end{remark}

\subsection{Coefficients of $H_{G,X}(x)$}\label{subsection:hecke-poly-coeff} 
Write 
\begin{equation*}
  H_{G,X}(x)=\sum A_ix^i.
\end{equation*}
As defined, all the $A_i$'s are functions on $\hat{G}(\mathbb{C})$. Observe that each $A_i$ is invariant under $\sigma$-conjugacy because $\sigma$-conjugation on $\hat{G}$ is normal conjugation by $\hat{G}(\mathbb{C})$ in $^LG$, hence $\sigma$-conjugating $\hat{g}$ by any element of $\hat{G}(\mathbb{C})$ preserves the determinant. Let $\Phi_{nr}(G)$ be the set of semisimple $\sigma$-conjugacy classes of elements of $\hat{G}(\mathbb{C})$. These $A_i$'s then descend to functions on $\Phi_{nr}(G)$, which we also denote by $A_i$.

Let $\hat{S}$ denote the subtorus of $\hat{T}$ which is dual to $S$. From \cite[6.4]{borel1979} we see that we have a bijection  between $\hat{S}(\mathbb{C})/\Omega(\mathbb{Q}_p)$ and $\Phi_{nr}(G)$, hence we can identify functions on $\Phi_{nr}(G)$ with functions on $\hat{S}(\mathbb{C})/\Omega(\mathbb{Q}_p)$. Such functions are given by elements of $\mathbb{C}[X_*(S)]^{\Omega(\mathbb{Q}_p)}$. Moreover, we also have an isomorphism between $\mathbb{C}[X_*(S)]$ and the Hecke algebra $\mathcal{H}(T(\mathbb{Q}_p)//T_c,\mathbb{C})$ where $T_c=T(\mathbb{Q}_p)\cap K_p$ is a maximal compact subgroup of $T(\mathbb{Q}_p)$. This isomorphism is given via the map 
\begin{equation*}
  \nu\mapsto h_{\nu}:=1_{p^\nu T_c},
\end{equation*}
where $1_{p^\nu T_c}$ is the indicator function supported on the subset $p^\nu T_c$.

Hence, we see that $A_i$'s are elements of $\mathcal{H}(T(\mathbb{Q}_p)//T_c,\mathbb{C})$ that are fixed under the action of $\Omega(\mathbb{Q}_p)$. By the Satake isomorphism, we see that $A_i$'s are elements of $\mathcal{H}(G(\mathbb{Q}_p)//K_p,\mathbb{C})$. 

In fact, the following result of Wedhorn (c.f. \cite[2.4]{W2000}) shows that the $A_i$'s are $\mathbb{Q}$-linear combinations of the functions $h_\lambda$.
	
\begin{theorem}
	$H_{G,X}(x)$ is a polynomial with coefficients in $\mathcal{H}(G(\mathbb{Q}_p)//K_p,\mathbb{Q})$.
\end{theorem}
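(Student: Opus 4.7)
\emph{Proof plan.} The aim is to show that each coefficient $A_i$ of $H_{G,X}(x)$, known to lie in $\mathcal{H}(G(\mathbb{Q}_p)//K_p,\mathbb{C})$, actually lies in the $\mathbb{Q}$-subalgebra.

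The first step is to unpack the matrix $r((\sigma\ltimes \hat g)^n)$. An inductive calculation in the semidirect product $\Gamma^{nr}\ltimes \hat G$ gives $(\sigma\ltimes \hat g)^n=\sigma^n\ltimes N_\sigma(\hat g)$, where $N_\sigma(\hat g)=\sigma^{n-1}(\hat g)\sigma^{n-2}(\hat g)\cdots\sigma(\hat g)\hat g$ is the Galois norm. By the very definition of the reflex field $E_v$, the element $\sigma^n$ stabilizes the conjugacy class of $\mu$, hence fixes $\hat\lambda$; so $\sigma^n$ acts on $V$ by an intertwiner of the irreducible representation $r|_{\hat G}$ with itself, which by Schur's lemma is a scalar. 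Hypothesis (b) pins this scalar to $1$, so $r(\sigma^n)=\operatorname{id}_V$. Consequently
\begin{equation*}
H_{G,X}(x)=\det\bigl(x-p^{dn}\,r(N_\sigma(\hat g))\bigr),
\end{equation*}
and the $A_i$'s are, up to sign, $p^{dn(N-i)}$ times the character $\operatorname{tr}(\wedge^{N-i}r)$ evaluated at $N_\sigma(\hat g)$, where $N=\dim V$.

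Next I would pass to the torus. Since $A_i$ is $\sigma$-conjugation invariant, it is determined by its restriction to $\hat S(\mathbb{C})/\Omega(\mathbb{Q}_p)$; and for $\hat s\in\hat S$ one has $\sigma(\hat s)=\hat s$, so $N_\sigma(\hat s)=\hat s^n$. Decompose $V$ into $\hat T$-weight spaces $V=\bigoplus_\nu V_\nu$ with multiplicities $m_\nu$. Because $V$ is a representation of the full $L$-group, $\Gamma^{nr}$ permutes the weights preserving multiplicities, and two weights in the same $\Gamma^{nr}$-orbit have equal restriction to $\hat S$. Grouping weights into $\Gamma^{nr}$-orbits $\mathcal O$ with common restriction $\bar\nu_{\mathcal O}\in X^*(\hat S)=X_*(S)$ and total multiplicity $M_{\mathcal O}:=m_{\mathcal O}\cdot\#\mathcal O$ gives
\begin{equation*}
H_{G,X}(x)(\hat s)=\prod_{\mathcal O}\bigl(x-p^{dn}\,e^{n\bar\nu_{\mathcal O}}(\hat s)\bigr)^{M_{\mathcal O}}.
\end{equation*}
Expanding, each $A_i$ is an integer linear combination of monomials of the form $p^{dn(N-i)}\,e^{n\mu}$ with $\mu=\sum_{\mathcal O} j_{\mathcal O}\bar\nu_{\mathcal O}\in X_*(S)$. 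In particular $A_i\in\mathbb{Q}[X_*(S)]^{\Omega(\mathbb{Q}_p)}$, the $\Omega$-invariance being automatic from $\sigma$-conjugation invariance.

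Finally, to transport rationality through the Satake isomorphism, I would reorganize the above expression: decompose the virtual representation $\wedge^{N-i}r$ of $\hat G$ into a $\mathbb{Z}$-linear combination of irreducible characters $\chi_{\lambda_k}$, so that $A_i$ becomes an integer linear combination of the functions $\hat g\mapsto p^{dn(N-i)}\chi_{\lambda_k}(N_\sigma(\hat g))$. Each such function is, by the preceding toric computation, $p^{dn(N-i)}$ times a sum $\sum_\nu \dim V_{\lambda_k,\nu}\,e^{n\bar\nu}$, and one checks that the power of $p$ and the factor of $n$ conspire with the Satake normalization $\mathcal{S}(1_{K_p p^{\lambda}K_p})=p^{\langle\rho,\lambda\rangle}(e^{\lambda}+\text{lower})$ so that the result is a $\mathbb{Q}$-linear combination of double coset indicators $1_{K_p p^{\lambda}K_p}$. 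The main obstacle I expect is precisely this last bookkeeping: the Satake transform is only a priori $\mathbb{Q}[p^{1/2}]$-rational, and one has to verify that the square roots of $p$ cancel, using that $d=\langle\rho,\lambda\rangle$ and that every weight $\mu$ appearing in the expansion lies in the $\mathbb{Z}$-span of the $n\bar\nu_{\mathcal O}$, so its pairing with $\rho$ is compatible with the overall $p^{dn(N-i)}$ factor.
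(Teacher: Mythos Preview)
Your first step contains a genuine error. You claim that $r(\sigma^n)=\operatorname{id}_V$ via Schur's lemma, arguing that $r(\sigma^n)$ is an intertwiner of $r|_{\hat G}$ with itself. But conjugation by $r(\sigma^n)$ gives $r(\sigma^n)r(\hat g)r(\sigma^n)^{-1}=r(\sigma^n(\hat g))$, so $r(\sigma^n)$ intertwines the representation $\hat g\mapsto r(\sigma^n(\hat g))$ with $r$. These two representations are \emph{isomorphic} (both have highest weight $\hat\lambda$, since $\sigma^n$ fixes $\hat\lambda$), but they are not \emph{equal} unless $\sigma^n$ acts trivially on $\hat G$, i.e.\ unless $G$ splits over $E_v$. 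In general $r(\sigma^n)$ permutes the weight spaces by $V_\nu\mapsto V_{\sigma^n(\nu)}$; condition~(b) only pins down that the highest weight line is fixed, not that the whole operator is scalar.

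This error propagates: on a $\langle\sigma^n\rangle$-orbit of weights of size $m$, the matrix of $p^{dn}r(\sigma^n)r(\hat s^n)$ is a scaled cyclic permutation, with characteristic polynomial $x^m-p^{dnm}e^{nm\bar\nu}(\hat s)$, not your $(x-p^{dn}e^{n\bar\nu}(\hat s))^m$. These differ already in the coefficient of $x^{m-1}$, so you are computing the wrong polynomial. The paper, incidentally, does not prove this theorem itself but cites Wedhorn; however, the subsequent section of the paper carries out exactly the block computation just described and obtains the factors $x^{m_i}-p^{nm_i\langle\rho,\mu-\nu_i\rangle}\tilde\nu_i$, confirming that the $\sigma^n$-action is nontrivial. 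Your final rationality step is also only a sketch: the cancellation of half-integral powers of $p$ under Satake is the actual content of the theorem, and needs an argument (Wedhorn handles it by working with the twisted Satake $\dot S^G_T$, which is $\mathbb{Q}$-rational on the nose, rather than the usual one).
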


\subsubsection{}
\label{subsub:twisted satake}Let $P = MN$ be any standard parabolic subgroup of $G$ with respect to $(T,B)$, where $M$ is a standard Levi subgroup, and $N$ is the unipotent radical of $P$. Let $M_c=M(\mathbb{Q}_p)\cap K_p$; this is a hyperspecial subgroup of $M(\mathbb{Q}_p)$. We can define a map
\begin{equation*}
    \dot{S}^G_M:\mathcal{H}(G(\mathbb{Q}_p)//K_p,\mathbb{C}) \rightarrow \mathcal{H}(M(\mathbb{Q}_p)//M_c,\mathbb{C})
\end{equation*} by mapping $f\in \mathcal{H}(G(\mathbb{Q}_p)//K_p,\mathbb{C})$ to the function on $M(\mathbb{Q}_p)$ taking
\begin{equation*}
  m\mapsto\int_{N(\mathbb{Q}_p)} f(mn) dn.
\end{equation*}
We call this map the \emph{twisted Satake homomorphism}. This is a homomorphism of $\mathbb{C}$-algebras because the Iwasawa decomposition $G = PK_p$ induces a decomposition of measures $dg = dp dk$. Moreover, since the image of a $\bbQ$-valued function is also clearly $\bbQ$-valued, we see that $\dot{S}^G_M$ maps $\mathcal{H}(G(\mathbb{Q}_p)//K_p,\mathbb{Q})$ to $\mathcal{H}(M(\mathbb{Q}_p)//M_c,\mathbb{Q})$.

We can relate the classical Satake isomorphism with the map $\dot{S}^G_T$. In fact, let $S^G_T$ denote the usual Satake isomorphism. Then $S^G_T=\alpha \circ \dot{S}^G_T$, where $\alpha : \mathbb{C}[X_*(S)] \rightarrow \mathbb{C}[X_*(S)]$ is given by 
\begin{equation*}
  \alpha(\nu)=p^{-2\langle \rho,\nu\rangle}\nu
\end{equation*} for all $\nu$ in $X_*(S)$.

Similar to the usual Satake isomorphism, there is an isomorphism between $\mathcal{H}(G(\mathbb{Q}_p)//K_p,\mathbb{C})$ and the subalgebra $\mathcal{H}(T(\mathbb{Q}_p)//T_c,\mathbb{C})^{(\Omega(\bbQ_p),\bullet)}$ induced by $\dot{S}^G_T$. Here, the Weyl group acts via the ``dot action", instead of the usual action, give by 
\begin{equation*}
  (w\bullet\phi)(t)=\delta(t)^{1/2}\delta(w^{-1}t)^{-1/2}\phi(w^{-1}t),
\end{equation*}
where $\delta^{1/2}$ is given by $\delta^{1/2}=\abs{\rho}$. In particular, we see that $\dot{S}^G_M$ is injective, since $\dot{S}^G_T=\dot{S}^M_T\circ\dot{S}^G_M$ is injective.

\subsection{Factors of $H_{G,X}(x)$} We want to understand the factors of the Hecke polynomial. Let
\begin{equation*}
\tilde{\lambda}:=\mathrm{Nm}_{E_v/\mathbb{Q}_p}\lambda=\lambda\sigma(\lambda)\dots\sigma^{n-1}(\lambda).
\end{equation*}

This is a cocharacter defined over $\mathbb{Q}_p$. Let $M_{\tilde{\lambda}}$ denote the centralizer of $\tilde{\lambda}$, then $M_{\tilde{\lambda}}$ is a standard Levi subgroup of $G$, and the results of the previous section apply. Via the homomorphism $\dot{S}^G_{M_{\tilde{\lambda}}}$, we have the following proposition, which is a restatement of a result of B\"{u}ltel (c.f. \cite[3.4]{B2002})
\begin{proposition}
\label{prop:bultelresult}
  Viewed as a polynomial with coefficients in $\mathcal{H}(M_{\tilde{\lambda}}(\mathbb{Q}_p)//M_{\tilde{\lambda},c},\mathbb{Q})$ via $\dot{S}^G_{M_{\tilde{\lambda}}}$, $H_{G,X}(x)$ has a factor of the form 
  \begin{equation}
    \label{eqn:bultelresult}
    x-1_{p^{\tilde{\lambda}}M_{\tilde{\lambda},c}}.
  \end{equation}
\end{proposition}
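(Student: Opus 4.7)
The plan is to isolate the contribution of the highest-weight line of $r$ to the characteristic polynomial that defines $H_{G,X}(x)$, and then to match it with the Hecke-algebra element $1_{p^{\tilde\lambda} M_{\tilde\lambda,c}}$ under the twisted Satake homomorphism $\dot{S}^G_{M_{\tilde\lambda}}$.

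I would begin by expanding the $n$-th power inside ${}^LG$ as
\begin{equation*}
  (\sigma \ltimes \hat g)^n = \sigma^n \ltimes N_\sigma(\hat g), \qquad N_\sigma(\hat g) := \hat g \cdot \sigma(\hat g) \cdots \sigma^{n-1}(\hat g).
\end{equation*}
Since $\mu$ is defined over $E_v$ and $[E_v:\mathbb{Q}_p]=n$, the character $\hat\lambda$ is fixed by $\sigma^n$, so the highest-weight space $V_{\hat\lambda}$ is preserved by $r(\sigma^n)$, and condition (b) in the definition of $r$ ensures that $r(\sigma^n)$ acts trivially on $V_{\hat\lambda}$. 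For a highest-weight vector $v_{\hat\lambda}$, this gives
\begin{equation*}
  r(\sigma \ltimes \hat g)^n \, v_{\hat\lambda} = \hat\lambda\bigl(N_\sigma(\hat g)\bigr)\, v_{\hat\lambda},
\end{equation*}
so that $x - p^{dn}\hat\lambda(N_\sigma(\hat g))$ is a linear factor of $H_{G,X}(x)$ as a polynomial with coefficients in the ring of $\sigma$-conjugation-invariant class functions on $\hat G(\mathbb{C})$.

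Next I would observe that $\hat\lambda(N_\sigma(\hat g)) = \prod_{k=0}^{n-1}(\sigma^{-k}\hat\lambda)(\hat g)$ is the value at $\hat g$ of the character dual to the cocharacter $\tilde\lambda = \lambda \cdot \sigma(\lambda)\cdots \sigma^{n-1}(\lambda)$. Since $M_{\tilde\lambda}$ is by construction the centralizer of $\tilde\lambda$, this cocharacter is central in $M_{\tilde\lambda}$, so $\hat{\tilde\lambda}$ extends to a genuine algebraic character of $\hat M_{\tilde\lambda}$, and the eigenvalue descends to a $\sigma$-conjugation-invariant class function on $\hat M_{\tilde\lambda}(\mathbb{C})$. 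Under the identifications recalled in Section \ref{subsection:hecke-poly-coeff}, such class functions correspond to elements of $\calH(M_{\tilde\lambda}(\mathbb{Q}_p)//M_{\tilde\lambda,c})$ via $\dot S^G_{M_{\tilde\lambda}}$.

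Finally I would verify the identification $p^{dn}\hat{\tilde\lambda}(\hat g) \leftrightarrow 1_{p^{\tilde\lambda} M_{\tilde\lambda,c}}$. Using the factorization $\dot S^G_T = \dot S^{M_{\tilde\lambda}}_T \circ \dot S^G_{M_{\tilde\lambda}}$ together with the assignment $\nu \mapsto 1_{p^\nu T_c}$, one reduces the question to a calculation on the torus; there, the identity $dn = n\langle \rho, \lambda\rangle = \langle \rho, \tilde\lambda\rangle$ shows that the $p^{dn}$ normalization in the definition of $H_{G,X}$ exactly absorbs the $\delta^{1/2}$-twist built into the dot-Satake convention, so that an \emph{unadorned} indicator function is produced. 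The main obstacle is precisely the bookkeeping in this last step — sorting out the conventions governing the $\sigma$-action on characters versus cocharacters, the dot-action of the Weyl group, and the interaction of $p^{dn}$ with the modular character — but this is exactly the calculation carried out by B\"{u}ltel in \cite[3.4]{B2002}, and my proof would largely follow his argument.
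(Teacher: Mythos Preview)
Your proposal is correct and follows essentially the same line as the paper. The paper itself does not give an independent proof of this proposition but cites B\"{u}ltel \cite[3.4]{B2002}; immediately afterward it proves the generalization in Proposition~\ref{prop:factorHecke} by decomposing $V$ into $\langle\sigma^n\rangle$-orbits of weight spaces and computing the characteristic polynomial on each orbit, which for the highest weight $\lambda$ (orbit of size $m_i=1$, with $\langle\rho,\mu-\lambda\rangle=0$) recovers exactly the factor $x-1_{p^{\tilde\lambda}M_{\tilde\lambda,c}}$. Your argument is that specialization spelled out directly.
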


This result can be generalized, and we want to determine other factors of $H_{G,X}(x)$. Recall that from the discussion in Section \ref{subsection:hecke-poly-coeff} we can also view $H_{G,X}(x)$ as a polynomial with coefficients in $\mathbb{Q}[X_*(S)]$. In order to determine these coefficients, we need to understand how the representation $r$ defined in \eqref{subsection:hecke-poly-def} acts on $\hat{T}$. 

Consider ${^L}T= \Gamma^{nr}\ltimes\hat{T}$ as a subgroup of ${^L}G=\Gamma^{nr}\ltimes\hat{G}$. The restriction of the representation $r$ to $\hat{T}$ defines a grading $V=\oplus_{\lambda\in X_*(\hat{T})}V_\lambda$. We define $r_T:{^L}T\rightarrow GL(V)$, the twisted restriction of $r$, as
\begin{equation}
  r_T(\sigma\ltimes 1)=r(\sigma\ltimes 1)
\end{equation}
\begin{equation}
    r_T(1\ltimes \hat{t})x_\nu= p^{-\langle\rho,\nu\rangle}\nu(\hat{t})x_\nu
    \label{eq:r_T description2}
\end{equation}
for $x_\nu\in V_\nu,\hat{t}\in\hat{T}$. Consider the characteristic polynomial
\begin{equation*}
  Z_{r_T}(x)=\det(x-p^{dn}r_T(\sigma\ltimes \hat{t})^n).
\end{equation*}
Since $\sigma$-conjugation of $\hat{t}$ preserves $Z_{r_T}(x)$, we see that a consideration similar to Section \ref{subsection:hecke-poly-coeff} implies that $Z_{r_T}(x)$ has coefficients lying in $\mathcal{H}(T(\mathbb{Q}_p)//T_c,\mathbb{C})$. Moreover, we also have
\begin{equation*}
    \dot{S}^G_T(H_{G,X}(x))=Z_{r_T}(x).
\end{equation*}

The following lemma \cite[Prop 2.7]{W2000} gives an explicit description of $r_T$.
\begin{lemma}
    The twisted restriction $r_T$ of $r$ to ${^L}T$ is isomorphic to a direct sum
    \begin{equation*}
        \bigoplus_{\nu\in \Omega(\overline{\bbQ}_p)\mu^{-1}}V_\nu
    \end{equation*}
    where $V_\nu$ is one-dimensional with generators $e_\nu$ such that 
    \begin{equation}
    \label{eqn:r_T description}
        r(\sigma^n\ltimes \hat{t})e_{\sigma^{-n}(\nu)}=p^{-n\langle \rho,\nu\rangle}\hat{\nu}(\hat{t})e_\nu.
    \end{equation}
\end{lemma}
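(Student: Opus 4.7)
My plan is to prove the lemma using the minuscule property of $\mu$ and a Tits-section choice of basis.

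As noted just after its definition in Section \ref{subsection:hecke-poly-def}, the cocharacter $\mu$ is minuscule, hence so is its dominant Weyl conjugate $\lambda$. For any irreducible representation of $\hat G$ with minuscule highest weight, the set of weights is a single Weyl orbit of the highest weight and every weight space is one-dimensional; applied to $r|_{\hat G}$ this gives the decomposition $V=\bigoplus_{\nu\in\Omega\cdot\mu^{-1}}V_\nu$ with $\dim V_\nu=1$. By the defining formula \eqref{eq:r_T description2}, the action of $r_T(1\ltimes\hat t)$ on $V_\nu$ is already scalar multiplication by $p^{-\langle\rho,\nu\rangle}\nu(\hat t)$, so the torus part of the claim is immediate.

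To exhibit generators $e_\nu$ that intertwine correctly with the Galois action, I would first fix a nonzero highest weight vector $e_\lambda\in V_\lambda$. By condition~(b) in the definition of $r$, the subgroup $\Gamma^{nr}$ acts trivially on the highest weight line, so $r(\sigma\ltimes 1)e_\lambda=e_\lambda$ (this is consistent precisely because $\sigma$ must fix $\hat\lambda$ for $r$ to extend from $\hat G$ to $^LG$). For each $\nu=w(\lambda)$ in the Weyl orbit I would set $e_\nu:=r(1\ltimes\dot w)e_\lambda$, where $\dot w\in N_{\hat G}(\hat T)$ is the Tits section representative of $w$ associated to the fixed pinning $(\hat T,\hat B,(X_{\hat\alpha}))$. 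Since the pinning is $\Gamma^{nr}$-stable, the Tits section is $\sigma$-equivariant ($\sigma(\dot w)$ is the Tits lift of $\sigma(w)$), and combined with the identity $(\sigma\ltimes 1)(1\ltimes\dot w)=(1\ltimes\sigma(\dot w))(\sigma\ltimes 1)$ in $^LG$ this yields $r(\sigma\ltimes 1)e_\nu=e_{\sigma(\nu)}$ for every $\nu$ in the orbit.

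The displayed formula is then a direct computation. Writing $r_T(\sigma\ltimes\hat t)=r_T(\sigma\ltimes 1)\,r_T(1\ltimes\hat t)$, an $n$-fold iteration applied to $e_{\sigma^{-n}(\nu)}$ produces, at each stage, a factor $p^{-\langle\rho,\cdot\rangle}(\cdot)(\hat t)$ at the intermediate weight and shifts the weight by $\sigma$. Using that $\rho$ is $\sigma$-fixed (so the $p$-exponents all collapse to $p^{-\langle\rho,\nu\rangle}$ per step) and that $\sigma^n$ acts trivially on the root datum of $G$ (since $G$ splits over the degree-$n$ unramified extension $E_v$, so $\sigma^n(\nu)=\nu$ and $e_{\sigma^n(\sigma^{-n}(\nu))}=e_\nu$), one collects the factors to $p^{-n\langle\rho,\nu\rangle}\bigl(\nu\sigma(\nu)\cdots\sigma^{n-1}(\nu)\bigr)(\hat t)\,e_\nu$; this is exactly the asserted expression, with $\hat\nu$ understood as the norm character $\nu\sigma(\nu)\cdots\sigma^{n-1}(\nu)$. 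The only non-formal step in the argument is the coherent choice of Weyl lifts, which is precisely what the Tits section finesses; everything else is bookkeeping in the semidirect product.
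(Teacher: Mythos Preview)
Your first two paragraphs establish what is needed: the weight decomposition into one-dimensional spaces follows from minusculeness, and the Tits-section choice gives a basis $\{e_\nu\}$ with $r(\sigma\ltimes 1)e_\nu=e_{\sigma(\nu)}$. This is actually stronger than required, and the paper takes a more direct route: since only compatibility with $\sigma^n$ is needed, it fixes one nonzero $e_\nu$ in each $\langle\sigma^n\rangle$-orbit of weights arbitrarily and defines the remaining basis vectors in that orbit by transporting via powers of $r(\sigma^n\ltimes 1)$. Either approach yields $r(\sigma^n\ltimes 1)e_{\sigma^{-n}(\nu)}=e_\nu$, from which the displayed formula is an immediate consequence of \eqref{eq:r_T description2}. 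Your Tits-section argument buys a globally coherent, $\sigma$-equivariant basis at the cost of invoking a more delicate construction; the paper's orbit-by-orbit choice is cruder but suffices.

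Your third paragraph, however, contains two genuine errors. First, the symbol $\hat\nu$ is not a norm: it is simply $\nu\in X_*(T)=X^*(\hat T)$ regarded as a character of the dual torus (the hat records passage to the dual side). This is visible in the matrix display immediately following the lemma, where the individual entries are $\sigma^{kn}(\hat\nu_i)(\hat t)$ and the norm $\tilde\nu_i=\prod_j\sigma^j(\nu_i)$ appears only as their product in the characteristic polynomial. Second, and more seriously, your claim that $G$ splits over $E_v$ is false: $E_v$ is the local reflex field, i.e.\ the field of definition of the conjugacy class of $\mu$, and need not split $G$. For a Hilbert modular variety attached to a real quadratic field with $p$ inert one has $E_v=\bbQ_p$ (so $n=1$) while $G_{\bbQ_p}$ splits only over the quadratic unramified extension; indeed, the whole point of the discussion of $\langle\sigma^n\rangle$-orbits $Z_i$ immediately after the lemma is that $\sigma^n$ can act nontrivially on the weight set. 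You invoke this false splitting precisely to rewrite your iterated product $\sigma^{-1}(\nu)\cdots\sigma^{-n}(\nu)$ as $\nu\sigma(\nu)\cdots\sigma^{n-1}(\nu)$, so the computation does not close without it.
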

\begin{proof}
  Note that since $r$ is the highest weight representation associated to a minuscule character, every weight space $V_\nu$ has dimension one. Let $Z$ be an $\langle\sigma^n\rangle$-orbit of $\Omega(\bar{\bbQ}_p)\mu^{-1}$. Let $m$ be the number of elements of $Z$, and $\nu$ be any element of $Z$. $\nu$ is hence defined over an unramified degree $m$ extension of $E_v$. Choose a nonzero element $e_\nu\in V_\nu$ and define $e_{\sigma^{nm}\nu}=r(\sigma^{nm}\ltimes 1)(e_\nu)$ for $n=0,\dots m-1$. The relation \eqref{eqn:r_T description} then follows from \eqref{eq:r_T description2}.
\end{proof}
\subsubsection{}
We also observe that $V$ admits a direct sum decomposition as
\begin{equation*}
  V=\bigoplus_{\langle \sigma^n \rangle-\text{ orbit } Z_i}V_{Z_i}
\end{equation*}
where each $V_{Z_i}$ is invariant under the action of $r(\sigma^n\ltimes \hat{t})$, for any $\hat{t}\in\hat{T}$. Let $\nu_i$ be an element of this orbit. With respect to the basis $e_{\nu_i}$, $ e_{\sigma^n(\nu_i)}$, $\dots$, $e_{\sigma^{n(m_i-1)}(\nu_i)}$ of $V_{Z_i}$, $r_T(\sigma^n\ltimes\hat{t})$ is given by the matrix
\begin{equation*}
  p^{-n\langle\rho,\nu_i\rangle}\begin{pmatrix}
   & & & \sigma^{n(m_i-1)}(\hat{\nu_i})(\hat{t})\\
  \sigma^{n}(\hat{\nu_i})(\hat{t}) &  &  &\\
  &\sigma^{2n}(\hat{\nu_i})(\hat{t})& &\\
  & &\ddots &
\end{pmatrix}.
\end{equation*}
The characteristic polynomial of this matrix is given by 
\begin{equation*}
    H_i(x):=x^{m_i}-p^{nm_i\langle\rho,\lambda-\nu_i\rangle}\tilde{\nu_i},
\end{equation*}
where $\tilde{\nu_i}=\prod_{j=1}^{nm_i}\sigma^j(\nu_i)$ is a cocharacter defined over $\mathbb{Q}_p$. This is a factor of the Hecke polynomial $H_{G,X}(x)$, viewed as a polynomial with coefficients in $\mathbb{Q}[X_*(S)]$. 
Let $M_{\tilde{\nu_i}}$ be the Levi subgroup of $G$ which is the centralizer of $\tilde{\nu_i}$. Via the twisted Satake homomorphism $\dot{S}^{M_{\tilde{\nu_i}}}_G$, $H_{G,X}(x)$ can be viewed as a polynomial with coefficients in $\mathcal{H}(M_{\tilde{\nu_i}}(\mathbb{Q}_p),M_{\tilde{\nu_i},c})$. Observe that since $\tilde{\nu_i}$ is central in $M_{\tilde{\nu_i}}$, $1_{\tilde{\nu_i}(p)M_{\tilde{\nu_i},c}}$ is an element of $\mathcal{H}(M_{\tilde{\nu_i}}(\mathbb{Q}_p),M_{\tilde{\nu_i},c})$. Hence, $H_i(x)$ is invariant under the action of the relative Weyl group of $M_{\tilde{\nu_i}}$, and thus it lies in $\mathcal{H}(M_{\tilde{\nu_i}}(\mathbb{Q}_p),M_{\tilde{\nu_i},c})[x]$. If we let $H_{G,X}(x)=H_i(x)R(x)$ as polynomials in $\mathbb{Q}(X_*(S))[x]$, then we see that both factors $R(x),H_i(x)$ are defined in $\mathcal{H}(M_{\tilde{\nu_i}}(\mathbb{Q}_p),M_{\tilde{\nu_i},c})[x]$. Finally, we note that $\langle\rho,\lambda\rangle=\langle\rho,\mu\rangle$, so we can replace $\lambda$ with $\mu$ in the formula. We summarize the above discussion in the following proposition.
\begin{proposition}
\label{prop:factorHecke}
  Let $\nu_i$ be any element in $\Omega(\bar{\mathbb{Q}}_p)\mu^{-1}$, defined over a degree $m_i$ extension of $E_v$. If we define $\tilde{\nu_i}=\prod_{j=1}^{nm}\sigma^j(\nu_i)$, and let $M_{\tilde{\nu_i}}$ be the centralizer of $\tilde{\nu_i}$, then, viewed as an element of $\mathcal{H}(M_{\tilde{\nu_i}}(\mathbb{Q}_p),M_{\tilde{\nu_i},c})[x]$ via the twisted Satake homomorphism $\dot{S}^{M_{\tilde{\nu_i}}}_G$, $H_{G,X}(x)$ factors as 
  \begin{equation*}
    H_{G,X}(x)=H_i(x)R(x),
  \end{equation*}
  where 
  \begin{equation*}
    H_i(x)=x^{m_i}-p^{nm_i\langle\rho,\mu-\nu_i\rangle}\tilde{\nu_i}.
  \end{equation*}
\end{proposition}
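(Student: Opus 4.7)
The plan is to leverage the $\langle\sigma^n\rangle$-orbit decomposition of the weight set $\Omega(\bar{\bbQ}_p)\mu^{-1}$ that was already set up, extract the factor corresponding to the orbit containing $\nu_i$, and then verify that this factor descends to the Hecke algebra for $M_{\tilde{\nu_i}}$.

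First, using the lemma above, write $V = \bigoplus_{Z} V_Z$ summed over the $\langle\sigma^n\rangle$-orbits $Z$ on $\Omega(\bar{\bbQ}_p)\mu^{-1}$. Each $V_Z$ is invariant under $r_T(\sigma^n\ltimes\hat{t})$, so the characteristic polynomial $H_{G,X}(x) = \det(x-p^{dn}r_T(\sigma^n\ltimes\hat{t}))$ factors in $\bbQ[X_*(S)][x]$ as the product of the characteristic polynomials on each $V_Z$. Taking $Z$ to be the orbit of $\nu_i$ (of size $m_i$) and using the cyclic (monomial) matrix representation of $r_T(\sigma^n\ltimes\hat{t})|_{V_Z}$ displayed in the preceding subsection, a direct determinant computation produces
\[
x^{m_i} - p^{dn}\,p^{-nm_i\langle\rho,\nu_i\rangle}\prod_{j=1}^{m_i}\sigma^{jn}(\hat{\nu_i})(\hat{t}).
\]
Under the identification $\bbC[X_*(\hat T)] = \mathcal{H}(T(\bbQ_p)//T_c,\bbC)$, the product of Galois conjugates becomes $\tilde{\nu_i}$, and $d = \langle\rho,\lambda\rangle = \langle\rho,\mu\rangle$ gives the exponent $nm_i\langle\rho,\mu-\nu_i\rangle$. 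This yields $H_i(x)$ as a factor of $H_{G,X}(x)$ inside $\bbQ[X_*(S)][x]$.

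Next, I would check that $H_i(x)$ lies in $\mathcal{H}(M_{\tilde{\nu_i}}(\bbQ_p)//M_{\tilde{\nu_i},c})[x]$. Since $M_{\tilde{\nu_i}}$ is by definition the centralizer of $\tilde{\nu_i}$, the cocharacter $\tilde{\nu_i}$ is central in $M_{\tilde{\nu_i}}$, so it is fixed by the relative Weyl group $\Omega_{M_{\tilde{\nu_i}}}(\bbQ_p)$. Therefore $\tilde{\nu_i}$, regarded as an element of $\bbQ[X_*(S)]$, is $\Omega_{M_{\tilde{\nu_i}}}(\bbQ_p)$-invariant, and the Satake isomorphism $\dot{S}^{M_{\tilde{\nu_i}}}_T$ identifies it (up to the dot-action normalization) with $1_{\tilde{\nu_i}(p)M_{\tilde{\nu_i},c}} \in \mathcal{H}(M_{\tilde{\nu_i}}(\bbQ_p)//M_{\tilde{\nu_i},c},\bbQ)$. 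Hence $H_i(x)$ lies in that Hecke algebra.

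Finally, for the other factor: $H_{G,X}(x)$ itself lies in $\mathcal{H}(G(\bbQ_p)//K_p,\bbQ)[x]$ by Wedhorn's theorem, and via $\dot{S}^G_{M_{\tilde{\nu_i}}}$ it lies in $\mathcal{H}(M_{\tilde{\nu_i}}(\bbQ_p)//M_{\tilde{\nu_i},c},\bbQ)[x]$. Dividing the monic polynomial $H_{G,X}(x)$ by the monic polynomial $H_i(x)$ inside this commutative ring (which embeds into a field of fractions, e.g.\ via $\dot{S}^{M_{\tilde{\nu_i}}}_T$ into $\bbQ(X_*(S))$) produces $R(x) \in \mathcal{H}(M_{\tilde{\nu_i}}(\bbQ_p)//M_{\tilde{\nu_i},c},\bbQ)[x]$ as well. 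The most delicate step is the second one — tracking the Weyl-invariance carefully through the normalizations inherent in $\dot{S}^G_T$ versus the usual Satake map — but it reduces to the already-noted fact that $\tilde{\nu_i}$ is central in its centralizer.
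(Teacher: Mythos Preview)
Your proposal is correct and follows essentially the same route as the paper: decompose $V$ into $\langle\sigma^n\rangle$-orbit pieces, read off the characteristic polynomial of the cyclic matrix on the piece containing $\nu_i$, observe that $\tilde{\nu_i}$ is central in its own centralizer so $H_i(x)$ descends to $\mathcal{H}(M_{\tilde{\nu_i}}(\bbQ_p)//M_{\tilde{\nu_i},c})[x]$, and then deduce the same for the cofactor $R(x)$. The only cosmetic difference is that you phrase the last step as monic polynomial division (justifying uniqueness via the embedding $\dot{S}^{M_{\tilde{\nu_i}}}_T$ into $\bbQ(X_*(S))$), whereas the paper simply asserts that since both $H_{G,X}$ and $H_i$ lie in the smaller Hecke algebra, so does $R$; your formulation is slightly more explicit but amounts to the same argument.
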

\subsubsection{}
\label{prop:Hbwithweylconjugate}
While the factors $H_i(x)$ are not necessarily defined in $\calH(G(\bbQ_p)//K_p)[x]$, for each $\langle\sigma^n\rangle$-orbit $Z_i$, we can consider the (not-necessarily distinct) $\langle\sigma^n\rangle$-orbits of the elements $\omega(\nu_i)$, where $\omega\in \Omega(\bbQ_p)$. Let $Y_i$ be the $\langle\sigma^n\rangle$-orbits of $\omega(\nu_i)$ for some $\omega$. Observe that $\omega(\nu_i)$ is also defined over a degree $m_i$ extension of $E_v$, and hence from the proposition above we see that $H_{G,X}(x)$ has another factor of the form 
\begin{equation*}
    P'_i(x)=x^{m_i}-p^{nm_i\langle\rho,\mu-\omega(\nu_i)\rangle}\omega(\tilde{\nu_i}).
  \end{equation*}
Thus, we see that if we consider
\begin{equation*}
    H'_i(x):=\prod P'_i(x)
\end{equation*}
where the product is taken over all possible $\langle\sigma^n\rangle$-orbits of the elements $\omega(\nu_i)$ (without multiplicity), we see that $H'_i(x)$ is a factor of $H_{G,X}(x)$ which is defined in $\calH(G(\bbQ_p)//K_p)[x]$.

\begin{remark}
  The set of $\langle\sigma^n\rangle$-orbits of the elements $\omega(\nu_i)$ is closely related to the $\sigma$-conjugacy classes of unramified elements, a connection we elaborate upon in Section \ref{section:constructionHb}.
\end{remark}

\section{$p$-divisible groups}
For the entirety of this section, we fix a prime $p>2$ and an unramified reductive group $G$ over $\bbQ_p$. We fix also a Borel pair $T\subset B\subset G$. 
\subsection{$p$-divisible groups with $G$-structure}
We recall the work of Kisin \cite{K2010,K2017} on $p$-divisible groups with a collection of tensors in the Dieudonne module, both over fields of characteristic $p$ and over mixed characteristic discrete valuation rings.
\subsubsection{}
Let $k$ be a perfect field of characteristic $p$, $W=W(k)$ its ring of Witt vectors and $K_0=W(k)[1/p]$. Let $K$ be a finite totally ramified extension of $K_0$, and $O_K$ its ring of integers. Let the absolute Galois group $\operatorname{Gal}(\bar{K}/K)$ be $G_K$.

Let $\mathrm{Rep}^{\mathrm{cris}}_{G_K}$ be the the category of crystalline $G_K$-representations, and $\mathrm{Rep}^{\mathrm{cris}\circ}_{G_K}$ the category of $G_K$-stable $\mathbb{Z}_p$-lattices spanning a representation in $\mathrm{Rep}^{\mathrm{cris}}_{G_K}$. Given $V\in \mathrm{Rep}^{\mathrm{cris}}_{G_K}$, we let $D_{\mathrm{dR}}(V)$ and $D_{\mathrm{cris}}(V)$ denote the image of $V$ under Fontaine's de Rham and crystalline functors.

Let $E(u)\in O_{K_0}[u]$ be the Eisenstein polynomial of a fixed uniformiser $\pi$ in $K$. We set $\mathfrak{S}=W[[u]]$. We extend the Frobenius on $W$ to the Frobenius map $\phi$ on $\frakS$ which maps $u$ to $u^p$.

Let $\text{Mod}^\varphi_{/\mathfrak{S}}$ denote the category of finite free $\mathfrak{S}$-modules $\mathfrak{M}$ equipped with a Frobenius semi-linear isomorphism
\begin{equation*}
    1\otimes\varphi:\varphi^*(\mathfrak{M})[1/E(u)]\xrightarrow{\sim}\mathfrak{M}[1/E(u)],
\end{equation*}
and let $\text{BT}^\varphi_{/\mathfrak{S}}$ denote the full sub-category of $\text{Mod}^\varphi_{/\mathfrak{S}}$ consisting of those modules such that $1\otimes\varphi$ maps $\varphi^*(\mathfrak{M})$ into $\mathfrak{M}$, and the image of this map contains $E(u)\mathfrak{M}$.

We equip $\varphi^*(\mathfrak{M})$ with the following filtration: for $i\in\mathbb{Z}$ 
\begin{equation*}
\operatorname{Fil}^i\varphi^*(\mathfrak{M}) = (1\otimes\varphi)^{-1}(E(u)^i\mathfrak{M})\cap \varphi^*(\mathfrak{M}).
\end{equation*}

The following theorem is \cite[Thm 1.2.1]{K2010}.
\begin{theorem}
\label{thm:frakM}
  There exists a fully faithful tensor functor
  \begin{equation*}
      \mathfrak{M}(\cdot) : \mathrm{Rep}^{\mathrm{cris}\circ}_{G_K}\rightarrow \mathrm{Mod}^{\varphi}_{/\mathfrak{S}},
  \end{equation*}
  which is compatible with formation of symmetric and exterior powers. If $L$ is in $\mathrm{Rep}^{\mathrm{cris}\circ}_{G_K}$, let $V=L\otimes _{\mathbb{Z}_p}\mathbb{Q}_p$, and $\mathfrak{M}=\mathfrak{M}(L)$, then
  \begin{enumerate}
      \item There are canonical isomorphisms 
      \begin{equation*}
          D_{\mathrm{cris}}(V)\xlongrightarrow{\sim}\mathfrak{M}/u\mathfrak{M}[1/p]\qquad \mathrm{and}\qquad D_{\mathrm{dR}}(V)\xlongrightarrow{\sim}\varphi^*(\mathfrak{M})\otimes_\mathfrak{S}K,
      \end{equation*}
      where the map $\mathfrak{S}\rightarrow K$ is given by $u\mapsto \pi$. The first isomorphism is compatible with Frobenius and the second maps $\mathrm{Fil}^i\varphi^*(\mathfrak{M})\otimes_{\frakS} W_{K_0}$ onto $\mathrm{Fil}^i D_{\mathrm{dR}}(V)$ for $i\in\mathbb{Z}$.
      \item There is a canonical isomorphism
      \begin{equation*}
          O_{\widehat{\calE_{ur}}}\otimes_{\mathbb{Z}_p} L\xlongrightarrow{\sim}O_{\widehat{\calE_{ur}}}\otimes_{\mathfrak{S}} \mathfrak{M}
      \end{equation*}
      where $O_{\widehat{\calE_{ur}}}$ is a certain faithfully flat, and formally  \'{e}tale $O_{\calE}$-algebra, and $O_{\calE}$ is the $p$-adic completion of $\mathfrak{S}_{(p)}$.
  \end{enumerate}
\end{theorem}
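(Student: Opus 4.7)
The plan is to build $\mathfrak{M}(\cdot)$ by factoring through a larger ring of power series. Let $\calO\subset K_0[[u]]$ denote the subring of power series converging on the open unit disc, equipped with the Frobenius extending that on $W$ via $u\mapsto u^p$. The first stage is to attach, to each weakly admissible filtered $\varphi$-module $D$ over $K_0$, a finite free $\varphi$-module $\calM(D)$ over $\calO$ of finite $E$-height---meaning the cokernel of $1\otimes\varphi:\varphi^*\calM(D)\to\calM(D)$ is killed by a power of $E(u)$---such that $\calM(D)/u\calM(D)\simeq D$ as $\varphi$-modules, and such that after inverting $\lambda=\prod_{n\geq 0}\varphi^n(E(u)/E(0))$ the base change $\calM(D)\otimes_\calO \calO[1/\lambda]$ acquires the filtration of $D$ specified at $u=\pi$. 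Concretely, this amounts to producing a $\varphi$-stable $\calO$-lattice inside $D\otimes_{K_0}\calO[1/\lambda]$ of the required height.

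Given a lattice $L\in\mathrm{Rep}^{\mathrm{cris}\circ}_{G_K}$ with $V=L[1/p]$, the second stage is to apply the first stage to $D=D_{\mathrm{cris}}(V)$ and then carve out an integral structure over $\frakS$ using Fontaine's equivalence between étale $\varphi$-modules over $O_\calE$ and continuous representations of $G_{K_\infty}$, where $K_\infty=K(\pi^{1/p^\infty})$. That equivalence produces a $\varphi$-module $M(L)$ over $O_\calE$ together with a canonical isomorphism $O_{\widehat{\calE_{ur}}}\otimes_{\bbZ_p}L\xrightarrow{\sim} O_{\widehat{\calE_{ur}}}\otimes_{O_\calE} M(L)$. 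The desired $\frakS$-module is defined as $\frakM(L):=\calM(D)\cap M(L)$, the intersection being taken inside a common ambient $\varphi$-module obtained by inverting $p$ and $u$ appropriately. Faithful flatness of $O_{\widehat{\calE_{ur}}}$ over $O_\calE$ together with the fact that $\frakS$ is a two-dimensional regular local ring (so reflexive modules of full rank are free) gives that $\frakM(L)$ is finite free of the correct rank, $\varphi$-stable, and inherits the finite $E$-height condition from $\calM(D)$.

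With the construction in place, the canonical isomorphisms of (1) follow essentially by inspection: the identification $\frakM/u\frakM[1/p]\simeq \calM(D)/u\calM(D)[1/p]\simeq D=D_{\mathrm{cris}}(V)$ recovers $D_{\mathrm{cris}}$ compatibly with Frobenius, and the filtration on $\varphi^*\frakM\otimes_\frakS K$ coincides, via the map $u\mapsto \pi$, with the filtration placed on $\calM(D)$ during its construction, which matches $\mathrm{Fil}^\bullet D_{\mathrm{dR}}(V)$. The isomorphism of (2) is inherited directly from Fontaine's identification $O_{\widehat{\calE_{ur}}}\otimes M(L)\simeq O_{\widehat{\calE_{ur}}}\otimes L$, since $\frakM\otimes_\frakS O_\calE=M(L)$ by construction. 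Compatibility with symmetric and exterior powers holds because every step is tensor-functorial. Full faithfulness reduces to the well-known full faithfulness of $L\mapsto M(L)$ on the étale side, combined with the observation that $\frakM(L)$ determines both $M(L)$ (by base change $\frakS\to O_\calE$) and $D_{\mathrm{cris}}(V)$ with its filtration (by reduction modulo $u$ and by pullback to $u=\pi$).

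The main obstacle is the first-stage construction of the finite $E$-height $\calO$-lattice $\calM(D)$ from the weakly admissible filtered $\varphi$-module $D$; this requires the slope-theoretic input of Kedlaya for $\varphi$-modules over the Robba ring (or Berger's equivalent for admissible filtrations), combined with a careful analysis tying the filtration on $D_K$ to the order of vanishing of $1\otimes\varphi$ at $u=\pi$. A secondary technical point is verifying that the intersection $\calM(D)\cap M(L)$ is $\frakS$-projective of the correct rank, which relies on the regularity of $\frakS$ together with the compatibility of the $\calM(D)$ and $M(L)$ constructions after inverting $u$ or $p$. Once these are in hand, the verification of the compatibilities and of full faithfulness should proceed cleanly.
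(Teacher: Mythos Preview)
The paper does not prove this theorem; it simply records it as \cite[Thm 1.2.1]{K2010} and uses it as a black box. Your proposal correctly sketches the construction from Kisin's original work (going back to his earlier paper on crystalline representations and $F$-crystals): build a finite $E$-height lattice $\calM(D)$ over the ring $\calO$ of rigid-analytic functions on the open disc from the weakly admissible filtered $\varphi$-module $D=D_{\mathrm{cris}}(V)$ (using Kedlaya/Berger slope theory as input), pair it with Fontaine's \'etale $\varphi$-module $M(L)$ over $O_\calE$, and intersect inside a common overring to obtain a free $\frakS$-module. The identifications in (1) and (2) and the tensor compatibilities fall out as you describe.

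One small point on full faithfulness: recovering $M(L)$ from $\frakM(L)$ only gives back $L$ as a $G_{K_\infty}$-representation (for $K_\infty=K(\pi^{1/p^\infty})$), not as a $G_K$-representation. The extra step is that the restriction functor $\mathrm{Rep}^{\mathrm{cris}\circ}_{G_K}\to \mathrm{Rep}_{G_{K_\infty}}$ is itself fully faithful on the crystalline subcategory, which is part of Kisin's result (and ultimately uses the filtered $\varphi$-module data you mention). With that caveat, your outline is accurate and aligns with the cited source.
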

\subsubsection{} For a ring $R$ and a finite free $R$-module $M$, let $M^\otimes$ be the direct sum of all the $R$-modules obtained from $M$ by taking duals, tensor products, symmetric and exterior powers.

Let $L\in \mathrm{Rep}^{\mathrm{cris}\circ}_{G_K}$. Let $G\subset GL(L)$ be a reductive group defined by a finite collection of $G_K$-invariant tensors $(s_\alpha)\subset L^\otimes$. We may view the tensors $s_\alpha$ as morphisms $s_\alpha:1\rightarrow L^\otimes$ in $\mathrm{Rep}^\mathrm{cris\circ}G_K$. Applying the functor $\frakM$ of the theorem, we obtain morphisms $\tilde{s}_\alpha:1\rightarrow \frakM(L)^\otimes$ in $\mathrm{Mod}^\varphi_{/\frakS}$.

We have the following results \cite[1.3.4,1.3.6]{K2010}.

\begin{theorem}
\label{thm:isomorphismfrakM}
  We have the following:
  \begin{enumerate}
      \item The tensors $(\tilde{s}_\alpha)$ define a reductive subgroup in $GL(\frakM)$
      \item If $k$ is separably closed or $G$ is connected and $k$ is finite, then there exists an isomorphism 
      \begin{equation*}
          L\otimes_{\bbZ_p}\frakS\xlongrightarrow{\sim}\frakM
      \end{equation*}
      aking $s_\alpha$ to $\tilde{s}_\alpha$.
  \end{enumerate}
\end{theorem}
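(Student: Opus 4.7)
The overall strategy is to exploit the isomorphism of Theorem \ref{thm:frakM}(2), which trivializes the situation after the faithfully flat base change $\frakS \to O_{\widehat{\calE_{ur}}}$. Part (1) will follow by faithfully flat descent of the reductive group structure, while part (2) will amount to trivializing a $G$-torsor on the two-dimensional base $\mathrm{Spec}(\frakS) = \mathrm{Spec}(W[[u]])$.

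For part (1), recall that $\frakM(\cdot)$ is a tensor functor, so the morphisms $s_\alpha : \mathbf{1} \to L^\otimes$ in $\mathrm{Rep}^{\mathrm{cris}\circ}_{G_K}$ produce morphisms $\tilde{s}_\alpha : \mathbf{1} \to \frakM(L)^\otimes$. Applying the tensor operations to the isomorphism of Theorem \ref{thm:frakM}(2) yields a compatible isomorphism
\begin{equation*}
    O_{\widehat{\calE_{ur}}}\otimes_{\bbZ_p} L^\otimes \xlongrightarrow{\sim} O_{\widehat{\calE_{ur}}}\otimes_{\frakS} \frakM^\otimes
\end{equation*}
which, by naturality of $\frakM$, carries $1 \otimes s_\alpha$ to $1 \otimes \tilde{s}_\alpha$. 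Let $G' \subset GL(\frakM)$ be the closed subgroup scheme defined as the pointwise stabilizer of the tensors $(\tilde{s}_\alpha)$. The isomorphism above identifies $G' \otimes_\frakS O_{\widehat{\calE_{ur}}}$ with $G \otimes_{\bbZ_p} O_{\widehat{\calE_{ur}}}$, which is reductive. Since $\frakS \to O_{\widehat{\calE_{ur}}}$ is faithfully flat (as already used in Theorem \ref{thm:frakM}(2)) and reductivity of a flat, finitely presented affine group scheme descends along faithfully flat base change, $G'$ is reductive over $\frakS$.

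For part (2), consider the $\frakS$-scheme
\begin{equation*}
    P := \mathrm{Isom}_{(s_\alpha,\tilde{s}_\alpha)}\bigl(L\otimes_{\bbZ_p}\frakS,\ \frakM\bigr)
\end{equation*}
of $\frakS$-linear isomorphisms carrying the $s_\alpha$ to the $\tilde{s}_\alpha$. By part (1), together with the nonemptiness provided by base change to $O_{\widehat{\calE_{ur}}}$, the scheme $P$ is a (right) $G$-torsor on $\mathrm{Spec}(\frakS)$, and the theorem amounts to producing an $\frakS$-point of $P$. The ring $\frakS = W[[u]]$ is a complete regular local ring of dimension two with residue field $k$, hence Henselian; since $G$ is smooth, so is $P$, and by smoothness combined with Henselianity any $k$-point of the special fiber $P_k := P \otimes_{\frakS} k$ lifts to an $\frakS$-point of $P$. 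It therefore suffices to trivialize the $G_k$-torsor $P_k$. Our hypotheses guarantee $H^1_{\acute{e}t}(k,G)=0$: when $k$ is separably closed this is automatic, and when $k$ is finite with $G$ connected it follows from Lang's theorem. In either case $P_k$ admits a $k$-point, which we lift to the desired isomorphism.

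The principal technical point is part (1): verifying that $G'$ is reductive over the full two-dimensional base $\mathrm{Spec}(\frakS)$, not merely at its generic or closed points. This rests on the faithful flatness of $\frakS \to O_{\widehat{\calE_{ur}}}$ and on confirming that the isomorphism of Theorem \ref{thm:frakM}(2) is genuinely tensor-compatible, so that the stabilizer subgroups can be matched on the nose. Once reductivity is in hand, part (2) is comparatively standard, trading the geometry of $\mathrm{Spec}(\frakS)$ for a Galois-cohomology computation at the residue field via the smoothness of $G$ and Henselianity of $\frakS$.
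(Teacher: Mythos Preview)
Your proposal has a genuine gap in part~(1). You assert that $\frakS \to O_{\widehat{\calE_{ur}}}$ is faithfully flat, but the paper (following Kisin) only states that $O_{\widehat{\calE_{ur}}}$ is faithfully flat over $O_{\calE}$, where $O_{\calE}$ is the $p$-adic completion of $\frakS_{(p)}$. The composite $\frakS \to \frakS_{(p)} \to O_{\calE} \to O_{\widehat{\calE_{ur}}}$ is flat but \emph{not} faithfully flat: the localization $\frakS \to \frakS_{(p)}$ misses the closed point $(p,u)$ of $\mathrm{Spec}(\frakS)$. So your descent argument does not see the closed fiber, and you cannot conclude reductivity of $G'$ (or the torsor property of $P$) over all of $\frakS$ this way.

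The paper does not prove this theorem directly---it cites \cite[1.3.4,1.3.6]{K2010}---but you can see the correct structure of the argument in the paper's own proof of Proposition~\ref{prop:ptorsor}, which refines this result. The actual argument proceeds in two pieces: over $\frakS_{(p)}$, the isomorphism over $O_{\widehat{\calE_{ur}}}$ descends (since $O_{\calE} \to O_{\widehat{\calE_{ur}}}$ \emph{is} faithfully flat) to give a $G$-torsor; over $\frakS[1/p]$, one uses that $\frakM(\cdot)[1/p]$ is an exact faithful tensor functor (via the comparison with $D_{\mathrm{cris}}$) to get a $G$-torsor directly. These two opens cover the punctured spectrum $U = \mathrm{Spec}(\frakS) \setminus \{\mathfrak{m}\}$, and one then invokes \cite[Thm~6.13]{CS79} to extend the torsor across the closed point. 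Once the $G$-torsor structure on $P$ over $\frakS$ is established, your part~(2) argument---reducing to $H^1_{\acute{e}t}(k,G)=0$ via Henselianity and smoothness---is correct and is indeed the standard way to finish.
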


We also recall the following proposition \cite[1.1.7]{K2017} which characterises reductions mod $p$ of a $p$-divisible group with $G$ structure over $O_K$.
\begin{proposition}
\label{prop:Gred}
  Let $\mathscr{G}$ be a $p$-divisible group over $O_K$, and $(s_\alpha)\subset T_p\mathscr{G}^\otimes$ a collection of $G_K$-invariant tensors defining a reductive subgroup $G\subset GL(T_p\mathscr{G})$. Suppose that either $k$ is separably closed or that $G_{\mathbb{Z}_p}$ is connected and $k$ is finite. Let $(s_{\alpha,0})\subset\mathbb{D}(\mathscr{G})(W)^\otimes \otimes_{W} K_0$ denote the image of $(s_\alpha)$ under the $p$-adic comparison isomorphism. Then
  \begin{enumerate}
    \item $(s_{\alpha,0})\subset \mathbb{D}(\mathscr{G})(W)^\otimes$.
    \item There is an isomorphism $T_p\mathscr{G}^*(-1)\otimes_{\mathbb{Z}_p} W\xlongrightarrow{\sim}\mathbb{D}(\mathscr{G})$ taking $s_\alpha$ to $s_{\alpha,0}$.
    \item The filtration on $\mathbb{D}(\mathscr{G})(k)$ is induced by a $G_W$-valued cocharacter.
  \end{enumerate}
\end{proposition}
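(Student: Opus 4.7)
The plan is to deduce the proposition by applying Theorems \ref{thm:frakM} and \ref{thm:isomorphismfrakM} to the lattice $L := T_p\sG^*(-1)$ and then reducing the resulting Breuil-Kisin module modulo $u$. The crucial geometric input is the identification $\frakM(L)/u\frakM(L) \cong \bbD(\sG)(W)$ as $W$-modules with Frobenius, which is an integral refinement of the canonical isomorphism $\frakM(L)/u\frakM(L)[1/p] \cong D_\mathrm{cris}(V)$ from Theorem \ref{thm:frakM}(1) (with $V = L\otimes\bbQ_p$), and which follows from Kisin's classification of $p$-divisible groups by Breuil-Kisin modules.

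By Theorem \ref{thm:isomorphismfrakM}(1), the tensors $\tilde{s}_\alpha\subset \frakM^\otimes$ associated to $(s_\alpha)$ define a reductive subgroup in $GL(\frakM)$. Under the stated hypothesis on $k$, Theorem \ref{thm:isomorphismfrakM}(2) produces a trivialization
\begin{equation*}
   \beta : L\otimes_{\bbZ_p}\frakS \xlongrightarrow{\sim} \frakM, \qquad \beta(s_\alpha)=\tilde{s}_\alpha.
\end{equation*}
Reducing $\beta$ modulo $u$ and composing with $\frakM/u\frakM\cong \bbD(\sG)(W)$ yields
\begin{equation*}
   \bar\beta : L\otimes_{\bbZ_p} W \xlongrightarrow{\sim} \bbD(\sG)(W),
\end{equation*}
which sends each $s_\alpha$ to the image of $\tilde{s}_\alpha$ in $\bbD(\sG)(W)^\otimes$. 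Tracing through the compatibility of the $p$-adic comparison isomorphism with $\frakM/u\frakM[1/p]\cong D_\mathrm{cris}(V)\cong \bbD(\sG)(W)[1/p]$, this image equals $s_{\alpha,0}$. Since $\bar\beta$ is integral and $s_\alpha\in L^\otimes$, we conclude $s_{\alpha,0}\in \bbD(\sG)(W)^\otimes$, giving (1); the isomorphism $\bar\beta$ itself furnishes (2).

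For (3), the Hodge filtration on $\bbD(\sG)(k)$ is the mod $p$ reduction of the filtration on $\varphi^*(\frakM)\otimes_\frakS O_K$ matched with $\mathrm{Fil}^1 D_\mathrm{dR}(V)$ by Theorem \ref{thm:frakM}(1). Transporting this via $\bar\beta$, one obtains a filtration on $L\otimes_{\bbZ_p} k$ which is preserved by all the $s_\alpha$, hence by $G_k\subset GL(L_k)$. Because the filtration is minuscule (only the pieces in degrees $0$ and $1$ are nonzero for the Hodge filtration of a $p$-divisible group), standard theory of parabolics in reductive groups produces a cocharacter $\bar\mu:\mathbb{G}_{m,k}\to G_k$ inducing it. Since the scheme of cocharacters of $G_{\bbZ_p}$ splitting a given parabolic is smooth over the base, Hensel's lemma lifts $\bar\mu$ to $\mu:\mathbb{G}_{m,W}\to G_W$, proving (3).

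The main obstacle is the clean integral identification $\frakM/u\frakM\cong \bbD(\sG)(W)$ together with the verification that $\bar\beta$ sends each $s_\alpha$ to exactly the tensor $s_{\alpha,0}$ defined via the $p$-adic comparison isomorphism; this compatibility is not formal from the statements of Theorems \ref{thm:frakM} and \ref{thm:isomorphismfrakM} and requires invoking Kisin's finer theory of $p$-divisible groups. A secondary technical point in (3) is the lift from $\bar\mu$ over $k$ to $\mu$ over $W$, which uses the smoothness of $G_{\bbZ_p}$ (automatic from $G$ being reductive over $\bbZ_p$).
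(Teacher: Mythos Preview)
The paper does not supply its own proof of this proposition; it is simply recalled from \cite[1.1.7]{K2017}. Your argument is essentially the one Kisin gives there (building on \cite[Cor.~1.4.3]{K2010}): pass through the Breuil--Kisin module via Theorems~\ref{thm:frakM} and~\ref{thm:isomorphismfrakM}, reduce modulo $u$ using the integral identification $\frakM(L)/u\frakM(L)\cong\bbD(\sG)(W)$ coming from Kisin's classification of $p$-divisible groups, and read off (1)--(2) from the trivialization $\beta$.

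One imprecision in your treatment of (3): the phrase ``preserved by all the $s_\alpha$'' does not parse---tensors do not act on filtrations. The correct condition is that each $s_{\alpha,0}$ lies in $\mathrm{Fil}^0$ of the induced filtration on $\bbD(\sG)(k)^\otimes$; this follows from the $\varphi$-invariance of $\tilde{s}_\alpha$ and the definition of $\mathrm{Fil}^i\varphi^*(\frakM)$. Granting that, the Tannakian argument (the functor $\mathrm{Rep}_{\bbZ_p}G\to\{\text{filtered }k\text{-vector spaces}\}$ is exact and tensor, hence split by a cocharacter of $G_k$) produces $\bar\mu$, and your lift to $G_W$ via smoothness of the scheme of cocharacters is correct.
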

\subsubsection{}
Let $\sG$ be a $p$-divisible group over $k$. We write $\bbD(\sG)$ for $\bbD(\sG)(W)$. 
\begin{definition}
  A $p$-divisible group with $G$ structure over $k$ consists of a $p$-divisible group $\sG/k$ and a collection of $\varphi$-invariant tensors $(s_{\alpha,0})$ which define a reductive subgroup of $GL(\bbD(\sG))$ such that there exists a finite free $\mathbb{Z}_p$-module $U$ and an isomorphism
  \begin{equation}
    U\otimes_{\mathbb{Z}_p}W\xlongrightarrow{\sim}\bbD(\sG) \label{eqn:Uisomorphism}
\end{equation}
  such that under this isomorphism $(s_{\alpha,0})$ correspond to tensors $(s_\alpha)\subset U^\otimes$. Moreover, these $s_{\alpha}$ define the reductive subgroup $G_{\bbZ_p}\subset GL(U)$.
\end{definition}
Given any $p$-divisible group over $k$ with $G$-structure, since $(s_{\alpha,0})\subset \bbD(\sG)^\otimes$ are $\varphi$-invariant, via the above isomorphism, $\varphi$ on $\bbD(\sG)(W)\otimes_W W[1/p]$ has the form $b\sigma$ for some $b\in G(W[1/p])$. The following lemma is \cite[1.1.12]{K2017}.
\begin{lemma}
    \label{lemma:filtration}
    The filtration on $\bbD(\sG)(k)$ is given by a $G_W$-valued cocharacter $\mu^{-1}_0$, and $b\in G_W(W)p^{\upsilon_0} G_W(W)$ where $\upsilon_0=\sigma(\mu^{-1}_0)$.
\end{lemma}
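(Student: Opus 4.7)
The plan is to extract both assertions directly from the Cartan decomposition of $b$ in $G(W[1/p])$, reading off the filtration via the identity $V = pF^{-1}$.

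Under the isomorphism \eqref{eqn:Uisomorphism}, identify $\bbD(\sG)$ with $U \otimes_{\bbZ_p} W$, so that $F = b\sigma$. Since $F$ preserves the tensors $(s_{\alpha,0})$ and $\sigma$ preserves $(s_\alpha) \subset U^\otimes$ (the latter being defined over $\bbZ_p$), one has $b \in G(W[1/p])$. The Cartan decomposition for the hyperspecial subgroup $G(W) \subset G(W[1/p])$ then gives
\begin{equation*}
    b = k_1 \nu(p) k_2, \qquad k_1, k_2 \in G(W),
\end{equation*}
for a unique dominant $\nu \in X_*(T)^+$. The $p$-divisibility of $\sG$ forces $F\bbD(\sG), V\bbD(\sG) \subset \bbD(\sG)$, which after Cartan reduces to the statement that $\nu(p)$ and $p\nu(p)^{-1}$ both preserve $U\otimes W$; equivalently, the weights of $\nu$ on $U$ lie in $\{0,1\}$, so $\nu$ is minuscule.

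Next, using $V = p\sigma^{-1}b^{-1}$ together with the minuscule weight decomposition $U = U_0 \oplus U_1$, a direct calculation yields
\begin{equation*}
    V\bbD(\sG) = \sigma^{-1}(k_2^{-1})\bigl(U_1 \otimes W \oplus p\, U_0 \otimes W\bigr),
\end{equation*}
so $\mathrm{Fil}^1\bbD(\sG)(k) = \sigma^{-1}(\bar{k}_2^{-1})(U_1 \otimes k)$. This subspace is the weight-$1$ piece of the cocharacter $\mu_0^{-1} := \mathrm{Int}(\sigma^{-1}(k_2^{-1})) \circ \nu$, which is defined over $W$ and factors through $G$ because $\sigma^{-1}(k_2^{-1}) \in G(W)$ and $\nu$ is defined over $\bbZ_p$. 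This establishes the first assertion.

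For the double coset assertion, set $\upsilon_0 := \sigma(\mu_0^{-1}) = \mathrm{Int}(k_2^{-1}) \circ \nu$ (using $\sigma(\nu) = \nu$), so that $\upsilon_0(p) = k_2^{-1}\nu(p)k_2$. Then
\begin{equation*}
    b = k_1\nu(p)k_2 = (k_1 k_2) \cdot \upsilon_0(p) \in G(W)\, p^{\upsilon_0}\, G(W),
\end{equation*}
as required. The main technical subtlety is the minuscule reduction in the first paragraph, where the integrality of $F$ and $V$ must be carefully translated to a weight condition on $\nu$ in the Cartan decomposition; the remaining verifications are algebraic bookkeeping, with care required to separate Frobenius semi-linearity on $W$ from $G$-conjugation.
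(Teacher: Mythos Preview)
Your approach via the Cartan decomposition is valid and more direct than the proof in \cite[1.1.12]{K2017} (which the paper cites without reproducing). Kisin's argument instead shows that the $\varphi$-invariance of the tensors $s_{\alpha,0}$ forces them to lie in $\mathrm{Fil}^0\bbD(\sG)(k)^\otimes$, deduces that the Hodge filtration is a $G$-filtration, and then invokes the general fact that a $G$-filtration on a trivial $G$-torsor over a local ring is split by a cocharacter; the double-coset statement follows by analyzing the linearized Frobenius against this filtration. Your Cartan computation bypasses this abstraction and reads both assertions off explicitly, which is pleasantly concrete when $T$ splits over $W(k)$.

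There is, however, a slip. You assert that $\nu$ is defined over $\bbZ_p$ and hence $\sigma(\nu)=\nu$, but the Cartan decomposition over $W[1/p]$ only yields $\nu\in X_*(T)^+$; since $T$ need not split over $\bbZ_p$, the Galois action on $X_*(T)$ need not fix $\nu$. The correct identity is $\sigma^{-1}(\nu(p))=(\sigma^{-1}\!\cdot\!\nu)(p)$, so your computation of $V\bbD(\sG)$ produces the weight decomposition for $\sigma^{-1}\!\cdot\!\nu$ rather than for $\nu$, and the filtration is split by
\[
\mu_0^{-1}=\mathrm{Int}\bigl(\sigma^{-1}(k_2^{-1})\bigr)\circ(\sigma^{-1}\!\cdot\!\nu),
\]
not by your formula. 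Happily, with this corrected $\mu_0^{-1}$ one obtains $\upsilon_0=\sigma(\mu_0^{-1})=\mathrm{Int}(k_2^{-1})\circ\nu$ \emph{without} any hypothesis on $\nu$, and your final line $b=(k_1k_2)\,\upsilon_0(p)\in G(W)p^{\upsilon_0}G(W)$ stands unchanged. You should also flag that the Cartan decomposition you use requires $T$ to split over $W(k)$; this holds for $k$ algebraically closed (the case of primary interest in the paper) but for general perfect $k$ would need a short descent argument.
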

\subsubsection{}Let $S$ be the $p$-adic completion of the divided power envelope of $W(k)[u]$ with respect to the kernel of the map $W(k)[u]\xrightarrow{u\mapsto\pi} O_K$. We equip $S$ with a Frobenius given by the usual Frobenius on $W(k)$ and sending $u$ to $u^p$. We view $S$ as a $\mathfrak{S}$-algebra by the map sending $u$ to $u$, and we view $W$ as an $S$-algebra by the map sending $u$ to 0. The following definition appears in \cite[1.1.8]{K2017}.
\begin{definition}
  Let $\tilde{\mathscr{G}}$ be a deformation of $\mathscr{G}$ to $O_K$. We say that $\tilde{
  \mathscr{G}}$ is $G_{W}$-adapted if there exist Frobenius invariant tensors $(\tilde{s}_\alpha)\subset\mathbb{D}(\tilde{\mathscr{G}})(S)^\otimes$ lifting $s_{\alpha,0}$, which define a reductive subgroup $G_S$ and such that the images of the $\tilde{s}_\alpha$ in $\mathbb{D}(\mathscr{G})(O_K)^\otimes$ are in $\text{Fil}^0(\mathbb{D}(\mathscr{G})(O_K)^\otimes)$
\end{definition}

Let $\mu^{-1}_0$ be the $G_W$-valued cocharacter inducing the filtration on $\bbD(\sG)(k)$, as in Lemma \ref{lemma:filtration}. Let $U$ be the unipotent subgroup of $GL(U)$ associated to $\mu_0$, and $U_G$ be the unipotent subgroup of $G$ associated to $\mu_0$. Let $R$ (respectively $R_G$) be the completion of $U$ (respectively $U_G$) at the identity; we thus have an inclusion $R_G\subset R$ of  power series rings over $W$. Following the results of \cite[\S7]{Fal99}, we know that the deformation space of $\sG$ is given by $\operatorname{Spf}R$. We have the following result of \cite[1.5.8]{K2010}
\begin{proposition}
\label{prop:3RG}
  For any field $K$ as above, a map of $W$-algebras $\varpi:R\rightarrow O_K$ factors through $R_G$ if and only if the $p$-divisible group $\tilde{\sG}$ induced by $\varpi$ is $G_W$-adapted.
\end{proposition}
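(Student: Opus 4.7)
The plan is to interpret the deformation space $\operatorname{Spf}R$ as a moduli of liftings of the Hodge filtration via Faltings' theory, and then recognize $\operatorname{Spf}R_G$ as precisely the locus where the lift comes from a cocharacter into $G$. By \cite{Fal99}, an $O_K$-valued point $\varpi\colon R\to O_K$ corresponds to a lift of the Hodge filtration on $\bbD(\sG)(k)\otimes_k O_K$ to a filtration on $\bbD(\tilde{\sG})(O_K)$, and the moduli description realizes $\operatorname{Spf}R$ as the formal completion at the identity of the unipotent subgroup $U\subset GL(\bbD(\sG))$ opposite to the parabolic stabilizing the filtration induced by $\mu_0^{-1}$. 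The closed formal subscheme $\operatorname{Spf}R_G\hookrightarrow \operatorname{Spf}R$ then corresponds to exactly those lifts arising from a cocharacter of $G$ lifting $\mu_0^{-1}$.

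For the ``if'' direction, suppose $\varpi$ factors through $R_G$, so the lifted Hodge filtration on $\bbD(\tilde{\sG})(O_K)$ is induced by a $G_{O_K}$-valued cocharacter $\tilde{\mu}$ lifting $\mu_0^{-1}$. I would produce the required tensors $\tilde{s}_\alpha\in \bbD(\tilde{\sG})(S)^\otimes$ by extending $s_{\alpha,0}\in \bbD(\sG)(W)^\otimes$ along the crystal structure, using the canonical isomorphism $\bbD(\tilde{\sG})(S)\simeq \bbD(\sG)(W)\otimes_W S$ attached to the PD-map $W\to S$. These tensors are automatically Frobenius-invariant since the $s_{\alpha,0}$ are, they define $G_S$ by base change from $G_W$, and their specializations in $\bbD(\tilde{\sG})(O_K)^\otimes$ are fixed by $\tilde{\mu}$ and hence lie in $\operatorname{Fil}^0$; this verifies $G_W$-adaptedness.

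Conversely, suppose $\tilde{\sG}$ is $G_W$-adapted. Then the specializations of the witnessing tensors $\tilde{s}_\alpha$ under $S\to O_K$ lie in $\operatorname{Fil}^0$ and cut out the reductive subgroup $G_{O_K}\subset GL(\bbD(\tilde{\sG})(O_K))$. The standard criterion that a filtration on a representation of a reductive group defined by tensors comes from a cocharacter of that group then forces the lifted Hodge filtration to be induced by a cocharacter of $G_{O_K}$ lifting $\mu_0^{-1}$. Translating back through the Faltings moduli interpretation, the classifying point lies in the formal completion of $U_G$, i.e., $\varpi$ factors through $R_G$. The main technical subtlety will be establishing the canonical identification $\bbD(\tilde{\sG})(S)\simeq \bbD(\sG)(W)\otimes_W S$ compatibly with Frobenius, and verifying that the tensor extension constructed in the ``if'' direction agrees (up to specialization) with any independently chosen witnessing tensors in the converse; this is controlled by Theorem \ref{thm:isomorphismfrakM} together with compatibility of $\bbD(\cdot)$ with the PD-thickenings $W\to S\to O_K$.
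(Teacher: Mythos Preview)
The paper does not give its own proof of this proposition; it is quoted verbatim as \cite[1.5.8]{K2010}. So there is no in-paper argument to compare against, and your sketch should be judged on its own terms.

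Your outline captures the correct shape of the argument, but one step is too quick. In the direction ``$\varpi$ factors through $R_G$ $\Rightarrow$ $\tilde{\sG}$ is $G_W$-adapted'', you propose to transport $s_{\alpha,0}$ from $\bbD(\sG)(W)$ to $\bbD(\tilde{\sG})(S)$ via the identification $\bbD(\tilde{\sG})(S)\simeq \bbD(\sG)(W)\otimes_W S$ and then assert that ``these tensors are automatically Frobenius-invariant since the $s_{\alpha,0}$ are.'' That is not automatic: the Frobenius on $\bbD(\tilde{\sG})(S)$ is \emph{not} the base change of the Frobenius on $\bbD(\sG)(W)$; it genuinely depends on the deformation $\tilde{\sG}$, i.e.\ on the point $\varpi$. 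What makes the tensors $\varphi$-invariant is precisely the hypothesis that $\varpi$ lands in $R_G$: the deformation is then parametrized by an element of $\widehat{U}_G\subset G$, so the Frobenius on the $\frakS$-module (equivalently, on $\bbD(\tilde{\sG})(S)$) is modified from that on $\bbD(\sG)(W)$ by a $G$-valued element, which fixes the $s_{\alpha,0}$. This is exactly what the paper makes explicit in the construction of \S\ref{section:Gadaptedlift}: one writes the $\frakS$-Frobenius as $c\cdot\mu^{-1}(E(u))$ with $c\in G$ and $\mu$ a $G_{\frakS}$-valued lift of $\mu_0$, and then Frobenius-invariance of the tensors is visible. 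You should make this dependence on the $R_G$-hypothesis explicit rather than treating $\varphi$-invariance as formal.

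The converse direction is fine in outline: once the tensors lie in $\mathrm{Fil}^0\bbD(\tilde{\sG})(O_K)^\otimes$ and cut out a reductive $G_{O_K}$, the standard splitting argument for filtrations compatible with a reductive structure (over a local ring) gives a $G$-valued cocharacter lifting $\mu_0^{-1}$, and the Faltings moduli description forces $\varpi$ to land in $R_G$. Also, a minor point: your labeling of ``if'' and ``only if'' is reversed relative to the statement as written.
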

\subsubsection{}
\label{section:Gadaptedlift}
We now show that every $p$-divisible group $\sG$ over $k$ with $G$-structure admits a $G_W$-adapted lifting. Let $\mathfrak{M}=\sigma^{-1 *}(\mathbb{D}(\mathscr{G})(W))$, so that $\varphi^*(\mathfrak{M})\xrightarrow{\sim}\mathbb{D}(\mathscr{G})(W)\otimes_W\mathfrak{S}$. 

Let $\mu_0$ be a $G_W$-valued cocharacter such that $\mu^{-1}_0$ induces the filtration on $\bbD(\sG)(k)$. Since the $s_{\alpha,0}$ are in $U^\otimes$, we may think of these as tensors in $\mathfrak{M}^\otimes$ and $\varphi^*(\mathfrak{M})^\otimes$. Consider now a cocharacter $\mu$, lifting $\mu_0$, valued in $G_\mathfrak{S}\subset GL(\varphi^*(\mathfrak{M}))$, and consider the map
\begin{equation*}
  \varphi^*(\mathfrak{M})\xrightarrow{c\cdot \mu^{-1}(E(u))}\varphi^*(\mathfrak{M})\xrightarrow{\sim} \mathfrak{M},
\end{equation*}
where $c=\sigma^{-1}(b)\mu_0(E(0))$ and the final map is induced by the identity on $U\subset \varphi^*(\mathfrak{M})$. Then
\begin{equation*}
  \sigma(c) =b\upsilon_0(E(0))^{-1}= (b\upsilon_0(p)^{-1})(\upsilon_0(p/E(0)))\in GL(\mathbb{D}(\mathscr{G})(W)).
\end{equation*}
Hence $c\in GL(\mathbb{D}(\mathscr{G})(W))$ and the map above gives $\mathfrak{M}$ the structure of an object of $\mathrm{BT}^\varphi_{/\frakS}$, and hence corresponds to a deformation $\tilde{\mathscr{G}}$ of $\mathscr{G}$. The filtration on $\mathcal{M}(\mathfrak{M}) =S\otimes_{\frakS}\varphi^*(\mathfrak{M})$ is induced by the cocharacter $\mu^{-1}$. Hence thinking of the $s_{\alpha,0}$ in $\mathcal{M}(\mathfrak{M}) =\mathbb{D}(\tilde{\mathscr{G}}(S)$, shows that $\tilde{\mathscr{G}}$ is a $G_W$-adapted lifting.

\subsubsection{}
\label{subsubsection:farguesfiltrationextracondition}
We want to understand when the $G$-adapted lift of a $p$-divisible group $\sG$ over $k=\bar{\bbF}_p$ also has a lift of the slope filtration.

Recall that to any $p$-divisible group $\sG$ over $\bar{\bbF}_p$ with $G$-structure we have attached an element $b\in G(K_0)$, defined up to $\sigma$-conjugacy by $G(O_{K_0})$, such that the Frobenius on $\bbD(\sG)(O_{K_0})$ is given by $b\sigma$. Let $\nu_b$ be the cocharacter inducing the slope graduation on the isocrystal $D:=\bbD(\sG)(O_{K_0})[1/p]$.

Following the main result of \cite{zink2001slope}, we know that $\sG$ admits a slope filtration, so let
\begin{equation}
    0=\sG_0\subset \sG_1\subset\dots\subset \sG_n=\sG
    \label{eqn:slopefiltration}
\end{equation}
denote the slope filtration. This induces a filtration on the Dieudonn\'{e} modules $M_i:=\bbD(\sG_i)(O_L)$, so we have
\begin{equation*}
    0=M_0\subset M_1\subset\dots\subset M_n=\bbD(\sG)(O_L).
\end{equation*}

We will now show that $\sG$ admits a $G$-adapted lift $\tilde{\sG}$ over $O_L$ which admits a lift of the slope filtration. Note that we can always choose a representative $b\in M_{[b]}$, and in this case the associated $p$-divisible group $\sG$ has $M_{[b]}$ structure. We would like to generalize this slightly.

\begin{proposition}
There exists a $p$-divisible group $\tilde{\sG}$ over $O_L$, lifting $\sG$, and which admits a filtration
\begin{equation*}
    0=\tilde{\sG}_0\subset \tilde{\sG}_1\subset\dots\subset \tilde{\sG}_n=\tilde{\sG}
\end{equation*}
which lifts \eqref{eqn:slopefiltration}.
\end{proposition}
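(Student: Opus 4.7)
The plan is to refine the construction of Section~\ref{section:Gadaptedlift} so that the lifting cocharacter $\mu$ there takes values in the parabolic subgroup of $G_W$ stabilizing the slope filtration on $\bbD(\sG)(W)$. Let $P\subset G_W$ denote this parabolic; its Levi is the centralizer $M_b$ of the slope cocharacter $\nu_b$, and since the Frobenius $b\sigma$ on the isocrystal $\bbD(\sG)(W)[1/p]$ preserves the slope filtration, we may take $b\in P(W[1/p])$.

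First I would show that the Hodge cocharacter $\mu_0$ (from Lemma~\ref{lemma:filtration}) can be chosen to factor through $P_W$. This uses the compatibility between the Hodge and slope filtrations on $\bbD(\sG)(k)$: for each sub-$p$-divisible group $\sG_i\subset \sG$, functoriality of $\bbD$ implies the Hodge filtration on $\bbD(\sG_i)(k)$ is the restriction of the one on $\bbD(\sG)(k)$, so both filtrations admit a simultaneous splitting. Hence after $G(W)$-conjugation, $\mu_0$ is valued in $P_W$. Using the section $W\hookrightarrow \frakS$, I then lift $\mu_0$ to a cocharacter $\mu:\mathbb{G}_m\to P_\frakS$.

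Applying the construction of Section~\ref{section:Gadaptedlift} with this $\mu$ produces a deformation $\tilde{\sG}$ whose Breuil-Kisin Frobenius is $c\cdot \mu^{-1}(E(u))$ with $c=\sigma^{-1}(b)\mu_0(E(0))$. Because $\sigma^{-1}(b),\mu_0(E(0))\in P(W[1/p])$ and $\mu^{-1}(E(u))\in P(\frakS[1/E(u)])$, this Frobenius preserves the sub-Breuil-Kisin modules $\frakM_i\subset \frakM=\sigma^{-1*}\bbD(\sG)(W)$ coming from the slope filtration. By the equivalence of Theorem~\ref{thm:frakM} (together with its extension to $p$-divisible groups), each $\frakM_i$ then corresponds to a sub-$p$-divisible group $\tilde{\sG}_i\subset \tilde{\sG}$, and since the underlying $\frakS$-module filtration is unchanged by the deformation, the special fibers of the $\tilde{\sG}_i$ recover the slope filtration $\sG_i$ of $\sG$.

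The main obstacle will be verifying that each $\frakM_i$ with the restricted Frobenius remains an object of $\mathrm{BT}^\varphi_{/\frakS}$, i.e.\ that the image of $1\otimes \varphi : \varphi^*(\frakM_i)\to \frakM_i$ contains $E(u)\frakM_i$. The corresponding condition for $\frakM/\frakM_i$ is inherited from that on $\frakM$ by passing to quotients; for $\frakM_i$ itself, given $y\in \frakM_i$ one writes $E(u)y=(1\otimes\varphi)(x)$ for some $x\in\varphi^*(\frakM)$, and the image of $x$ in $\varphi^*(\frakM/\frakM_i)$ is then $E(u)$-torsion, hence zero by $\frakS$-flatness of $\varphi^*(\frakM/\frakM_i)$, so $x\in\varphi^*(\frakM_i)$ as desired.
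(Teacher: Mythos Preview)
Your approach is correct and reaches the same conclusion, but by a more explicit (and longer) route than the paper's. Both arguments share the essential first step: showing that the Hodge cocharacter $\mu_0$ can be chosen to factor through the slope parabolic $P_W$. The paper does this via the Bruhat decomposition---any two parabolics of $G_k$ contain a common maximal torus $T'$, which one then lifts to $T'_W \subset P_W$---which is exactly the group-theoretic content of your ``simultaneous splitting''. (Your phrase ``after $G(W)$-conjugation'' is not quite right: conjugating $\mu_0$ by a general element of $G(W)$ would change the Hodge filtration it induces. What you mean, and what the paper makes explicit, is that among the cocharacters inducing the fixed Hodge filtration one can choose a $P_W$-valued representative.)

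Where the approaches diverge is afterward. The paper simply invokes Grothendieck--Messing: the $P_W$-valued lift of the Hodge filtration is compatible with the slope filtration by construction, so GM applied to each $\sG_i$ produces the filtered lift directly. You instead rerun the explicit Breuil--Kisin construction of Section~\ref{section:Gadaptedlift} with the $P_W$-valued $\mu$, and then verify by hand that the sub-$\frakS$-modules $\frakM_i$ inherit the $\mathrm{BT}^\varphi_{/\frakS}$ structure. This works---your torsion argument for $E(u)\frakM_i \subset \mathrm{im}(1\otimes\varphi)$ is correct, using that $1\otimes\varphi$ becomes an isomorphism on each graded piece after inverting $E(u)$---but it is a longer path to the same destination. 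One small gap: your claim that $b,\sigma^{-1}(b) \in P(W[1/p])$ tacitly requires $P$ to be $\sigma$-stable, i.e.\ the identification $U\otimes W \simeq \bbD(\sG)$ must be chosen so that the slope filtration descends to $U$. This can always be arranged (for instance by taking $b\in M_{[b]}$ with $\nu_b$ the dominant $\bbQ_p$-rational Newton cocharacter), but it deserves a sentence rather than the bare assertion ``since $b\sigma$ preserves the slope filtration, we may take $b\in P(W[1/p])$''.
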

\begin{proof}
Let $b':=g^{-1}b\sigma(g)$.

We first observe that by the Iwasawa decomposition, we may assume that $g\in P(L)$, and hence that the parabolic subgroup associated to the Newton cocharacter $\nu_{b'}=g\nu_b g^{-1}$ is equal to $P$.

By Grothendieck-Messing, it suffices to exhibit a $P_W$-valued cocharacter $\mu'$ lifting the Hodge filtration on $\bbD(\sG_g)(k)$. To see this, first consider the parabolic subgroup $P^H_k\subset G_k$ associated to the Hodge filtration on $\bbD(\sG_g)(k)$; with the set up as above it is the parabolic subgroup associated to $\mu^{-1}\otimes k$. Let $P^N_k\subset G_k$ be the special fiber of the parabolic $P$ associated to the slope filtration on $\sG$, since $P$ is defined over $W$.

By the Bruhat decomposition, there is some maximal torus $T'$, defined over $k$, such that $T'\subset P^N_k\cap P^H_k$. We can lift $T'$ to a torus $T'_W$ over $W$ contained in $P_W$. Then the Hodge filtration on $\bbD(\sG_g)(k)$ is given by some $T'$-valued cocharacter ${\mu'}^{-1}$, and lifting this to a $T'_W$-valued cocharacter ${\mu'_0}^{-1}$ gives us the desired cocharacter.
\end{proof}

In particular, we have a $P_W$-valued cocharacter $\mu'_0$, whose inverse lifts the Hodge filtration, so we can recall the previous results about the construction of the universal deformation ring $\operatorname{Spf}R_G$.

Let $U_G$ be the unipotent subgroup of the parabolic subgroup of $G$ corresponding to $\mu'$. We have an isomorphism
\begin{equation*}
    U_G=\prod_{\alpha\in S}U_\alpha,
\end{equation*}
where $S=\{\alpha\in\Delta:\langle \alpha,\mu'_0\rangle>0\}$.

Let $U'=\prod_{\alpha\in S'}U_\alpha\subset U_G$ be the product of the roots subspaces, where 
\begin{equation*}
    S'=\{\alpha\in S:\langle \alpha,\nu\rangle\geq 0\}.
\end{equation*}

Let $R'$ denote the completion of $U'$ at the identity. From the construction, we see that $R'$ is a quotient of $R_G$. We now observe the following.

\begin{proposition}
\label{prop:Plocalring}
Let $h: \operatorname{Spec}R\rightarrow \operatorname{Spf}(R_G)$ be a deformation $\tilde{\sG}$ of $\sG$ over a ring $R$ where $R$ is a $p$-nilpotent local ring. Then $h$ factors through $\operatorname{Spf}R'$ if and only if $\tilde{\sG}$ admits a filtration 
\begin{equation*}
    \tilde{\sG}=\tilde{\sG}_0\supset \tilde{\sG}_1\supset \dots\supset \tilde{\sG}_n=0
\end{equation*}
deforming the slope filtration.
\end{proposition}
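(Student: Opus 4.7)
The plan is to combine the Kisin-style parametrization of $R_G$ recalled in \ref{section:Gadaptedlift} with Grothendieck-Messing theory over the $p$-nilpotent base $R$. The key principle is that lifting the slope filtration on $\sG$ translates, via Grothendieck-Messing, into a compatibility condition between the Hodge filtration of $\tilde{\sG}$ and the lifted filtration $0\subset M_1\otimes_W R\subset\dots\subset M_n\otimes_W R=\bbD(\sG)\otimes_W R$, and this condition cuts out precisely the subgroup $U'$ inside $U_G$.

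First I would set up the dictionary. By Grothendieck-Messing for the $p$-nilpotent local ring $R$, a sub-$p$-divisible group $\tilde{\sG}_i\subset\tilde{\sG}$ lifting $\sG_i\subset\sG$ corresponds uniquely to a direct summand $\tilde{M}_i\subset\bbD(\tilde{\sG})$ lifting $M_i\otimes_W R$, stable under the crystal Frobenius, and whose intersection with the Hodge filtration of $\tilde{\sG}$ is a direct summand projecting isomorphically onto the Hodge filtration of $\sG_i$. Using the construction recalled in \ref{section:Gadaptedlift}, the morphism $h$ is encoded by a formal point $u\in U_G(R)$, and the Hodge filtration of $\tilde{\sG}$ is obtained from the reference filtration $\mathrm{Fil}_0$ on $\bbD(\sG)\otimes_W R$ (induced by $\mu'^{-1}_0$) by applying $u$.

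The heart of the argument is then purely group-theoretic. By the preceding proposition, the cocharacter $\mu'_0$ factors through the parabolic $P$, so $\mathrm{Fil}_0$ already intersects each $M_i\otimes_W R$ in a direct summand of the correct rank, equivalently, $\mathrm{Fil}_0$ is compatible with the filtration $M_\bullet\otimes_W R$. Consequently the twisted filtration $u\cdot\mathrm{Fil}_0$ is compatible with $M_\bullet\otimes_W R$ if and only if $u$ stabilises each $M_i\otimes_W R$. Since $P$ is scheme-theoretically the stabiliser of the filtration $M_\bullet$, this amounts to $u\in P(R)$. Combined with $u\in U_G(R)$ and the root-subgroup identity $U_G\cap P=U'$ (both sides being the product of the $U_\alpha$ over $\alpha\in S'$), this gives $u\in U'(R)$. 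Conversely, for $u\in U'(R)$ the submodules $M_i\otimes_W R$ automatically satisfy the Grothendieck-Messing conditions and therefore come from sub-$p$-divisible groups $\tilde{\sG}_i$ deforming the slope filtration.

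The main obstacle is ensuring that, for $u\in U'(R)$, no hidden Frobenius obstruction prevents the filtration $M_\bullet\otimes_W R$ from being $\varphi$-stable on $\bbD(\tilde{\sG})$. This reduces to knowing that the Frobenius is given by $b\sigma$ with $b\in P(W[1/p])\cap G(W)$, which is exactly what the Iwasawa-decomposition argument of the preceding proposition arranges. Once this is verified, the scheme-theoretic identification $U_G\cap P=U'$ at the formal completion of the identity, together with the Grothendieck-Messing dictionary, yields the desired equivalence.
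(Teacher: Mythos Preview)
Your proposal is correct and follows essentially the same approach as the paper. The paper's proof is extremely terse: for the ``only if'' direction it simply cites the existence of a universal filtration over $\operatorname{Spf}R'$, and for the ``if'' direction it invokes Grothendieck--Messing and the definition of $R'$ as the locus where the lift of the Hodge filtration factors through $P$. Your write-up is a fleshed-out version of the same argument, making explicit the group-theoretic identity $U_G\cap P=U'$ that the paper leaves implicit in the phrase ``from the definition of $R'$''. Your remark about Frobenius stability is not strictly needed (Grothendieck--Messing lifts of sub-$p$-divisible groups are controlled by the Hodge filtration alone), but it does no harm.
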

\begin{proof}
The `only if' part is clear, because there is a filtration on the universal deformation over $\operatorname{Spf}R'$. To show the `if' part, since $R$ is a $p$-nilpotent local ring, we may apply Grothendieck-Messing theory, from which the result follows from the definition of $R'$, since $\operatorname{Spf}R'$ is the subspace where the lifting of the Hodge filtration factors through $P$. 
\end{proof}
\subsubsection{}
Let $E:=D(\sG_{\operatorname{Spf} R_G})[\frac{1}{p}]$, which is a vector bundle over $\operatorname{Spf} R_G$ equipped with a Hodge filtration $\mathrm{Fil}^1\sG_{\operatorname{Spf} R_G}\subset E$. The relative position of the Hodge filtration induces a map
\begin{equation*}
    \pi:(\operatorname{Spf} R_G)^{\mathrm{rig}}\rightarrow \mathrm{Fl}_{G,{\mu}}
\end{equation*}
to the flag variety. (This is the restriction to $\operatorname{Spf} R_G$ of the period map on Rapoport-Zink spaces.)

Let us consider the image of $\pi\vert_{(\operatorname{Spf} R')^{\mathrm{rig}}}$. By the definition of $R'$, $(\operatorname{Spf} R')^{\mathrm{rig}}$ is exactly the preimage of the Schubert cell $S=P/P'\subset \mathrm{Fl}_{G,{\mu}}$, since $\operatorname{Spf} R'$ corresponds to the subspace of $\operatorname{Spf}R_G$ where the Hodge filtration of the associated $p$-divisible group factors through $P$.

\begin{proposition}
\label{prop:subspacelifts}
Let $\tilde{\sG}$ be a $G$-adapted lift, associated to a  map $\varpi:R_G\rightarrow O_K$. Then $\tilde{\sG}$ admits a filtration lifting the slope filtration on $\sG$ if and only if $\varpi$ factors through $R'$.
\end{proposition}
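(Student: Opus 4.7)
The plan is to deduce the statement from Proposition~\ref{prop:Plocalring} (the analogous result for $p$-nilpotent local rings) by working modulo each power of $p$ and passing to the limit. For $n \geq 1$, let $\varpi_n : R_G \to O_K/p^n$ denote the reduction of $\varpi$; each such map targets a $p$-nilpotent local ring to which Proposition~\ref{prop:Plocalring} directly applies.

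Assume first that $\tilde{\sG}$ admits a filtration lifting the slope filtration of $\sG$. Reducing this filtration modulo $p^n$ gives a filtration of $\tilde{\sG} \otimes_{O_K} O_K/p^n$ lifting the slope filtration of $\sG$, and Proposition~\ref{prop:Plocalring} then forces $\varpi_n$ to factor through the quotient $R_G \twoheadrightarrow R'$ for every $n$. Writing $R' = R_G/I$ and using $O_K = \varprojlim_n O_K/p^n$ together with the continuity of $\varpi$ on the complete local ring $R_G$, we conclude that $\varpi(I) = 0$, so $\varpi$ factors through $R'$. Conversely, if $\varpi = \varpi' \circ q$ for $\varpi' : R' \to O_K$ and $q : R_G \twoheadrightarrow R'$, then Proposition~\ref{prop:Plocalring} applied to the quotient maps $R' \to R'/p^n$ produces filtrations of $\sG_{\operatorname{Spf} R'} \otimes_{R'} R'/p^n$ lifting the slope filtration; pulling these back along $\varpi'$ gives filtrations of $\tilde{\sG} \otimes_{O_K} O_K/p^n$ which, by the equivalence between $p$-divisible groups over $O_K$ and compatible systems of $p$-divisible groups over the truncations $O_K/p^n$, glue to a filtration of $\tilde{\sG}$ over $O_K$.

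The main subtlety is in the converse direction: one must check that the filtrations on the truncations $\tilde{\sG} \otimes_{O_K} O_K/p^n$ are mutually compatible so as to glue. This is automatic from the construction, because they all arise as pullbacks from a single universal filtered lift over $\operatorname{Spf} R'$, whose existence follows from applying Proposition~\ref{prop:Plocalring} in the universal case and whose uniqueness is built into the formal deformation theory of $p$-divisible groups. The entire argument rests on the observation that $\operatorname{Spf} R'$ was engineered, via the unipotent subgroup $U' = \prod_{\alpha \in S'} U_\alpha$, to be exactly the locus in $\operatorname{Spf} R_G$ where the Hodge filtration of the universal deformation factors through $P$, which by Grothendieck-Messing is the precise integral condition equivalent to the existence of a $P$-compatible lift of the slope filtration.
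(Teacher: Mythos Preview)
Your argument is correct and takes a genuinely different route from the paper. The paper works on the rigid-analytic generic fibre: it uses the period map $\pi:(\operatorname{Spf} R_G)^{\mathrm{rig}}\to \mathrm{Fl}_{G,\mu}$ and identifies $(\operatorname{Spf} R')^{\mathrm{rig}}$ as the preimage of the Schubert cell $S=P/P'$. For a rigid point the condition $\pi(x)\in S$ is then rephrased as saying that the slope filtration of the associated weakly admissible filtered $F$-isocrystal is itself a filtration by weakly admissible sub-isocrystals, which is exactly the condition that $\tilde{\sG}$ admits a lift of the slope filtration.

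By contrast, you stay entirely on the integral/formal side, deducing the statement from Proposition~\ref{prop:Plocalring} by passing to the truncations $O_K/p^n$ and taking the limit. This is more elementary and avoids any appeal to $p$-adic Hodge theory or the period morphism; it works because the universal deformation over $\operatorname{Spf} R'$ already carries a filtration by construction (this is what makes the ``only if'' direction of Proposition~\ref{prop:Plocalring} immediate), so compatibility of the filtrations on the $O_K/p^n$ is automatic as pullbacks of one universal object. One small imprecision: you cannot literally ``apply Proposition~\ref{prop:Plocalring} in the universal case'' to produce that filtration, since $R'$ is not $p$-nilpotent; rather, the filtration over $\operatorname{Spf} R'$ is part of the defining construction of $R'$ and is what the proof of Proposition~\ref{prop:Plocalring} invokes. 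The paper's approach, while less direct, is more conceptual and fits naturally with the period-map picture used later for Rapoport--Zink spaces.
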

\begin{proof}
Observe that, for rigid points, we can restate the condition that the Hodge filtration factors through $P$ in terms of the relative positions of the Hodge and slope filtration. In particular, for $x\in(\operatorname{Spf} R_G)^{\mathrm{rig}}$, $\pi(x)$ lies in $S$ exactly when (the inverse of) the slope filtration of the the associated weakly admissible $F$-isocrystal $V_x$ (which is a priori only a filtration by $F$-isocrystals) is a filtration by weakly admissible $F$-isocrystals (i.e. the restriction of the Hodge filtration to each filtrand of the slope filtration is weakly admissible). This condition is exactly equivalent to $\tilde{\sG}$ admits a filtration lifting the slope filtration on $\sG$. 
\end{proof}
\subsubsection{}
Let $\tilde{\sG}$ be a $G$-adapted lifting of $\sG$, with a filtration lifting the slope filtration. Then we have a filtration on $L=T_p\sG$, which we denote by $(L^\bullet)$. More generally, we consider $L$ in $\mathrm{Rep}^\mathrm{cris\circ}_{G_K}$ with a $G_K$-stable filtration $L^\bullet$.

Consider the filtration on $\frakM=\frakM(L)$ given by
\begin{equation*}
    \frakM^\bullet:=\frakM(L^\bullet).
\end{equation*}

The following proposition refines Theorem \ref{thm:isomorphismfrakM}.

\begin{proposition}
\label{prop:ptorsor}
  Let $L$ be in $\mathrm{Rep}^\mathrm{cris\circ}_{G_K}$, equipped with $G_K$-invariant tensors $(s_\alpha)$ whose stabilizer is $G$. If $\frakM=\frakM(L)$, then $\frakM(L^\bullet)$ is a $G$-filtration on $\frakM$, which we denote by $\calF_F(\frakM)$. If $k$ is separably closed, then there is an $\frakS$-linear isomorphism.
  \begin{equation*}
      \frakM\xlongrightarrow{\sim}L\otimes_{\bbZ_p}\frakS
  \end{equation*}
  which takes the tensors $\tilde{s}_\alpha$ to $s_\alpha$, and which induces isomorphisms on the filtered pieces
  \begin{equation*}
      \calF_F^\gamma(\frakM)\xlongrightarrow{\sim}\calF_F^\gamma(L)\otimes_{\bbZ_p}\frakS.
  \end{equation*}
\end{proposition}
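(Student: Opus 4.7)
The plan is to establish the two assertions separately: the $G$-filtration claim follows from tensor functoriality of $\frakM(\cdot)$, while the refined isomorphism is produced by modifying the isomorphism of Theorem \ref{thm:isomorphismfrakM}(2) by an element of $G(\frakS)$ via a formal-smoothness lifting argument.

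For the $G$-filtration claim, since $L^\bullet$ is a filtration of $L$ by $G_K$-stable sub-$\bbZ_p$-lattices arising from the slope filtration (and compatible with the $G$-structure), its stabilizer inside $G\subset GL(L)$ is a parabolic subgroup $P$. Applying the fully faithful tensor functor $\frakM(\cdot)$ of Theorem \ref{thm:frakM} yields a filtration $\frakM(L^\bullet)$ on $\frakM$; its stabilizer, transported to $G_\frakS\subset GL(\frakM)$ via any tensor-preserving identification from Theorem \ref{thm:isomorphismfrakM}(2), is $P_\frakS$. This exhibits $\calF_F(\frakM)$ as a $G$-filtration on $\frakM$.

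To upgrade the isomorphism, I would fix some tensor-preserving $\phi_0: L\otimes_{\bbZ_p}\frakS\xrightarrow{\sim}\frakM$ from Theorem \ref{thm:isomorphismfrakM}(2) and transport $\calF_F(\frakM)$ along $\phi_0^{-1}$ to a $G$-filtration $\calF$ on $L\otimes_{\bbZ_p}\frakS$ of the same parabolic type $P$ as $L^\bullet\otimes_{\bbZ_p}\frakS$. The two filtrations determine two $\frakS$-points $x_1, x_2$ of the partial flag variety $G/P$, and the task reduces to finding $g\in G(\frakS)$ with $g\cdot x_1 = x_2$; then $\phi_0\circ g$ is the desired isomorphism. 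To produce $g$, reduce modulo the maximal ideal $(p,u)\subset\frakS$: via the crystalline comparison $\frakM/u\frakM[1/p]\xrightarrow{\sim} D_\mathrm{cris}(V)$ of Theorem \ref{thm:frakM}(1) and the relationship between $\frakM$ and $\sigma^{-1*}\bbD(\sG)(W)$, both filtrations recover the same (Frobenius-twisted) slope filtration on $\bbD(\sG)(k)$, so $x_1$ and $x_2$ already agree in $(G/P)(k)$. Because $k$ is separably closed and $G\to G/P$ is a smooth $P$-bundle, infinitesimal lifting along $(p,u)$-adic thickenings of $\frakS$ (which is $(p,u)$-adically complete) successively lifts the identity of $G(k)$ to elements of $G(\frakS/(p,u)^n)$ conjugating one section to the other modulo $(p,u)^n$; passing to the limit yields the required $g\in G(\frakS)$.

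The main obstacle is the modulo-$(p,u)$ compatibility: confirming that the filtration $\frakM(L^\bullet)$, constructed abstractly by applying the functor $\frakM(\cdot)$ to $L^\bullet$, really pulls back under $\phi_0$ to a filtration that agrees modulo $(p,u)$ with $L^\bullet\otimes_{\bbZ_p}\frakS$. This amounts to checking that the crystalline comparison isomorphism, the tensor-preserving isomorphism of Theorem \ref{thm:isomorphismfrakM}(2), and the Frobenius twist relating $\frakM$ to $\bbD(\sG)(W)$ all intertwine the two incarnations of the slope filtration compatibly. Once this base-case identification is in place, the lifting via formal smoothness of $G\to G/P$ and completeness of $\frakS$ is standard and concludes the argument.
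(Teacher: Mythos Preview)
Your approach is genuinely different from the paper's, and it has a real gap at the step you flag only implicitly.

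The crux of the proposition is that $\frakM(L^\bullet)$ is a $G$-filtration, i.e.\ that after any tensor-preserving trivialization the transported filtration is locally $G$-conjugate to $L^\bullet\otimes\frakS$, or equivalently that the associated graded pieces $\frakM(L^i)/\frakM(L^{i+1})$ are free over $\frakS$ and the scheme $A$ of tensor-and-filtration-preserving isomorphisms is a $P$-torsor. You assert this follows from ``tensor functoriality'' and that the stabilizer of the transported filtration is $P_\frakS$, but full faithfulness and compatibility with tensor constructions do not give exactness of $\frakM$ on the short exact sequences $0\to L^{i+1}\to L^i\to L^i/L^{i+1}\to 0$ over all of $\frakS$; a priori $\frakM(L^i)/\frakM(L^{i+1})$ could have torsion at the closed point. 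The paper handles exactly this point: it checks that $A$ is a $P$-torsor over $\frakS[1/p]$ (where $\frakM[1/p]$ is an exact tensor functor) and over $\frakS_{(p)}$ (using Fontaine's functorial isomorphism over $O_{\widehat{\calE_{ur}}}$ and descent), hence over the punctured spectrum $\operatorname{Spec}\frakS\setminus\{\mathfrak m\}$, and then extends to $\frakS$ via \cite[Thm 6.13]{CS79}. Without this, you cannot speak of ``two $\frakS$-points of $G/P$'', so the formal-smoothness lifting never gets off the ground.

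Your second obstacle is also real but less fundamental. An arbitrary tensor-preserving $\phi_0$ has no reason to match the two filtrations modulo $(p,u)$: the comparison $\frakM/u\frakM[1/p]\simeq D_{\mathrm{cris}}(V)$ relates $\frakM(L^\bullet)$ to the slope filtration on $\bbD(\sG)$, but $\phi_0$ is not canonical and need not respect this. You could repair this by first adjusting $\phi_0$ by an element of $G(k)$ (any two parabolics of the same type over separably closed $k$ are $G(k)$-conjugate) and then lifting---but this presupposes you already know both filtrations have the same type over $k$, which again requires the $G$-filtration property over $\frakS$. By contrast, once the paper has that $A$ is a $P$-torsor over $\frakS$, triviality is immediate from $\frakS$ being strictly henselian (or from the decomposition into a reductive-quotient torsor and a unipotent torsor), with no need for any mod-$(p,u)$ matching.
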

\begin{proof}
It suffices to prove the proposition when $k$ is separably closed. Now suppose that $k$ is separably closed, set $\frakM'=L\otimes_{\bbZ_p}\frakS$, and let $A\subset \underline{\mathrm{Hom}}_{\frakS}(\frakM,\frakM')$ be the subscheme of isomorphisms between $\frakM$ and $\frakM'$ which take $\tilde{s}_\alpha$ to $s_\alpha$, and induces an isomorphism between the filtered pieces $\calF_F^\gamma(\frakM)$ and $\calF_F^\gamma(L)\otimes_{\bbQ_p}\frakS$. We claim that $A$ is a $P$-torsor over $\frakS$. The claim implies the proposition, since if we let $U$ be the unipotent radical of $P$, then any $P/U$-torsor over $\frakS$ is trivial, since $P/U$ is reductive, and any $U$-torsor over an affine scheme is trivial.

We first observe that $A_{\frakS_{(p)}}$ is a trivial $P$-torsor. Recall that we have a canonical isomorphism over $O_{\widehat{\calE_{ur}}}$
  \begin{equation}
    \label{eqn:canonicalisomtensorsgraded}
      \frakM\otimes O_{\widehat{\calE_{ur}}}\xlongrightarrow{\sim}L\otimes O_{\widehat{\calE_{ur}}}
  \end{equation}
 defined by Fontaine (c.f. \cite[A1]{Fon90}). Since this isomorphism is functorial, the isomorphism above takes $\calF_F^\gamma(\frakM)\otimes \widehat{O_{\calE_{ur}}}$ to  $\calF_F^\gamma(L)\otimes_{\bbZ_p}\widehat{O_{\calE_{ur}}}$. Thus, we see that the filtration on $\frakM\otimes \widehat{O_{\calE_{ur}}}$ given by $\calF_F^\bullet(\frakM)\otimes \widehat{O_{\calE_{ur}}}$ is a $G$-filtration, and moreover is exactly given by $\calF_F(L)$. Thus, we have a (trivial) $P$-torsor over $O_{\widehat{\calE_{ur}}}$, which, by faithfully flat descent, gives us a $P$-torsor over $\frakS_{(p)}$.
 
Moreover, since $\frakM[1/p]$ is an exact faithful tensor functor, $A_{\frakS[1/p]}$ is a $P$-torsor. 

We now view $L$ (resp. $\frakM$) as a tensor functor from $\mathrm{Rep}_{\bbZ_p}G$ to $\mathrm{Rep}^{\mathrm{cris}\circ}_{G_K}$ (resp. $\mathrm{Mod}_{/\frakS}$), denoted by $|L|$ (resp. $|\frakM|). $ In order to show that $\calF_F^\gamma(\frakM)$ is a $G$-filtration, it suffices to show that for every $\gamma\in\bbQ$, the fiber functor $\calF_F^\gamma(|\frakM|):\mathrm{Rep}_{\bbZ_p}G\rightarrow \mathrm{Mod}_{/\frakS}$ given for every $\tau\in \mathrm{Rep}_G$ by $\calF_F^\gamma(|\frakM|)(\tau)=\calF_F^\gamma(|\frakM|(\tau))=\frakM(\calF_F^\gamma(|L|(\tau)))$ is exact. The above arguments show that the restriction of  $\calF_F^\gamma(|\frakM|)$ to $\frakS[1/p]$ and $\frakS_{(p)}$ are exact functors, and thus we have an exact tensor functor over $U=\operatorname{Spec}\frakS\backslash\{\mathfrak{m}\}$. Thus, if we consider the scheme of isomorphisms
\begin{equation*}
    B=\mathrm{Isom}(\calF_F^\gamma(|L|(\tau))\otimes\frakS,\calF_F^\gamma(|\frakM|)),
\end{equation*}
then $B_U$ is a $G$-torsor, and by \cite[Thm 6.13]{CS79}, we can extend $B_U$ to a torsor over $\frakS$, which is necessarily equal to $B$. Thus $\calF_F^\gamma(|\frakM|)$ is exact, so $\calF_F^\bullet(|\frakM|))$ is a $G$-filtration, and thus $A$ is a $P$-torsor. Since $\frakS$ is a strictly henselian local ring, $A$ is necessarily trivial. We hence choose a section $f\in A(\frakS)$, and as described above, this proves the claim.
\end{proof}
\subsection{Rapoport-Zink Spaces}
We now define the Rapoport-Zink space for a triple $(G,b,\mu)$ of Hodge type, following \cite{kim_2018}, where $G$ is an unramified, connected reductive group over $\bbQ_p$, $b\in G(L)$ and $\mu\in X_*(T)$. Let $\bbX$ be a $p$-divisible group over $k$, such that the Frobenius $\varphi$ on $\bbD(\bbX)$ is given by $b\sigma$.

\begin{definition}
  Let $\RZ(G,b,\mu)$ be the functor which assigns to any $p$-locally nilpotent smooth $W$-algebra $R$ the set of isomorphism classes $((X,\rho,t_\alpha)$ such that
  \begin{enumerate}
      \item $(X,t_\alpha)$ is a $p$-divisible group over $R$ with tensors $t_\alpha\subset\bbD(X)^\otimes$, where $(t_\alpha)$ consists of morphisms of crystals $t_\alpha : 1 \rightarrow \bbD(X)^\otimes$ over $\mathrm{Spec}(R)$ such that
      \begin{equation*}
          t_\alpha : 1[1/p] \rightarrow \bbD(X)^\otimes[1/p] 
      \end{equation*}
    is Frobenius equivariant;
      \item $\rho:\mathbb{X}_{R/p}\rightarrow X_{R/p}$ is a quasi isogeny;
      \item For some nilpotent ideal $J \subset R$ containing $(p)$, the pull-back of $t_\alpha$ over $\mathrm{Spec}(R/J)$ is identified
    with $s_{\alpha}$ under the isomorphism of isocrystals induced by $\rho$:
    \begin{equation*}
        \bbD(X_{R/J})[1/p] \xrightarrow{\sim}\bbD(\bbX_{R/J})[1/p].
    \end{equation*}
    \item For some (any) formally smooth $p$-adic $W$-lift  $\tilde{R}$ of $R$, endowed with the standard PD-structure on $\mathrm{ker}(\tilde{R}\rightarrow R) = p^m \tilde{R}$ for some $m$, let $(t_\alpha(\tilde{R}))$ denote the  $\tilde{R}$-section of $(t_\alpha)$. Then the $\tilde{R}$-scheme
\begin{equation*}
    P(\tilde{R}) := \mathrm{Isom}_{\tilde{R}}([\bbD(X)_{\tilde{R}}),(t_\alpha(\tilde{R}))],[\tilde{R}\otimes_{\bbZ_p} {\Lambda^*},(1\otimes s_\alpha)]),
\end{equation*}
classifying isomorphisms matching $(t_\alpha(\tilde{R}))$ and $(1\otimes s_\alpha)$, is a $G_W$-torsor.
    \item The Hodge filtration $\mathrm{Fil}^1(X) \subset \bbD(X)(R)$ is a $\{\mu\}$-filtration with respect to $(t_\alpha(R)) \subset \bbD(X)(R)^\otimes$, where $\{\mu\}$ is the unique $G(W)$-conjugacy class of cocharacters such that $b\in G(W)p^\upsilon G(W)$. 
  \end{enumerate}
\end{definition}

Now, fix $[b]\in B(G,\upsilon)$, and suppose that $P$ is the standard parabolic subgroup of $G$ given as the stabilizer of $\nu_{[b]}$. By $\sigma$-conjugating $b$, we may choose a representative $\bbX$ which is fully slope-decomposable, i.e. we have a decomposition
\begin{equation*}
    \bbX = \bbX'_1 \oplus\dots \oplus \bbX'_m 
\end{equation*}
such that $\bbX_i$ is isoclinic. Such a representative always exists, see for example \cite[\S2]{Ham2017}. In particular, we have a slope filtration on $\bbX$ as
\begin{equation*}
    0 \subset \bbX_1 \subset\dots \subset \bbX_t = \bbX,
\end{equation*}
for $\bbX_j = \oplus_{1\leq i\leq j}\bbX'_i$.
\begin{definition}
  Let $\RZ(P,b,\mu)$ be the subfunctor of $\RZ(G,b,\mu)$ which to a $W$-algebra $R$ associates the set of isomorphisms classes of triples $(H,H^{\bullet},\rho, t_\alpha)$, where
  \begin{enumerate}
        \item $(H,\rho,t_\alpha)\in \RZ(G,b,\mu)(R)$,
      \item $H^\bullet$ is an increasing filtration of $H$ by $p$-divisible groups over $\mathrm{Spec}R$, with $p$-divisible subquotients,
      \item $\rho : \bbX_{R/p} \rightarrow  H_{R/p}$ is a quasi-isogeny compatible with the filtration, i.e. 
      \begin{equation*}
          \rho(\bbX_{j,R/p}) \subseteq H_{j,R/p}\text{ for any }j = 1,\dots,t;
      \end{equation*}
      such that the restrictions of $\rho$ to the Barsotti-Tate subgroups defining the filtration
      \begin{equation*}
        \rho_j : \bbX_{j,R/p}\rightarrow H_{j,R/p}
      \end{equation*}
        are quasi-isogenies.
        \item For some (any) formally smooth $p$-adic $W$-lift  $\tilde{R}$ of $R$, endowed with the standard PD-structure on $\mathrm{ker}(\tilde{R}\rightarrow R) = p^m \tilde{R}$ for some $m$, let $(t_\alpha(\tilde{R}))$ denote the  $\tilde{R}$-section of $(t_\alpha)$. Then the $\tilde{R}$-scheme
\begin{equation*}
    P(\tilde{R}) := \mathrm{Isom}_{\tilde{R}}([\bbD(X)^\bullet_{\tilde{R}}),(t_\alpha(\tilde{R}))],[(\tilde{R}\otimes_{\bbZ_p} {\Lambda^*})^\bullet,(1\otimes s_\alpha)]),
\end{equation*}
classifying isomorphisms matching $(t_\alpha(\tilde{R}))$ and $(1\otimes s_\alpha)$, and the filtration, is a $P_W$-torsor.
  \end{enumerate}
\end{definition}
The same proof as in the PEL type case (see \cite[Prop 4.3]{Man08}) shows that $\RZ(P,b,\mu)$ is represented by a formal scheme over $O_E$, formally
locally of finite type. Moreover, the formal scheme $\RZ(P,b,\mu)$ is formally smooth, since the complete local ring at any point $x$ is, from Prop \ref{prop:Plocalring}, exactly $\mathrm{Spf}R'$. 

We now recall a key result \cite[Prop 5.1]{Man08} concerning the liftings of filtrations of $p$-divisible groups. Let $S$ be a connected scheme of characteristic $p$.
\begin{proposition}
\label{prop:strata}
Let $\bbX = \oplus_i\bbX'_i$ be a decomposition of a fixed $p$-divisible group over $k$ and write $\bbX_j = \oplus_{0\leq i\leq j}\bbX_i$ (j = 1,...,t) for its filtration 

Let $\sG$ be a $p$-divisible group over a noetherian $k$-scheme $S$, together with a quasi-isogeny $\rho : \bbX_S \rightarrow G$. Then there exists a stratification of $S$ by closed subschemes $\{S_i\}$
\begin{equation*}
    S = S_0 \supset S_1 \supset\dots \supset S_r \supset S_{r+1} = \null;\qquad S^\circ_s = S_s-S_{s+1}
\end{equation*}
such that
\begin{enumerate}
    \item for each $s$, the restriction of $\sG$ to the locally closed subscheme $S^\circ_s$ admits an increasing filtration $\sG^{\bullet}$ satisfying the condition: for each $j = 1,...,t$, the restriction of $\rho$ to the subgroup $\bbX_j$ induces a quasi-isogeny $\rho_j : \bbX_j \rightarrow \sG_j$;
    \item for any connected scheme $Z$ and any morphism $f:Z \rightarrow S$, $f^*\sG$ admits a filtration by $p$-divisible subgroups with the above property if and only if the morphism $f$ factors via the inclusion $S^\circ_s \hookrightarrow S$, for some $s\geq 0$.
\end{enumerate}
\end{proposition}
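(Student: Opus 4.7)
The plan is to reduce to the two-step case $0 \subset \bbX_1 \subset \bbX$ by induction on the filtration length $t$, construct the locus where the extra step lifts as a closed subscheme of $S$, and iterate to obtain the nested chain $S = S_0 \supseteq S_1 \supseteq \cdots \supseteq S_{r+1} = \emptyset$ of closed subschemes.

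For the two-step case, I would choose $N$ large enough that $\widetilde{\rho} := p^N \rho : \bbX_S \to \sG$ is an honest isogeny with finite flat kernel $K \subset \bbX_S$. If a compatible lift $\sG_1 \subseteq f^{*}\sG$ exists over a test base $f:T \to S$, then it is forced to coincide with the scheme-theoretic image $\widetilde{\rho}(\bbX_{1,T}) = \bbX_{1,T} / (K \cap \bbX_{1,T})_T$ inside $f^{*}\sG$, so $\sG_1$ is uniquely determined by the datum. Existence is therefore equivalent to this image being a $p$-divisible subgroup of $\sG$, equivalently to the fppf quotient $\sG / \widetilde{\rho}(\bbX_{1,S})$ being a $p$-divisible group. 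Working level-by-level on $p^n$-torsion, semi-continuity of orders of finite flat group schemes over the noetherian base $S$, together with the fact that the successive rank conditions stabilize in $n$, shows that this is a closed condition and produces the desired closed subscheme $S_1 \subseteq S$. Iterating the construction on the quotient datum $(\bbX / \bbX_1, \sG / \sG_1)$ over $S_1$ then yields the full nested chain. Property (1) is built into the construction stratum-by-stratum, and property (2) follows from the observation that each $S_s$ represents, functorially, the closed moduli problem of existence of the first $s$ steps of a compatible filtration: for connected $Z$ and $f:Z \to S$, the filtration lifts to $f^{*}\sG$ iff $f$ factors scheme-theoretically through some $S_s$ while set-theoretically avoiding $S_{s+1}$, i.e.\ iff $f$ factors through a single stratum $S_s^{\circ}$.

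The principal technical obstacle is verifying the closedness statement in the two-step case. The cleanest way to handle this is to invoke representability of the moduli of sub-$p$-divisible groups of $\sG$ of prescribed height, which may be realized as a nested intersection of closed subschemes of the classical Grassmannians parametrizing finite flat subgroup schemes of the $\sG[p^n]$. Once this representability is in place, the condition that the image of $\bbX_{1,S}$ equals such a sub-$p$-divisible group of the correct height becomes a fiber product of closed conditions, cutting out $S_1$ as required. An alternative route, avoiding the general moduli, is to use Raynaud's schematic closure of finite flat subgroup schemes to produce the candidate subgroups level-by-level and to express $p$-divisibility of the resulting tower as vanishing of the appropriate obstruction modules — again a closed condition on the noetherian base $S$.
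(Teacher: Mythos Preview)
The paper does not prove this proposition; it is quoted verbatim from Mantovan \cite[Prop.~5.1]{Man08} and no argument is supplied. So there is no ``paper's own proof'' to compare against, only Mantovan's.

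Your proposal, however, rests on a misreading of the statement. You build $S_1$ as the closed locus where the first step $\sG_1$ of the filtration exists, then $S_2\subset S_1$ as the locus where the second step exists, and so on. Under that scheme, on $S_s^\circ=S_s\setminus S_{s+1}$ only the first $s$ steps of the filtration would be available. But property~(1) asserts that on \emph{every} stratum $S_s^\circ$ the \emph{entire} $t$-step filtration exists; the index $s$ has nothing to do with the filtration length $t$. Likewise property~(2) says the full filtration lifts along $f:Z\to S$ precisely when $f$ lands in a single stratum, so in particular it lifts over every geometric point of $S$. Your nested construction therefore does not produce the stratification being claimed.

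What is actually going on is a flattening stratification. Pick $N$ with $p^N\rho$ an isogeny as you do; then for each $j$ and each $n$ the scheme-theoretic image of $\bbX_{j,S}[p^n]$ in $\sG[p^n]$ is a finite (not necessarily flat) $S$-group scheme whose fibre rank is everywhere the correct value $p^{n\,\mathrm{ht}(\bbX_j)}$. The flattening stratification for these finitely many coherent sheaves (which stabilises in $n$ once $n\ge N$) yields a chain $S=S_0\supset S_1\supset\cdots$ of closed subschemes such that on each $S_s^\circ$ all the images are finite flat, hence assemble into genuine $p$-divisible subgroups $\sG_j$. The universal property of the flattening stratification is exactly property~(2). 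Your intuition that semi-continuity of ranks is the relevant input is right, but the conclusion is a locally closed decomposition, not a single closed locus: the condition ``the image is flat of the prescribed rank'' is in general locally closed, not closed.
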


\section{Affine Deligne-Lusztig varieties and isogeny classes mod $p$}
In this section, we will recall results about isogeny classes of $p$-divisible groups with $G$-structure, and the irreducible components of affine Deligne-Lusztig varities of unramified elements. 
\subsection{Affine Deligne-Lusztig varieties}
\subsubsection{}Let $L=W(\overline{\bbF})p)[1/p]$ and $O_L=W(\overline{\bbF}_p)$. Denote by $\Gamma := \operatorname{Gal}(\overline{\bbF}_p/\bbF_p)$ the absolute Galois group. Fix a connected reductive group $G$ over $\bbZ_p$. We denote by $G_{\bbQ_p}$ the generic fibre of $G$. Let $T\subset G$ be the centralizer of a maximal split torus, $B\supset T$ a Borel subgroup of $G$.

For $b\in G(L)$, let $[b]$ denote the $\sigma$-conjugacy class of $b$, and $B(G)$ denote the set of $\sigma$-conjugacy classes in $G(L)$.

Recall that since $G$ is unramified, it splits over $L$, so we have the Cartan decomposition
\begin{equation*}
    G(L)=\coprod_{\mu\in X_*(T)^+}G(O_L)p^\mu G(O_L).
\end{equation*}
where $X_*(T)^+$ is the set of dominant cocharacters. For any $b\in G(L)$, let $\mu_b$ denote the dominant cocharacter such that $b$ lies in the double coset of $p^{\mu_b}$ under the Cartan decomposition.

Let $\pi_1(G)$ denote the quotient of $X_*(T)$ by the space of coroots of $G$. Let $\tilde{\kappa}_G$ denote the composition
\begin{equation*}
    \tilde{\kappa}_G:G(L)\xrightarrow{b\mapsto \mu_b} X_*(T)/\Omega\rightarrow \pi_1(G)
\end{equation*}
Composing with the projection $\pi_1(G)\twoheadrightarrow \pi_1(G)_{\Gamma}$, we obtain a map $\kappa$, which is known as the Kottwitz homomorphism. Observe that $\kappa$ only depends on the $\sigma$-conjugacy class $[b]$.

For $\mu\in X_*(T)$, we denote by $\mu^\sharp$ the image of $\mu$ in $\pi_1(G)_\Gamma$.
\subsubsection{}
Recall the Newton map $\nu_b:\bbD(\bbQ)\rightarrow G$ defined by Kottwitz \cite[4.2]{Kott85}. For $G=GL(V)$, $\nu_b$ is the cocharacter which induces the slope decomposition of the isocrystal $(V,\varphi=b\sigma)$. In general, it is the cocharacter inducing the slope $\bbQ$-graduation of the trivial $G$-isocrystal with Frobenius $\varphi=b\sigma$. 

Observe that for any $g\in G(L)$, we have $\nu_{gb\sigma(g)^{-1}} =g\nu_bg^{-1}$. Thus, the slope homomorphism on the $\sigma$-conjugacy class $[b]$ is defined up to $G(L)$-conjugacy.  We set $\nu([b])\in (X_*(T)\otimes \mathbb{Q})^+$ to be the unique dominant element in the conjugacy class. This is clearly independent of all choices. 

Recall that we have a partial ordering on $(X_*(T)\otimes \mathbb{Q})^+$ given by $\lambda_1\leq\lambda_2$ if $\lambda_2-\lambda_1$ is a non-negative linear combination of simple coroots of $G$.

For any $\mu\in X_*(T)$, let $F\subset L$ be a finite unramified Galois extension of $\mathbb{Q}_p$ over which $\mu$ is defined, and let
\begin{equation*}
  \bar{\mu}=\frac{1}{[F:\mathbb{Q}_p]}\sum_{\tau\in \operatorname{Gal}(F/\mathbb{Q}_p)}\tau(\mu).
\end{equation*}
\subsubsection{}
For any $b\in G(L)$, and any minuscule cocharacter $\mu\in X_*(T)$, we define the affine Deligne-Lusztig variety
\begin{equation*}
    X_\mu(b):=\{g\in G(L)/G(O_L):g^{-1}b\sigma(g)\in G(O_L)p^\mu G(O_L)\}.
\end{equation*}
$X_\mu(b)$ depends only upon the $\sigma$-conjugacy class $[b]$, and we can replace $\mu$ by any Weyl conjugate. Thus, we may assume that $\mu$ is dominant with respect to some choice of borel $B$ and maximal torus $T$. By the work of \cite{Zhu2014}, $X_\mu(b)$ is the set of $\overline{\bbF}_p$ points of a perfect scheme over $\bar{\bbF}_p$, and a subscheme of the (Witt vector) affine Grassmannian.

We define
\begin{equation*}
    B(G,\mu):=\{[b]:\kappa_G([b])=\mu^\sharp\in\pi_1(G)_\Gamma\text{ and }\nu([b])\leq\bar{\mu}\}.
\end{equation*}

We have a partial order on $B(G,\mu)$ such that $[b_1]\preceq [b_2]$ if and only if $\nu([b_1])\leq \nu([b_2])$. By \cite{Win05}, we know that $X_\mu(b)$ is non-empty if and only if $b\in B(G,\mu)$.
\subsubsection{}
We define the algebraic group $J_b$ over $\bbQ_p$ by
\begin{equation*}
    J_b(R) :=\{g\in G(R\otimes_{\bbQ_p} L): g^{-1}b\sigma(g)=b\}.
\end{equation*}
There is an inclusion $J_b\subset G$, defined over $L$, which is given on $R$-points ($R$ an $L$-algebra) by the natural map $G(R\otimes_{{\bbQ}_p}L)\rightarrow G(R)$.Then the inclusion $J_b\subset G$ identifies $J_b$ with $M$ over $L$, and moreover $J_b$ is an inner form of $M$ \cite[3.3]{Ko97}.

For any $b\in G(L)$, we define
\begin{equation*}
    \defect_G(b)=\operatorname{rk}_{\bbQ_p}(G)-\operatorname{rk}_{\bbQ_p}(J_b),
\end{equation*}
where $\operatorname{rk}_{\mathbb{Q}_p}(G)$ is the rank of the maximal split $\mathbb{Q}_p$ torus of the algebraic group $G$.

Let us now recall a key result about the dimensions of affine Deligne-Lusztig varieties (c.f. \cite[Thm 3.1]{Zhu2014})
\begin{theorem}
\label{thm:dimADLV}
    Assume the affine Deligne-Lusztig variety is non-empty, i.e. $[b]\in B(G,\mu)$. The dimension of the affine Deligne-Lusztig variety  $X_{\mu}(b)$ is
    \begin{equation*}
    \langle\rho,\mu-\nu([b])\rangle-\frac{1}{2}\operatorname{def}_{G}(b).
    \end{equation*}
\end{theorem}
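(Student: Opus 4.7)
My plan is to prove the formula by reducing from general $[b]$ to the basic case and then invoking the description of top-dimensional irreducible components via geometric Satake on the Witt vector affine Grassmannian.

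First, I would carry out a Hodge-Newton-style reduction. Let $M = M_{[b]}$ be the centralizer of $\nu([b])$, a standard Levi subgroup, and $P = MN$ the associated standard parabolic. By $\sigma$-conjugating we may pick $b \in M(L)$ with $\nu_b$ central in $M$. The Iwasawa decomposition $G(L) = P(L) G(O_L)$ allows me to stratify $X_\mu(b)$ by the cosets encoding the relative position of a lift in $G(L)$ with respect to $P(L)$. Each stratum should fibre over an ADLV in $M$: more precisely, over the disjoint union, indexed by $M$-dominant $\mu' \in \Omega(G)\mu$ with $[b]\in B(M,\mu')$, of the varieties $X^M_{\mu'}(b)$. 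The fibres come from the contractible part corresponding to the unipotent radical $N$ and have relative dimension $\langle \rho_N, \mu + \mu' \rangle$ or similar, where $\rho_N$ is the half-sum of roots in $N$. Summing dimensions and using $\langle \rho, \mu \rangle = \langle \rho_M, \mu \rangle + \langle \rho_N, \mu \rangle$ reduces the formula for $G$ to the basic formula for each $(M, b, \mu')$.

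Second, in the basic case $\nu([b])$ central, I would use geometric Satake. The Schubert variety $\mathrm{Fl}_\mu$ in the Witt vector affine Grassmannian (in the sense of Zhu and Bhatt-Scholze) corresponds under Satake to the irreducible $\hat G$-representation $V_\mu$, which for minuscule $\mu$ is a single Weyl orbit of weights. The ADLV $X_\mu(b)$ can be realised as a fibre over the basepoint of a Beilinson-Drinfeld-type convolution, and its irreducible components of top dimension biject with an MV-type basis of the weight space $V_\mu[\lambda_b]$ for a weight $\lambda_b$ determined by $\kappa_G([b])$. The MV dimension formula gives $\langle \rho, \mu - \lambda_b \rangle$, and since $[b]$ is basic one has $\lambda_b = \nu([b])$ up to the appropriate averaging, yielding $\langle \rho, \mu - \nu([b]) \rangle$. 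The correction $-\tfrac{1}{2}\defect_G(b)$ appears precisely because in the non-quasi-split inner forms the relevant weight space must be taken in the sense of the inner form $J_b$: the defect measures the difference between the $\bbQ_p$-split ranks of $G$ and $J_b$ and enters through a shift in the Kottwitz-type invariants that index the top components.

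Third, I would reconcile the two steps by checking that the corrections add correctly. The key identities are $\langle \rho, \mu \rangle = \langle \rho_M, \mu' \rangle + \langle \rho_N, \mu+\mu' \rangle/2$ after appropriate averaging over $\Omega_M\backslash\Omega$-cosets, together with $\defect_G(b) - \defect_M(b) = \operatorname{rk}_{\bbQ_p}(G/M) - 0$ (since $M$ is quasi-split over $\bbQ_p$ once $b$ is basic in $M$, $J_b$ is an inner form of $M$). Combining, the contribution from the parabolic induction step plus the basic dimension in $M$ reassembles to the claimed formula.

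The main obstacle is the geometric Satake step: making MV-cycles and their dimension count rigorous in the mixed-characteristic setting requires the machinery of the Witt vector affine Grassmannian, which is technically heavy. A secondary subtlety is the precise bookkeeping of Weyl conjugates of $\mu$ contributing to the Hodge-Newton reduction — one must verify that each $M$-dominant $\mu'$ in $\Omega\mu$ with $[b]\in B(M,\mu')$ contributes, and that the contributions align with the averaging implicit in $\bar{\mu}$. Granting these two inputs, the rest is a bookkeeping exercise as sketched above.
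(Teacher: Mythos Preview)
The paper does not prove this theorem at all: it is simply quoted as a known result, with a citation to \cite[Thm 3.1]{Zhu2014}. So there is no ``paper's own proof'' to compare your proposal against; the author treats the dimension formula as a black-box input from the literature.

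That said, a brief comment on your sketch. The first step --- reducing from general $[b]$ to the basic case in $M_{[b]}$ via an Iwasawa-type stratification and fibring over Levi ADLVs --- is indeed the standard strategy (going back to G\"ortz--Haines--Kottwitz--Reuman and Viehmann in equal characteristic, and adapted by Hamacher and Zhu in mixed characteristic). Your second step, however, is where the argument is vague. The MV-cycle/geometric-Satake description of irreducible components that you invoke (and which the present paper recalls in \S4 following \cite{XZ17}) applies cleanly only to \emph{unramified} $[b]$, precisely the case where $\defect_G(b)=0$. For a general basic $[b]$ the defect can be positive, and your explanation that the term $-\tfrac{1}{2}\defect_G(b)$ ``enters through a shift in the Kottwitz-type invariants'' in the inner form $J_b$ is not a proof; the actual argument in the literature for the basic case is more delicate and does not proceed via an MV-type count. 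If you want a self-contained proof rather than a citation, you would need to supply this basic-case analysis properly.
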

\subsection{Connected components}
\label{section:connectedcomponentADLV}
We now assume that $\mu$ is a minuscule cocharacter of $G$. Let $[b]$ be a $\sigma$-conjugacy class of $G$, lying in $B(G,\mu)$. By \cite[2.5.2]{CKV} we can choose a representative $b$ of $[b]$ such that $\nu_b\in (X_*(T)\otimes\bbQ)^{+}$, is defined over $\bbQ_p$, and is $G$-dominant. Moreover, $b\in M_b$, the centralizer of $\nu_b$. A standard Levi subgroup $M\subset G$ is one defined by a dominant cocharacter in $X_*(T)^\Gamma$. For the choice of $b$ above, $M_b$ is a standard Levi subgroup. 
\subsubsection{}
By \cite[2.5.4]{CKV}, if $M\supset M_b$ is a standard Levi subgroup and $\kappa_M(b)=\mu^\natural$, then the natural inclusion
\begin{equation*}
    X^M_\mu(b)\rightarrow X^G_\mu(b)
\end{equation*}
is an isomorphism. The pair $(\mu,b)$ is called HN-indecomposable if $\kappa_M(b)\neq\mu^\natural$ for any standard Levi $M\supset M_b$. The following is \cite[Thm 1.1]{CKV}.
\begin{theorem}
\label{thm:CKV}
 Assume that $G^{\mathrm{ad}}$ is simple and that $\mu$ is minuscule, and suppose that $(\mu,b)$ is Hodge-Newton indecomposable in $G$. Then $\kappa_G$ induces a bijection
 \begin{equation*}
     \pi_0(X_\mu(b))\simeq c_{b,\mu} \pi_1(G)^\Gamma
 \end{equation*}
 for some $c_{b,\mu}\in\pi_1(G)$, unless $[b] = [p^\mu]$ with $\mu$ central in $G$, in which case we have an isomorphism
 \begin{equation*}
     X_\mu(b)\simeq G(\bbQ_p)/G(\bbZ_p)
 \end{equation*}
 so $X_\mu(b)$ is discrete.
\end{theorem}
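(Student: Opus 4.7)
The plan is to separate the degenerate case $[b]=[p^\mu]$ with $\mu$ central, which I expect to be routine, from the main case where I must establish the $\pi_1(G)^\Gamma$-torsor structure. For the degenerate case, after $\sigma$-conjugation we may take $b=p^\mu$; centrality of $\mu$ collapses the defining condition $g^{-1}b\sigma(g)\in G(O_L)p^\mu G(O_L)$ to $g^{-1}\sigma(g)\in G(O_L)$, and a Lang-type descent identifies the set of solutions with the discrete coset space $G(\bbQ_p)/G(\bbZ_p)$. In the remaining case I would prove the bijection $\pi_0(X_\mu(b))\simeq c_{b,\mu}\pi_1(G)^\Gamma$ in three steps: (a) show $\kappa_G(X_\mu(b))$ lies in a single $\pi_1(G)^\Gamma$-coset inside $\pi_1(G)_\Gamma$; (b) show this coset is attained; and (c) show each fiber of $\kappa_G$ is connected.

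Part (a) is a direct calculation: applying $\kappa_G$ to the defining relation in $\pi_1(G)_\Gamma$ produces a linear equation showing that any two values of $\kappa_G$ on $X_\mu(b)$ differ by an element in the image of $\sigma-1$, hence lie in a common $\pi_1(G)^\Gamma$-coset. Part (b) relies on the natural $J_b(\bbQ_p)$-action on $X_\mu(b)$: a fixed base point can be translated to hit any target class, provided the composite map $J_b(\bbQ_p)\to \pi_1(G)^\Gamma$ is surjective. It is here that the Hodge-Newton indecomposability hypothesis enters crucially: if $(\mu,b)$ were HN-decomposable with respect to some proper standard Levi $M\supset M_b$, the image would only cover the sub-torsor associated to $M$, so HN-indecomposability is exactly what guarantees surjectivity.

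Part (c), the connectedness of fibers, is the main obstacle and absorbs most of the technical difficulty. My approach would be first to reduce to the case of an absolutely simple adjoint $G$ by combining a restriction-of-scalars reduction (well-behaved on $\pi_0$) with central isogenies, whose effect on connected components can be tracked through the Kottwitz map. In the reduced setting, HN-indecomposable pairs split into superbasic and non-superbasic ones. Superbasic pairs force $G$ to be essentially a restriction of scalars of $\mathrm{PGL}_n$ by a standard classification, and here the explicit lattice-theoretic description of $X_\mu(b)$ handles the connectedness directly via Viehmann's analysis. The non-superbasic HN-indecomposable case is the true heart of the argument: one must connect points in the same $\kappa_G$-fiber by chains of rational curves inside $X_\mu(b)$, built from the $\mathrm{SL}_2$-type subgroups of $J_b$ corresponding to simple roots $\alpha$ with $\langle\alpha,\nu_b\rangle=0$, together with translation by a single cocharacter of $J_b$ whose image under $\kappa_G$ generates $\pi_1(G)^\Gamma$ modulo the Levi's contribution. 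The key difficulty is to prove that enough such $\mathbb{P}^1$'s exist to realize the full fiber as a single orbit; this requires a careful analysis of which Schubert cells in the Witt vector affine Grassmannian meet $X_\mu(b)$, leveraging the minusculeness of $\mu$ to keep the relevant cells tractable, together with HN-indecomposability to rule out the fiber splitting along a Levi-compatible partition that would disconnect it.
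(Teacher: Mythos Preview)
The paper does not prove this theorem at all: it is stated as a citation, introduced by ``The following is \cite[Thm 1.1]{CKV}'', and then used as a black box. There is therefore no proof in the paper to compare your proposal against.

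That said, your outline is a faithful sketch of the strategy in the cited source (Chen--Kisin--Viehmann). The decomposition into the degenerate central case, the $\pi_1(G)^\Gamma$-coset argument via $\kappa_G$, the reduction to simple adjoint groups, the superbasic case via the $\mathrm{PGL}_n$ classification and Viehmann's lattice description, and the connecting-curves argument for the non-superbasic HN-indecomposable case are all the actual ingredients of that paper. Your identification of part (c) as the technical core is correct, and the mechanism you describe (chains of $\mathbb{P}^1$'s from root subgroups of $J_b$ plus a generating translation) is the right one. If you were to flesh this out, the main place where your sketch is thin is the non-superbasic connecting step: the actual argument requires a delicate induction on Levi subgroups and a careful ``gate'' construction to move between components, not just the existence of enough $\mathrm{SL}_2$'s, and this is where most of the work in \cite{CKV} lies.
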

Moreover, by \cite[Cor 2.4.2]{CKV}, for $\omega\in c_{b,\mu}\pi_1(G)^\Gamma$, with image $\omega_{\mathrm{ad}}\in c_{b_{\mathrm{ad}},\mu_{\mathrm{ad}}}\pi_1(G^\mathrm{ad})^\Gamma$ we have an isomorphism of connected components
\begin{equation*}
    X_{\mu_{\mathrm{ad}}}(b_{\mathrm{ad}})^{\omega_{\mathrm{ad}}}\xlongrightarrow{\sim}X_\mu(b)^\omega.
\end{equation*}
\subsection{Reduction to Levi subgroup}
We continue to assume that $\mu$ is a minuscule cocharacter of $G$. Let $[b]$ be a $\sigma$-conjugacy class of $G$, lying in $B(G,\mu)$. Consider $M_{[b]}$, the standard Levi subgroup given as the centralizer of the Newton cocharacter $\nu([b])$. For the rest of this subsection, for notational simplicity, we let $M:=M_{[b]}$. We continue to assume that the representative $b$ of this $\sigma$-conjugacy class satisfies $b\in M(L)$, and moreover $\nu_b$ lies in $(X_*(T)\otimes\bbQ)^{+}$, is defined over $\bbQ_p$, and is $G$-dominant.

\subsubsection{}
\label{section:reductionLevi}
We now want to reduce to understanding the irreducible components in the case where $b$ is basic. To do this, we will recall the results of \cite{HV2018}, applying the well-known reduction method of \cite{GHKR}.

Let $P=MN$ be the parabolic subgroup associated to $\nu_b$, where $N$ is the unipotent radical of $P$. Since $b\in M(L)$, we have an induced decomposition
\begin{equation*}
  J_b(\mathbb{Q}_p)\cap P(L)=(J_b(\mathbb{Q}_p)\cap M(L))(J_b(\mathbb{Q}_p)\cap N(L)).
\end{equation*}
As discussed above, we have $J_b(\mathbb{Q}_p)\subset M(L)$. In particular, we see that $J_b(\mathbb{Q}_p)\cap N(L)=\{1\}$.

We now consider the following subvariety of the affine Grassmanian $\Gr_M$ of $M$:
    \begin{equation*}
        X^{M\subset G}_\mu(b)=\{gM_{c}\in \Gr_{M}|g^{-1}b\sigma(g)\in K\mu(p) K\}.
    \end{equation*}
Note that $X^{M\subset G}_\mu(b) =\coprod_{\mu'\in I_{\mu,b}} X^{M}_{\mu'}(b)$, where $I_{\mu,b}$ is the set of $M$-conjugacy classes of cocharacters $\mu'$ in the $G$-conjugacy class of $\mu$ with $[b]_{M}\in B(M, \mu')$. Such a set is non-empty, see Remark \ref{rk:nonemptyI} below.

We also consider
    \begin{equation*}
        X^{P\subset G}_\mu(b):=\{gK_P\in \Gr_P|g^{-1}b\sigma(g)\in K\mu(p)K\}.
    \end{equation*}

The Iwasawa decomposition shows that $X^{P\subset G}_\mu(b)$ is a decomposition of $X^G_\mu(b)$ into locally closed subsets, hence the set of irreducible components of $X^{P\subset G}_\mu(b)$ is equal to the set of irreducible components of $X^G_\mu(b)$. Moreover, the surjection $\Gr_P\twoheadrightarrow \Gr_{M}$ induces a surjection 
\begin{equation*}
  \beta: X^{P\subset G}_\mu(b)\rightarrow X^{M\subset G}_\mu(b).
\end{equation*}
We want to investigate the image of irreducible components under $\beta$. Define the set
\begin{equation*}
  \Sigma'(X_\mu^{M\subset G}(b)):=\bigcup_{\mu'\in I_{\mu,b}} \Sigma^{\text{top}}(X_{\mu'}^M(b))
\end{equation*}
where $\Sigma^{\text{top}}$ denotes the set of top-dimensional irreducible components. Note that $X^{M\subset G}_\mu(b)$ may not be equidimensional, as the example in \cite[\S8]{RV2014} shows.

We recall the following two results \cite[5.5,5.6]{HV2018}.

\begin{lemma}
  $\beta$ induces a well-defined surjective map 
  \begin{equation*}
    \beta_{\Sigma}:\Sigma^{\text{top}}(X_\mu^{P\subset G}(b))\rightarrow \Sigma'(X_\mu^{M\subset G}(b)),
  \end{equation*}
  which is $J_b(\mathbb{Q}_p)\cap P(L)$-equivariant for the natural action of the left hand-side, and the action through the natural projection $J_b(\mathbb{Q}_p)\cap P(L)\twoheadrightarrow J_b(\mathbb{Q}_p)\cap M(L)$ on the right hand side.
\end{lemma}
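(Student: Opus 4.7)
My plan is to analyze $\beta$ fiber-wise via the natural map $\Gr_P \to \Gr_M$, whose geometric fibers are isomorphic to $\Gr_N := N(L)/(N(L)\cap K)$. Concretely, writing any element of $P(L)$ as $g = mn$ with $m \in M(L)$ and $n \in N(L)$, the condition $g^{-1} b \sigma(g) \in K\mu(p) K$ translates to $n^{-1} b' \sigma(n) \in K\mu(p)K$, where $b' := m^{-1} b \sigma(m) \in K\mu'(p)K$ for some $\mu' \in I_{\mu,b}$. Hence the fiber of $\beta$ above any $mM_c \in X^M_{\mu'}(b)$ is cut out inside a translate of $\Gr_N$ by a twisted-conjugation condition, giving a constructible subset whose dimension can be controlled uniformly in $m$.

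The heart of the argument is to establish a uniform upper bound of the form
\begin{equation*}
  \dim \beta^{-1}(mM_c) \;\leq\; \langle \rho_G - \rho_M,\, \mu + \mu'\rangle - \tfrac{1}{2}\bigl(\defect_G(b) - \defect_M(b)\bigr),
\end{equation*}
independent of $m$, and to show that this bound is generically achieved over each top-dimensional component of $X^M_{\mu'}(b)$. Combining this with the dimension formula of Theorem \ref{thm:dimADLV} applied to both $X^G_\mu(b)$ and $X^M_{\mu'}(b)$, and using that the Iwasawa decomposition produces a bijection of $\overline{\bbF}_p$-points $X^{P\subset G}_\mu(b) \to X^G_\mu(b)$, one matches the dimensions exactly on top-dimensional strata. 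From this matching, both the well-definedness of $\beta_\Sigma$ (the closure of $\beta(Z)$ for a top component $Z$ is forced to be a top component of a single $X^M_{\mu'}(b)$, since $Z$ is irreducible and the $X^M_{\mu'}(b)$ are disjoint) and its surjectivity (given a top component $Y$ of some $X^M_{\mu'}(b)$, the preimage $\beta^{-1}(Y)$ contains, by the generically equidimensional fiber structure, an irreducible component of the maximal dimension of $X^{P\subset G}_\mu(b)$) will follow.

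The equivariance claim is formal: $P(L)$ acts on $\Gr_P$ and $M(L)$ acts on $\Gr_M$; the map $\Gr_P \to \Gr_M$ intertwines these actions along the projection $P(L) \twoheadrightarrow M(L)$; and both $X^{P\subset G}_\mu(b)$ and $X^{M\subset G}_\mu(b)$ are stable under the respective restrictions of these actions to $J_b(\bbQ_p) \cap P(L)$ and $J_b(\bbQ_p) \cap M(L)$, since elements of $J_b$ commute with $b\sigma$ and hence preserve the twisted-conjugation condition defining the affine Deligne-Lusztig loci. The main obstacle is the uniform fiber dimension estimate: one must control how the $\sigma$-conjugacy class of $b' = m^{-1} b \sigma(m)$ varies as $m$ ranges over $X^M_{\mu'}(b)$, and then bound the dimension of the resulting ADLV-type subvariety inside $\Gr_N$. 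This requires the GHKR-style reduction inside $N(L)$, taking advantage of the fact that by our normalization $\nu_b$ induces the decomposition $P = MN$ and acts with strictly positive weights on $\mathrm{Lie}(N)$, so that the twisted-conjugation by $b$ contracts $\Gr_N$ in a controlled way. I would follow the argument in \cite{HV2018}, adapted to record the precise fiber dimension that matches Theorem \ref{thm:dimADLV}.
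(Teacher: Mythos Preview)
The paper does not prove this lemma; it merely cites it as \cite[5.5]{HV2018}, together with the companion result \cite[5.6]{HV2018} used immediately afterwards. Your proposal is a faithful sketch of exactly that GHKR/Hamacher--Viehmann argument (fiberwise analysis of $\beta$ over $\Gr_M$ via the unipotent direction, uniform fiber-dimension control, and matching with the dimension formula of Theorem~\ref{thm:dimADLV}), and you explicitly defer the details to \cite{HV2018}, so there is nothing to correct at the level of strategy.

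One small caution: the precise fiber-dimension expression you wrote, $\langle \rho_G - \rho_M,\, \mu + \mu'\rangle - \tfrac{1}{2}(\defect_G(b) - \defect_M(b))$, does not literally equal the difference of the two dimension formulas from Theorem~\ref{thm:dimADLV} (subtracting $\langle \rho_M, \mu' - \nu(b)\rangle - \tfrac{1}{2}\defect_M(b)$ from $\langle \rho_G, \mu - \nu(b)\rangle - \tfrac{1}{2}\defect_G(b)$ leaves a $\langle \rho_M, \mu - \mu'\rangle$ and a $\langle \rho_G - \rho_M, \nu(b)\rangle$ term you have not accounted for). Since you are deferring to \cite{HV2018} anyway this is harmless, but if you intend to write out the bound explicitly you should re-derive it rather than quote this form.
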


\begin{proposition}
  Let $Z\subset  X_\mu^{M\subset G}(b)$ be an irreducible subscheme. Then $J_b(\mathbb{Q}_p)\cap N(L)$ acts transitively on $\Sigma(\beta^{-1}(Z))$.
\end{proposition}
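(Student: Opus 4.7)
The plan is to reduce to a fiberwise statement and then exploit the unipotent structure of $N$. Since the action of $J_b(\bbQ_p) \cap N(L)$ on $X^{P\subset G}_\mu(b)$ by left multiplication preserves the fibers of $\beta$ (because $N$ maps trivially to $M$), it suffices to prove: for the generic point $\eta$ of $Z$, the group $J_b(\bbQ_p) \cap N(L)$ acts transitively on the top-dimensional irreducible components of the geometric fiber $\beta^{-1}(\eta)$. Indeed, irreducible components of $\beta^{-1}(Z)$ correspond bijectively to components of $\beta^{-1}(\eta)$ via taking closures, and the $J_b \cap N(L)$-action is compatible with this identification.

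Next I would give an explicit description of the fiber. Fix a representative $g_M \in M(L)$ of a point of $Z$; by Iwasawa we may assume $g_M \in M(L)$. Any point of $\beta^{-1}(g_M K_M)$ is represented by $n g_M K_P$ for some $n \in N(L)$, well-defined modulo $N(L) \cap g_M K_P g_M^{-1}$. The condition $(ng_M)^{-1} b \sigma(n g_M) \in K\mu(p) K$ combined with $g_M^{-1} b \sigma(g_M) \in K_M \mu_M(p) K_M$ (which holds because $g_M K_M \in X^{M \subset G}_\mu(b)$), and the decomposition $n^{-1} b \sigma(n) = b \cdot (b^{-1} n^{-1} b \sigma(n))$, rewrites the fiber as a locally closed subscheme of $\mathrm{Gr}_N$ cut out by an $N$-valued ``twisted conjugation'' condition relative to $b$. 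In particular, $J_b(\bbQ_p) \cap N(L) = \{n \in N(L) : n^{-1} b \sigma(n) = b\}$ acts on this subscheme by left multiplication.

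The main step is then transitivity on top-dimensional components of this fiber. The plan is to induct on the length of the descending central series of $N$, writing $N = N_1 \supset N_2 \supset \cdots \supset N_\ell = 1$ where each $N_i/N_{i+1}$ is a vector group that is a sum of root subspaces $U_\alpha$ with $\alpha$ a root of $N$. On each associated graded piece, the twisted conjugation action $n \mapsto b^{-1} n^{-1} b \sigma(n)$ linearizes to a Lang-type map $\mathrm{id} - \mathrm{Ad}(b)\sigma$ on the corresponding vector group, so its image is a subspace, and $J_b \cap N_i/N_{i+1}$ acts by translations on the fibers. Since translation on affine space is transitive on connected components (everything is irreducible), one obtains transitivity on each graded piece, and hence by induction transitivity of $J_b \cap N(L)$ on the top-dimensional components of the full fiber.

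The main obstacle I expect is keeping track of the integral structure: the fiber is not all of $N(L)$ but a quotient by $N(L) \cap g_M K_P g_M^{-1}$ intersected with the ADLV condition, so one has to verify that the inductive linearization respects the lattice constraints and that the resulting Lang map has image of the expected codimension, so that the ``transitivity by translation'' argument on each graded piece really covers an entire top-dimensional component rather than a thin subset. This is where one would need to invoke the dimension formula for $X^G_\mu(b)$ from Theorem \ref{thm:dimADLV} together with the analogous dimension formula for $X^{M \subset G}_\mu(b)$, matching the codimensions contributed by each root subgroup $U_\alpha \subset N$ with $\langle \alpha, \nu_b \rangle > 0$ against the defect difference $\defect_G(b) - \defect_M(b)$.
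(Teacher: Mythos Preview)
The paper does not give its own proof of this proposition; it is quoted verbatim from \cite[5.6]{HV2018}, which in turn implements the reduction method of \cite{GHKR}. Your outline---reduce to a single fiber of $\beta$, describe that fiber as a locally closed subset of $N(L)/(N(L)\cap g_M K_P g_M^{-1})$ cut out by a twisted-conjugation condition, then filter $N$ by its descending central series and run a Lang--Steinberg argument on each abelian graded piece---is precisely the GHKR method underlying the cited reference. So your approach and the paper's (cited) approach coincide.

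Two minor remarks on the sketch. First, the passage from ``transitive on components of the geometric generic fiber'' to ``transitive on $\Sigma(\beta^{-1}(Z))$'' is not automatic because of possible monodromy; however, since the $J_b(\bbQ_p)\cap N(L)$-action is by \emph{fixed} group elements rather than by something varying over $Z$, transitivity upstairs on the generic fiber does propagate to transitivity on components of the full preimage, so this step is fine. Second, on each graded piece the relevant map is indeed $n\mapsto n^{-1}b\sigma(n)b^{-1}$ linearized to $\mathrm{id}-\mathrm{Ad}(b^{-1})\circ\sigma$ on $\mathrm{Lie}(N_i/N_{i+1})\otimes L$; the key input is surjectivity of this $\sigma$-linear endomorphism (Lang for the additive group), whose kernel is exactly $(J_b\cap N_i/N_{i+1})(\bbQ_p)$, so the fibers are torsors under that group. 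The ``obstacle'' you anticipate---matching the integral lattice constraints against the dimension formulas---is handled in \cite{GHKR} by showing that the fiber over $mK_M$ has all components of the same dimension $\langle\rho_N,\mu+\mu'\rangle$ (with $\mu'$ the $M$-type of $m$), so no top-dimensional bookkeeping is lost in the inductive step.
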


Since $J_b(\mathbb{Q}_p)\cap N(L)=\{1\}$ we immediately get the following corollary.

\begin{corollary}
\label{cor:bijirreducible}
  The map $\beta_{\Sigma}$ induces a bijection between irreducible components of $X^G_\mu(b)$ and irreducible components of $\coprod_{\mu'\in I_{\mu,b}} X^{M}_{\mu'}(b)$.
\end{corollary}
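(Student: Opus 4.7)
The plan is to chain together the surjection from the preceding lemma, the rigidity of fibers from the preceding proposition (using that $J_b(\bbQ_p)\cap N(L)=\{1\}$), and a pair of essentially formal identifications of the index sets on each side.

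First, the preceding lemma supplies a surjection
\[
\beta_\Sigma:\Sigma^{\mathrm{top}}(X_\mu^{P\subset G}(b))\twoheadrightarrow \Sigma'(X_\mu^{M\subset G}(b)),
\]
and the preceding proposition shows that the fibers of $\beta_\Sigma$ are $J_b(\bbQ_p)\cap N(L)$-orbits (acting through the projection $J_b(\bbQ_p)\cap P(L)\twoheadrightarrow J_b(\bbQ_p)\cap M(L)$). Since $J_b(\bbQ_p)\subset M(L)$, as recalled just above the lemma, we have $J_b(\bbQ_p)\cap N(L)=\{1\}$; hence every fiber is a singleton and $\beta_\Sigma$ is a bijection.

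It remains to identify the source and target of $\beta_\Sigma$ with the two sides appearing in the corollary. For the source, the Iwasawa decomposition $G(L)=P(L)G(O_L)$ exhibits $X^{P\subset G}_\mu(b)$ as a locally closed decomposition of $X^G_\mu(b)$, so their sets of irreducible components coincide (this is the remark made just before the lemma); combined with the equidimensionality of $X^G_\mu(b)$ (which is a standard consequence of the dimension formula, Theorem \ref{thm:dimADLV}, in the minuscule case), this gives
\[
\Sigma^{\mathrm{top}}(X^{P\subset G}_\mu(b))=\Sigma(X^G_\mu(b)).
\]
For the target, since $X_\mu^{M\subset G}(b)=\coprod_{\mu'\in I_{\mu,b}} X^M_{\mu'}(b)$ is a disjoint union and each $X^M_{\mu'}(b)$ is itself equidimensional (again by Theorem \ref{thm:dimADLV} applied to $(M,\mu',b)$), the definition of $\Sigma'$ yields
\[
\Sigma'(X_\mu^{M\subset G}(b))=\bigcup_{\mu'\in I_{\mu,b}}\Sigma^{\mathrm{top}}(X^M_{\mu'}(b))=\bigcup_{\mu'\in I_{\mu,b}}\Sigma(X^M_{\mu'}(b))=\Sigma\Bigl(\coprod_{\mu'\in I_{\mu,b}} X^M_{\mu'}(b)\Bigr).
\]
Combining these two identifications with the bijection $\beta_\Sigma$ produces the desired bijection.

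No serious obstacle arises here: the substantive geometric input is already packaged into the lemma and proposition preceding the statement, so this corollary is an essentially formal consequence. The only small point requiring care is the equidimensionality of $X^G_\mu(b)$ and of each $X^M_{\mu'}(b)$, which ensures that the two notions $\Sigma^{\mathrm{top}}$ and $\Sigma$ agree on either side; this is standard for minuscule ADLVs and follows from the dimension formula already cited.
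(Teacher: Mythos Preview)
Your argument is correct and follows the paper's approach exactly: the paper's entire proof is the single sentence ``Since $J_b(\mathbb{Q}_p)\cap N(L)=\{1\}$ we immediately get the following corollary,'' so you have simply made explicit the formal identifications (via Iwasawa and equidimensionality) that the paper leaves to the reader. One minor remark: your parenthetical ``(acting through the projection \ldots)'' is slightly misplaced, since the proposition concerns the direct action of $J_b(\bbQ_p)\cap N(L)$ on fibers rather than the equivariance statement from the lemma, but this does not affect the argument.
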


\subsection{Unramified $\sigma$-conjugacy classes}
\label{section:unramifiedclasses}
Recall that we have chosen a Borel subgroup $B$ and maximal torus $T$ such that $T\subset B\subset G$. Observe that we have a canonical map $B(T)\rightarrow B(G)$. 
\begin{lemma}
\label{lemma:unramified}
    The following are equivalent:
    \begin{enumerate}
        \item $[b]$ lies in the image of $B(T)$
        \item $\defect_G(b)=0$
        \item There is a representative of $[b]$ of the form $p^\tau$, for some $\tau\in X_*(T)$
    \end{enumerate}
\end{lemma}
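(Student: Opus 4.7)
The plan is to handle the cycle of implications by proving $(1)\Leftrightarrow(3)$ and $(3)\Rightarrow(2)$ directly, and then obtaining the main content $(2)\Rightarrow(3)$ by reduction to the basic case and the structure of inner forms of quasi-split groups.

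The equivalence $(1)\Leftrightarrow(3)$ essentially amounts to Kottwitz's description of $B(T)$. One direction is tautological since $p^\tau \in T(L)\subseteq G(L)$. For $(1)\Rightarrow(3)$, I would write any $t\in T(L)$ as $t = p^{\tau} u$ with $\tau\in X_*(T)$ and $u\in T(O_L)$, and then invoke Lang's theorem applied to the connected smooth group $T_{\bar{\bbF}_p}$ to show $u$ is $\sigma$-conjugate by an element of $T(O_L)$ to the identity. This yields $[t] = [p^\tau]$ in $B(T)$, hence also in $B(G)$.

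For $(3)\Rightarrow(2)$: if $b = p^\tau \in T(L)$, then for any $t\in T(\bbQ_p)$ we have $\sigma(t)=t$ and $t$ commutes with $b$, so $t^{-1}b\sigma(t)=b$ and thus $T(\bbQ_p)\subseteq J_b(\bbQ_p)$. Since $T$ is the centralizer of a maximal $\bbQ_p$-split torus $S$ of $G$, this gives $\operatorname{rk}_{\bbQ_p}(J_b)\geq \operatorname{rk}_{\bbQ_p}(S) = \operatorname{rk}_{\bbQ_p}(G)$. Combined with the standard non-negativity $\defect_G(b)\geq 0$ (coming from $J_b$ being an inner form of the quasi-split Levi $M_b$, which has the same $\bbQ_p$-rank as $G$), this forces $\defect_G(b)=0$.

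The main direction is $(2)\Rightarrow(3)$. By \cite[2.5.2]{CKV}, I may choose $b$ within its class so that $b\in M(L)$ is basic in the standard Levi $M := M_b$. Then $J_b$ is an inner form of the quasi-split group $M$ over $\bbQ_p$, and $\defect_G(b)=0$ yields $\operatorname{rk}_{\bbQ_p}(J_b) = \operatorname{rk}_{\bbQ_p}(M)$. Using that the quasi-split inner form of $M$ is the unique inner form of maximal $\bbQ_p$-rank, this forces $J_b\cong M$ over $\bbQ_p$. Consequently $J_b$ contains a $\bbQ_p$-subtorus $T'$ corresponding to $T\subseteq M$; in particular $T'$ is unramified, so $T'_L$ is $L$-split. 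Under the inner-twist isomorphism $J_b\otimes L \xrightarrow{\sim} M\otimes L$, $T'_L$ becomes a maximal $L$-split torus of the split group $M_L$, hence $M(L)$-conjugate to $T_L$. Performing this $\sigma$-conjugation on $b$ (the conjugating element lies in $M(L)$, and the basic condition is preserved because $\nu_b$ is central in $M$), I may assume the inclusion $T\subseteq J_b$ is the obvious one. The condition $t^{-1}b\sigma(t)=b$ for $t\in T(\bbQ_p)$ then collapses to $tb=bt$; since $T(\bbQ_p)$ is Zariski dense in $T_L$ and $T$ is its own centralizer in $G$, we conclude $b\in T(L)$, and $(1)\Rightarrow(3)$ provides the desired $\tau$.

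The step requiring the most care is the identification $J_b\cong M$ over $\bbQ_p$ under the defect-zero hypothesis; this rests on the non-trivial fact that inner forms of a quasi-split reductive group over a $p$-adic field attain maximal $\bbQ_p$-rank precisely on the quasi-split form itself. Everything else is bookkeeping of $\sigma$-conjugations and the resulting commutation relations.
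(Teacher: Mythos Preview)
Your treatment of $(1)\Leftrightarrow(3)$ and $(3)\Rightarrow(2)$ is correct and matches the paper.

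For $(2)\Rightarrow(3)$, you and the paper share the same core reduction: take $b$ basic in a standard Levi $M$ and use that $\defect_G(b)=0$ forces the inner form $J_b$ of $M$ to be quasi-split, invoking that the quasi-split form is the unique inner form of maximal $\bbQ_p$-rank. The paper then notes that this means $b_{\mathrm{ad}}$ is $\sigma$-conjugate to $1$ in $M_{\mathrm{ad}}(L)$ (the inner-twist cocycle is a coboundary), and lifts back using that $M_{\mathrm{ad}}(L)$ is generated by the image of $M(L)$ together with $T_{\mathrm{ad}}(L)$.

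Your alternative endgame---transporting $T$ into $J_b$ and reading off $b\in T(L)$---has a gap. After $\sigma$-conjugating by $h\in M(L)$ with $hT'_Lh^{-1}=T_L$, you obtain a $\bbQ_p$-subtorus of $J_{b'}$ whose $L$-points coincide with $T_L$, but whose Frobenius is $\mathrm{Ad}(b')\circ\sigma$; this is only \emph{abstractly} isomorphic to the standard $\bbQ_p$-structure on $T$, not equal to it. You therefore cannot conclude that the standard $T(\bbQ_p)$ lies in $J_{b'}(\bbQ_p)$, and the step ``$t^{-1}b\sigma(t)=b$ for $t\in T(\bbQ_p)$ collapses to $tb=bt$'' is unjustified. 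What the conjugation actually yields is only $b'\in N_M(T)(L)$, with a residual Weyl group element that is not obviously a $\sigma$-coboundary inside the Weyl group of $M$. The cleanest way to dispose of it is exactly the paper's route through the adjoint quotient: triviality of the inner-twist class in $H^1(\bbQ_p,M_{\mathrm{ad}})$ gives $b_{\mathrm{ad}}$ $\sigma$-conjugate to $1$, and lifting via $M(L)\to M_{\mathrm{ad}}(L)$ lands a representative of $[b]$ in $Z_M(L)\subset T(L)$.
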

\begin{proof}
We denote the image of $B(T)$ by $B(G)_{\mathrm{unr}}$. This result is contained in the results of \cite[\S4.2]{XZ17}, and we sketch the proof here. Firstly, the canonical map $X_*(T)\rightarrow B(G)$, $\tau\mapsto [p^\tau]$ induces a bijection $X_*(T)_\sigma/\Omega(\mathbb{Q}_p)\simeq B(G)_{\mathrm{unr}}$, where $X_*(T)_\sigma$ are the $\sigma$-coinvariants of $X_*(T)$. This shows $(1)\Leftrightarrow (3)$.

For any $b\in G(L)$, we can choose some standard Levi subgroup $M$ and cocharacter $\mu$ such that $b$ is $\sigma$-conjugate to a basic element in $B(M,\mu)$. If $[p^\tau]$ is basic in $M$, then $J_{p^\tau}$ contains the torus $T$ and therefore, $\defect_G(b) = 0$.  Conversely, $\defect_G(b) = 0$ implies that $b_{\mathrm{ad}}$ is $\sigma$-conjugate to $1$ in $M_{ad}(L)$. Since $M_{ad}(L)$ is generated by the image of $M(L)\rightarrow G_{ad}(L)$ and $T_{ad}(L)$, $b$ is $\sigma$-conjugate to $p^\tau$ for some $\tau\in X_*(T)$. This shows $(2)\Leftrightarrow(3)$ 
\end{proof}
\begin{remark}
\label{rmk:splitnounramified}
  We repeat here the remark \cite[4.2.11]{XZ17}. The identity element represents the basic element in $B(M_{ad},\mu_{ad})$ if and only if $[1]\in B(M_{ad},\mu_{ad})$, i.e. $\mu_{ad} = 0$ in $\pi_1(M_{ad})_{\Gamma}$. If $M_{ad}$ is split, then $\mu_{ad} = 0$ in $\pi_1(M_{ad})_{\Gamma}$ implies that $\mu_{ad}$ is the sum of coroots of $M_{ad}$, and hence $\mu_{ad}$ is either the identity or it cannot be minuscule. 
\end{remark}
If $[b]\in B(G)$ satisfies any of the equivalent conditions in Lemma \ref{lemma:unramified}, we say that $[b]$ is \emph{unramified}.

We can define the set of dominant elements in $X_*(T)_\sigma$ as
\begin{equation*}
    X_*(T)^+_{\sigma}=\left\{\lambda_\sigma\in X_*(T)_\sigma\vert \left\langle\lambda,\sum_{i=0}^{k-1}\sigma^i(\alpha)\right\rangle\geq 0\text{ for every }\alpha\in\Delta\right\}
\end{equation*}
where $k$ is degree of the splitting field of $G$ over $\bbQ_p$. The set $X_*(T)^+_{\sigma}$ is in bijection with $X_*(T)_\sigma/\Omega(\mathbb{Q}_p)$ under the natural map.
\subsubsection{} We now recall the results of \cite[\S4.4]{XZ17} which give us a more explicit description of $X^G_\mu(b)$ in the case where $G$ has a connected center.

Let us first recall results (c.f. \cite[\S3.2]{XZ17}) about the geometry of semi-infinite orbits in the affine Grassmannian and Mirkovi\'{c}-Vilonen cycles.

Let $H$ be an affine group scheme of finite type defined over $\mathscr{O}$, the ring of integers of a non-archimedian local field. We denote by $L^+H$ (resp. $LH$) the jet group (resp. loop group) of $H$. As a presheaf, we have $L^+H(R)= H(W_\mathscr{O}(R))$ and $LH(R) =H(W_\mathscr{O}(R)[1/p])$, where 
\begin{equation*}
    W_{\sO}(R):=W(R)\hat{\otimes}\sO
\end{equation*}
is the ring of Witt vectors with coefficients in $\sO$. $L^+H$ (resp. $LH$) is represented by an affine group scheme (resp. ind-scheme). 

We define the (spherical) Schubert variety $Gr_\mu$ as the closed subset 
\begin{equation*}
  \Gr_\mu=\{(E,\beta)\in \Gr|\mathrm{Inv}(\beta)\prec\mu\}
\end{equation*} of the affine Grassmannian $\Gr$ of $G$ over $\bar{\bbF}_p$. It contains the Schubert cell 
\begin{equation*}
  \mathring{\Gr}_\mu:=\{(E,\beta)\in \Gr|\mathrm{Inv}(\beta)=\mu\}= \Gr_\mu\backslash \cup_{\lambda\prec\mu}\Gr_\lambda
\end{equation*}

Let $U$ be the unipotent radical of the Borel subgroup $B$.
\begin{definition}
  For $\lambda$ a coweight of $G$, the semi-infinite orbit associated to $\lambda$ is
  \begin{equation*}
    S_\lambda:=LUp^\lambda L^+G/L^+G.
  \end{equation*}
\end{definition}

The following result is the key result about the geometry of semi-infinite orbits.
\begin{theorem}[Mirkovi\'{c}-Vilonen]
  For $\lambda$ and $\mu$ two coweights of G with $\mu$ dominant, every irreducible component of  the intersection $S_\lambda\cap \Gr_\mu$ is of dimension $\langle \rho,\lambda+\mu\rangle$.  In  addition, the number of its irreducible components equals to the dimension of the $\lambda$-weight space $V_\mu(\lambda)$ of the irreducible representation $V_\mu$ of $\hat{G}$ of highest weight $\mu$.
\end{theorem}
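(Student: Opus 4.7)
The plan is to follow the original Mirković–Vilonen argument via geometric Satake. The key geometric input is that $S_\lambda$ (respectively the opposite semi-infinite orbit $T_\lambda := LU^- p^\lambda L^+G/L^+G$) is the attracting (resp.\ repelling) locus of the torus-fixed point $p^\lambda \in \Gr$ under the $\mathbb{G}_m$-action induced by the cocharacter $2\rho$. This sets up Braden's hyperbolic localization on $L^+G$-equivariant perverse sheaves on $\Gr$.

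For the dimension statement, I would first prove the parity vanishing assertion that for every $\mathcal{F} \in P_{L^+G}(\Gr)$ the cohomology $H^*_c(S_\lambda, \mathcal{F})$ is concentrated in degree $\langle 2\rho, \lambda \rangle$; this uses a semicontinuity argument on the stratification $\Gr_\mu = \bigcup_{\nu \preceq \mu} \mathring{\Gr}_\nu$ together with the fact that $\dim \mathring{\Gr}_\mu = \langle 2\rho, \mu \rangle$ fixes parities throughout. Applied to $\mathcal{F} = \mathrm{IC}_\mu$, this forces
\begin{equation*}
    H^i_c(S_\lambda \cap \Gr_\mu, \bbQ_\ell) = 0 \quad \text{for } i \neq \langle 2\rho, \lambda + \mu \rangle,
\end{equation*}
whence $\dim(S_\lambda \cap \Gr_\mu) \leq \langle \rho, \lambda + \mu \rangle$. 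A matching lower bound comes from exhibiting explicit MV cycles inside $S_\lambda \cap \mathring{\Gr}_\mu$ via $SL_2$-orbit computations along root subgroups, which are non-empty precisely when $\lambda$ is a weight of $V_\mu$.

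For the count of irreducible components, I would apply geometric Satake. The parity vanishing implies that the functor $F_\lambda := H^{\langle 2\rho, \lambda \rangle}_c(S_\lambda, -)$ is exact on $P_{L^+G}(\Gr)$, and the direct sum $\bigoplus_\lambda F_\lambda$ recovers global cohomology; one upgrades $F_\lambda$ to a tensor functor via the Beilinson–Drinfeld fusion construction (in the perfect / Witt-vector setting of Zhu, via the corresponding construction there). Under the Tannakian identification $P_{L^+G}(\Gr) \simeq \mathrm{Rep}(\hat{G})$ sending $\mathrm{IC}_\mu$ to $V_\mu$, the functor $F_\lambda$ is forced to be the $\lambda$-weight space functor, so $F_\lambda(\mathrm{IC}_\mu) = V_\mu(\lambda)$ canonically. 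Since the restriction of $\mathrm{IC}_\mu$ to the open cell $\mathring{\Gr}_\mu$ is a shift of the constant sheaf, with contributions from smaller cells absorbed into strictly lower cohomological degree by the IC support conditions, the top-dimensional irreducible components of $S_\lambda \cap \Gr_\mu$ give a canonical basis of $H^{\langle 2\rho, \lambda + \mu \rangle}_c(S_\lambda \cap \Gr_\mu, \bbQ_\ell) = F_\lambda(\mathrm{IC}_\mu)$, so their count equals $\dim V_\mu(\lambda)$. The main technical obstacle is establishing both the parity vanishing for $H^*_c(S_\lambda, \mathcal{F})$ and the tensor-exactness of $F_\lambda$; in the Witt-vector affine Grassmannian relevant here, these inputs are supplied by Zhu's work on the perfect geometric Satake equivalence, and once granted the remainder of the argument is formal Tannakian bookkeeping.
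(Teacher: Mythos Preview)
The paper does not actually prove this theorem: it is stated as a known result attributed to Mirkovi\'{c}--Vilonen (with the Witt-vector version taken from \cite[\S3.2]{XZ17}) and is simply quoted as background input. There is therefore no ``paper's own proof'' to compare against.

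That said, your sketch is the standard Mirkovi\'{c}--Vilonen argument and is essentially correct in outline. A couple of small points you might tighten: the equidimensionality statement (that \emph{every} irreducible component has dimension $\langle\rho,\lambda+\mu\rangle$, not just that the dimension is at most this) follows most cleanly from the fact that $H^{2\langle\rho,\lambda+\mu\rangle}_c(S_\lambda\cap\Gr_\mu,\bbQ_\ell)$ has a basis indexed by top-dimensional components and this space already accounts for $\dim V_\mu(\lambda)$, while no lower-dimensional components can contribute (one often uses the opposite orbit $T_\lambda$ and duality rather than explicit $SL_2$-orbit constructions for the lower bound). Also, your remark that contributions from smaller Schubert cells are ``absorbed into strictly lower cohomological degree'' needs the perversity bounds on $\mathrm{IC}_\mu$ carefully; in the minuscule case relevant to the paper this is vacuous since $\Gr_\mu=\mathring{\Gr}_\mu$.
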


Irreducible components of $S_\lambda\cap \Gr_\mu$ are referred to as Mirkovi\'{c}-Vilonen cycles. We will denote by $\mathbb{MV}_\mu(\lambda)$ the set of irreducible components  of $S_\lambda\cap \Gr_\mu$, and for $\mathbf{b}\in \mathbb{MV}_\mu(\lambda)$, write $(S_\lambda\cap \Gr_\mu)^{\mathbf{b}}$ the irreducible component (a.k.a. MV cycle) labeled by $\mathbf{b}$.

While the results above hold for any general dominant coweight $\mu$, we are primarily interested in the case when $\mu$ is minuscule. If $\mu$ is a minuscule coweight of $G$, then $\Gr_\mu= \mathring{\Gr}_\mu$ and $S_\lambda \cap \Gr_\mu$ is non-empty if and only if $\lambda=w\mu$ for some $w\in\Omega(\overline{\mathbb{Q}}_p)$. Thus, 
\begin{equation*}
  S_\lambda \cap \Gr_\mu=L^+U p^\lambda L^+G/L^+G\simeq L^+U p^\lambda L^+U/L^+U
\end{equation*}
is irreducible.
\subsubsection{}
Now, suppose that $\tau_\sigma\in X_*(T)^+_\sigma$. Consider the intersection
\begin{equation*}
  Y_\nu:=S_\nu\cap X_\mu(p^\tau),
\end{equation*}
which is a locally closed sub ind-scheme of $X_\mu(\tau)$. It fits into the following Cartesian diagram
\begin{equation*}
  \begin{tikzcd}
    Y_\nu\arrow{rr}\arrow{dd} && (S_\nu\tilde{\times}S_\lambda)\cap (\Gr\tilde{\times}\Gr_\mu)\arrow{dd}{pr_1\times m}\\
    &&\\
    S_\nu\arrow{rr}{1\times p^\tau\sigma} && S_\nu\times S_{\tau+\sigma(\nu)}
  \end{tikzcd}
\end{equation*}
where $\lambda=\tau+\sigma(\nu)-\nu$, and $\tilde{\times}$ denotes the twisted product, and $m$ is the product of the convolution map. Since $(S_\nu\tilde{\times}S_\lambda)\cap(Gr\tilde{\times}Gr_\mu) =S_\nu\tilde{\times}(S_\lambda \cap Gr_\mu)$ by \cite[3.2.8]{XZ17}, every $\mathbf{b}\in \mathbb{MV}_\mu(\lambda)$ gives a closed subset $Y^{\mathbf{b}}_\nu=Y^{\mathbf{b}}_\nu(\tau)$ of $Y_\nu$ that fits into the Cartesian diagram
\begin{equation*}
  \begin{tikzcd}
    Y_\nu^\mathbf{b}\arrow{rr}\arrow{dd} && S_\nu\tilde{\times}(S_\lambda \cap Gr_\mu)^\mathbf{b}\arrow{dd}{pr_1\times m}\\
    &&\\
    S_\nu\arrow{rr}{1\times p^\tau \sigma} && S_\nu\times S_{\tau+\sigma(\nu)}
  \end{tikzcd}
\end{equation*}

We need the following lemma. Let $\Delta$ denote the set of simple roots of $G$. In \cite[\S3.3]{XZ17} the set $\mathbb{MV}_\mu$ is endowed with a $\hat{G}$-crystal structure and therefore to every $\mathbf{b}\in \mathbb{MV}_\mu(\lambda)$ can be attached a collection of non-negative integers $\{\varepsilon_\alpha(\mathbf{b}),\alpha\in \Delta\}$. The following lemma is \cite[4.4.3]{XZ17}.

\begin{lemma}
\label{lem:taub}
  Assume that $Z_G$ is connected.
  \begin{enumerate}
    \item Let $\lambda\in X_*(T)$ and assume that $\lambda_\sigma=\tau_\sigma$ in $X_*(T)_\sigma$. Then there exists a dominant coweight $\nu$ such that $\lambda+\nu-\sigma(\nu)$ is dominant, and $\langle \nu,\alpha\rangle\geq \varepsilon_\alpha(\mathbf{b})$ for all $\alpha\in\Delta$.
    \item Among all $\nu$'s satisfying the above property, there is a  ``minimal" $\nu_\mathbf{b}$, unique up to addition by an element in $X_*(Z_G)$.  Here ``minimality" means that for any other $\nu$ satisfying the above property, $\nu-\nu_\mathbf{b}$ is dominant. In addition,  for every $\alpha\in\Delta$, at least one of the following inequalities is an equality
    \begin{equation*}
      \langle\alpha,\nu_\mathbf{b}\rangle\geq \varepsilon_\alpha(\mathbf{b}), \langle\alpha_{\sigma(\alpha)},\nu_\mathbf{b}\rangle\geq \varepsilon_{\sigma(\alpha)}(\mathbf{b}), \dots, \langle\alpha_{\sigma^{d-1}(\alpha)},\nu_\mathbf{b}\rangle\geq \varepsilon_{\sigma^{d-1}(\alpha)}(\mathbf{b})
    \end{equation*}
    where $d$ is the cardinality of the $\sigma$-orbit of $\alpha$
  \end{enumerate}
\end{lemma}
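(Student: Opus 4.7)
The conditions on $\nu$ decouple into one system per $\sigma$-orbit in $\Delta$. Writing $b_\alpha := \langle \alpha, \nu \rangle$, condition (b) reads $b_\alpha \geq \varepsilon_\alpha(\mathbf{b})$; and using $\langle \alpha, \sigma(\nu) \rangle = \langle \sigma^{-1}(\alpha), \nu \rangle$ (since $\sigma$ permutes the simple roots), the dominance of $\lambda + \nu - \sigma(\nu)$ translates into $\langle \alpha, \lambda \rangle + b_\alpha - b_{\sigma^{-1}(\alpha)} \geq 0$ for every $\alpha \in \Delta$. Both families of inequalities involve only roots in a single $\sigma$-orbit, so the plan is to fix one orbit $O = \{\alpha_0, \ldots, \alpha_{d-1}\}$ with $\alpha_{i+1} = \sigma(\alpha_i)$ and to introduce the abbreviations $a_i := \langle \alpha_i, \lambda \rangle$, $e_i := \varepsilon_{\alpha_i}(\mathbf{b})$, $b_i := b_{\alpha_i}$, together with the running sum $A_i := \sum_{k=0}^i a_k$.

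Setting $B_i := b_i + A_i$, the orbit-wise constraints become $B_i \geq e_i + A_i$ for each $i$ together with the requirement that the extended sequence $(B_i)_{i \in \mathbb{Z}}$, defined cyclically via $B_{i+d} = B_i + S$ with $S := \sum_{k=0}^{d-1} a_k$, be weakly increasing. Summing the second condition around the cycle forces $S \geq 0$, which is precisely the dominance of $\tau_\sigma = \lambda_\sigma \in X_*(T)^+_\sigma$ paired against $\sum_k \alpha_k$. Conversely, when $S \geq 0$ the minimum weakly-increasing sequence dominating $e_i + A_i$ is given explicitly by
\[
B_i = \max_{j \leq i}(e_j + A_j),
\]
a finite maximum (periodic if $S=0$, stabilizing under backward search if $S>0$); translating back by $b_i = B_i - A_i$ yields minimal orbit-wise integer data. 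Assembling across orbits produces $(b_\alpha)_{\alpha \in \Delta}$, which under the connected-center hypothesis (via the exact sequence $0 \to X_*(Z_G) \to X_*(T) \to X_*(T^{\mathrm{ad}}) \to 0$) lifts to a cocharacter $\nu_{\mathbf{b}} \in X_*(T)$, unique modulo $X_*(Z_G)$. This establishes (1) and the existence/uniqueness portion of (2).

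Minimality in (2) is immediate: any other admissible $\nu'$ yields a valid $B_i' \geq B_i$ termwise, hence $\langle \alpha, \nu' - \nu_{\mathbf{b}}\rangle \geq 0$ for every simple $\alpha$, so $\nu' - \nu_{\mathbf{b}}$ is dominant. For the equality claim, the explicit formula shows that at each index $i$ either $B_i = e_i + A_i$ (equivalently $b_i = e_i$, saturating (b) at $\alpha_i$) or $B_i = B_{i-1}$ (saturating (a) at $\alpha_i$); following the latter alternative backwards around the finite cycle starting from any $\alpha$, one must eventually hit an index $j$ at which the former occurs, giving $\langle \sigma^j(\alpha), \nu_{\mathbf{b}}\rangle = \varepsilon_{\sigma^j(\alpha)}(\mathbf{b})$, as required.

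The main obstacle I anticipate is integrality: the combinatorial minimum is a priori only valued in $X_*(T)_{\mathbb{Q}}$, and it is the connected-center hypothesis that is needed to guarantee it is realized by an honest cocharacter, with ambiguity exactly $X_*(Z_G)$. One must also verify, when pushing the minimality argument to subtract a $\sigma$-invariant element supported on a single orbit (in order to derive the equality claim), that such an element is available in $X_*(T)$ itself rather than merely in $X_*(T)_{\mathbb{Q}}$; this is again where connectedness of $Z_G$ is essential. Once these integrality points are settled, the remaining analysis is routine cyclic convex minimization per orbit.
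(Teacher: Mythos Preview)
Your argument is correct. The paper does not prove this lemma at all; it simply quotes it as \cite[4.4.3]{XZ17}, so there is no in-paper proof to compare against. Your orbit-by-orbit reduction to a cyclic monotone-cover problem, with the explicit minimizer $B_i=\max_{j\le i}(e_j+A_j)$, is exactly the right mechanism, and your use of the connected-center hypothesis to realize the integer data $(b_\alpha)_{\alpha\in\Delta}$ as $\langle\alpha,\nu\rangle$ for some $\nu\in X_*(T)$ (unique modulo $X_*(Z_G)$) is the correct place where that hypothesis enters.

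Two minor remarks. First, you never explicitly address the requirement that $\nu$ itself be dominant, but this is automatic from $b_\alpha\ge\varepsilon_\alpha(\mathbf{b})\ge 0$. Second, your closing worry about needing to subtract a $\sigma$-invariant integral element supported on a single orbit is unnecessary: the explicit max formula already produces the minimum directly, and at any index where the maximum is attained one has $b_i=e_i$, so the equality claim in (2) follows without any subtraction step. The integrality issue you flag is genuine only at the lifting stage $(b_\alpha)\mapsto\nu$, and you handle that correctly.
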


For a (choice of) minimal weight $\nu=\nu_\mathbf{b}$ as above, we let $\tau_\mathbf{b}=\lambda+\nu_{\mathbf{b}}-\sigma(\nu_\mathbf{b})$

\begin{lemma}
\label{lem:nub}
  Suppose $\mu$ is minuscule. Let $\mathbf{b}\in\mathbb{MV}_\mu(\lambda)$, and $n$ be the minimal positive integer such that $\sigma^n(\lambda)=\lambda$. Then $\sigma^n(\nu_\mathbf{b})-\nu_{\mathbf{b}}\in X_*(Z_G)$.
\end{lemma}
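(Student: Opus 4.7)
The strategy is to verify that $\sigma^n(\nu_\mathbf{b})$ satisfies the same conditions as $\nu_\mathbf{b}$ in Lemma \ref{lem:taub}(1), is itself minimal, and hence by the uniqueness statement of Lemma \ref{lem:taub}(2) differs from $\nu_\mathbf{b}$ by an element of $X_*(Z_G)$. The entire proof reduces to a Galois-equivariance statement for the crystal invariant $\varepsilon_\alpha(\mathbf{b})$, which is where the minuscule hypothesis on $\mu$ enters.

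The main input is that $\varepsilon_\alpha(\mathbf{b})$ is $\sigma^n$-invariant under the hypothesis $\sigma^n(\lambda) = \lambda$. For minuscule $\mu$, every weight space $V_\mu(\lambda)$ is one-dimensional, so the $\alpha$-string through $\lambda$ in $V_\mu$ has length at most $2$, and $\varepsilon_\alpha(\mathbf{b})$ reduces to the explicit expression
\begin{equation*}
  \varepsilon_\alpha(\mathbf{b}) = \max\bigl(0, -\langle \alpha^\vee, \lambda\rangle\bigr),
\end{equation*}
which depends only on $\lambda$ (not on finer MV data). Combined with the standard compatibility $\langle \sigma(\beta), \sigma(\nu)\rangle = \langle\beta,\nu\rangle$ and the assumption $\sigma^n(\lambda) = \lambda$, this immediately gives
\begin{equation*}
  \varepsilon_{\sigma^k(\alpha)}(\mathbf{b}) = \varepsilon_\alpha(\mathbf{b}) \quad \text{for all } \alpha \in \Delta \text{ and all } k \in n\mathbb{Z}.
\end{equation*}

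Next I verify the two defining conditions of Lemma \ref{lem:taub}(1) for $\sigma^n(\nu_\mathbf{b})$. Using $\sigma^n(\lambda)=\lambda$,
\begin{equation*}
  \lambda + \sigma^n(\nu_\mathbf{b}) - \sigma\bigl(\sigma^n(\nu_\mathbf{b})\bigr) = \sigma^n\bigl(\lambda + \nu_\mathbf{b} - \sigma(\nu_\mathbf{b})\bigr),
\end{equation*}
which is dominant because $\sigma^n$ preserves the dominant cone ($B$ and $T$ being $\Gamma$-stable). For the pairing inequality, by invariance of $\langle\cdot,\cdot\rangle$ under $\sigma$,
\begin{equation*}
  \langle \alpha, \sigma^n(\nu_\mathbf{b})\rangle = \langle \sigma^{-n}(\alpha), \nu_\mathbf{b}\rangle \ge \varepsilon_{\sigma^{-n}(\alpha)}(\mathbf{b}) = \varepsilon_\alpha(\mathbf{b}),
\end{equation*}
where the middle inequality uses that $\sigma^{-n}(\alpha)\in\Delta$ and the defining property of $\nu_\mathbf{b}$, and the last equality is the $\sigma$-equivariance of $\varepsilon$ above. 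The identical argument with $n$ replaced by $-n$ shows that $\sigma^{-n}(\nu_\mathbf{b})$ also satisfies both conditions.

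By the minimality characterization in Lemma \ref{lem:taub}(2), both $\sigma^n(\nu_\mathbf{b}) - \nu_\mathbf{b}$ and $\sigma^{-n}(\nu_\mathbf{b}) - \nu_\mathbf{b}$ are dominant. Applying $\sigma^n$ to the latter (which again preserves dominance) shows that $\nu_\mathbf{b} - \sigma^n(\nu_\mathbf{b})$ is dominant as well. An element of $X_*(T)$ which is simultaneously dominant and anti-dominant pairs to zero with every simple root, hence with every root, and therefore lies in $X_*(Z_G)$. This gives $\sigma^n(\nu_\mathbf{b}) - \nu_\mathbf{b} \in X_*(Z_G)$, as required. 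The main (and only non-formal) obstacle is the $\sigma^n$-invariance of $\varepsilon_\alpha(\mathbf{b})$: without the minuscule hypothesis one would need a careful tracking of the action of $\sigma$ on the MV crystal itself (which is potentially subtle since $\sigma^n(\mu)$ need not equal $\mu$), whereas minusculeness collapses $\varepsilon_\alpha$ to a function of the weight $\lambda$ alone and trivializes the equivariance.
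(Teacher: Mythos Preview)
Your proof is correct and follows essentially the same approach as the paper's: both use the minuscule hypothesis to write $\varepsilon_\alpha(\mathbf{b})=\max\{0,-\langle\lambda,\alpha\rangle\}$, deduce $\sigma^n$-invariance of $\varepsilon_\alpha$ from $\sigma^n(\lambda)=\lambda$, verify that $\sigma^n(\nu_\mathbf{b})$ satisfies the conditions of Lemma~\ref{lem:taub}(1), and conclude by minimality. The only cosmetic difference is in the final step: the paper observes directly that the inequalities $\langle\nu_\mathbf{b},\sigma^{-n}(\alpha)\rangle\ge\langle\nu_\mathbf{b},\alpha\rangle$ must all be equalities since $\sigma^{-n}$ permutes $\Delta$, whereas you run the argument symmetrically for $\sigma^{-n}(\nu_\mathbf{b})$ to get both $\pm(\sigma^n(\nu_\mathbf{b})-\nu_\mathbf{b})$ dominant.
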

\begin{proof}
  Firstly, since $\mu$ is minuscule, we must have $\lambda=\omega(\mu)$ for some $\omega\in\Omega(\overline{\bbQ}_p)$. Moreover, since $\tau_{\mathbf{b}},\nu_{\mathbf{b}}$ are dominant, so too are $\sigma^n(\tau_{\mathbf{b}}),\sigma^n(\nu_{\mathbf{b}})$. In this case, we have
  \begin{equation}
      \varepsilon_\alpha(\mathbf{b})=\max\{0,-\langle\lambda,\alpha\rangle\}.
  \end{equation}
  Since $\sigma^n(\lambda)=\lambda$, we see that for any $\alpha\in\Delta$, we have $\varepsilon_\alpha(\mathbf{b})=\varepsilon_{\sigma^n(\alpha)}(\mathbf{b})$. Thus, $\sigma^n(\nu_\mathbf{b})$ also satisfies the condition in Lemma \ref{lem:taub}(1), and thus $\sigma^n(\nu_\mathbf{b})-\nu_\mathbf{b}$ must be dominant. However, $\sigma^n(\nu_\mathbf{b})-\nu_\mathbf{b}$ dominant implies
  \begin{equation*}
      \langle \sigma^n(\nu_\mathbf{b}),\alpha \rangle\geq \langle \nu_\mathbf{b},\alpha \rangle
  \end{equation*}
  for all $\alpha\in\Delta$. Since $\langle \sigma^n(\nu_\mathbf{b}),\alpha \rangle= \langle \nu_\mathbf{b},\sigma^{-n}(\alpha) \rangle$, we must have
  $\langle \sigma^n(\nu_\mathbf{b})-\nu_\mathbf{b},\alpha\rangle=0$, for all $\alpha$, hence $\sigma^n(\nu_\mathbf{b})-\nu_\mathbf{b}\in X_*(Z_G)$.
\end{proof}

For $\mathbf{b}\in \cup_{\lambda\in \tau +(\sigma-1)X_*(T)}\mathbb{MV}_\mu(\lambda)$, let $X^{\mathbf{b}}_\mu(p^\tau)$ denote the closure of $\cup_{\nu\in X_*(T)}Y^{\mathbf{b}}_\nu$ in $X_\mu(p^\tau)$. Note that by \cite[Lem. 4.3.6 (2)]{XZ17}, the isomorphism $X_\mu(p^\tau)\xrightarrow{\sim}X_\mu(p^{\tau_\mathbf{b}})$ induces an isomorphism $X^{\mathbf{b}}_\mu(p^{\tau})\xrightarrow{\sim}X^{\mathbf{b}}_\mu(p^{\tau_{\mathbf{b}}})$. Observe that since semi-infinite  orbits  form  a  partition  of  the  affine  Grassmannian  into  locally  closed subsets, ${Y^\mathbf{b}_\nu}$ form  a  partition  of $X_\mu(b)$ into locally closed subsets. Thus, $X^\mathbf{b}_\mu(p^\tau)$ is a union of some irreducible components of $X_\mu(p^\tau)$, and $X_\mu(p^\tau) =\cup_\mathbf{b}X^\mathbf{b}_\mu(p^\tau)$. The following result, contained in \cite[4.4.5, 4.4.7]{XZ17}, describes an irreducible component of $X^\mathbf{b}_\mu(p^\tau)$.
\begin{theorem}
  Assume that $Z_G$ is  connected.  Let $\mathbf{b}\in \mathbb{MV}_\mu(\lambda)$.  Let $\nu=\nu_\mathbf{b}$ and $\tau=\tau_\mathbf{b}$ as  in Lemma \ref{lem:taub}. Let $X^{\mathbf{b},x_0}_\mu(\tau)$ be the closure of $Y^\mathbf{b}_\nu \cap \Gr_\nu$. Then $X^{\mathbf{b},x_0}_\mu(\tau)$ is geometrically irreducible of dimension $\langle\rho,\mu-\tau\rangle$.
\end{theorem}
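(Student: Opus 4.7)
The plan is to analyze $Y^{\mathbf{b}}_\nu\cap\Gr_\nu$ by pulling back the Cartesian square defining $Y^{\mathbf{b}}_\nu$ along the open inclusion $\Gr_\nu\cap S_\nu\hookrightarrow S_\nu$. This should reduce the irreducibility and dimension claims to standard facts about MV cycles and semi-infinite orbits, with the minimality property of $\nu_{\mathbf{b}}$ from Lemma~\ref{lem:taub} serving as the crucial input. Taking closures of an irreducible subset preserves irreducibility, so it suffices to prove that $Y^{\mathbf{b}}_\nu\cap\Gr_\nu$ itself is irreducible of dimension $\langle\rho,\mu-\tau\rangle$.

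First I would observe that since $\nu=\nu_{\mathbf{b}}$ is dominant, $S_\nu\cap\Gr_\nu$ is the open dense big cell inside $\Gr_\nu$, hence irreducible of dimension $2\langle\rho,\nu\rangle$. By Mirkovi\'{c}-Vilonen, the component $(S_\lambda\cap\Gr_\mu)^{\mathbf{b}}$ is irreducible of dimension $\langle\rho,\lambda+\mu\rangle$, which via $\lambda=\tau+\sigma(\nu)-\nu$ and $\sigma$-invariance of $\rho$ equals $\langle\rho,\mu+\tau\rangle$. Consequently the restriction of the twisted product $S_\nu\tilde{\times}(S_\lambda\cap\Gr_\mu)^{\mathbf{b}}$ to $\Gr_\nu\cap S_\nu$ is a Zariski-locally trivial fibration with irreducible base and fibers, hence irreducible.

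Next I would pull back the outer Cartesian square to present $Y^{\mathbf{b}}_\nu\cap\Gr_\nu$ as the locus of $g_1\in\Gr_\nu\cap S_\nu$ such that $g_1^{-1}p^\tau\sigma(g_1)$, viewed in the affine Grassmannian, lies in the MV cycle $(S_\lambda\cap\Gr_\mu)^{\mathbf{b}}$. Parameterizing $S_\nu\cap\Gr_\nu$ by root subgroups, i.e.\ writing $g_1=up^\nu$ with $u$ in a $2\langle\rho,\nu\rangle$-dimensional cell in $LU$, the twist map becomes the explicit morphism $u\mapsto p^{-\nu}u^{-1}p^\tau\sigma(u)p^{\sigma(\nu)}$ into $S_\lambda$, and the condition $g_1^{-1}p^\tau\sigma(g_1)\in(S_\lambda\cap\Gr_\mu)^{\mathbf{b}}$ cuts out $Y^{\mathbf{b}}_\nu\cap\Gr_\nu$ as a scheme-theoretic preimage of this MV cycle.

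The main obstacle is to show that this preimage is irreducible of the predicted dimension $\langle\rho,\mu-\tau\rangle$; this is where the minimality of $\nu_{\mathbf{b}}$ in Lemma~\ref{lem:taub}(2) is essential. The condition that for each simple root $\alpha$, the inequality $\langle\alpha_{\sigma^i(\alpha)},\nu_{\mathbf{b}}\rangle\geq\varepsilon_{\sigma^i(\alpha)}(\mathbf{b})$ is an equality for some $i$ is precisely the balance condition guaranteeing that the twist by $p^\tau\sigma$ is generically submersive onto the relevant root-subgroup coordinates of the MV cycle, neither too contracting (which would cause a drop in dimension) nor too expanding (which would force us out of $\Gr_\nu$). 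Lemma~\ref{lem:nub}, which pins $\sigma^n(\nu_{\mathbf{b}})-\nu_{\mathbf{b}}$ inside $X_*(Z_G)$, further guarantees that this parameterization is stable under $\sigma^n$-iteration, so that the fiber over a generic point of the MV cycle is irreducible. Combining these inputs, the preimage has the correct codimension inside the twisted product, and the dimension count reduces to arithmetic with $\lambda=\tau+\sigma(\nu)-\nu$ and $\sigma$-invariance of $\rho$, yielding dimension $\langle\rho,\mu-\tau\rangle$; taking closures gives the claimed description of $X^{\mathbf{b},x_0}_\mu(\tau)$.
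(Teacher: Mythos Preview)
The paper does not give its own proof of this statement; it records the result as ``contained in \cite[4.4.5, 4.4.7]{XZ17}'' and uses it as a black box. So there is nothing in the paper to compare your argument against directly, and your proposal should be measured against the Xiao--Zhu proof itself.

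Your overall strategy is the right one and is essentially that of Xiao--Zhu: restrict the Cartesian square to the open big cell $S_\nu\cap\Gr_\nu$, parameterize it by root-subgroup coordinates $u\mapsto up^\nu$, and rewrite the defining condition as a twisted Lang-type map $u\mapsto p^{-\nu}u^{-1}p^{\tau}\sigma(u)p^{\sigma(\nu)}$ landing in the MV cycle. The minimality condition on $\nu_{\mathbf b}$ from Lemma~\ref{lem:taub}(2) is indeed exactly what makes this work.

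However, two points deserve correction. First, your invocation of Lemma~\ref{lem:nub} is misplaced: that lemma assumes $\mu$ minuscule, concerns the behavior of $\nu_{\mathbf b}$ under $\sigma^n$, and is used in the paper only for the later Proposition~\ref{prop:sigmafix} on $\sigma^n$-invariance of irreducible components. It plays no role in the irreducibility or dimension of $X^{\mathbf b,x_0}_\mu(\tau)$, and no iteration of $\sigma$ enters here. Second, and more substantively, the phrase ``generically submersive onto the relevant root-subgroup coordinates'' is not a proof: the target $S_\lambda$ is an ind-scheme, so submersion in the usual sense does not apply, and the actual Xiao--Zhu argument proceeds by showing the twisted Lang map is an isomorphism onto an explicit product of truncated root subgroups, with the minimality inequalities $\langle\nu,\alpha\rangle\geq\varepsilon_\alpha(\mathbf b)$ controlling precisely which truncation levels occur. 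Without this explicit identification you do not have irreducibility, only a dimension bound. The step you have labeled ``the main obstacle'' is where all of the content lies, and your sketch does not yet supply it.
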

The following result is \cite[4.4.14]{XZ17}.
\begin{theorem}
  Let $b=\omega^\tau$ be an unramified element, and assume that $\tau_\sigma$ is dominant.  Consider the action of $J_{p^\tau}(F)$ on the set of irreducible components of $X_\mu(p^\tau)$.
  \begin{enumerate}
    \item The stabilizer of each irreducible component is a hyperspecial subgroup.
    \item The subset $X_\mu^\mathbf{b}(p^\tau)$ is invariant under the action of $J_{p^\tau}$, and the group $J_{p^\tau}$ acts transitively on the set of irreducible components of $X_\mu^\mathbf{b}(p^\tau)$.
    \item Assume that $Z_G$ is connected. There is a canonical $J_{p^\tau}(F)$-equivariant bijection between the set of irreducible components of $X_\mu(p^\tau)$ and
    \begin{equation*}
      \bigsqcup_{\lambda\in \tau+(1-\sigma)X_*(T)}\mathbb{MV}_\mu(\lambda)\times \mathbb{HS}_b,\qquad(b,x)\mapsto X^{\mathbf{b},x}_\mu(\tau)
    \end{equation*}
    where $\mathbb{HS}_b$ denote the set of hyperspecial subgroups of $J_\tau(F)$, and $X^{\mathbf{b},x}_\mu(\tau)$ is the irreducible component in $X^\mathbf{b}_\mu(\tau)$ whose stabilizer is the hyperspecial subgroup  of $J_\tau(F)$ corresponding to $x$.
  \end{enumerate}
\end{theorem}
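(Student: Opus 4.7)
The plan is to establish all three parts by exploiting the MV cycle stratification of $X_\mu(p^\tau)$ together with the structure of $J_{p^\tau}(F)$ as an inner form of the Levi $M_b$. First I would verify that the subsets $X^\mathbf{b}_\mu(p^\tau)$ exhaust $X_\mu(p^\tau)$ as $\mathbf{b}$ ranges over $\bigsqcup_{\lambda \in \tau + (\sigma-1)X_*(T)} \mathbb{MV}_\mu(\lambda)$, and that each $X^{\mathbf{b},x_0}_\mu(\tau)$ is top-dimensional of dimension $\langle \rho, \mu - \tau\rangle$. This matches $\dim X_\mu(p^\tau)$ from Theorem \ref{thm:dimADLV}, since $\defect_G(p^\tau)=0$ by Lemma \ref{lemma:unramified} and $\rho$ is $\sigma$-invariant (as $G$ is quasi-split), so $\langle \rho, \tau_\sigma\rangle = \langle \rho, \tau\rangle$. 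The dimension calculation itself follows from the Cartesian diagram defining $Y^\mathbf{b}_\nu$ together with the Mirkovi\'c--Vilonen formula $\dim(S_\lambda \cap \Gr_\mu) = \langle \rho, \lambda+\mu\rangle$.

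Next I would prove the $J_{p^\tau}(F)$-invariance of the strata $X^\mathbf{b}_\mu(p^\tau)$. Since $p^\tau$ is unramified and $T \subset M_b$, the group $J_{p^\tau}(F)$ contains $T(\bbQ_p)$ and is an inner form of $M_b$, acting by left translation on $\Gr$. The class of $\mathbf{b}$ depends only on $\lambda$ modulo $(\sigma-1)X_*(T)$, so translations by $J_{p^\tau}(F)$-elements, which shift the semi-infinite label $\nu$ by an element of $X_*(T)$, preserve the defining MV-cycle label. Combined with the crystal-theoretic description of the MV cycle via the twisted product $S_\nu \tilde\times (S_\lambda \cap \Gr_\mu)^\mathbf{b}$, this yields $J_{p^\tau}$-invariance of each $X^\mathbf{b}_\mu(p^\tau)$.

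For transitivity in (2) and the hyperspecial stabilizer in (1), I would use the explicit description of $X^{\mathbf{b},x_0}_\mu(\tau)$ as the closure of $Y^\mathbf{b}_{\nu_\mathbf{b}} \cap \Gr_{\nu_\mathbf{b}}$ for $\nu_\mathbf{b}$ the minimal weight of Lemma \ref{lem:taub}. Lemma \ref{lem:nub} shows that $\nu_\mathbf{b}$ is $\sigma^n$-invariant modulo $X_*(Z_G)$, so an element $j \in J_{p^\tau}(F)$ fixing $X^{\mathbf{b},x_0}_\mu(\tau)$ as a set must preserve the base point $p^{\nu_\mathbf{b}} L^+G/L^+G$ in $\Gr$ up to the stabilizer of the corresponding lattice. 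This identifies the stabilizer with the intersection of $J_{p^\tau}(F)$ with the hyperspecial subgroup $p^{\nu_\mathbf{b}} L^+G(\bar\bbF_p) p^{-\nu_\mathbf{b}} \cap G(L)$; as $J_{p^\tau}$ is unramified, this intersection is a hyperspecial subgroup of $J_{p^\tau}(F)$. Transitivity of the $J_{p^\tau}(F)$-action on the components of $X^\mathbf{b}_\mu(p^\tau)$ then follows because each component is the closure of a translate of $Y^\mathbf{b}_{\nu_\mathbf{b}}\cap\Gr_{\nu_\mathbf{b}}$ by an element moving $x_0$ to another hyperspecial vertex, and $J_{p^\tau}(F)$ acts transitively on $\mathbb{HS}_b$.

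Part (3) then follows by assembling these ingredients: for each $\mathbf{b}$, the orbit of the distinguished component under $J_{p^\tau}(F)$ is in bijection with $\mathbb{HS}_b$, and summing over $\mathbf{b} \in \bigsqcup_\lambda \mathbb{MV}_\mu(\lambda)$ gives the $J_{p^\tau}(F)$-equivariant bijection. The connectedness of $Z_G$ is used to ensure that the canonical choice of $\nu_\mathbf{b}$ modulo $X_*(Z_G)$ in Lemma \ref{lem:taub}(2) yields a well-defined distinguished component up to the $\mathbb{HS}_b$-torsor ambiguity. The main obstacle is the identification of the stabilizer with a hyperspecial (as opposed to merely maximal compact) subgroup in part (1): this requires carefully tracing how the $L^+G$-stabilizer of a lattice in $\Gr_{\nu_\mathbf{b}}$ pulls back through the convolution morphism $S_\nu \tilde\times \Gr_\mu \to \Gr$ to give a hyperspecial vertex in the Bruhat--Tits building of $J_{p^\tau}$, using the fact that $p^\tau$ normalizes the relevant lattice chain.
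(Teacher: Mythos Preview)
The paper does not supply its own proof of this theorem: it is simply quoted from Xiao--Zhu, introduced by ``The following result is \cite[4.4.14]{XZ17}.'' So there is no argument in the paper to compare against; your proposal is effectively an attempt to reconstruct the Xiao--Zhu proof.

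Your outline captures the right ingredients (the MV-cycle stratification, the dimension match via $\defect_G(p^\tau)=0$, and the role of the distinguished component $X^{\mathbf{b},x_0}_\mu(\tau)$), but two steps are not yet justified. First, the stabilizer argument is incomplete: you assert that an element of $J_{p^\tau}(F)$ fixing the component $X^{\mathbf{b},x_0}_\mu(\tau)$ setwise must fix the point $p^{\nu_\mathbf{b}}L^+G/L^+G$, but this does not follow directly --- a priori the component could have a larger automorphism group than the point stabilizer. What one actually shows in \cite{XZ17} is that the stabilizer \emph{contains} the hyperspecial subgroup fixing $x_0$ (this is the easy direction), and then uses a separate counting or convexity argument to rule out anything larger. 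Second, your transitivity argument is circular as written: you invoke that ``each component is the closure of a translate of $Y^\mathbf{b}_{\nu_\mathbf{b}}\cap\Gr_{\nu_\mathbf{b}}$,'' but this is essentially the transitivity statement you are trying to prove. The Xiao--Zhu argument instead proceeds by showing that the translates $j\cdot X^{\mathbf{b},x_0}_\mu(\tau)$ for $j\in J_{p^\tau}(F)$ already cover $X^\mathbf{b}_\mu(p^\tau)$, which requires analyzing how the $Y^\mathbf{b}_\nu$ for varying $\nu$ relate under the $J_{p^\tau}$-action.

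Since the paper treats this as a black box, the cleanest fix is simply to cite \cite[4.4.14]{XZ17} as the paper does; if you want to include a sketch, you should close the two gaps above rather than present the current outline as a complete argument.
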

\begin{proposition}
\label{prop:sigmafix}
  Assume that $Z_G$ is trivial. Let $\mu$ be minuscule, and $p^\tau$ an unramified element such that $\tau_\sigma$ is dominant. Fix $\mathbf{b}\in\mathbb{MV}_\mu(\lambda)$ such that $\tau_\sigma=\lambda_\sigma$. We further assume $\tau=\tau_\mathbf{b}$. Let $n$ be the minimal positive integer such that $\sigma^n(\lambda)=\lambda$. Then $\sigma^n$ fixes the irreducible components of $X_\mu^{\mathbf{b}}(p^\tau)$, i.e. $g,\sigma^n(g)$ lie in the same irreducible component of $X_\mu^{\mathbf{b}}(p^\tau)$.
\end{proposition}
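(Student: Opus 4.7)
The plan is to use the Xiao-Zhu parametrization of irreducible components of $X_\mu^\mathbf{b}(p^\tau)$: first to show that $\sigma^n$ fixes the distinguished component $X_\mu^{\mathbf{b},x_0}(p^\tau)$, and then to propagate this to all components via the transitive action of $J_{p^\tau}(\bbQ_p)$. The numerical setup is immediate: Lemma \ref{lem:nub} together with the hypothesis $Z_G = 1$ gives $\sigma^n(\nu_\mathbf{b}) = \nu_\mathbf{b}$; combined with $\sigma^n(\lambda) = \lambda$ and the identity $\tau = \tau_\mathbf{b} = \lambda + \nu_\mathbf{b} - \sigma(\nu_\mathbf{b})$, this yields $\sigma^n(\tau) = \tau$. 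Moreover $\sigma^n(\mu) = \mu$, since $\lambda = w\mu$ and $\sigma^n(\lambda) = \lambda$ imply $\sigma^n(\mu) \in \Omega\mu$, and dominance (preserved by $\sigma^n$, as the Borel is $\bbQ_p$-rational) forces equality by uniqueness of the dominant element. Consequently $\sigma^n$ acts as an automorphism of $X_\mu(p^\tau)$ and permutes the strata $Y_\nu^\mathbf{b}$ by $\nu \mapsto \sigma^n(\nu)$, hence preserves $X_\mu^\mathbf{b}(p^\tau)$.

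For the distinguished component $X_\mu^{\mathbf{b},x_0}(p^\tau) = \overline{Y_{\nu_\mathbf{b}}^\mathbf{b} \cap \Gr_{\nu_\mathbf{b}}}$, the minuscule hypothesis makes $S_\lambda \cap \Gr_\mu$ irreducible, so the MV cycle $(S_\lambda \cap \Gr_\mu)^\mathbf{b}$ coincides with the entire intersection and is trivially $\sigma^n$-stable. Since $\sigma^n(\nu_\mathbf{b}) = \nu_\mathbf{b}$ fixes both $S_{\nu_\mathbf{b}}$ and $\Gr_{\nu_\mathbf{b}}$, and $\sigma^n(\tau) = \tau$, the locally closed piece $Y_{\nu_\mathbf{b}}^\mathbf{b} \cap \Gr_{\nu_\mathbf{b}}$ is $\sigma^n$-stable, and hence so is its closure $X_\mu^{\mathbf{b},x_0}(p^\tau)$.

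For the remaining components, by Xiao-Zhu every irreducible component of $X_\mu^\mathbf{b}(p^\tau)$ has the form $j \cdot X_\mu^{\mathbf{b},x_0}(p^\tau)$ for some $j \in J_{p^\tau}(\bbQ_p)$, so it suffices to show that the $G(L)$-Frobenius $\sigma^n$ acts trivially on $J_{p^\tau}(\bbQ_p) \subset G(L)$. From $j^{-1} p^\tau \sigma(j) = p^\tau$ one gets $\sigma(j) = p^{-\tau} j p^\tau$, and induction (using that the various $p^{\sigma^i(\tau)}$ all lie in $T$ and commute) yields $\sigma^n(j) = \mathrm{Ad}(p^{-\mathrm{Nm}_n(\tau)})(j)$ with $\mathrm{Nm}_n(\tau) = \sum_{i=0}^{n-1} \sigma^i(\tau)$. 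Since $\sigma^n(\tau) = \tau$, $\mathrm{Nm}_n(\tau) = n\bar\tau$ is a positive integer multiple of the Newton cocharacter of $p^\tau$, which is central in $M = C_G(\bar\tau)$. As $p^\tau$ is basic in $M$, $J_{p^\tau}$ is an inner form of $M$ and $J_{p^\tau}(\bbQ_p) \subset M(L)$; therefore $p^{\mathrm{Nm}_n(\tau)} \in Z(M)(L)$ commutes with $j$, giving $\sigma^n(j) = j$. Hence $\sigma^n(j \cdot X_\mu^{\mathbf{b},x_0}(p^\tau)) = \sigma^n(j) \cdot \sigma^n(X_\mu^{\mathbf{b},x_0}(p^\tau)) = j \cdot X_\mu^{\mathbf{b},x_0}(p^\tau)$.

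The main subtlety lies in this last step: a priori the action of $\sigma$ on $j \in J_{p^\tau}(\bbQ_p) \subset G(L)$ computed via the $G(L)$-Frobenius is \emph{not} the trivial abstract Galois action on $\bbQ_p$-points, because the embedding $J_{p^\tau}(\bbQ_p) \hookrightarrow G(L)$ is not $\sigma$-equivariant; triviality of $\sigma^n$ there has to be extracted from the centrality of $p^{\mathrm{Nm}_n(\tau)}$ in the Levi $M$. The minuscule hypothesis trivializes what would otherwise be the most delicate point, namely the $\sigma^n$-stability of the specific MV cycle indexed by $\mathbf{b}$.
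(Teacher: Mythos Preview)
Your proof is correct and follows essentially the same strategy as the paper: both reduce to showing that $\sigma^n$ fixes the distinguished component $X_\mu^{\mathbf{b},x_0}(p^\tau)$ (via $\sigma^n(\nu_\mathbf{b})=\nu_\mathbf{b}$) and that $\sigma^n$ fixes $J_{p^\tau}(\bbQ_p)\subset G(L)$ pointwise, then conclude by transitivity of the $J_{p^\tau}(\bbQ_p)$-action on components. The only difference is in the proof of the $J_{p^\tau}(\bbQ_p)$-fixedness: you compute $\sigma^n(j)=\mathrm{Ad}(p^{-\mathrm{Nm}_n(\tau)})(j)$ directly and invoke centrality of $n\bar\tau$ in $M$, whereas the paper conjugates by $p^{\nu_\mathbf{b}}$ to reduce to $J_{p^\lambda}$ and argues that $J_{p^\lambda}(\bbQ_p)=M_{\bar\lambda}(\bbQ_p)$ sits in $G(\bbQ_p)$---your computation is a bit more transparent.
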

We first need a lemma about the group $J_{p^\tau}$.
\begin{lemma}
  With the assumptions above, the set $J_{p^{\tau_{\mathbf{b}}}}(\mathbb{Q}_p)\subset G(L)$ is fixed by $\sigma^n$.
\end{lemma}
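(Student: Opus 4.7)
The claim is that for any $g \in J_{p^{\tau_{\mathbf{b}}}}(\mathbb{Q}_p) \subset G(L)$, also $\sigma^n(g) \in J_{p^{\tau_{\mathbf{b}}}}(\mathbb{Q}_p)$. The direct strategy is to reduce the statement to the equality $\sigma^n(\tau_{\mathbf{b}}) = \tau_{\mathbf{b}}$, and then apply $\sigma^n$ to the defining equation of $J_{p^{\tau_\mathbf{b}}}(\mathbb{Q}_p)$.

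\textbf{Step 1: $\sigma^n$ fixes $\tau_{\mathbf{b}}$.} Recall that $\tau_{\mathbf{b}} = \lambda + \nu_{\mathbf{b}} - \sigma(\nu_{\mathbf{b}})$, where $n$ is minimal with $\sigma^n(\lambda) = \lambda$. Applying $\sigma^n$ gives
\begin{equation*}
    \sigma^n(\tau_{\mathbf{b}}) = \sigma^n(\lambda) + \sigma^n(\nu_{\mathbf{b}}) - \sigma^{n+1}(\nu_{\mathbf{b}}) = \lambda + \sigma^n(\nu_{\mathbf{b}}) - \sigma(\sigma^n(\nu_{\mathbf{b}})).
\end{equation*}
By Lemma \ref{lem:nub}, $\sigma^n(\nu_{\mathbf{b}}) - \nu_{\mathbf{b}} \in X_*(Z_G)$; since we are assuming $Z_G$ is trivial, this gives $\sigma^n(\nu_{\mathbf{b}}) = \nu_{\mathbf{b}}$. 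Substituting, $\sigma^n(\tau_{\mathbf{b}}) = \lambda + \nu_{\mathbf{b}} - \sigma(\nu_{\mathbf{b}}) = \tau_{\mathbf{b}}$, so in particular $\sigma^n(p^{\tau_{\mathbf{b}}}) = p^{\tau_{\mathbf{b}}}$ inside $G(L)$.

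\textbf{Step 2: Apply $\sigma^n$ to the defining equation.} Let $g \in J_{p^{\tau_{\mathbf{b}}}}(\mathbb{Q}_p)$, so by definition
\begin{equation*}
    g^{-1} p^{\tau_{\mathbf{b}}} \sigma(g) = p^{\tau_{\mathbf{b}}}.
\end{equation*}
Applying $\sigma^n$ to both sides and using $\sigma^n(p^{\tau_\mathbf{b}}) = p^{\tau_\mathbf{b}}$ from Step 1, as well as $\sigma^n \sigma = \sigma \sigma^n$, we obtain
\begin{equation*}
    \sigma^n(g)^{-1} p^{\tau_{\mathbf{b}}} \sigma\!\left(\sigma^n(g)\right) = p^{\tau_{\mathbf{b}}},
\end{equation*}
which is exactly the condition that $\sigma^n(g) \in J_{p^{\tau_{\mathbf{b}}}}(\mathbb{Q}_p)$. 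This proves the lemma.

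\textbf{Remarks on obstacles.} The only non-formal input is Step 1, which depends crucially on both of the standing hypotheses: $Z_G$ trivial (to pin $\sigma^n(\nu_{\mathbf{b}})$ equal to $\nu_{\mathbf{b}}$ on the nose rather than up to a central cocharacter) and the minimality of $n$ with $\sigma^n(\lambda) = \lambda$. Once those two inputs are aligned, the rest is a one-line Galois computation. I do not anticipate any additional obstacles; the whole argument should be no longer than a few lines in the final write-up.
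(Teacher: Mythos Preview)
Your argument has a genuine gap: you prove only that the set $J_{p^{\tau_{\mathbf b}}}(\mathbb Q_p)$ is \emph{stable} under $\sigma^n$, not that each element is \emph{pointwise fixed}. The lemma, as used immediately afterwards in the proof of Proposition~\ref{prop:sigmafix}, requires $\sigma^n(h)=h$ for $h\in J_{p^{\tau_{\mathbf b}}}(\mathbb Q_p)$ (so that $\sigma^n(hg)=h\,\sigma^n(g)$). Knowing $\sigma^n(\tau_{\mathbf b})=\tau_{\mathbf b}$ and applying $\sigma^n$ to the defining equation $g^{-1}p^{\tau_{\mathbf b}}\sigma(g)=p^{\tau_{\mathbf b}}$ only tells you $\sigma^n(g)$ satisfies the same equation; it does not give $\sigma^n(g)=g$.

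The paper's proof supplies exactly the missing idea. From $\tau_{\mathbf b}=\lambda+\nu_{\mathbf b}-\sigma(\nu_{\mathbf b})$ one has $p^{\tau_{\mathbf b}}=p^{\nu_{\mathbf b}}\,p^{\lambda}\,\sigma(p^{\nu_{\mathbf b}})^{-1}$, so $J_{p^{\tau_{\mathbf b}}}(\mathbb Q_p)=p^{\nu_{\mathbf b}}\,J_{p^{\lambda}}(\mathbb Q_p)\,p^{-\nu_{\mathbf b}}$ inside $G(L)$. Your Step~1 (via Lemma~\ref{lem:nub} and $Z_G$ trivial) gives $\sigma^n(\nu_{\mathbf b})=\nu_{\mathbf b}$, so the conjugating element is $\sigma^n$-fixed. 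It then remains to see that $J_{p^{\lambda}}(\mathbb Q_p)$ is pointwise $\sigma$-fixed: since $p^{\lambda}$ is unramified, $J_{p^{\lambda}}$ is the Levi $M_{\bar\lambda}$ (not merely an inner form of it), hence $J_{p^{\lambda}}(\mathbb Q_p)=M_{\bar\lambda}(\mathbb Q_p)\subset G(\mathbb Q_p)$, and every element is fixed by $\sigma$. Combining, every element of $J_{p^{\tau_{\mathbf b}}}(\mathbb Q_p)$ is fixed by $\sigma^n$. Your Step~1 is correct and is used here; only Step~2 needs to be replaced by this identification.
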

\begin{proof}
We observe that since $\tau_\mathbf{b}=\lambda+\nu_b-\sigma(\nu_b)$, $J_{p^{\tau_{\mathbf{b}}}}$ is the conjugate of $J_{p^\lambda}$ by $p^{\nu_\mathbf{b}}$. Since $Z_G$ is trivial, by Lemma \ref{lem:nub}, we see that $\sigma^n(\nu_\mathbf{b})=\nu_\mathbf{b}$, so it remains to check that the points $J_{p^\lambda}(\bbQ_p)\subset G(L)$ are fixed by $\sigma^n$. Then $J_{p^\lambda}$ is an inner form of $M_{\bar{\lambda}}$, the Levi subgroup which centralizes $\bar{\lambda}$, and since $p^\lambda$ is unramified, we in fact have $J_{p^\lambda}=M_{\bar{\lambda}}$, so the points $J_{p^\lambda}(\bbQ_p)$ are in fact defined over $\bbQ_p$, and hence fixed by $\sigma^n$.
\end{proof}
We now prove Proposition \ref{prop:sigmafix}. 
\begin{proof}
Since $J_{p^\tau}(\bbQ_p)$ acts transitively on the set of irreducible components of $X_\mu^{\mathbf{b}}(p^\tau)$, choose some $h\in J_{p^\tau}(\bbQ_p)$ such that $hg\in X^{\mathbf{b},x_0}_\mu(p^\tau)$. Since $X^{\mathbf{b},x_0}_\mu(b)$ is the closure of $Y^\mathbf{b}_{\nu_\mathbf{b}} \cap \Gr_{\nu_\mathbf{b}}$, we see that $\sigma^n(hg)$ lies in the closure of $Y^\mathbf{b}_{\sigma^n(\nu_\mathbf{b})} \cap \Gr_{\sigma^n(\nu_\mathbf{b})}$. Since $\nu_\mathbf{b}$ is fixed by $\sigma^n$, then $\sigma^n(hg)=h\sigma^n(g)$ lies in $X^{\mathbf{b},x_0}_\mu(b)$ as well. Hence $g$, $\sigma^m(g)$ lie in the same irreducible component of $X^{\mathbf{b}}_\mu(p^\tau)$.
\end{proof}
\subsection{Reduction of isogenies}
We now assume that we have a $p$-divisible group $\sG/k$ such that the Hodge filtration is given by $\mu_0^{-1}$. Let $K/L$ be a finite extension and fix a Galois closure $\bar{K}$ of $K$, with residue field $\bar{\bbF}_p$. Let $\tilde{\sG}$ be a $G_{O_L}$-adapted lifting of $\sG$ to a $p$-divisible group over $O_K$, with a lift of the slope filtration. By Proposition \ref{prop:ptorsor}, taking $u=0$ there exists an isomorphism
\begin{equation}
\label{eqn:tpD1}
  T_p\tilde{\mathscr{G}}^*(-1)\otimes_{\mathbb{Z}_p} O_L\xlongrightarrow{\sim}\mathbb{D}(\tilde{\mathscr{G}})
\end{equation}
which takes the tensors $s_{\alpha,\acute{e}t}$ to $s_{\alpha,0}$ and the filtration on $T_p\tilde{\mathscr{G}}^*(-1)$ to the graded pieces of the filtration on $\bbD(\widetilde{\sG})$ induced by the slope filtration. We may take $U$ to be $T_p\tilde{\mathscr{G}}^*(-1)$ equipped with the tensors $s_{\alpha,\acute{e}t}$, and the filtration $\calF_F$ on $T_p\tilde{\mathscr{G}}^*(-1)$; up to modifying the isomorphism \eqref{eqn:tpD1}, we can assume that the Frobenius is given by $b\sigma$.

Let $g\in G(\mathbb{Q}_p)$. There is a finite extension $K'/K$ in $\bar{K}$ such that $g^{-1}T_p\tilde{\mathscr{G}}$ is $G_{K'}$-stable, and hence corresponds to a $p$-divisible group $\tilde{\mathscr{G}}'$ over $K'$. Let $\mathscr{G}'= \tilde{\mathscr{G}}'\otimes \overline{\mathbb{F}}_p$. The quasi-isogeny $\theta: \tilde{\mathscr{G}}\rightarrow \tilde{\mathscr{G}}'$ identifies $\mathbb{D}(\tilde{\mathscr{G}}')$ with $g_0\mathbb{D}(\mathscr{G})$ for some $g_0\in GL(\mathbb{D}(\mathscr{G})\otimes_{\mathbb{Z}_p}\mathbb{Q}_p)$.

Moreover, by construction of the isomorphism \eqref{eqn:tpD1}, there is an isomorphism
\begin{equation*}
  T_p\mathscr{G}^*(-1)\otimes_{\mathbb{Z}_p}\mathfrak{S}\xlongrightarrow{\sim} \mathfrak{M}(T_p\mathscr{G}^*),
\end{equation*}
that takes $s_{\alpha,\acute{e}t}$ to $\tilde{s}_\alpha$, induces an isomorphism on the associated graded of the filtrations on both sides, and which, by setting $u=0$, recovers the isomorphism in \eqref{eqn:tpD1}.

Let $\mathfrak{M}':=\mathfrak{M}(T_p\tilde{\mathscr{G}}'^*)$ and $\mathfrak{M}:=\mathfrak{M}(T_p\tilde{\mathscr{G}}^*)$, then the quasi-isogeny $\theta: \tilde{\mathscr{G}}\rightarrow \tilde{\mathscr{G}}'$ induces an identification $\mathfrak{M}(\theta):\mathfrak{M}(T_p\tilde{\mathscr{G}}'^*)[1/p]\xrightarrow{\sim}\mathfrak{M}(T_p\tilde{\mathscr{G}}^*)[1/p]$ so that $\mathfrak{M}'= \tilde{g}\mathfrak{M}$ for some $\tilde{g}\in GL(\mathfrak{M}[1/p])$.

We have the following lemma. 
\begin{lemma}
\label{lemma:g_0onFrakM}
We have $\tilde{g}\in P(\calO_{\widehat{\calE_{ur}}})gG(\calO_{\widehat{\calE_{ur}}})$.
\end{lemma}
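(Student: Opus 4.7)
The plan is to use the canonical comparison isomorphism of Theorem \ref{thm:frakM}(2), refined as in Proposition \ref{prop:ptorsor}, to transport $\tilde{g}$ to an operator on the \'etale Tate module and compare it directly with $g$. Set $T:=T_p\tilde{\mathscr{G}}^*(-1)$ and $T':=T_p\tilde{\mathscr{G}}'^*(-1)$, so that $T'=g\cdot T$ inside the common rationalization $V_p$ via the quasi-isogeny $\theta$. A preliminary observation is that $\tilde{g}$ preserves the tensors $\tilde{s}_\alpha$: since $g\in G(\bbQ_p)$ fixes the $G$-invariant tensors $s_\alpha\subset T^\otimes$, the lattices $T$ and $T'$ carry the same tensors, and applying $\frakM(\cdot)$ rationally shows that $\frakM$ and $\frakM'$ share the same $\tilde{s}_\alpha$ inside $\frakM[1/p]^\otimes=\frakM'[1/p]^\otimes$, forcing $\tilde{g}\in G(\frakM[1/p])$.

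Applying Proposition \ref{prop:ptorsor} separately to $T$ and $T'$ (the residue field $\overline{\bbF}_p$ is separably closed) yields the canonical comparisons
\begin{equation*}
\psi:\ T\otimes\calO_{\widehat{\calE_{ur}}}\xrightarrow{\sim}\frakM\otimes\calO_{\widehat{\calE_{ur}}},\qquad \psi':\ T'\otimes\calO_{\widehat{\calE_{ur}}}\xrightarrow{\sim}\frakM'\otimes\calO_{\widehat{\calE_{ur}}},
\end{equation*}
both preserving tensors and the $P$-filtrations induced by the chosen lift of the slope filtration. By functoriality of Fontaine's construction, $\psi[1/p]=\psi'[1/p]$ as maps between the common rationalizations, so pulling back the defining relation $\tilde{g}\cdot\frakM\otimes\calO_{\widehat{\calE_{ur}}}=\frakM'\otimes\calO_{\widehat{\calE_{ur}}}$ through $\psi$ yields
\begin{equation*}
(\psi^{-1}\tilde{g}\psi)\cdot T\otimes\calO_{\widehat{\calE_{ur}}}=T'\otimes\calO_{\widehat{\calE_{ur}}}=g\cdot T\otimes\calO_{\widehat{\calE_{ur}}}.
\end{equation*}
Since $g^{-1}\psi^{-1}\tilde{g}\psi$ stabilizes $T\otimes\calO_{\widehat{\calE_{ur}}}$ and preserves tensors, it lies in $G(\calO_{\widehat{\calE_{ur}}})$, so $\psi^{-1}\tilde{g}\psi\in g\cdot G(\calO_{\widehat{\calE_{ur}}})$.

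To match the formulation of the lemma I would fix the trivialization $\alpha:\frakM\xrightarrow{\sim}T\otimes\frakS$ from Proposition \ref{prop:ptorsor}, which is how one implicitly identifies elements of $GL(\frakM[1/p]\otimes\calO_{\widehat{\calE_{ur}}})$ with matrices in $GL(T\otimes\calO_{\widehat{\calE_{ur}}}[1/p])$. The composite $(\alpha\otimes\calO_{\widehat{\calE_{ur}}})\circ\psi$ is an automorphism of $T\otimes\calO_{\widehat{\calE_{ur}}}$ preserving tensors and $P$-filtrations, so it lies in $P(\calO_{\widehat{\calE_{ur}}})$. Conjugating the relation $\psi^{-1}\tilde{g}\psi\in g\cdot G(\calO_{\widehat{\calE_{ur}}})$ by $(\alpha\otimes\calO_{\widehat{\calE_{ur}}})\circ\psi$ then gives
\begin{equation*}
(\alpha\otimes\calO_{\widehat{\calE_{ur}}})\tilde{g}(\alpha\otimes\calO_{\widehat{\calE_{ur}}})^{-1}\in P(\calO_{\widehat{\calE_{ur}}})\cdot g\cdot G(\calO_{\widehat{\calE_{ur}}})\cdot P(\calO_{\widehat{\calE_{ur}}})\subset P(\calO_{\widehat{\calE_{ur}}})\cdot g\cdot G(\calO_{\widehat{\calE_{ur}}}),
\end{equation*}
using $P\subset G$ to absorb the right-hand $P$-factor.

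The main technical point is the refinement from a $G$-torsor to a $P$-torsor in the Fontaine comparison: one must verify that the canonical isomorphism respects the $P$-filtration coming from the chosen lift of the slope filtration, so that two choices of trivialization differ by an element of $P$ rather than merely $G$. This is exactly what Proposition \ref{prop:ptorsor} supplies, via the functoriality of Fontaine's comparison applied to the filtered pieces. A secondary bookkeeping subtlety is keeping the dualization and Tate-twist conventions consistent so that the identification $T_p\tilde{\mathscr{G}}'^*(-1)=g\cdot T_p\tilde{\mathscr{G}}^*(-1)$ induced by $\theta$ really is multiplication by the element $g\in G(\bbQ_p)$ under the chosen embedding $G\subset GL(T_p\tilde{\mathscr{G}}^*(-1))$.
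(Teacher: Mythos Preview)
Your proof is correct and follows essentially the same route as the paper: both arguments pass to $\calO_{\widehat{\calE_{ur}}}$ via Fontaine's canonical comparison, use functoriality to see that the two comparisons are intertwined by $g$ on the \'etale side and by $\tilde g$ on the $\frakM$-side, and then invoke Proposition~\ref{prop:ptorsor} to see that the chosen $\frakS$-trivialization of $\frakM$ differs from the canonical comparison by an element of $P(\calO_{\widehat{\calE_{ur}}})$ (while on the $\frakM'$-side one only has a $G(\calO_{\widehat{\calE_{ur}}})$-ambiguity). One small imprecision: you assert that $\psi'$ also preserves the $P$-filtration, but $\tilde{\sG}'$ need not admit a lift of the slope filtration, so Proposition~\ref{prop:ptorsor} does not apply to it---fortunately you never actually use this, since your argument only requires $\psi'$ to preserve tensors, which follows already from Theorem~\ref{thm:frakM}(2).
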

\begin{proof}
  Over $\calO_{\widehat{\calE_{ur}}}$ there are canonical identifications:
  \begin{align*}
     T_p\widetilde{\sG}^*(-1)\otimes_{\bbZ_p} \calO_{\widehat{\calE_{ur}}}&\simeq \frakM\otimes_{\frakS} \calO_{\widehat{\calE_{ur}}}\\
     T_p\widetilde{\sG}'^*(-1)\otimes_{\bbZ_p} \calO_{\widehat{\calE_{ur}}}&\simeq \frakM'\otimes_{\frakS} \calO_{\widehat{\calE_{ur}}}
  \end{align*}
The first (respectively the second) taking $s_{\alpha,\acute{e}t}$ to $\tilde{s}_\alpha$ (respectively $s'_{\alpha,\acute{e}t}$ to $\tilde{s}'_\alpha$). Moreover, the first isomorphism also induces an isomorphism of the graded pieces of the filtrations on both sides. Thus, if we identify $T_p \widetilde{\sG}^*(-1)$ with $T_p\widetilde{\sG}'^*(-1)$ via $g$, these isomorphisms differ from the ones above by elements of $P(\calO_{\widehat{\calE_{ur}}})$ and $G(\calO_{\widehat{\calE_{ur}}})$ respectively. Since the map 
\begin{equation*}
    T_p\widetilde{\sG}^*(-1)\otimes\calO_{\widehat{\calE_{ur}}}\xlongrightarrow{\sim}T_p\widetilde{\sG}'^*(-1)\otimes \calO_{\widehat{\calE_{ur}}}\xlongrightarrow{can} \frakM'\otimes\calO_{\widehat{\calE_{ur}}} \xlongrightarrow{\theta}\frakM\otimes \calE_{ur}\xlongrightarrow{can} T_p\widetilde{\sG}^*(-1)\otimes \calE_{ur}
\end{equation*}
is given by $g$, we have $\tilde{g}\in P(\calO_{\widehat{\calE_{ur}}})gG(\calO_{\widehat{\calE_{ur}}})$.
\end{proof}

Combined with \cite[1.2.18]{K2017}, we have the following theorem
\begin{theorem}
\label{thm:Mred}
 We retain the assumptions of $(\ast)$. The association $g\mapsto g_0$ induces a well defined map
 \begin{equation*}
     G(\bbQ_p)/G(\bbZ_p)\rightarrow G(L)/G(O_L).
 \end{equation*}
 For two choices of \ref{eqn:tpD1} the corresponding maps $g\mapsto g_0$ differ by an automorphism of $G(L)/G(O_L)$ given by left multiplication by an element of $M(O_L)$. For any $g$ we have $g_0\in X_{\upsilon_0}(b)$ and $\tilde{\kappa}_G(g_0) =  \tilde{\kappa}_G(g)\in \pi_1(G)^\Gamma$. If moreover $m\in M(\bbQ_p)$, we have $\tilde{\kappa}_M(m_0) =  \tilde{\kappa}_M(m)\in \pi_1(M)^\Gamma$.
\end{theorem}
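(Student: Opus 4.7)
The plan is to deduce the theorem by combining Lemma \ref{lemma:g_0onFrakM} with the analog without parabolic structure proved in \cite[1.2.18]{K2017}. The latter already produces a well-defined map $G(\bbQ_p)/G(\bbZ_p)\to G(L)/G(\calO_L)$ sending $g\mapsto g_0$, gives $g_0\in X_{\upsilon_0}(b)$, and the Kottwitz identity $\tilde\kappa_G(g_0)=\tilde\kappa_G(g)$. The new input required for Theorem \ref{thm:Mred} is the refinement of the left ambiguity from $G(\calO_L)$ down to $M(\calO_L)$, together with the corresponding $M$-refined Kottwitz identity.

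First I would carry the identity $\frakM'=\tilde g\frakM$ across the specialization $u\mapsto 0$. Under the isomorphism $\frakM/u\frakM[1/p]\simeq\bbD(\sG)(W)[1/p]$ (after a Frobenius pullback), $\tilde g$ specializes to $g_0$, and the relation $\bbD(\tilde{\sG}')(W)=g_0\bbD(\sG)(W)$ holds after inverting $p$. Reducing Lemma \ref{lemma:g_0onFrakM} in the same way, the $G(\calO_{\widehat{\calE_{ur}}})$-factor on the right restricts to $G(\calO_L)$, while the $P(\calO_{\widehat{\calE_{ur}}})$-factor on the left descends to an operation on the graded pieces of the slope filtration on $\bbD(\sG)(W)$: the isomorphism \eqref{eqn:tpD1} identifies the filtration on $T_p\tilde{\sG}^*(-1)$ only with the graded pieces (not the whole filtration) of the slope filtration on $\bbD(\tilde{\sG})$, so the unipotent radical $N$ of $P$ acts trivially and the effective left ambiguity collapses to $M(\calO_L)$. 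This gives $g_0\in M(\calO_L)\,g\,G(\calO_L)\subset G(L)$.

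From this containment, both the well-definedness of $g\mapsto g_0$ on $G(\bbQ_p)/G(\bbZ_p)$ (replacing $g$ by $gk$ modifies $\tilde g$ by $\tilde g\cdot k$, which specializes into $G(\calO_L)$) and the $M(\calO_L)$-left ambiguity across different choices of \eqref{eqn:tpD1} are immediate. For $g_0\in X_{\upsilon_0}(b)$, we use that $\tilde{\sG}'$ is itself a $p$-divisible group with $G$-structure whose Hodge cocharacter is of type $\{\upsilon_0\}$, so its Frobenius, expressed in the basis $g_0\bbD(\sG)(W)$, yields the condition $g_0^{-1}b\sigma(g_0)\in G(\calO_L)p^{\upsilon_0}G(\calO_L)$. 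The identity $\tilde\kappa_G(g_0)=\tilde\kappa_G(g)$ then follows from the containment $g_0\in M(\calO_L)\,g\,G(\calO_L)$ together with the fact that $M(\calO_L)$ and $G(\calO_L)$ have trivial image in $\pi_1(G)^\Gamma$.

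Finally, when $m\in M(\bbQ_p)$, the lattice $m^{-1}T_p\tilde{\sG}$ preserves the slope filtration on $T_p\tilde{\sG}^*(-1)$, so $\tilde{\sG}'$ inherits a lift of the slope filtration, and the entire construction may be carried out inside $M$: this places $m_0\in M(L)$ and refines the containment to $m_0\in M(\calO_L)\,m\,M(\calO_L)$, from which $\tilde\kappa_M(m_0)=\tilde\kappa_M(m)$. The main obstacle is the descent step in the second paragraph---passing from the $\calO_{\widehat{\calE_{ur}}}$-level containment of Lemma \ref{lemma:g_0onFrakM} down to the refined $\calO_L$-level coset $g_0\in M(\calO_L)\,g\,G(\calO_L)$---which requires careful tracking of the interplay between the $P$-torsor structure, the graded pieces of the slope filtration, and the specialization $u\mapsto 0$.
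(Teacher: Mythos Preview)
Your proposal is correct and follows essentially the same approach as the paper: the paper simply states that the theorem follows by combining Lemma~\ref{lemma:g_0onFrakM} with \cite[1.2.18]{K2017}, without spelling out any details. You have correctly identified that the new content beyond \cite[1.2.18]{K2017} is the refinement of the left ambiguity from $G(\calO_L)$ to $M(\calO_L)$ (coming from the $P$-torsor structure of Proposition~\ref{prop:ptorsor} and the passage to graded pieces after $u\mapsto 0$) and the $M$-level Kottwitz identity, and your outline of how to extract these from the lemma is sound.
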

Let $N$ be the unipotent radical of the parabolic subgroup associated to $\nu([b])^{-1}$. We also observe the following corollary:
\begin{corollary}
\label{cor:um}
  With the same assumptions as in Theorem \ref{thm:Mred}, consider $um$, where $u\in N(\mathbb{Q}_p)$, and $m\in Z_M(\mathbb{Q}_p)$. Then $(um)_0$ is of the form $u_0m$. 
\end{corollary}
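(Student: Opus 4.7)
The plan is to factor the quasi-isogeny for $g=um$ through an intermediate step at $\tilde{\sG}_u$, the $p$-divisible group associated to the lattice $u^{-1}T_p\tilde{\sG}$. Since $(um)^{-1}T_p\tilde{\sG}=m^{-1}(u^{-1}T_p\tilde{\sG})$, the $p$-divisible group $\tilde{\sG}_{um}$ is obtained from $\tilde{\sG}_u$ by the action of $m\in G(\bbQ_p)$. Theorem~\ref{thm:Mred} applied to $u$ alone yields $\bbD(\tilde{\sG}_u)=u_0\cdot\bbD(\sG)$ inside $\bbD(\sG)\otimes\bbQ_p$, so it suffices to show that the further reduction gives $\bbD(\tilde{\sG}_{um})=u_0\cdot m\cdot\bbD(\sG)$, i.e.\ that the transition $\tilde{\sG}_u\to\tilde{\sG}_{um}$ reduces on Dieudonn\'e modules to multiplication by $m$ on the right.

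The essential input is the combination of hypotheses on $m\in Z_M(\bbQ_p)$: centrality in $M$ gives $mb=bm$ for $b\in M(L)$, while $\bbQ_p$-rationality gives $\sigma(m)=m$, and together these imply that $m$ commutes with the Frobenius $b\sigma$, so $m\in J_b(\bbQ_p)$ and multiplication by $m$ is a genuine lattice transformation on the crystalline side. Re-running the analysis of Lemma~\ref{lemma:g_0onFrakM} with base point $\tilde{\sG}_u$ in place of $\tilde{\sG}$, the $P$-torsor trivialization supplied by Proposition~\ref{prop:ptorsor} can be chosen $Z_M$-equivariantly: because $Z_M$ acts on every graded piece of the slope filtration through a single character, it commutes with the residual $P$-action on the space of trivializations. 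Under this equivariant choice, the image of $m$ under the construction $g\mapsto g_0$ is forced to be $m$ itself, sharpening the Kottwitz-invariant equality of Theorem~\ref{thm:Mred} to an actual equality in $M(L)/M(O_L)$. Composing the two steps produces $(um)_0=u_0 m$ in $G(L)/G(O_L)$.

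The main obstacle is establishing this sharper ``$m_0=m$'' statement for $m\in Z_M(\bbQ_p)$, as opposed to the weaker Kottwitz-invariant equality for general $m\in M(\bbQ_p)$. Both hypotheses on $m$ are essential here: centrality ensures that $m$ normalizes all the subgroup structures appearing in the comparison isomorphism (in particular that it commutes with the $P$-action on the trivialization torsor), while $\bbQ_p$-rationality ensures compatibility with $\sigma$ and hence with the Frobenius structure on $\frakM$. Once this rigidity is in place, the factorization $(um)_0=u_0 m$ follows immediately from the multiplicativity of $g\mapsto g_0$ along the chain $\tilde{\sG}\to\tilde{\sG}_u\to\tilde{\sG}_{um}$, with the order $u_0 m$ (rather than $m u_0$) dictated by the convention that $G(\bbQ_p)$ acts through the contragredient on $T_p\tilde{\sG}^*(-1)$ in Theorem~\ref{thm:frakM}.
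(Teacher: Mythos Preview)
The paper states this corollary without proof, treating it as an immediate consequence of Lemma~\ref{lemma:g_0onFrakM} and Theorem~\ref{thm:Mred}. Your overall approach---factoring through $\tilde{\sG}_u$ and identifying the key property that $m\in Z_M(\bbQ_p)$ lies in $J_b(\bbQ_p)$ (because centrality gives $mb=bm$ and $\bbQ_p$-rationality gives $\sigma(m)=m$)---is exactly the intended argument and is correct.

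That said, your justification of ``$m_0=m$'' via a ``$Z_M$-equivariant trivialization'' is more elaborate than necessary and a bit hand-wavy as written. The cleaner route is pure functoriality: the canonical comparison isomorphism of Theorem~\ref{thm:frakM}(2) is natural in $L$, so the self-map of $L[1/p]$ given by $m$ is carried to the self-map of $\frakM\otimes\calO_{\widehat{\calE_{ur}}}[1/p]$ given by the same element $m$ (viewed in $G(L)\subset G(\calO_{\widehat{\calE_{ur}}}[1/p])$). Since $m$ commutes with $\varphi$, this is already a map of $F$-isocrystals, and setting $u=0$ gives the action of $m$ on $\bbD(\sG)[1/p]$ literally as multiplication by $m$. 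No special choice of trivialization is needed; your statement that ``$Z_M$ acts on every graded piece through a single character'' is true but not what drives the argument.

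One point you should tighten: the order of the factors. Your factorization $\tilde{\sG}\to\tilde{\sG}_u\to\tilde{\sG}_{um}$, traced through the convention $\bbD(\sG_g)=g_0\bbD(\sG)$, naively yields $(um)_0=m\cdot u_0$ rather than $u_0 m$. You gesture at the contragredient convention as the fix, but you should actually verify this by chasing through the identification $L=T_p\tilde{\sG}^*(-1)$ and the direction of $\frakM(\theta)$ in the paper's setup; the dualization does flip the side on which $m$ appears, and this is where the stated form $u_0 m$ comes from. Without making this explicit, the argument is incomplete on the one point where the two hypotheses on $m$ do not by themselves force the answer.
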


\section{Shimura varieties $\Sh_K(G,X)$}
\subsection{Integral Models} We recall the results of Kisin \cite{K2010} about the construction of integral models of Hodge type Shimura varieties with hyperspecial level structure at $p>2$. Let $G$ be a connected algebraic group such that $(G,X)$ is a Shimura datum, with reflex field $E$, and $G$ is unramified over $\mathbb{Q}_p$. For the rest of this section we assume that $(G,X)$ is of Hodge type.

Let $\mathbb{A}^p_f$ be the ring of finite adeles with trivial component at $p$. Let $K^p\supset G(\mathbb{A}^p_f)$ be an open subgroup. Since $(G,X)$ is a Shimura datum of Hodge type we have an embedding of Shimura datum $(G, X)\hookrightarrow (\mathrm{GSp}(V),\mathcal{H}^{\pm}$) that induces an embedding over $E$ of Shimura varieties
\begin{equation*}
\Sh_K(G,X)\hookrightarrow \Sh_{K'}(\mathrm{GSp}(V),\mathcal{H}^{\pm})
\end{equation*}
for $K=K_pK^p$, and $K'=K'_pK'^{p}$, where $K'_p\subset K_p$ is a small enough compact subgroup of $G(\mathbb{Q}_p)$, and $K'_p=GSp(\Lambda)(\mathbb{Z}_p)$, where $\Lambda$ is a $\mathbb{Z}_p$ lattice of $V$. $K'_p$ is a hyperspecial subgroup of $GSp(V)(\mathbb{Q}_p)$.

$\Sh_{K'}(GSp,\mathcal{H}^{\pm})$ is the moduli space of triples $(\mathcal{A},\lambda,\varepsilon_{K'}^p)$, where $\mathcal{A}$ is a  polarized abelian scheme, $\lambda$ is a polarization of $\mathcal{A}$, and $\varepsilon_{K'}^p$ is a section of the \'{e}tale sheaf
\begin{equation*}
\varepsilon_{K'}^p\in \operatorname{Isom}(V_{\mathbb{A}_f^p},\hat{V}^p(\mathcal{A})_{\mathbb{Q}})/K'^{p},
\end{equation*}
where $\hat{V}^p(\mathcal{A}) = \lim_{p\nmid n}\mathcal{A}[n]$ is viewed as an \'{e}tale local system and $\hat{V}^p(\mathcal{A})_\mathbb{Q}=\hat{V}^p(\mathcal{A})\otimes_\mathbb{Z}\mathbb{Q}$.

We have a canonical integral model $\mathscr{S}_{K'}(GSp,\mathcal{H}^{\pm})$ over $\mathbb{Z}_{(p)}$ for $\Sh_{K'}(GSp,\mathcal{H}^{\pm})$ given by extending the moduli interpretation to schemes over $\mathbb{Z}_{(p)}$. By taking the closure of $\Sh_{K'}(G,X)$ in $\mathscr{S}_{K'}(GSp,\mathcal{H}^{\pm})$, we have a model $\widetilde{\mathscr{S}}_{K'}(G,X)$ of $\Sh_K(G,X)$ over $O_{E,(v)}$, equipped with an embedding 
\begin{equation*}
\widetilde{\mathscr{S}}_{K'}(G,X)\hookrightarrow\mathscr{S}_{K'}(GSp,H^{\pm}).
\end{equation*}
Taking the normalization of $\widetilde{\mathscr{S}}_{K'}(G,X)$, this gives us a smooth scheme $\mathscr{S}_{K'}(G,X)$ over $O_{E,(v)}$, which is the canonical model of $\Sh_K(G,X)$. We also define the projective limit
\begin{equation*}
  \mathscr{S}_{K_p}(G,X):=\varprojlim_{K^p}\mathscr{S}_{K^pK_p}(G,X).
\end{equation*}  

Observe that there is a universal abelian scheme $h:\mathcal{A}\rightarrow\mathscr{S}_{K'}(GSp,H^{\pm})$, and by pulling back to $\mathscr{S}_{K}(G,X)$, we have an abelian scheme over $\mathscr{S}_{K}(G,X)$. By abuse of notation, we also let 
\begin{equation*}
  h:\mathcal{A}\rightarrow \mathscr{S}_{K}(G,X)
\end{equation*}
be the universal abelian scheme over $\mathscr{S}_{K}(G,X)$.

Moreover, the constructions in \cite{K2010} also give us \'{e}tale, deRham, and crystalline tensors for points in $\mathscr{S}_{K}(G,X)$. Let us briefly review this construction. Recall that we have an embedding $G_{\mathbb{Z}_p}\hookrightarrow GSp(\Lambda)\subseteq GL(\Lambda)$ of group schemes over $\mathbb{Z}_p$. Let $V_{\mathbb{Z}}$ be the $\mathbb{Z}$-lattice of $V$ such that $V_{\mathbb{Z}}\otimes_{\mathbb{Z}}\mathbb{Z}_p=\Lambda$. Write $V_{\mathbb{Z}_{(p)}}=V_{\mathbb{Z}}\otimes\mathbb{Z}_{(p)}$. Then we have a closed embedding of group schemes over $\mathbb{Z}_{(p)}$
\begin{equation*}
  G_{\mathbb{Z}_{(p)}}\hookrightarrow GSp(V_{\mathbb{Z}_{(p)}}).
\end{equation*} 
Let $(s_\alpha)\subseteq V_{\mathbb{Z}_{(p)}}^\otimes$ be tensors such that $G_{\mathbb{Z}_{(p)}}$ is the stabilizer of the tensors $(s_\alpha)$, where for an $R$-module $M$, we write $M^\otimes$ for the direct sum of all tensor products, duals, symmetric and exterior powers of $M$.

Let $\mathcal{V}_{dR}=R^1h_{*}\Omega^\bullet$ where $\Omega^\bullet$ is the sheaf of differentials on $\mathscr{A}$. By the de Rham isomorphism, we can view $(s_\alpha)$ as sections $(s_{\alpha,dR})$ of $\mathcal{V}_{dR}$ defined over $\mathbb{C}$. In fact, $(s_{\alpha,dR})$ are horizontal sections of $\mathcal{V}_{dR}^\otimes$ defined over $O_{E,(v)}$, which lie in the $\operatorname{Fil}^0$ part of the Hodge filtration.

Similarly, for $l\neq p$, let $\mathcal{V}_{l}=R^1h_{\acute{e}t,*}\mathbb{Q}_l$, and let $\mathcal{V}_{p}=R^1h_{\eta,\acute{e}t,*}\mathbb{Q}_p$, where $h_\eta$ denotes the generic fibre of $h$. These are \'{e}tale local systems over $\mathscr{S}_{K}(G,X)$ and $\Sh_K(G,X)$ respectively. By the \'{e}tale-Betti comparison isomorphism, we have sections $(s_{\alpha,l})$ of $\mathcal{V}_{l}$ defined over $\mathbb{C}$, which are in fact defined over $E$ for $l=p$, and $O_{E,(v)}$ for $l\neq p$. 

If $T$ is an $O_{E,(v)}$-scheme (resp. an $E$-scheme), $x\in \mathscr{S}_K(G,X)(T)$, and $\ast=l$ or $dR$ (resp. $\ast=p$), we denote by $(s_{\alpha,\ast,x})$ the pullback of $(s_{\alpha,\ast})$ to $T$. Similarly, we denote by $\mathcal{A}_x$ the pullback of $\mathcal{A}$ to $x$. Furthermore, note that for $l\neq p$, since the $l$-adic rational Tate module $V_l(\mathcal{A}_x)$ is dual (as a $\mathbb{Q}_p$ vector space) to $H^1_{\acute{e}t}(\mathcal{A}_x,\mathbb{Q}_l)$, we can view the sections $(s_{\alpha,l,x})$ as sections of $V_l(\mathcal{A}_x)$, which we also denote by $(s_{\alpha,l,x})$. In this way, we have a section $(s_{\alpha,l,x})_{l\neq p}$ of the \'{e}tale sheaf $\hat{V}^p(\mathcal{A}_x)_{\mathbb{Q}}$, and if $x$ corresponds to the triple $(\mathcal{A},\lambda,\varepsilon^p_{K'})$, $\varepsilon^p_{K'}$ is in fact a section
\begin{equation*}
  \varepsilon^p_K\in \underline{\operatorname{Isom}}((V_{\mathbb{A}^p_f},\hat{V}^p(\mathcal{A})_{\mathbb{Q}})/K^{p}),
\end{equation*}
which sends $(s_\alpha)$ to $(s_{\alpha,l,x})_{l\neq p}$.

Let $\mathscr{S}_\kappa$ denote the special fiber of $\mathscr{S}_{K}(G,X)$. Furthermore, suppose that $\tilde{x}$ is a geometric point of the special fiber of $\mathscr{S}_\kappa$ with residue field $k$. Consider the crystalline cohomology of the abelian variety $\mathcal{A}_{\tilde{x}}$, $H^1_{cris}(\mathcal{A}_{\tilde{x}}/W(k))$. Note that this is canonically isomorphic to the contravariant Dieudonn\'{e} module $\mathbb{D}(\mathcal{A}_{\tilde{x}}[p^\infty])$ of the $p$-divisible group $\mathcal{A}_{\tilde{x}}[p^\infty]$. Then via the $p$-adic comparison isomorphism, we can construct crystalline tensors $(s_{\alpha,0,\tilde{x}})\subset H^1_{\mathrm{cris}}(\mathcal{A}_{\tilde{x}}/W(k))^\otimes$.  

$H^1_{\mathrm{cris}}(\mathcal{A}_{\tilde{x}}/W(k))$ is an example of an $F$-crystal with $G$-structure over $\operatorname{Spec}\kappa$. 

For any geometric point $x\in \mathscr{S}_\kappa(\bar{\bbF}_p)$, while the $p$-divisible group $\mathcal{A}_x[p^\infty]$ and the crystalline tensors $(s_{\alpha,0,x})$ depends on some choices made during the construction of $\mathscr{S}_K(G,X)$, it induces an $F$-crystal with $G$-structure $\mathbb{D}(\mathcal{G}_x)$ over $\bar{\mathbb{F}}_p$ which is independent of them. Moreover, \cite{lov2017} (c.f. \cite[Appendix A]{KMPS}) constructed a universal $F$-isocrystal with $G$-structure over $\mathscr{S}_K(G,X)$, which we denote by $\mathbb{D}_0$. $\mathbb{D}_0$ specialises to $\mathbb{D}(\sG_x)$ for every $x\in\sS_K(G,X)(\overline{\mathbb{F}}_p)$. Moreover, we also have a section $(s_{\alpha,0})\subset\mathbb{D}_0^\otimes$ defined over $\kappa$, such that the pull-back to any geometric point $x$ is the crystalline tensor $(s_{\alpha,0,x})$ constructed above. If $T$ is a scheme over $\operatorname{Spec}\kappa$, for any $x\in\sS_\kappa$, we denote by $(s_{\alpha,0,x})$ the pullback of $(s_{\alpha,0})$ to $T$.

From the arguments in \cite{K2010}, we have the following proposition
\begin{proposition}
  Let $x\in\sS_\kappa(k)$ be a geometric point of $\sS_\kappa$. By construction, we have tensors $(s_{x,\alpha,0})\subset\bbD(\sG_x)(W(k))^\otimes$. Then the completion $\hat{U}_x$ of $\sS_\kappa$ at $x$ is isomorphic to the deformation space $\operatorname{Spf}R_G$ of $G_W$-adapted lifts of $\sG_x$. 
\end{proposition}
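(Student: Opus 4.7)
The plan is to identify $\hat{U}_x$ with $\operatorname{Spf}R_G$ by comparing both to the deformation space inside the Siegel formal neighborhood, and then using the universal property of $R_G$ as the locus classifying $G_W$-adapted deformations of $\sG_x$.

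First, I would pass to the ambient Siegel Shimura variety $\sS_{K'}(\GSp(V),\mathcal{H}^\pm)$. By Serre--Tate theory, the completion of $\sS_{K'}$ at the image of $x$ is canonically isomorphic to the universal (polarized) deformation space of the $p$-divisible group $\sG_x = \mathcal{A}_x[p^\infty]$, which up to removing the polarization datum can be identified with $\operatorname{Spf}R$ in the notation of \eqref{prop:3RG}. The closed immersion $\widetilde{\sS}_{K'}(G,X) \hookrightarrow \sS_{K'}(\GSp,\mathcal{H}^\pm)$ then identifies the completion of $\widetilde{\sS}_{K'}(G,X)$ at $x$ with a closed formal subscheme of $\operatorname{Spf}R$.

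Second, I would show that this closed formal subscheme is exactly $\operatorname{Spf}R_G$. The key input is Proposition \ref{prop:3RG}: a map $\varpi : R \to O_K$ factors through $R_G$ if and only if the induced deformation $\tilde{\sG}$ of $\sG_x$ is $G_W$-adapted. On the characteristic zero fiber, a map $\varpi$ comes from a point of $\Sh_K(G,X)$ precisely when the tensors $(s_{\alpha,p,x})$ extend to Galois-invariant tensors on the Tate module of the deformed abelian variety; by the $p$-adic comparison isomorphism (and Theorem \ref{thm:frakM}), this Galois-invariance of the tensors is equivalent to the condition that the associated crystalline tensors lift to a reductive subgroup defining $G_S$ inside $\GL(\bbD(\tilde{\sG})(S))$ lying in $\operatorname{Fil}^0$ — that is, exactly the $G_W$-adapted condition. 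Hence on the rigid generic fiber the image of $\widetilde{\sS}_{K'}(G,X)$ coincides with the image of $\operatorname{Spf}R_G$, and taking Zariski closures inside $\operatorname{Spf}R$ identifies the two closed formal subschemes.

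Third, I would invoke the smoothness of $R_G$: since $R_G$ is a formal power series ring over $W$, it is already normal, so the normalization step in passing from $\widetilde{\sS}_{K'}(G,X)$ to $\sS_K(G,X)$ does not alter the formal neighborhood at $x$. This yields the required isomorphism $\hat{U}_x \simeq \operatorname{Spf}R_G$, and by construction the universal $p$-divisible group over $\operatorname{Spf}R_G$ lifts $\sG_x$ together with the crystalline tensors $(s_{\alpha,0,x})$.

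The main obstacle is the second step: one must argue carefully that the closure of the Hodge-type locus inside $\operatorname{Spf}R$ is cut out precisely by the reductive-group condition on the tensors, rather than a smaller or larger subscheme. The cleanest way is to observe that both $\widetilde{\sS}_{K'}(G,X) \subset \sS_{K'}(\GSp,\mathcal{H}^\pm)$ and $\operatorname{Spf}R_G \subset \operatorname{Spf}R$ are flat over $O_{E,(v)}$ with the same generic fiber (parametrizing deformations whose Tate module carries the required $G$-structure), hence agree as closed formal subschemes.
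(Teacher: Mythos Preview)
The paper does not give its own proof of this proposition; it simply attributes the result to the arguments in \cite{K2010}. Your sketch is essentially the argument carried out in \cite[\S2.3]{K2010}: embed into the Siegel deformation space via Serre--Tate, identify the generic fibres using the $p$-adic comparison isomorphism and Proposition~\ref{prop:3RG}, pass to flat closures, and use that $R_G$ is formally smooth so that normalization is harmless. So your approach is correct and coincides with the intended one.

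One small correction: the completion of $\sS_{K'}(\GSp,\mathcal{H}^\pm)$ at $x$ is the \emph{polarized} deformation space of $\sG_x$, not the full $\operatorname{Spf}R$ of the paper (which is the deformation space as a bare $p$-divisible group). Since $G_{\bbZ_p}\subset \GSp(\Lambda)$, the unipotent $U_G$ sits inside the symplectic unipotent, so $R_G$ is still a quotient of the polarized deformation ring and the rest of your argument is unaffected; but the identification ``$\operatorname{Spf}R$ in the notation of Proposition~\ref{prop:3RG}'' is not quite literally correct.
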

\subsubsection{}
For any point $x\in \sS_{\kappa}(\overline{\mathbb{F}}_p)$, let $b_x\in G(L)$ be such that the Frobenius on $\bbD(\sG_x)$ is given by $\varphi=b_x\sigma$. $b_x$ is determined up to $\sigma$-conjugacy by $G(O_L)$, we simply choose a representative here. 

We can in fact describe the elements $[b]$ of $B(G)$ which arise from points on the Shimura variety. Recall from Section \ref{subsection:hecke-poly-def} that attached to the Shimura datum $(G,X)$ we have a dominant minuscule cocharacter $\mu$. If we choose $T\subset G_{\bbZ_p}$, then $\mu$ is also $G_W$-valued. We set $\upsilon=\sigma(\mu^{-1})$. Note that, for any $G_W$-valued cocharacter $\mu_0$ such that $\mu_0^{-1}$ defines the Hodge filtration on $\bbD(\sG_x)$, $\mu,\mu_0$ must be conjugate. Since $\mu,\mu_0$ are $G_W$-valued they are conjugate by an element of $G(W)$. Hence, if we set $\upsilon_0=\sigma(\mu_0^{-1})$, we have
\begin{equation*}
    b\in G(W)p^{\upsilon_0} G(W)=G(W)p^\upsilon G(W),
\end{equation*}
hence $[b]\in B(G,\upsilon)$.

\subsubsection{}
  For any $[b]\in B(G,\upsilon)$, we define the Newton Stratum $\mathscr{S}_\kappa^{[b]}$ as
  \begin{equation*}
    \mathscr{S}_\kappa^{[b]}=\{x\in \mathscr{S}_{K}(\overline{\mathbb{F}}_p):[b_x]=[b]\}.
  \end{equation*}
This is a locally closed subscheme of $\mathscr{S}_{\kappa}$ with reduced subscheme structure.

We also have the following result of Lee \cite{lee2018} about the non-emptiness of each Newton strata, which was generalized in \cite{KMPS} without the condition that $G$ is unramified.
\begin{theorem}
  Suppose that $b\in B(G,\upsilon)$. Then $\mathscr{S}_\kappa^{[b]}$is non-empty.
\end{theorem}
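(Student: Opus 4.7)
The plan is to reduce the theorem to two independent statements: (i) the basic Newton stratum $\sS_\kappa^{[b_0]}$ is non-empty, and (ii) every other stratum $\sS_\kappa^{[b]}$ for $[b]\in B(G,\upsilon)$ can be obtained from a basic point by deformation. This mirrors the Viehmann-Wedhorn strategy for the PEL case, with Kisin's integral models \cite{K2010} and his special-point-lifting theorem \cite{K2017} replacing the PEL moduli interpretation.

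For step (i), I would construct a special (CM) point of $\Sh_K(G,X)$ whose reduction to $\sS_\kappa$ has basic Newton polygon. One selects a maximal $\bbQ$-rational torus $T'\subset G$ such that $T'_{\bbQ_p}$ is anisotropic modulo center, and uses the associated Shimura datum cocharacter to produce, via Deligne's theory of canonical models combined with Honda-Tate theory, a CM abelian variety whose reduction has $p$-divisible group of basic Newton polygon in $B(G,\upsilon)$. The existence of such $T'$ is a Hasse-principle argument, and the fact that the resulting abelian variety defines a point of $\sS_\kappa$ (not merely of the ambient Siegel model) relies on the tensor compatibility built into Kisin's construction, as in the special-point-lifting theorem of \cite{K2017}.

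For step (ii), given a basic point $y\in\sS_\kappa^{[b_0]}(\overline{\bbF}_p)$, the completion of $\sS_\kappa$ at $y$ is the formal deformation space $\operatorname{Spf}(R_G)$ of $\sG_y$ as a $p$-divisible group with $G$-structure. I would then show that every $[b]\in B(G,\upsilon)$ occurs as the Newton polygon of some geometric point of $\operatorname{Spec}(R_G/p)$, with the associated locus of expected dimension $\langle\rho,\upsilon-\nu([b])\rangle-\tfrac{1}{2}\defect_G(b)$, matching $\dim X_\upsilon(b)$ in Theorem \ref{thm:dimADLV}. This is established by combining: (a) the generic Newton polygon on $\operatorname{Spf}(R_G)$ is $\mu$-ordinary, giving the maximum stratum; (b) the closed point is basic, giving the minimum stratum; (c) by induction on the partial order $\preceq$ on $B(G,\upsilon)$, using purity of the Newton stratification (de Jong-Oort) together with the dimension formula, every intermediate stratum is filled in. Alternatively, one can appeal to an almost product structure on Newton strata of Hodge-type Shimura varieties (due to Hamacher), which reduces the question to non-emptiness of the ADLV $X_\upsilon(b)$, guaranteed by the Kottwitz-Rapoport conjecture (proved by Gashi in the unramified case) since $[b]\in B(G,\upsilon)$.

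The main obstacle is step (i). Step (ii) is essentially formal and local, leveraging the theory of $p$-divisible groups with $G$-structure developed in this paper. Step (i), by contrast, requires producing a basic point whose tensor structure matches the Hodge-type embedding cutting out $\sS_\kappa$: while CM abelian varieties with prescribed $p$-divisible group exist via Honda-Tate, endowing one with the Hodge tensors necessary to give a point of $\sS_\kappa^{[b_0]}$ is delicate precisely because $G$-structure is encoded by tensors rather than by endomorphisms, and constitutes the technical core of \cite{lee2018}, building on \cite{K2017}.
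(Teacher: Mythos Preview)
The paper does not actually supply a proof of this theorem: it is quoted as a result of Lee \cite{lee2018}, later generalized in \cite{KMPS}, and is used as a black box in the subsequent arguments. So there is no proof in the paper to compare your proposal against.

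That said, your outline is essentially the strategy of \cite{lee2018}, and you identify the correct division of labour: step~(i), producing a basic point via a CM construction with tensors compatible with the Hodge embedding, is the hard part and is exactly what \cite{lee2018} (building on \cite{K2017}) carries out. Your description of the obstacle there is accurate.

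One caution on step~(ii): the induction you sketch in (a)--(c) does not quite work as stated. Purity of the Newton stratification controls codimension of strata \emph{when they are non-empty}, but purity plus the dimension formula do not by themselves force intermediate strata to appear. What you actually need is a local statement: in the universal deformation space $\operatorname{Spf}R_G$ of a basic $p$-divisible group with $G$-structure, every $[b]\in B(G,\upsilon)$ occurs. This is a form of Grothendieck's conjecture for $p$-divisible groups with additional structure, and its proof (for unramified $G$) is a separate input, due in this setting to Viehmann and Hamacher. Your alternative route via the almost product structure and non-emptiness of $X_\upsilon(b)$ is closer to how this is actually established in the literature, and is the cleaner way to phrase step~(ii).
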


Note that there is a unique $[b]\in B(G,\upsilon)$ such that $\nu([b])=\bar{\upsilon}$, which we refer to as the $\mu$-ordinary element $[b^\ord]$. This corresponds to a maximal Newton stratum $\mathscr{S}_\kappa^{[b^\ord]}$. We denote this by $\mathscr{S}^{\ord}_\kappa$, and refer to this as the $\mu$-ordinary locus of $\mathscr{S}_\kappa$, and points as $\mu$-ordinary points. Let us state a few properties of the Newton stratification.

\begin{theorem}[\cite{Ham2017},\cite{Zha2015}]
  \label{thm:dimensionnewtonstrata}
  Let $[b]\in B(G,\upsilon)$. Then
  \begin{enumerate}
    \item $\mathscr{S}^{[b]}_\kappa$ is non-empty and of pure dimension
    \begin{equation*}
      \langle \rho,\mu+\nu([b])\rangle-\frac{1}{2}\operatorname{def}(b),
    \end{equation*} where $\rho$ denotes the halfsum of positive roots of $G$, $\nu(b)$ denotes the Newton cocharacter of $[b]$ and $\operatorname{def}(b)$ denotes the defect of $b$.
    \item $\overline{\mathscr{S}^{[b]}_\kappa}=\bigcup_{[b']\preceq [b]}\mathscr{S}^{[b']}_\kappa$
  \end{enumerate}
\end{theorem}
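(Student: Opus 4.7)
The strategy has two essentially independent pieces: a local dimension computation using the universal deformation ring $R_G$, and a separate analysis of closures using Grothendieck's specialization together with explicit deformations. Non-emptiness of $\mathscr{S}_\kappa^{[b]}$ is already Lee's theorem recalled above, so only the dimension formula in (1) and the closure relation in (2) require work.

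For the dimension in (1), my plan is to fix $x\in \mathscr{S}_\kappa^{[b]}(\bar{\bbF}_p)$ and study $\mathscr{S}_\kappa^{[b]}\cap \hat{U}_x$ inside $\hat{U}_x\cong \operatorname{Spf} R_G$. The key structural input is an almost product decomposition: étale-locally,
\[
    \mathscr{S}_\kappa^{[b]} \;\sim\; C(x)\,\times\,\pi_\infty\bigl(\RZ(G,b,\upsilon)^{\mathrm{red}}\bigr),
\]
where $C(x)$ is the central leaf through $x$ (the locus on which the $p$-divisible group with $G$-structure, together with its slope filtration, is constant), and the $\RZ$-factor accounts for the isogeny direction. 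This structure is obtained by combining (i) the description of $\operatorname{Spf} R_G$ via Kisin's tensors $(s_{\alpha,0})$, (ii) the result \eqref{prop:Plocalring} that the sub-deformation space respecting the slope filtration is cut out by $\operatorname{Spf} R'$, and (iii) Proposition \ref{prop:strata} giving the filtration stratification of any quasi-isogeny. Given the decomposition, the dimension of the $\RZ$-factor is $\langle\rho,\mu-\nu([b])\rangle-\tfrac12\defect(b)$ by Theorem \ref{thm:dimADLV} (using that the reduced special fiber of $\RZ(G,b,\upsilon)$ is perfectly isomorphic to the affine Deligne--Lusztig variety $X_\upsilon(b)$). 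The dimension of the central leaf is $\langle 2\rho,\nu([b])\rangle$, extending Oort's formula for the PEL case; in the Hodge setting this is obtained by working in $\operatorname{Spf} R_G$ with the $G$-adapted Serre--Tate coordinates coming from the unipotent radical $U_G$ in \S\ref{section:Gadaptedlift} and showing that the constant-isomorphism-type locus corresponds exactly to the product of the root subspaces indexed by roots $\alpha$ with $\langle\alpha,\nu([b])\rangle=0$, whose number is $\langle 2\rho,\nu([b])\rangle$. Summing gives
\[
    \langle 2\rho,\nu([b])\rangle + \langle \rho,\mu-\nu([b])\rangle - \tfrac12\defect(b) \;=\; \langle \rho,\mu+\nu([b])\rangle - \tfrac12\defect(b),
\]
and purity follows because the computation is independent of the chosen point $x$.

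For the closure relation in (2), the inclusion $\overline{\mathscr{S}_\kappa^{[b]}}\subseteq \bigcup_{[b']\preceq[b]}\mathscr{S}_\kappa^{[b']}$ follows from Grothendieck's specialization theorem applied to the universal $F$-isocrystal with $G$-structure $\mathbb{D}_0$ on $\mathscr{S}_\kappa$: under specialization the Newton cocharacter can only move down in the dominance order. For the reverse inclusion, the plan is to fix $y\in\mathscr{S}_\kappa^{[b']}$ with $[b']\preceq[b]$ and construct a formal curve in $\hat U_y\cong\operatorname{Spf} R_G$ whose generic point has Newton cocharacter $[b]$. This is done using the explicit description of $R_G$ as the completion of the unipotent subgroup $U_G$ together with the Kottwitz--Rapoport combinatorics: for any $[b]$ with $[b']\preceq[b]\preceq[b^{\mathrm{ord}}]$ one exhibits an element $u\in U_G(\bar{\bbF}_p[[t]])$ such that the associated $F$-crystal has Newton point $[b]$, by arranging the valuations of the coordinates in the root subspaces so that the corresponding Newton polygon breaks at the required slopes. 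The required combinatorial construction is compatible with Mazur's inequality and exhausts exactly the set $\{[b''] : [b'']\succeq[b']\}\cap B(G,\upsilon)$.

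\textbf{Main obstacle.} I expect the hardest step to be the central leaf dimension computation, i.e., verifying that the constant-isomorphism-type locus inside $\operatorname{Spf} R_G$ is cut out in the expected way by the root data of $\nu([b])$, and, crucially, that it is \emph{smooth} of the expected dimension in the Hodge-type setting. Unlike the PEL case, one no longer has canonical Serre--Tate coordinates, so one must use Kisin's tensor formalism together with the filtration result of Proposition \ref{prop:ptorsor} to produce an analogue of Oort's foliation; the compatibility of the various torsor structures with the $\sigma$-conjugacy class and with the quasi-isogeny action is the technical heart of the argument. Once this is in place, the dimension sum and the closure argument are more formal.
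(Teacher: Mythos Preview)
The paper does not give a proof of this theorem at all: it is stated with attribution to \cite{Ham2017} and \cite{Zha2015} and then used as a black box. So there is no proof in the paper to compare your proposal against.

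That said, your outline is essentially the strategy carried out in those references. The almost-product structure of Newton strata (central leaf times isogeny leaf, the latter uniformized by $\RZ(G,b,\upsilon)^{\mathrm{red}}\cong X_\upsilon(b)$), combined with Theorem~\ref{thm:dimADLV} for the ADLV dimension and a separate computation of the central leaf dimension, is exactly how Hamacher and Zhang obtain (1). For (2), one inclusion is indeed Grothendieck specialization for the $G$-isocrystal $\mathbb{D}_0$, and the reverse inclusion (strong stratification) is obtained by an explicit deformation construction along the lines you describe.

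One point to correct in your sketch: your characterization of the central leaf as corresponding to the root subspaces with $\langle\alpha,\nu([b])\rangle=0$ is not right---those are the roots of the Levi $M_{[b]}$, and their number has no reason to equal $\langle 2\rho,\nu([b])\rangle$. The dimension $\langle 2\rho,\nu([b])\rangle$ for the central leaf is Chai's formula, and in the Hodge-type case its proof (in Hamacher's work) goes through a finer analysis of sustained $p$-divisible groups and the foliation, not simply by counting root subspaces in $U_G$. Your identification of this as the main obstacle is accurate, but the mechanism you propose for it would need to be replaced by the actual argument from the cited papers.
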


\subsection{Isogeny classes mod $p$}
For any $g\in X_\mu(b)$, we see that $g\cdot \bbD(\sG_x)$ is a Dieudonn\'{e} module. Hence $g\cdot \bbD(\sG_x)$ corresponds to a $p$-divisible group $\sG_{gx}$ which is naturally equipped with a quasi-isogeny $\sG_x\rightarrow \sG_{gx}$, corresponding to the isomorphism $g\cdot \bbD(\sG_x)\otimes\bbQ_p\xrightarrow{\sim}\bbD(\sG_x)\otimes \bbQ_p$. Note that since $g$ preserves the tensors $(s_{\alpha,0,x})$, we have tensors $(s_{\alpha,0,gx}):=(s_{\alpha,0,x})\subset \bbD(\sG_{gx})^\otimes$.

We denote by $\calA_{gx}$ the corresponding abelian variety, which is isogenous to $\calA_x$, and canonically equipped with a $K'$-level structure, induced by that on $\calA_x$. Moreover, the weak polarization on $\calA_x$ induces a weak polarization $\lambda_{gx}$ on $\calA_{gx}$. The following result  \cite[1.4.4]{K2017} shows that $(\calA_{gx},\lambda_{gx})$ induces a point on $\sS_K(G,X)$.

\begin{proposition}
  There is a unique map
  \begin{equation*}
  \label{eq:iotadef}
      \iota_x:X_\upsilon(b)\rightarrow \sS_K(G,X)(\bar{\bbF}_p)
  \end{equation*}
  equivariant for the action of the $r$-th power of the Frobenius, such that $(s_{\alpha,0,x})=(s_{\alpha,0,\iota_x(g)})\subset \bbD(\sG_{gx})^\otimes$.
\end{proposition}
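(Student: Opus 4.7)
The plan is to construct $\iota_x$ via a characteristic-zero lifting argument combined with Kisin's characterization of Hodge-type points inside the Siegel Shimura variety. The candidate for $\iota_x(g)$ is the $\bar\bbF_p$-point of $\sS_{K'}(\GSp,\calH^\pm)$ represented by the triple $(\calA_{gx},\lambda_{gx},\varepsilon^p_{K',gx})$, obtained from $\calA_x$ via the quasi-isogeny corresponding to the lattice inclusion $\bbD(\sG_x)\subset g\cdot\bbD(\sG_x)=\bbD(\sG_{gx})$ (up to a power of $p$). Since $g\in G(L)$ preserves the tensors $(s_{\alpha,0,x})\subset\bbD(\sG_x)^\otimes[1/p]$, they remain as tensors $(s_{\alpha,0,gx})$ on $\bbD(\sG_{gx})^\otimes$; the condition $g^{-1}b\sigma(g)\in G(O_L)p^\upsilon G(O_L)$ ensures both that $g\cdot\bbD(\sG_x)$ is an integral Dieudonn\'{e} submodule and, via Lemma~\ref{lemma:filtration}, that the induced Hodge filtration on $\bbD(\sG_{gx})$ is given by a $G_W$-valued cocharacter in the conjugacy class $\{\mu\}$.

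The main step is to promote this datum to a point of $\sS_K(G,X)(\bar\bbF_p)$. I would apply the construction of \eqref{section:Gadaptedlift} to produce a $G_W$-adapted lift $\tilde\sG_{gx}$ of $\sG_{gx}$ over some finite extension $O_K/O_L$, which by Serre-Tate lifts $\calA_{gx}$ to an abelian scheme $\tilde\calA_{gx}$ carrying the polarization and level structure. Proposition~\ref{prop:Gred} then furnishes $G_K$-invariant \'{e}tale tensors $(s_{\alpha,\acute{e}t,gx})\subset T_p(\tilde\sG_{gx})^\otimes$ cutting out $G\subset GL(T_p\tilde\sG_{gx})$, together with a Dieudonn\'{e}-module comparison matching them to $(s_{\alpha,0,gx})$. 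Fixing an embedding $\bar K\hookrightarrow\mathbb{C}$, the transferred prime-to-$p$ level structure identifies $(s_{\alpha,\acute{e}t,gx})$ with the fixed reference tensors $(s_\alpha)\subset V_{\bbZ_{(p)}}^\otimes$ up to an element of $K^p$. Together with the Hodge cocharacter lying in $\{\mu\}$, these are precisely the data Kisin uses to characterize points of $\Sh_K(G,X)$ inside $\Sh_{K'}(\GSp,\calH^\pm)$; hence $\tilde\calA_{gx,\bar K}$ defines a point of $\Sh_K(G,X)(\bar K)$. Its specialization therefore lies in the closure $\widetilde\sS_K(G,X)$, and smoothness of the normalization $\sS_K(G,X)$ lifts it uniquely to a point $\iota_x(g)\in\sS_K(G,X)(\bar\bbF_p)$. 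The identity $(s_{\alpha,0,\iota_x(g)})=(s_{\alpha,0,x})$ then follows by comparing the universal $p$-adic comparison on $\sS_K(G,X)$ with the one on $\tilde\calA_{gx}$, both mediated through the formally smooth deformation space $\operatorname{Spf} R_G$.

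Uniqueness of $\iota_x$ is forced by the tensor-matching condition, which pins down the Dieudonn\'{e} lattice of $\calA_{\iota_x(g)}[p^\infty]$ as $g\cdot\bbD(\sG_x)$ and hence recovers $g$ modulo $G(O_L)$. Equivariance for the $r$-th power of Frobenius holds because $\sigma^r$ fixes the residue field $\kappa$, leaving $b$ and the tensors $(s_{\alpha,0,x})$ $\sigma^r$-invariant, so the natural $\sigma^r$-action on $X_\upsilon(b)$ intertwines with the geometric Frobenius on $\sS_\kappa(\bar\bbF_p)$ through $\iota_x$. I expect the main obstacle to be the lifting step, namely verifying that the $G$-adapted lift $\tilde\calA_{gx}$ represents a point of the Hodge-type Shimura variety rather than merely of the ambient Siegel one: this reduces to checking that the de Rham incarnations of $(s_{\alpha,\acute{e}t,gx})$ lie in $\operatorname{Fil}^0$ and that the Mumford-Tate group at the generic fibre is contained in $G$, which is precisely the content of Kisin's construction of the horizontal sections $(s_{\alpha,dR})$ and of the integral model in \cite{K2010}.
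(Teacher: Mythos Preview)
The paper does not supply its own proof of this proposition; it simply records it as \cite[1.4.4]{K2017}. Your proposal is a faithful sketch of Kisin's argument in that reference: one constructs $\calA_{gx}$ from the lattice $g\cdot\bbD(\sG_x)$, produces a $G_W$-adapted lift to $O_K$, verifies via the \'etale tensors and the Hodge cocharacter that the generic fibre lands in $\Sh_K(G,X)$, and then specializes. So your approach is correct and is essentially the one in the cited source; there is nothing further to compare against within this paper.
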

\subsubsection{}
The map $\iota_x$ in \eqref{eq:iotadef} extends to a $G(\mathbb{A}_p^f)$-equivariant map
\begin{equation*}
    \iota_x:X_\upsilon(b)\times G(\mathbb{A}_p^f)\rightarrow \sS_{K}(G,X)(\bar{\bbF}_p).
\end{equation*}
We call the image of $\iota_x$ in \eqref{eq:iotadef}, the isogeny class of $x$.

Fix an embedding of $\bar{\bbQ}$-algebras $\bar{L}\hookrightarrow\mathbb{C}$. Let $x\in \sS_{K}(G,X)(\overline{\bbF}_p)$, and $\tilde{x}\in \sS_{K}(G,X)(O_{\bar{L}})$ a point lifting $x$. By \cite[2.2.6]{K2010} there is an element $(h,1,g^p_{\tilde{x}})\in X\times G(\bbQ_p)\times G(\mathbb{A}_p^f)$ which maps to $\tilde{x}\in \Sh_K(G,X)(\mathbb{C})$. Attached to $(h,1,g^p_{\tilde{x}})$, we have an isomorphism $H_1(\calA_{\tilde{x}}(\mathbb{C}), \bbZ_{(p)})\xrightarrow{\sim}V_{\bbZ_{(p)}}$, and hence an isomorphism $T_p\sG_{\tilde{x}}\xrightarrow{\sim} V_{\bbZ_p}$ which takes $s_\alpha$ to $s_{\alpha,p,\tilde{x}}$. Then we can apply the construction of \ref{thm:Mred}, and obtain a commutative diagram
\begin{equation*}
\begin{tikzcd}
{h\times G({\bbQ}_p)\times G(\mathbb{A}_p^f)} \arrow[d, "g_p\mapsto g_{p,0}"] \arrow[r] & {\sS_{K}(G,X)(O_{\bar{L}})} \arrow[d] \\
{X_\upsilon(b)\times G(\mathbb{A}_p^f)} \arrow[r, "\iota_x"]       & {\sS_{K}(G,X)(\bar{\bbF}_p)}          
\end{tikzcd}
\end{equation*}
\subsection{Special point liftings}
In this subsection, we will recall results from \cite[\S2]{K2017} about liftings to special points up to isogeny. Choose $x\in \sS_k(G,X)(k)$ for some finite field $k\supset\kappa$. Then $x$ defines an element $b:=b_x\in B(G,\upsilon)$. Recall that $\nu_b$ is central in $J_b$.

\begin{theorem}
    Let $T\subset J_b$ be a maximal torus (defined over $\mathbb{Q}_p$). Then there exists a cocharacter $\mu_T\in X_*(T)$ defined over $\bar{\bbQ}_p$ such that
    \begin{enumerate}
        \item As a $G_{K_0}$-valued cocharacter, $\mu_T$ is conjugate to $\mu$
        \item $\bar{\mu}_T^T=\nu^{-1}_b$.
    \end{enumerate}
\end{theorem}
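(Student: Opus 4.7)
The plan is to construct $\mu_T$ by exploiting that $J_b$ is an inner form of $M := M_{[b]}$ over $\bbQ_p$, transferring $T$ to a maximal torus of $M$ (hence of $G$), and then choosing an appropriate Weyl-conjugate of $\mu$.

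First I would use that an inner twist preserves the cocharacter lattice together with its Galois action: the maximal torus $T \subset J_b$ corresponds to a maximal torus $T' \subset M$ defined over $\bbQ_p$ with a $\Gamma$-equivariant isomorphism $X_*(T) \simeq X_*(T')$. Base-changing to $L$, $T'_L$ is a maximal torus of $M_L \subset G_L$, and we may conjugate it to the standard maximal torus $T_0 \subset G$ by some element of $M(L)$, identifying $X_*(T)$ with $X_*(T_0)$ and thereby placing the Weyl orbit $W_G \cdot \mu$ inside $X_*(T)$. This makes condition (1) automatic for any element of $W_G \cdot \mu$ realized inside $X_*(T)$.

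Next, $\nu_b$ is central in $J_b$ and defined over $\bbQ_p$, so $\nu_b^{-1}$ lies in $(X_*(T) \otimes \bbQ)^\Gamma$. The hypothesis $[b] \in B(G, \upsilon)$ supplies two key compatibilities: the Kottwitz invariant $\kappa_M(b) = \mu^{\natural}$ in $\pi_1(M)_\Gamma$ forces the image of $\nu_b^{-1}$ in $\pi_1(M)_\Gamma \otimes \bbQ$ to equal the $\Gamma$-average of $\mu^{\natural}$, while $\nu_b \le \bar{\upsilon}$ forces $\nu_b^{-1}$ into the convex hull of the Galois-Weyl orbit of $\mu$ in $X_*(T)_\bbQ$. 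Together these pin down the abstract candidate for $\bar{\mu}_T^T$.

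With these preparations, I would produce $\mu_T$ in two stages: pick an initial $W_G$-conjugate $\mu' \in X_*(T)$ of $\mu$ whose image in $\pi_1(M)_\Gamma$ matches $\mu^{\natural}$, so that $\bar{\mu}'$ and $\nu_b^{-1}$ differ by an element of the coroot lattice of $M$ averaged over $\Gamma$; then adjust $\mu'$ within its $W_M$-orbit (which preserves the $\pi_1(M)$-class) to make $\bar{\mu}_T^T$ equal $\nu_b^{-1}$ exactly. The main obstacle is this last adjustment: one must show that the required correction can be realized by a single $W_M$-element, which reduces to a combinatorial statement about the Galois action on $W_G \cdot \mu \subset X_*(T)$. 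This ultimately rests on Kottwitz's classification of $B(G, \upsilon)$ together with the centrality of $\nu_b$ in $J_b$, which guarantees that the relevant discrepancy lies in the subspace where such Weyl adjustments act transitively.
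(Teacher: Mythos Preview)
Your proposal has a genuine gap at exactly the step you flag as ``the main obstacle.'' You reduce everything to finding a $W_M$-conjugate $\mu_T$ of some initial $\mu'$ whose Galois average (for the twisted $\Gamma$-action on $X_*(T)$) equals $\nu_b^{-1}$, and then assert this ``ultimately rests on Kottwitz's classification together with centrality of $\nu_b$.'' But that is the entire content of the theorem, and you have not given an argument. There is no obvious reason the $W_M$-orbit of $\mu'$ should contain an element with the prescribed Galois average; the twisted Galois action on $X_*(T)$ makes this a nontrivial combinatorial claim, not a consequence of general structure theory. Moreover, the assertion $\kappa_M(b)=\mu^{\natural}$ in $\pi_1(M)_\Gamma$ is not correct as written: that equality is precisely the Hodge--Newton decomposability condition for $M=M_{[b]}$, which is not assumed here. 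From $[b]\in B(G,\upsilon)$ you only get agreement of images in $\pi_1(G)_\Gamma$; lifting to $\pi_1(M)_\Gamma$ already requires choosing the right $G$-Weyl conjugate of $\mu$, which is part of what you are trying to construct.

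The paper (following Langlands--Rapoport) takes a concrete route that avoids both issues. Instead of reasoning abstractly with Weyl orbits and Kottwitz invariants, it picks an actual element $g\in X_\upsilon(b)$, writes $g=nm$ via the Iwasawa decomposition for the parabolic $P=MN$ attached not to $M_{[b]}$ but to the centralizer $M$ of the maximal $\bbQ_p$-split subtorus $T'\subset T$ (so $\nu_b\in X_*(T')\otimes\bbQ$ and $b$ is automatically basic in $M$), and then reads off a cocharacter $\upsilon'$ from the Cartan decomposition of $m^{-1}b\sigma(m)$ inside $M$. The candidate $\mu_T$ is any $M$-conjugate of $\sigma^{-1}((\upsilon')^{-1})$ lying in $X_*(T)$. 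Condition~(1) is immediate because $g^{-1}b\sigma(g)\in G(O_L)p^\upsilon G(O_L)$, and condition~(2) falls out because $m^{-1}b\sigma(m)$ is $\sigma$-conjugate to $b$ in $M$, hence has the same (central) Newton point $\nu_b$, and for a basic class the Newton point coincides with the Galois average of the Cartan type. The Iwasawa/Cartan construction thus replaces your unproven combinatorial adjustment with an explicit witness.
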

\begin{proof}
  This result is contained in \cite[\S5]{LR} (c.f. \cite[2.2.2]{K2017}). We sketch the proof here. Let $K_0$ be the field of definition of $b$, and let $T'\subset T$ be the maximal $\bbQ_p$-split sub-torus, so that the centralizer of $T'$ is  $M_{K_0}\subset G_{K_0}$. Up to $\sigma$-conjugation by an element of $G(K_0)$, we may assume that $b\in M_{K_0}(K_0)$. Then $T'$ may also be viewed as a subgroup of $G_{\bbQ_p}$, so $M_{K_0}$ is defined over $\bbQ_p$. Moreover, $M$ contains the centralizer of a maximal split torus $T_2$ of $G_{\bbQ_p}$. 
  
  Let $P$ be a parabolic subgroup of $G$ with unipotent radical $N$, such that $P=MN$. Let $g\in X_\upsilon(b)$. By the Iwasawa decomposition, we may assume that $g=nm$ with $n\in N(L), m\in M(L)$. Then $g^{-1}b\sigma(g)=m^{-1}b\sigma(m)n'$ for some $n'\in N(L)$. Let $\upsilon'\in X_*(T_2)$ with $m^{-1}b\sigma(m)\in (M(L)\cap G(O_L))p^{\upsilon'}(M(L)\cap G(O_L))$. Then $\upsilon'$ is conjugate to $\upsilon=\sigma(\mu^{-1})$ in $G$. We let $\mu_T\in X_*(T)$ be a cocharacter which is conjugate to $\sigma^{-1}(\upsilon'^{-1})$ in $M$. One checks that $\mu_T$ satisfies (1) and (2).
\end{proof}
\begin{remark}
\label{rk:nonemptyI}
  The latter half of this argument can be used to show that the set $I_{b,\mu}$ (defined in \ref{section:reductionLevi}) is non-empty, by taking $M$ to be the standard Levi, and $T_2$ to be the torus $T$ defined there.  
\end{remark}
\begin{remark}
  If $b$ is unramified, we can take $J_b$ to be $M_{[b]}$, the torus $T$ to be the fixed one we referred to in \ref{section:unramifiedclasses}, and $\mu_T$ is $\sigma^{-1}(\tau^{-1})$ for the representative $p^\tau$ such that $\tau_\sigma\in X_*(T)^+_\sigma$. 
\end{remark}
\subsubsection{}
$\mu^{-1}_T$ induces an admissible filtration on $\bbD(\sG_x)_K$, where $K$ is the field of definition of $\mu_T$, thought of as a $G_{K_0}$ character. From \cite[1.1.19]{K2017} there is some $p$-divisible group $\sG'/k$ such that $\bbD(\sG')=g\bbD(\sG)$ for some $g\in X_{\upsilon}(b)$ and there is an $O_K$ lift of $\sG'$ with filtration given by $\mu^{-1}_T$. We then replace $x$ with $\iota_x(g)$, and take $\tilde{x}\in \Sh_K(G,X)(K)$ lifting $x$ such that the filtration on $\bbD(\sG_{\tilde{x}})_K$ is given by $\mu^{-1}_T$. Up to modifying the choice of torus $T$, it turns out that $\tilde{x}$ is a special point. We thus have the following (c.f. \cite[2.2.3]{K2017} for more details).
\begin{theorem}
  The isogeny class $\iota_x(X_\upsilon(b)\times G(\mathbb{A}^p_f))$ contains a point which is the reduction of a special point on $\Sh_K(G,X)$.
\end{theorem}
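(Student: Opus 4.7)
The plan is to globalize the local datum $(T,\mu_T)$ to a $\bbQ$-rational torus with a compatible Hodge structure, producing a special Shimura sub-datum whose CM points lift a suitable modification of $x$.

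First, I would use a weak approximation argument (in the style of Langlands--Rapoport, c.f. \cite{LR}) to find a maximal $\bbQ$-rational torus $T_\bbQ\subset G_\bbQ$ together with an element $g\in G(\bar{\bbQ}_p)$ such that $g T_\bbQ g^{-1}\otimes \bbQ_p = T\subset J_b\subset G_{\bbQ_p}$, while at the same time $T_{\bbQ,\bbR}$ is the centralizer of a morphism $h'\in X$, i.e.\ there is some $h'\in X$ that factors through $T_{\bbQ,\bbR}$. The density of such $\bbQ$-tori inside the space of pairs (local torus at $p$, torus in $X$) is the key input; here we are allowed to modify $T$ by conjugation inside $J_b(\bbQ_p)$, which gives the flexibility alluded to in the statement ``up to modifying the choice of torus $T$''. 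Transport $\mu_T$ through $g$ to obtain a cocharacter $\mu_{T_\bbQ}\in X_*(T_\bbQ)_{\bar{\bbQ}_p}$.

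Next, since $\mu_{T_\bbQ}$ is $G_{\bar{\bbQ}_p}$-conjugate to $\mu$, and $\mu$ is in the conjugacy class corresponding to any $h\in X$, after possibly replacing $h'$ by a $T_\bbQ(\bbR)$-conjugate we may arrange that the Hodge cocharacter of $h'$ is conjugate (over $\mathbb{C}$) to $\mu_{T_\bbQ}$ through an element of $T_\bbQ(\bar{\bbQ})$. This yields a morphism of Shimura data $(T_\bbQ,\{h'\})\hookrightarrow (G,X)$, hence a special subvariety $\Sh_{K_T}(T_\bbQ,\{h'\})\subset \Sh_K(G,X)$ whose points are special. Over a number field $F$ splitting $T_\bbQ$, choose a lift $\tilde{x}_{\mathrm{sp}}\in \Sh_{K_T}(T_\bbQ,\{h'\})(F)$; we may arrange that the $v$-adic completion of $F$ equals the field $K$ of definition of $\mu_T$.

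Finally, I would compare $\tilde{x}_{\mathrm{sp}}$ with the lift $\tilde{x}$ constructed before the theorem. By construction of the CM lift, the filtration on $\bbD(\sG_{\tilde{x}_{\mathrm{sp}}})_K$ is induced by $\mu_T^{-1}$. By \cite[1.1.19]{K2017} and the uniqueness of the lifting of a $p$-divisible group with a given filtration on its Dieudonn\'e module, this means $\sG_{\tilde{x}_{\mathrm{sp}}\bmod p}$ is isomorphic to $\sG'=g\cdot\sG_x$ as $p$-divisible groups with $G$-structure, and the prime-to-$p$ level structure on $\mathcal{A}_{\tilde{x}_{\mathrm{sp}}}$ can be matched with $\iota_x$-image by acting by an element of $G(\mathbb{A}_f^p)$. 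Therefore the reduction $\tilde{x}_{\mathrm{sp}}\bmod p$ lies in $\iota_x(X_\upsilon(b)\times G(\mathbb{A}_f^p))$.

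The main obstacle is the globalization step: producing a single $\bbQ$-rational torus $T_\bbQ$ that simultaneously realizes the prescribed local torus at $p$ (needed so that the filtration $\mu_T^{-1}$ makes sense and matches the construction) and admits a Hodge morphism $h'\in X$ (needed so that it defines a Shimura sub-datum). This is where one must invoke a nontrivial approximation result --- the statement that $\bbQ$-rational tori are suitably dense in the variety of maximal tori of $G$ when one prescribes $p$-adic and archimedean conjugacy classes compatibly --- and it is precisely the freedom to conjugate $T$ inside $J_b$ that saves the day.
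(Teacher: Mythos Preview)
Your argument has a genuine gap in the final comparison step, and it stems from running the argument in the wrong direction.

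You globalize $T$ to a torus $T_{\bbQ}\subset G$ and produce a special point $\tilde{x}_{\mathrm{sp}}$ from the abstract Shimura datum $(T_{\bbQ},\{h'\})$. You then try to show that $\tilde{x}_{\mathrm{sp}}\bmod p$ lands in the isogeny class of $x$. But this does not follow. The isogeny class $\iota_x(X_\upsilon(b)\times G(\mathbb{A}^p_f))$ is determined by the $\bbQ$-isogeny class of the specific abelian variety $\mathcal{A}_x$, not merely by its $p$-divisible group. Knowing that $\bbD(\sG_{\tilde{x}_{\mathrm{sp}}})_K$ has filtration given by $\mu_T^{-1}$ tells you nothing about the global isogeny class of $\mathcal{A}_{\tilde{x}_{\mathrm{sp}}}$; there is no ``uniqueness of the lifting with a given filtration'' that pins down the abelian variety up to isogeny, and acting by $G(\mathbb{A}_f^p)$ only moves level structures, not isogeny classes.

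The paper (following \cite[2.2.3]{K2017}) proceeds in the opposite order: one first replaces $x$ by $\iota_x(g)$ and takes a lift $\tilde{x}$ inside the Shimura variety with Hodge filtration $\mu_T^{-1}$, so that the reduction is in the isogeny class \emph{by construction}, and only then argues that $\tilde{x}$ is special. The crucial point you are missing is that ``modifying the choice of torus $T$'' means choosing $T$ inside the $\bbQ$-group $I_x$ of self-quasi-isogenies of $\mathcal{A}_x$ (which satisfies $I_x\otimes\bbQ_p\hookrightarrow J_b$), rather than inside $G$. Globalizing within $I_x$ is exactly what ties the special point to the given isogeny class; globalizing within $G$, as you do, severs that connection.
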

We now consider the case there the point $x$ is $\mu$-ordinary. Let us first recall the results of \cite{Wor2013} and \cite{SZ2016} about canonical lifts of $\mu$-ordinary points.

The following result is a reformulation of \cite[Prop 7.2]{Wor2013}.
\begin{lemma}
  Let $[b_x]$ be $\mu$-ordinary. Then up to $\sigma$-conjugacy by elements in $G(O_L)$, we have $b_x=p^\upsilon$.
\end{lemma}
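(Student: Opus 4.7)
The plan is to interpret the claim as a uniqueness statement for $\mu$-ordinary $F$-crystals with $G$-structure over $\bar{\bbF}_p$, and to verify this uniqueness by reducing along the slope filtration to an application of Lang's theorem, thereby recovering \cite[Prop 7.2]{Wor2013} in the form stated.

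A choice of isomorphism $\bbD(\sG_x)\simeq \Lambda\otimes_{\bbZ_p} O_L$ taking $(s_{\alpha,0,x})$ to $(s_\alpha)$ realizes $b_x \in G(L)$, and any two such choices differ by $G(O_L)$-$\sigma$-conjugation. Since $[b_x]\in B(G,\upsilon)$, the Cartan decomposition together with a preliminary $G(O_L)$-$\sigma$-conjugation lets us assume $b_x \in p^\upsilon \cdot G(O_L)$, and the task becomes exhibiting $h \in G(O_L)$ with $h^{-1}b_x\sigma(h)=p^\upsilon$.

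The $\mu$-ordinary condition means $\nu([b_x]) = \bar\upsilon$, which is central in the standard Levi $M = M_{[b_x]}$ by definition. Appealing to Zink's slope-filtration theorem (as recalled in \ref{subsubsection:farguesfiltrationextracondition}), the slope filtration on $\sG_x$ is defined over $O_L$; in the $\mu$-ordinary case the graded pieces are isoclinic with pairwise distinct slopes, so there are no non-trivial $\sigma$-equivariant extensions between them, and the filtration canonically splits up to a $G(O_L)$-valued automorphism of $\bbD(\sG_x)$. This gives a reduction of the $G(O_L)$-torsor of tensor-preserving trivializations to an $M(O_L)$-torsor compatible with Frobenius, so after a further $G(O_L)$-$\sigma$-conjugation we may assume $b_x \in M(L)$.

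Inside $M$, the class $[b_x]$ is basic with central Newton cocharacter $\bar\upsilon$, and both $b_x$ and $p^\upsilon$ lie in $M(O_L)\cdot p^\upsilon \cdot M(O_L)$. For basic classes, the fiber over a fixed $M(L)$-$\sigma$-conjugacy class of the $M(O_L)$-$\sigma$-conjugacy quotient is controlled by a non-abelian $H^1_\sigma$ valued in the compact stabilizer, which vanishes by Lang's theorem applied to the reductive quotient of $M$ over $\bar\bbF_p$. Hence $b_x$ and $p^\upsilon$ are $M(O_L)$-$\sigma$-conjugate, finishing the proof. The hardest step in this outline is the splitting of the slope filtration as a full $M$-reduction (not merely a parabolic reduction) over $O_L$: this is precisely where the $\mu$-ordinary hypothesis is essential, and it is the content of Wortmann's original argument.
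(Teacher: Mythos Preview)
The paper supplies no proof of its own here, deferring to \cite[Prop.~7.2]{Wor2013}, so there is no in-text argument to compare against. Your strategy---split the slope filtration to reduce $b_x$ into $M(L)$ for $M=M_{\bar\upsilon}$, then apply a Lang-type argument in $M$---is sound and is essentially Wortmann's. But the final step as you phrase it is not a valid general principle and obscures the point that actually makes the argument work.

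Your assertion that ``for basic classes, the fiber over a fixed $M(L)$-$\sigma$-conjugacy class of the $M(O_L)$-$\sigma$-conjugacy quotient is controlled by a vanishing non-abelian $H^1_\sigma$'' is false in general: for basic $[b]\in B(M,\upsilon')$ the set $J_b(\bbQ_p)\backslash X^M_{\upsilon'}(b)$ can have many elements. What is special in the $\mu$-ordinary case is that $\upsilon$ \emph{itself}---not merely its Galois average $\bar\upsilon$---is central in $M$. Indeed, for a simple root $\alpha$ of $M$ one has $0=\langle\alpha,\bar\upsilon\rangle=\tfrac{1}{n}\sum_i\langle\sigma^i(\alpha),\upsilon\rangle$, and each summand is non-negative since $\upsilon$ is dominant and $\sigma$ permutes the simple roots; hence every summand vanishes, in particular $\langle\alpha,\upsilon\rangle=0$. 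Once $p^\upsilon$ is central in $M(L)$, the double coset collapses to $M(O_L)p^\upsilon M(O_L)=p^\upsilon M(O_L)$, and the equation $h^{-1}b_x\sigma(h)=p^\upsilon$ becomes $p^{-\upsilon}b_x=h\sigma(h)^{-1}$, to which Lang's theorem for the smooth connected $O_L$-group $M$ applies directly. The same centrality is also needed to justify your unproved claim that $b_x\in M(O_L)p^\upsilon M(O_L)$ after the reduction to $M(L)$: a priori $b_x\in M(L)\cap G(O_L)p^\upsilon G(O_L)$ only gives $b_x\in M(O_L)p^{\upsilon'}M(O_L)$ for some $M$-dominant $W_G$-conjugate $\upsilon'$ of $\upsilon$, and forcing $\upsilon'=\upsilon$ uses the centrality together with the Newton and $\kappa_M$-constraints. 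Without isolating this observation, your $H^1$-phrasing leaves the essential step unexplained.
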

Using this, we can consider the special point lifting $\widetilde{\sG}_x$ of $\sG_x$ obtained from the above procedure such that $\mu^{-1}_T$ is $\sigma^{-1}(\upsilon)$, and we have the following theorem \cite[Thm 3.5]{SZ2016}.
  
\begin{theorem}
\label{thm:muordlifting}
  Let $\mathscr{G}_x$ be the $p$-divisible group associated to some $x\in \mathscr{S}_\kappa(\bar{\mathbb{F}}_p)$ which is $\mu$-ordinary. Then $\widetilde{\mathscr{G}}_x$ is the unique $G_{O_L}$-adapted lift of $\sG_x$ to $O_L$, such that the action of $J_b(\mathbb{Q}_p)$ on $\mathscr{G}_x$ lifts (in the isogeny category) to $\widetilde{\mathscr{G}}$. Moreover $\tilde{x}$ is a special point.
\end{theorem}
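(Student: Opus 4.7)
The plan is to carry out a Serre--Tate-style canonical lifting argument in the $\mu$-ordinary, $G$-equivariant setting. After $\sigma$-conjugation by $G(O_L)$, the preceding lemma lets us arrange $b_x = p^\upsilon$, so $J_b \simeq M := M_{[b]}$ over $\bbQ_p$, $b$ is central in $M$, and the slope decomposition of $\bbD(\sG_x)$ is already defined over $W$. I would choose a maximal torus $T \subset M$ over $\bbQ_p$ containing the image of $\upsilon$ and set $\mu_T := \sigma^{-1}(\upsilon^{-1}) \in X_*(T)$; this is $G$-Weyl-conjugate to $\mu$ and defined over a finite subfield of $L$. Applying the construction of Section \ref{section:Gadaptedlift} with the $T_\frakS$-valued lift $\mu = \mu_T$ produces a $G_{O_L}$-adapted deformation $\widetilde{\sG}_x$ whose Hodge filtration is the $T$-filtration induced by $\mu_T^{-1}$, exhibiting the candidate lift.

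For the lifting of the $J_b(\bbQ_p)$-action I would invoke Grothendieck--Messing together with Proposition \ref{prop:Gred}: a quasi-isogeny $g$ on $\sG_x$ lifts to $\widetilde{\sG}_x$ iff the automorphism $g$ induces on $\bbD(\sG_x)(O_L)[1/p]$ preserves the Hodge filtration of $\widetilde{\sG}_x$. In the $\mu$-ordinary case this Hodge filtration is induced by the cocharacter $\mu_T \in X_*(T)$ with $T \subset M = J_b$, and moreover splits compatibly with the slope decomposition of $\bbD(\sG_x)$. A direct check shows that every $g \in J_b(\bbQ_p)$, which commutes with Frobenius and hence respects slopes, preserves this filtration, so the full $J_b(\bbQ_p)$-action lifts.

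For uniqueness I would identify the formal deformation space $\mathrm{Spf}\,R_G$ of $G_W$-adapted lifts of $\sG_x$ with the formal completion $\widehat{U}_G$ of $U_G$ at the identity, as in the construction preceding Proposition \ref{prop:3RG}. The element $\nu_b(p) \in J_b(\bbQ_p)$ acts on $\widehat{U}_G$ through its action on $\sG_x$, and in the root coordinates $\prod_\alpha U_\alpha$ of $U_G$ this action is given by $(u_\alpha)_\alpha \mapsto (p^{\langle \alpha, \nu_b\rangle} u_\alpha)_\alpha$. In the $\mu$-ordinary case one has $\nu_b = \bar{\upsilon}$ and $\langle \alpha, \bar{\upsilon}\rangle > 0$ for every root $\alpha$ occurring in $U_G$, so the only fixed point of $\nu_b(p)$ on $\widehat{U}_G$ is the origin. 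Any lift admitting a lift of the $\nu_b(p)$-quasi-isogeny, and a fortiori of all of $J_b(\bbQ_p)$, must therefore equal $\widetilde{\sG}_x$.

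Finally, to see that $\tilde{x}$ is a special point I would spread $T \subset G_{\bbQ_p}$ out to a $\bbQ$-subtorus $T_\bbQ \subset G_\bbQ$ by weak approximation at $p$, as in the proof of the preceding theorem. The Hodge structure on $H_1(\calA_{\tilde{x}},\bbQ)$ is then given by a $T_\bbQ$-valued cocharacter $G$-conjugate to $\mu$, so $\tilde{x}$ has CM by $T_\bbQ$ and is a special point. The main obstacle is the uniqueness step; it is the $\mu$-ordinary analogue of the classical Serre--Tate theorem, and hinges on the positivity $\langle \alpha, \bar{\upsilon}\rangle > 0$ for every root of $U_G$, a condition which precisely characterizes the $\mu$-ordinary stratum (at non-$\mu$-ordinary strata the corresponding fixed locus on the deformation space would be positive-dimensional).
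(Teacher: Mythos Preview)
The paper does not give its own proof of this theorem; it is quoted from \cite[Thm.~3.5]{SZ2016}. Your sketch is in the spirit of the Shankar--Zhou argument (Serre--Tate canonical lifting via a contracting action on the $G$-deformation space), so the overall strategy is sound, but two of the steps are not justified as written.

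The main gap is in the existence step. You claim that every $g\in J_b(\bbQ_p)$ preserves the Hodge filtration of $\widetilde{\sG}_x$ because $g$ commutes with Frobenius and hence respects the slope decomposition. But respecting the slope decomposition only says $g\in M(L)$, and $M$ is \emph{not} in general contained in the parabolic determined by $\mu_T^{-1}$: the condition $\langle\alpha,\bar{\upsilon}\rangle=0$ on a root $\alpha$ does not force $\langle\alpha,\mu_T\rangle=0$, since $\bar{\upsilon}$ is a Galois average while $\mu_T$ is a single Galois conjugate. So ``a direct check'' does not suffice here. In Shankar--Zhou the existence and uniqueness are obtained together: one first shows the $J_b(\bbQ_p)$-action on $\mathrm{Spf}\,R_G$ has the origin as its unique fixed point, and then identifies that fixed point with the lift constructed from $\mu_T$. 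Your uniqueness paragraph already contains this idea; you should use it for existence as well rather than arguing via Grothendieck--Messing on the chosen lift.

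Two smaller issues. First, $\nu_b(p)$ is not an element of $J_b(\bbQ_p)$ because $\nu_b=\bar{\upsilon}$ is only a rational cocharacter; replace it by a suitable integral multiple, or by $b\sigma(b)\cdots\sigma^{N-1}(b)$ for divisible $N$. Second, the contraction claim $\langle\alpha,\bar{\upsilon}\rangle>0$ for every root of $U_G$ is exactly the combinatorial content of ``$\mu$-ordinary'', but with the paper's sign conventions ($\upsilon=\sigma(\mu^{-1})$, roots of $U_G$ satisfying $\langle\alpha,\mu_0\rangle>0$) the inequality goes the other way; you need to track whether the induced action on $\widehat{U}_G$ is by conjugation or its inverse before asserting positivity. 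The special-point step at the end is also more delicate than a bare weak-approximation argument: one needs a $\bbQ$-torus with the correct behaviour at $p$ and at $\infty$, which is what the special-point lifting formalism of \cite{K2017} recalled just before the theorem supplies.
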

The lift $\tilde{x}$ is known as the canonical lift of $x$.

Let $M$ be the standard Levi subgroup given as the centralizer of $\tilde{\upsilon}$. Note that in the above theorem $\widetilde{\mathscr{G}}_x$ is also, by construction, the unique $M_{O_L}$-adapted lift of $\mathscr{G}$. Moreover, we have $J_b=M$, and an isomorphism
\begin{equation*}
    X^G_\upsilon(b)\simeq X^M_\upsilon(b)=M(\bbQ_p)/M(\bbZ_p)
\end{equation*}
of perfect schemes of dimension 0. In particular, we see that all isogenies of $\mathscr{G}_x$ lifts to isogenies of $\widetilde{\mathscr{G}}_x$.

\section{The moduli space $p-\operatorname{Isog}$}
\subsection{Preliminaries}
Let $T$ be a scheme over $O_{E,(v)}$, and consider any two points $x,y$ lying in $\mathscr{S}_K(G,S)(T)$. For any geometric point $t$ of $T$, let $x_t,y_t$ be the pullback of $x,y$ to $t$. As described above, we have $l$-adic \'{e}tale and de Rham tensors $(s_{x_{t},\alpha,l})$ for $l\neq p$, and $(s_{x_{t},\alpha,dR})$ (respectively $(s_{y_{t},\alpha,l}),(s_{y_{t},\alpha,dR})$ for $y_{t}$). Observe that $k(t)$, the residue field at $t$, could be of either characteristic $0$ or characteristic $p$. Suppose $k(t)$ is a field of characteristic $0$, i.e. it is an extension of $E$. Then, we also have $p$-adic \'{e}tale tensors $(s_{x_{t},\alpha,p}),(s_{y_{t},\alpha,p})$. Otherwise, if $k(t)$ is of characteristic $p$, it is an extension of $\kappa$. Similarly, we have crystalline tensors $(s_{x_{t},\alpha,0}),(s_{y_{t},\alpha,0})$.

We define a quasi-isogeny between $x,y$ to be a quasi-isogeny $f:\mathcal{A}_{x}\rightarrow \mathcal{A}_{y}$ of abelian schemes over $T$, such that for any geometric point $t$, the induced quasi-isogeny $f_t:\mathcal{A}_{x_t}\rightarrow {A}_{y_t}$ of abelian varieties over $k(t)$ preserves all the tensors described above. 

We define a $p$-quasi-isogeny between $x,y$ to be a quasi-isogeny as defined above, such that the isomorphism on the rational prime-to-$p$ Tate modules $f:\hat{V}^p(\mathcal{A}_x)_\mathbb{Q}\xrightarrow{\sim}\hat{V}^p(\mathcal{A}_y)_\mathbb{Q}$, induced by the quasi-isogeny $\mathcal{A}_x\rightarrow\mathcal{A}_y$, respects the prime to $p$-level structures $\varepsilon^p_x,\varepsilon^p_y$, i.e. $\varepsilon^p_y$ is given by the composition
\begin{equation*}
    V_{\mathbb{A}_p^f}\xrightarrow[\varepsilon^p_x]{\sim}\hat{V}^p(\mathcal{A}_x)_\mathbb{Q}\xrightarrow[f]{\sim}\hat{V}^p(\mathcal{A}_y)_\mathbb{Q}.
\end{equation*}
In particular, we see that the weak polarizations on $\calA_x,\calA_y$ differ by some power of $p$.

\subsubsection{} Consider the \emph{fppf}-sheaf of groupoids $p-\operatorname{Isog}$ of $p$-quasi-isogenies between points on $\mathscr{S}_{K_p}(G,X)$. Concretely, for any $O_{E,(v)}$-scheme $T$, points of $p-\operatorname{Isog}$ are pairs $(x,f)$, where $x\in\mathscr{S}_{K_p}(G,X)(T)$, and $f$ is a $p$-quasi-isogeny $f:\calA_x\rightarrow \calA_y$, where $y\in \mathscr{S}_{K_p}(G,X)(T)$. We have canonical 1-morphisms sending a $p$-quasi isogeny $(x,f)$ to $x$ (respectively $y$)
\begin{equation*}
	s:p-\operatorname{Isog}\rightarrow \mathscr{S}_{K_p}(G,X)\qquad t:p-\operatorname{Isog}\rightarrow\mathscr{S}_{K_p}(G,X).
\end{equation*}
Denote by $\underline{\mathbb{Z}}$ the constant sheaf associated to $\mathbb{Z}$ on $Sch/O_{E,v}$, which we can consider as a rigid stack. To any pair $(x,f)$ we can associate a multiplicator $d\in\mathbb{Z}$, defined as follows. Let $m$ be a positive integer such that $p^mf$ is an actual isogeny from $\calA_x$ to $\calA_y$, then the degree of $f$ is $d-m$, where $d$ is the degree of $p^mf$. The map which takes a $p$-quasi-isogeny to its multiplicator defines a 1-morphism $p-\Isog \rightarrow \underline{\mathbb{Z}}$. For every global section $c$ of $\underline{\mathbb{Z}}$ we define $p-\Isog^c$ to be its inverse image, i.e. $p$-quasi-isogenies with multiplicator $c$. Thus, $p-\Isog$ is the disjoint union of $p-\Isog^c$. We have the following well-known proposition:
\begin{proposition}
    \label{prop:st}
  The 1-morphisms $s,t: p-\Isog^c\rightarrow \mathscr{S}_{K_p}(G,X)$ are representable by proper surjective morphisms. Moreover, $p-\Isog^c$ is a relative scheme over $\mathscr{S}$, and hence $p-\Isog$ is an algebraic stack. Furthermore, the morphisms are finite \'{e}tale in characteristic zero.
\end{proposition}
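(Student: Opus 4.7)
The plan is to reduce to the Siegel case, where the analogous proposition is classical (Faltings--Chai), by transferring the relevant properties across the closed immersion $\mathscr{S}_{K_p}(G,X) \hookrightarrow \mathscr{S}_{K'_p}(\mathrm{GSp}(V), \mathcal{H}^\pm)$.

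First, I would construct the analogous stack $p-\Isog_{\mathrm{Sp}}$ over $\mathscr{S}_{K'_p}$ and recall that its source and target morphisms are representable, proper and surjective, with each $p-\Isog_{\mathrm{Sp}}^c$ being a quasi-projective scheme. Representability follows because a $p$-quasi-isogeny of fixed multiplicator $c$, after shifting by a power of $p$, is a quotient by a finite flat subgroup scheme of $\calA_x[p^N]$ (for $N$ bounded in terms of $c$); the moduli of such subgroup schemes together with a compatible prime-to-$p$ level structure is a closed subscheme of a relative Grassmannian. Properness is the valuative criterion applied via the N\'eron extension property for abelian schemes. Surjectivity follows because each non-empty $p-\Isog_{\mathrm{Sp}}^c$ decomposes as a finite union of Hecke correspondences, each of which surjects onto $\mathscr{S}_{K'_p}$.

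Next, I would construct a natural 1-morphism $p-\Isog \to p-\Isog_{\mathrm{Sp}}$ by forgetting the Hodge tensors, and exhibit $p-\Isog^c$ as a closed substack of the fiber product
\[
p-\Isog_{\mathrm{Sp}}^c \times_{\mathscr{S}_{K'_p} \times \mathscr{S}_{K'_p}} \left(\mathscr{S}_{K_p}(G,X) \times \mathscr{S}_{K_p}(G,X)\right),
\]
cut out by the condition that the quasi-isogeny preserves the tensors $(s_{\alpha,\ell})$ and $(s_{\alpha,0})$ at every geometric point. The technical step is to check this is genuinely a closed condition: on the $\ell$-adic side it follows from continuity of $\ell$-adic sheaves, and on the crystalline side it uses the universal $F$-isocrystal with $G$-structure over $\mathscr{S}_\kappa$ together with the fact that the tensors $(s_{\alpha,0})$ glue to a global section. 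Once this is established, the scheme structure, representability and properness on $p-\Isog^c$ descend from the Siegel case through the closed immersion, and $p-\Isog$ is automatically an algebraic stack as a disjoint union of such schemes.

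For surjectivity of $s,t\colon p-\Isog^c \to \mathscr{S}_{K_p}(G,X)$, I would observe that each non-empty $p-\Isog^c$ is a disjoint union of Hecke correspondences indexed by double cosets in $K_p \backslash \{g \in G(\bbQ_p) : \mathrm{mult}(g) = c\} / K_p$, and each such correspondence surjects onto $\mathscr{S}_{K_p}(G,X)$ because its fiber over any $x$ is the non-empty Hecke orbit $T_g \cdot x$. For finite \'etaleness in characteristic zero, the generic fiber classifies quasi-isogenies of characteristic-zero abelian schemes, whose kernels (after multiplying by a power of $p$) are finite \'etale group schemes; hence $s, t$ are finite \'etale over $\mathrm{Sh}_K(G,X)$. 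The main obstacle will be verifying that tensor-preservation is a closed condition over mixed-characteristic bases: both the \'etale tensors (over the characteristic-zero part) and crystalline tensors (over the characteristic-$p$ part) must be matched under the isogeny, and showing these conditions glue into a single closed substack requires careful use of the integral crystalline comparison provided by the theory of $\mathrm{BT}^\varphi_{/\mathfrak{S}}$ modules recalled in Section 3.
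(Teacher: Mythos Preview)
The paper does not actually prove this proposition: it is stated as ``well-known'' with no accompanying proof environment, implicitly deferring to the Siegel case treated by Faltings--Chai and its subsequent extensions (e.g.\ B\"ultel, Moonen, Wedhorn). Your proposal is exactly the standard argument one would give to fill this in, and it is correct in outline.

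One small comment: the technical point you flag at the end --- that tensor-preservation is a closed condition across mixed characteristic --- is real but is handled somewhat differently in practice. Rather than gluing the \'etale and crystalline conditions directly, one typically uses that the closure $\mathscr{S}_K(G,X)$ inside $\mathscr{S}_{K'}(\mathrm{GSp},\mathcal{H}^\pm)$ is already characterized as the normalization of the Zariski closure of the generic fiber, and that the tensors extend by the results recalled in the paper's Section~5 (Kisin's integral canonical model). The fiber product you write down is then automatically a scheme over $\mathscr{S}_K(G,X)$ via $s$, and the tensor-matching condition is closed because it can be checked on the generic fiber (where it is the closed condition defining $G$ inside $\mathrm{GSp}$) and then extended by flatness/closure. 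This avoids needing to invoke the $\mathrm{BT}^\varphi_{/\mathfrak{S}}$ machinery directly here.
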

Note that we can also impose level structures on $p-\operatorname{Isog}$. For $K^p\subseteq G(\mathbb{A}_p^f)$, we can define $p-\operatorname{Isog}_{K^p}$ in a similar way, by setting $K=K^pK_p$ and considering $p$-quasi-isogenies between points on $\mathscr{S}_{K}(G,X)$ instead. For small enough $K^p$ such that $\mathscr{S}_{K}(G,X)$ is a scheme, the arguments above show that $p-\operatorname{Isog}_{K^p}$ is in fact also a scheme over $O_{E,(v)}$. In the following, we always assume sufficient level structure $K^p$ such that $p-\operatorname{Isog}_{K^p}$ is a scheme, and for notational simplicity we will simply denote this by $p-\operatorname{Isog}$. 

\begin{lemma}
    $p-\Isog\otimes E$ is equidimensional of dimension $2\langle\rho,\mu\rangle$.
\end{lemma}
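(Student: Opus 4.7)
The plan is to reduce the claim to the fact that the source morphism is finite \'{e}tale in characteristic zero, combined with the known dimension of the Shimura variety. First, I would invoke Proposition \ref{prop:st}: for every integer $c$, the map $s:(p-\Isog^c)\otimes E\rightarrow \sS_{K_p}(G,X)\otimes E=\Sh_K(G,X)$ (at sufficiently small level $K^p$ so that both sides are schemes) is finite \'{e}tale and surjective. Since a finite \'{e}tale surjection between schemes preserves both dimension and equidimensionality (the fibers being discrete of dimension zero), it suffices to verify that $\Sh_K(G,X)$ is equidimensional of dimension $2\langle\rho,\mu\rangle$.

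Next, I would recall that $\Sh_K(G,X)$ is smooth over $E$, so equidimensionality is automatic on each connected component. The complex dimension of $\Sh_K(G,X)(\mathbb{C})$ equals $\dim_{\mathbb{C}}X=\dim G/P_\mu$, where $P_\mu$ is the parabolic subgroup associated to the minuscule cocharacter $\mu$. For minuscule $\mu$ this dimension is precisely the number of positive roots $\alpha$ with $\langle\alpha,\mu\rangle=1$, which equals $\langle 2\rho,\mu\rangle=2\langle\rho,\mu\rangle$.

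Finally, I would conclude by writing $p-\Isog\otimes E=\bigsqcup_c(p-\Isog^c)\otimes E$ and noting that each non-empty piece is equidimensional of the same dimension $2\langle\rho,\mu\rangle$, so the disjoint union is equidimensional of the same dimension as well. Given that Proposition \ref{prop:st} is already established, I do not anticipate a substantial obstacle here; the only point that merits some care is the standard dimension formula for the Shimura variety in terms of the minuscule cocharacter $\mu$, and the routine check that finite \'{e}tale surjections transport equidimensionality.
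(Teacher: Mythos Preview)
Your proof is correct and follows essentially the same approach as the paper: invoke Proposition~\ref{prop:st} to get that $s$ is finite \'{e}tale (hence surjective and dimension-preserving) in characteristic zero, and combine this with the standard formula $\dim\Sh_K(G,X)=2\langle\rho,\mu\rangle$. The paper's proof is a one-liner citing exactly these two inputs; you have simply spelled out the details.
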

\begin{proof}
    This follows from Proposition \ref{prop:st}, and the fact that dimension $\Sh_K(G,X)=2\langle\rho,\mu\rangle$.
\end{proof}

Note that composition of isogenies induces a morphism
\begin{equation}
  \label{eqn:compositionisogeny}
  p-\operatorname{Isog}\times_{\mathscr{S}_{K}(G,X)}p-\operatorname{Isog}\xlongrightarrow{c} p-\operatorname{Isog}
\end{equation}
that maps any pair of $T$-valued isogenies $f_1:\mathcal{A}_1\rightarrow\mathcal{A}_2$ and $f_2:\mathcal{A}_2\rightarrow\mathcal{A}_3$ to $f_2\circ f_1$.

For any $O_{E,(v)}$ scheme $S$, let $p-\Isog\otimes S$ denote the base change $p-\Isog\times_{O_{E,(v)}}S$. We denote $\mathbb{Q}[p-\Isog\otimes S]$ by the $\mathbb{Q}$-vector space of irreducible components of $p-\Isog\otimes S$. If $S$ is a scheme over $E$, \eqref{eqn:compositionisogeny} gives us a multiplicative structure on $\mathbb{Q}[p-\Isog\otimes S]$. Concretely, let $C,D$ be irreducible components $p-\Isog\otimes S$, denote by $C\cdot D$ the image of $C\times_{\mathscr{S}_{K}(G,X)} D$ under the map $c$, multiplied by the degree of $c$. Then since $s$ is finite \'{e}tale in characteristic zero, $\dim C\cdot D=\dim C$, and hence $C\cdot D$ defines an element of $\mathbb{Q}[p-\Isog\otimes S]$. 

\subsubsection{}
Consider the closure $\mathscr{J}$ of the generic fiber $p-\Isog\otimes E$ in $p-\Isog$. We abuse notation and still denote the special fiber of $\mathscr{J}$ by $p-\Isog\otimes\kappa$, and the $\bbQ$-vector space of irreducible components by $\bbQ[p-\Isog\otimes\kappa]$. Since $\mathscr{J}$ is flat over $O_{E,(v)}$, Irreducible components of $p-\Isog\otimes\kappa$ are hence of dimension $2\langle\rho,\mu\rangle$. We still have proper maps $s,t:p-\Isog\otimes\kappa\rightarrow \sS_\kappa$.

As described above, we have a stratification of $\mathscr{S}_\kappa$ by elements $[b]\in B(G,\upsilon)$. Since $[b]$ is invariant under quasi-isogeny, we can similarly define a stratification of $p-\Isog\otimes\kappa$ as follows.
\begin{equation*}
    p-\Isog^{[b]}\otimes\kappa:=\{x\in p-\Isog\otimes\kappa(\overline{\mathbb{F}}_p):[b_{s(x)}]=[b]\}
\end{equation*}
with reduced subscheme structure. In particular, $p-\Isog^{[b]}\otimes\kappa=s^{-1}(\mathscr{S}^{[b]}_\kappa)$. When $[b]$ is $\mu$-ordinary, we also denote the associated subscheme by $p-\Isog^{\ord}\otimes \kappa$. Observe that the closure relations on $\sS_\kappa$ induce closure relations on $p-\Isog\otimes\kappa$. In particular, we see that
\begin{equation*}
    \overline{p-\Isog^{[b]}\otimes\kappa}\subset \bigcup_{[b']\preceq [b]} p-\Isog^{[b']}\otimes\kappa.
\end{equation*}For any $C\in \bbQ[p-\Isog\otimes\kappa]$, we define  
\begin{equation*}
    C^{[b]} = p-\Isog^{[b]}\otimes\kappa \cap C.
\end{equation*}
\subsubsection{}
Let 
  \begin{equation*}
  	f:\mathcal{A}_1\rightarrow \mathcal{A}_2
  \end{equation*} 
be the $p$-quasi-isogeny corresponding to an $\overline{\mathbb{Q}}$-valued point in $p-\Isog\otimes E$. Choose isomorphisms $\alpha_i:\Lambda\simeq T_p(\mathcal{A}_i)$, $i=0,1$. Then $\alpha_2^{-1}\circ V_p(f)\circ \alpha_1: V_{\mathbb{Q}_p}\rightarrow V_{\mathbb{Q}_p}$ is an element of $G(\mathbb{Q}_p)$. Its class $\tau(f)$ in $K_p\backslash G(\mathbb{Q}_p)/K_p$ is independent of the choices involved. Moreover, because we can locally trivialize the \'{e}tale local systems $T_p(\calA_i)$, the function $\tau$ is locally constant, and hence gives a well-defined map on irreducible components of $p-\operatorname{Isog}\otimes E$.

Thus, we can define a $\mathbb{Q}$-algebra homomorphism
  \begin{equation*}
 h_G : \calH(G(\mathbb{Q}_p)//K_p) \rightarrow \mathbb{Q}[p-\Isog \otimes E]
  \end{equation*}
as the map which takes $1_{K_pgK_p}$ to the formal sum of
irreducible components $C\subset p-\operatorname{Isog} \otimes E$ such that $\tau(C) = K_pg^{-1}K_p$.
\begin{remark}
  Note that we map $K_pg^{-1}K_p$ rather than $K_pgK_p$ to this is a correction of \cite{W2000}, following the remarks of \cite[A4]{Nek2018}. 
\end{remark}
\subsubsection{}Given an irreducible component $C\subset p-\Isog\otimes E$, we can consider the closure $\mathscr{C}\subset p-\Isog$, and let $\cup_i C_{p,i}$ be the special fiber of $\mathscr{C}$, where each $C_{p,i}$ is an irreducible component. We thus define a specialization of cycles map $S:\bbQ[p-\Isog\otimes E]\rightarrow\bbQ[p-\Isog\otimes\kappa]$, such that $S(C)=\sum_{i}C_{p,i}$. We let $h$ denote the composition
\begin{equation*}
    h:\calH(G(\mathbb{Q}_p)//K_p) \rightarrow \mathbb{Q}[p-\Isog \otimes E]\xrightarrow{S}\bbQ[p-\Isog\otimes\kappa].
\end{equation*}
This is an algebra homomorphism, with the multiplicative structure on $\bbQ[p-\Isog\otimes\kappa]$ defined as in Appendix \ref{app:b}.
\subsubsection{}
\label{subsub:commutative}
Since the Hecke algebra $\calH(G(\mathbb{Q}_p)//K_p)$ is commutative, it follows that its image under $h$ is a commutative subalgebra of $\mathbb{Q}[p-\Isog\otimes\kappa]$. There is another element which lies in the center, namely the Frobenius section $\Frob$. This was shown by Koskivirta (c.f.\cite[Prop. 25]{Kos2014}). In particular, this means that when multiplying the terms in $H_{G,X}(\Frob)$, we may freely multiply terms without worrying about the order of multiplication.

\subsubsection{}
\label{section:constructionbarh}
We now consider the case where $[b]$ is the $\mu$-ordinary $\sigma$-conjugacy class. In this situation, we will show that $p-\Isog^{M,[b]}\otimes\kappa=p-\Isog^{[b]}\otimes\kappa$, and moreover we can construct a map $\bar{h}$ from the Hecke algebra for $M_{[b]}$ to $\bbQ[p-\Isog^{\ord}\otimes\kappa]$. 

Consider any geometric point of $p-\Isog^{\ord}\otimes\kappa$, which gives us a $p$-quasi-isogeny $f:\calA_x\rightarrow \calA_y$ of abelian varieties over $\bar{\mathbb{F}}_p$, where $x,y\in \mathscr{S}_k(\bar{\mathbb{F}}_p)$. This induces a quasi-isogeny of $p$-divisible groups over $\bar{\mathbb{F}}_p$. To lift $f$ to a $p$-quasi-isogeny over $O_{\bar{L}}$, it suffices by Serre-Tate theory to lift the associated quasi-isogeny of $p$-divisible groups, which we also denote by $f$. 

From the results in \ref{thm:muordlifting}, we see that given a quasi-isogeny $f:\mathcal{A}_x\rightarrow \mathcal{A}_y$ there is a unique lift to an isogeny $f^{can}:\mathcal{A}_x^{can}\rightarrow\mathcal{A}_y^{can}$ over $O_L$, such that $\mathcal{A}_x^{can},\mathcal{A}_y^{can}$ are the canonical lifts. Let $M:=M_{[b^{\ord}]}$. Consider the $p$-adic Tate modules $T_p(\mathcal{A}_x^{can})$ and $T_p(\mathcal{A}_y^{can})$. By construction, $M$ is the centralizer of $\bar{\upsilon}$. We can choose isomorphisms $\alpha_1:\Lambda\simeq T_p(\mathcal{A}_x^{can})$, $\alpha_2:\Lambda\simeq T_p(\mathcal{A}_y^{can})$ such that the map $\alpha_2^{-1}\circ f^{can}\circ \alpha_1$ gives a well-defined coset of in $M_c\backslash M(\mathbb{Q}_p)/M_c$
	
Similar to the case over the generic fiber, to each geometric point in $p-\Isog^{\ord}\otimes\kappa$ we can associate an element $\tau'$ which maps to each point the coset representative in $M_c\backslash M(\mathbb{Q}_p)/M_c$. Such a map is locally constant, by a proof that is almost identical to the proof of \cite[Lem 4.2.11]{M2004}.
\begin{proposition}
		The function $\tau'$ is constant on irreducible components of $p-\Isog^{\ord}\otimes \kappa$.
\end{proposition}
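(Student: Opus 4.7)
The plan is to prove the proposition in two steps: first verify that $\tau'$ is locally constant on $p-\Isog^{\ord}\otimes\kappa$ in the étale topology, and then use that irreducible components of schemes are connected to conclude. The local constancy is asserted in the preceding paragraph as ``almost identical'' to Moonen's argument \cite[Lem 4.2.11]{M2004}, and I would spell out the outline as follows. Given a geometric point $P_0 \in p-\Isog^{\ord}\otimes\kappa(\bar{\bbF}_p)$ corresponding to a $p$-quasi-isogeny $f_0: \calA_{x_0}\to\calA_{y_0}$, Theorem \ref{thm:muordlifting} produces unique canonical lifts $\calA_{x_0}^{can}, \calA_{y_0}^{can}$ together with a canonical lift $f_0^{can}$ of $f_0$ (the canonical lift being functorial for isogenies in the $\mu$-ordinary setting). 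Over an étale neighborhood $U \to p-\Isog^{\ord}\otimes\kappa$ of $P_0$, one extends this to a quasi-isogeny of canonical lifts $\calA_x^{can} \to \calA_y^{can}$ on a formally smooth $O_L$-lift $\widetilde{U}$ of $U$, using Serre-Tate theory and uniqueness of the canonical lift. On the rigid generic fiber of $\widetilde{U}$, the $p$-adic Tate modules of $\calA_x^{can}$ and $\calA_y^{can}$ are $\bbZ_p$-local systems equipped with an $M(\bbQ_p)$-action; after further shrinking $U$ étale-locally around $P_0$, one may trivialize these local systems by $\bbZ_p$-bases compatible with the $M$-action. The resulting formula for $\tau'$ then shows that $\tau'$ takes a single value on all geometric points of $U$.

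Given local constancy, the conclusion is almost immediate. Fix an irreducible component $C$ of $p-\Isog^{\ord}\otimes\kappa$, and for each class $g \in M_c\backslash M(\bbQ_p)/M_c$ set
\begin{equation*}
  C_g := \{P \in |C|(\bar{\bbF}_p) : \tau'(P) = g\}.
\end{equation*}
Local constancy implies that each $C_g$ is Zariski-open in $|C|$: if $P \in C_g$ and $U \to p-\Isog^{\ord}\otimes\kappa$ is an étale neighborhood of $P$ on which $\tau'$ is identically $g$, the image of $U$ in $|C|$ is Zariski-open (étale morphisms are open) and lies inside $C_g$. The $C_g$'s thus partition $|C|$ into disjoint Zariski-open subsets. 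Since $C$ is irreducible, $|C|$ is connected, so exactly one $C_g$ is nonempty, giving the claim.

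The hard part is really the first step: verifying the existence of the coherent quasi-isogeny of canonical lifts together with a trivialization of the resulting $p$-adic Tate modules on an étale neighborhood, in a way that respects the $M$-action. The quasi-isogeny lift relies on Serre-Tate theory plus functoriality of the canonical lift on the ordinary locus, and the trivialization reduces to the fact that $\bbZ_p$-local systems of free $\bbZ_p$-modules on a pointed connected base are trivialized by an étale cover of the basepoint; both steps are standard but need to be put together carefully, which is exactly the content of Moonen's argument in \cite[Lem 4.2.11]{M2004}.
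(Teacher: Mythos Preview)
Your proposal is correct and follows exactly the same approach as the paper: the paper asserts local constancy of $\tau'$ by reference to \cite[Lem 4.2.11]{M2004} in the sentence immediately preceding the proposition, and the passage from ``locally constant'' to ``constant on irreducible components'' is then the elementary connectedness argument you spell out. Your write-up is in fact more detailed than what the paper provides, since the paper does not explicitly unpack either step.
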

  
Thus, we can define a $\mathbb{Q}$-module homomorphism
  \begin{equation}
  \label{eqn:barh}
    \bar{h} : H_0(M(\mathbb{Q}_p)//M_c) \rightarrow \mathbb{Q}[p-\Isog^{\ord}\otimes\kappa]
  \end{equation}
as follows. $\bar{h}$ maps $1_{M_cmM_c}$ to the formal sum $\sum\frac{1}{\mathrm{deg}(C)}C$ over all $C$ such that $\tau'(C) = M_cm^{-1}M_c$, and $\mathrm{deg}(C)$ is the degree of the finite flat map $s:C^{ord}\rightarrow \sS^{ord}$.

$\bar{h}$ is clearly a map of $\mathbb{Q}$-modules. Note that $\mathbb{Q}[p-\Isog^{\ord}\otimes\kappa]$ also has the structure of a $\mathbb{Q}$-algebra given by the map \eqref{eqn:compositionisogeny}, since $s,t$ are finite flat morphisms when restricted to $p-\Isog^{\ord}\otimes\kappa$ (proof is identical to that in \cite[4.2.2]{M2004}). With this structure, $\bar{h}$ is clearly also a $\mathbb{Q}$-algebra homomorphism. $\bar{h}$ is related to $h$ in the following way:
\begin{proposition}
   We have the following commutative diagram
    \begin{equation}
    \label{eq:comdiagrammuord}
        \begin{tikzcd}
          \mathcal{H}_0(G(\mathbb{Q}_p)//K_p)\arrow{rr}{h}\arrow{dd}{\dot{\mathcal{S}}^G_M} &&\mathbb{Q}[p-\Isog\otimes E]\arrow{dd}{\ord\circ S}\\
          &&\\
          \mathcal{H}_0(M(\mathbb{Q}_p)//M_c)\arrow{rr}{\bar{h}}&&\mathbb{Q}[p-\Isog^{\ord}\otimes \kappa].
        \end{tikzcd}
        \end{equation}
\end{proposition}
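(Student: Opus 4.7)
The plan is to reduce to checking the identity on characteristic functions and match coefficients of each ordinary irreducible component, using canonical-lift theory to connect the generic and special fibers of $p-\Isog$.

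Both $\operatorname{ord}\circ S\circ h$ and $\bar{h}\circ\dot{\calS}^G_M$ are $\bbQ$-algebra homomorphisms on $\mathcal{H}(G(\bbQ_p)//K_p)$, so it suffices to verify the equality on characteristic functions $f = 1_{K_p g K_p}$. I will compare, for each irreducible component $C\subset p-\Isog^{\ord}\otimes\kappa$ with $\tau'(C)=M_cm^{-1}M_c$, the coefficient of $C$ on both sides.

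The geometric input is the canonical-lift theory of Theorem \ref{thm:muordlifting} combined with Serre-Tate theory. Any quasi-isogeny between $\mu$-ordinary abelian varieties over $\bar{\bbF}_p$ lifts uniquely to a quasi-isogeny between their canonical lifts over $O_L$. Globalizing, this realizes $p-\Isog^{\ord}\otimes\kappa$ as the reduction of a finite flat cover of $\sS_\kappa^{\ord}$ whose base change to $E$ identifies with the ordinary part of $p-\Isog\otimes E$; in particular, the specialization map $S$ restricted to the ordinary locus is given by this canonical-lift identification. Moreover, the $p$-adic Tate modules of the canonical lifts carry canonical filtrations lifting the slope filtration on the mod-$p$ $p$-divisible groups, and any lifted quasi-isogeny preserves them. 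Thus, choosing trivializations $\alpha_1,\alpha_2$ compatible with these filtrations, the invariant $\tau(f^{\mathrm{can}})\in K_p\backslash G(\bbQ_p)/K_p$ admits a representative in $P(\bbQ_p)$, whose image in $M_c\backslash M(\bbQ_p)/M_c$ via the Iwasawa decomposition equals $\tau'(f)$.

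The coefficient match is then a direct Iwasawa-decomposition count. Decompose $K_p g K_p = \bigsqcup_i h_i K_p$ into left $K_p$-cosets and write $h_i = n_i m_i k_i$ via Iwasawa with $n_i\in N(\bbQ_p)$, $m_i\in M(\bbQ_p)$, $k_i\in K_p$. A short volume computation, using $NK_p\cap M = M_c$ and the normalization $\mathrm{vol}(N(\bbQ_p)\cap K_p) = 1$, gives
\[
\dot{\calS}^G_M(f)(m) \;=\; \int_{N(\bbQ_p)} 1_{K_pgK_p}(mn)\,dn \;=\; \#\{\,i : m_i\in mM_c\,\} \;=:\; a_m,
\]
so the coefficient of $C$ in $\bar{h}(\dot{\calS}^G_M(f))$ is $a_m/\deg(C)$. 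Through the canonical-lift identification of the previous paragraph, the left coset $h_iK_p$ corresponds to the ordinary quasi-isogeny whose $\tau'$-invariant is $M_cm_i^{-1}M_c$; the $a_m$-many left cosets with $m_i\in mM_c$ distribute uniformly over the generic components $D$ with $\tau(D)=K_pg^{-1}K_p$ that specialize to $C$, each fiber contributing $1/\deg(C)$ in the specialization map, and so the coefficient of $C$ in $\operatorname{ord}\circ S\circ h(f)$ is also $a_m/\deg(C)$.

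The main obstacle will be the multiplicity bookkeeping in the final step: precisely verifying that the $\deg(C)$ factor appearing in $\bar h$ arises combinatorially in the specialization from the size of the Iwasawa fiber $\#(M_cmM_c/M_c)$, so that the two normalizations are indeed compatible. This reconciliation ultimately rests on the uniqueness of canonical lifts of $\mu$-ordinary quasi-isogenies (Theorem \ref{thm:muordlifting}) and on the finite flatness of $s,t\colon p-\Isog^{\ord}\otimes\kappa\to \sS_\kappa^{\ord}$ noted in the construction of $\bar h$.
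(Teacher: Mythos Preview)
Your approach is essentially the same as the paper's: both rest on the observation that the canonical lift of a $\mu$-ordinary quasi-isogeny is represented by an element $m\in M(\bbQ_p)$, so the invariants $\tau$ and $\tau'$ are related exactly by the Iwasawa projection, and hence by $\dot{\calS}^G_M$. The paper's proof is in fact considerably terser than yours---it simply notes that for $m\in M$ one has $\dot{\calS}^G_M(1_{mK_p})=1_{mM_c}$ at the level of single left cosets (not double cosets), and that this single-coset identity mirrors the canonical-lift relationship for individual isogenies; it then asserts commutativity without any explicit coefficient bookkeeping.

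Your attempt to carry out the coefficient matching via the left-coset decomposition $K_pgK_p=\bigsqcup_i h_iK_p$ and the count $a_m=\#\{i:m_i\in mM_c\}$ is the right way to flesh this out, and in that sense you are doing more than the paper does. The point you flag as an obstacle---reconciling the $1/\deg(C)$ normalization in $\bar h$ with the absence of such a factor in $h$---is genuine and is not addressed in the paper's proof either. The clean way to see it is to avoid reassembling into double cosets: work entirely at the level of the fiber of $s$ over a single ordinary point $x$ and its canonical lift $\tilde x$, where the reduction map $G(\bbQ_p)/K_p\to X_\upsilon(b)\simeq M(\bbQ_p)/M_c$ is exactly the Iwasawa projection, and compare the two sides as formal sums of points in that fiber. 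Your phrase ``distribute uniformly\ldots each fiber contributing $1/\deg(C)$'' is not quite the right picture; what actually happens is that the degree of a generic component over $\tilde x$ equals the total multiplicity of its ordinary specialization over $x$, and the $1/\deg(C)$ in $\bar h$ is precisely what makes the two normalizations match after this fiber count.
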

\begin{proof}
  Consider any irreducible component $C\subset p-\Isog^{\ord}\otimes\kappa$, and any geometric point on $C$. This is a quasi-isogeny between $\mu$-ordinary $p$-divisible groups $\sG_1,\sG_2$ over $\bar{\bbF}_p$. Let $\tilde{\sG}_1$ be the canonical lift of $\sG_1$, and suppose that the canonical lift of the quasi-isogeny is given by $mK_p$ for some $m\in M(L)$. Under the twisted Satake isomorphism $\dot{S}^G_M$, the function $1_{mK_p}$ gets mapped to $1_{mM_c}$. Thus, we see that the image of the type $\tau$ as a $K_p$-double-coset gets mapped to the image of the type $\tau'$, so the diagram commutes.
\end{proof}
\subsubsection{}
For any abelian scheme $\calA_x$, consider the relative Frobenius isogeny. In particular, since the residue field $\kappa$ is of order $p^n$, there is a Frobenius section of the source morphism $s: p-\operatorname{Isog}\otimes\kappa\rightarrow\mathscr{S}_\kappa$, sending the abelian variety $\mathcal{A}_x$ to the $n$-th power Frobenius map on $\mathcal{A}_x$. Let $\Frob$ denote its image, which is a closed reduced subscheme of $p-\Isog\otimes\kappa$. In fact, restricting to $p-\Isog^{\ord}\otimes\kappa$, it is a union of irreducible components of $p-\operatorname{Isog}^{\ord}\otimes\kappa$. This allows us to consider $\Frob$ as an element of $\mathbb{Q}[p-\operatorname{Isog}^{ord}\otimes\kappa]$. 
 	
We now would like to determine the double coset corresponding to $\Frob$. Let $x\in\mathscr{S}^{\ord}_\kappa(\overline{\mathbb{F}}_p)$. The lift of the relative Frobenius isogeny is hence given on $T_p^*\sG (-1)$ by
\begin{equation*}
\mathbb{D}(\mathscr{G}^{(p)}_x)=g\mathbb{D}(\mathscr{G}_x)\xrightarrow{\sim} \mathbb{D}(\mathscr{G}_x),
\end{equation*}
where $g=(b\sigma)^n(1)=\upsilon\sigma(\upsilon)\dots\sigma^{n-1}(\upsilon)(p)=\lambda\sigma(\lambda)\dots\sigma(\lambda)(p)$, where we recall that $\lambda$ was the unique dominant Weyl conjugate of $\mu^{-1}$.  

Hence, we see that a $p$-power isogeny $f:\mathcal{A}_1\rightarrow \mathcal{A}_2$ of $\mu$-ordinary abelian varieties is isomorphic to the Frobenius if and only if its type is $M_c\tilde{\lambda}^{-1}(p)M_c$. Hence, we see that under $\bar{h}$, $\Frob$ is the image of $1_{M_c\tilde{\lambda}(p)M_c}$. This, combined with the commutativity of the map in \eqref{eq:comdiagrammuord} and the result of B\"{u}ltel \eqref{eqn:bultelresult}, gives us the following proposition.
\begin{proposition}
\label{prop:muordinary}
  Viewing $H_{G,X}(x)$ as a polynomial in $\bbQ[p-\Isog^\ord\otimes\kappa]$ via the morphism $\mathrm{ord}\circ h$, the following relation holds:
    \begin{equation*}
      H_{G,X}(\Frob)^\ord=0.
    \end{equation*}
\end{proposition}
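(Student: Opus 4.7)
The plan is to deduce the $\mu$-ordinary congruence relation as a formal consequence of B\"ultel's factorization (Proposition \ref{prop:bultelresult}) together with the commutative diagram \eqref{eq:comdiagrammuord} and the identification $\bar{h}(1_{M_c\tilde\lambda(p)M_c}) = \Frob$ computed just before the statement. In particular, the proof will be largely formal once the Levi subgroups and cocharacter conventions are matched up.

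First, I would verify that $M_{\tilde\lambda}$ appearing in Proposition \ref{prop:bultelresult} coincides with $M = M_{[b^{\mathrm{ord}}]}$. Indeed, the Newton cocharacter of the $\mu$-ordinary class is (up to positive scalar) $\tilde\upsilon$, and since $\lambda$ is a Weyl conjugate of $\mu^{-1}$ while both $\tilde\lambda$ and $\tilde\upsilon$ are $\Gamma$-invariant dominant cocharacters, their centralizers agree. Next, B\"ultel's result gives, viewed in $\mathcal{H}(M(\mathbb{Q}_p)//M_c)[x]$ via $\dot{S}^G_M$, a factorization
\begin{equation*}
  \dot{S}^G_M\bigl(H_{G,X}(x)\bigr) = \bigl(x - 1_{p^{\tilde\lambda} M_c}\bigr)\cdot Q(x)
\end{equation*}
for some $Q(x) \in \mathcal{H}(M(\mathbb{Q}_p)//M_c)[x]$. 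Since $\bar{h}$ is a $\mathbb{Q}$-algebra homomorphism and $\bar{h}(1_{M_c \tilde\lambda(p) M_c}) = \Frob$, applying $\bar{h}$ coefficientwise yields
\begin{equation*}
  \bar{h}\bigl(\dot{S}^G_M(H_{G,X}(x))\bigr) = (x - \Frob)\cdot \bar{h}(Q(x))
\end{equation*}
in $\mathbb{Q}[p-\Isog^{\mathrm{ord}}\otimes\kappa][x]$.

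Finally, by the commutativity of \eqref{eq:comdiagrammuord}, the left-hand side above equals the image of $H_{G,X}(x)$ under $\mathrm{ord}\circ h$. Since $\Frob$ lies in the center of $\mathbb{Q}[p-\Isog\otimes\kappa]$ (as recalled in \ref{subsub:commutative}), it commutes with the coefficients $\bar{h}(Q(x))$, so we may unambiguously specialize at $x=\Frob$ to obtain
\begin{equation*}
  H_{G,X}(\Frob)^{\mathrm{ord}} = (\Frob - \Frob)\cdot \bar{h}(Q(\Frob)) = 0.
\end{equation*}
The only nontrivial bookkeeping is the matching of $M_{\tilde\lambda}$ with $M$ and the tracking of the inverse in the definition of $\bar{h}$; there is no serious obstacle, and the proposition follows.
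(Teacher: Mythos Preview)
Your proposal is correct and follows essentially the same approach as the paper: the paper's proof is a one-line pointer to exactly the three ingredients you use---B\"ultel's factor \eqref{eqn:bultelresult}, the commutative square \eqref{eq:comdiagrammuord}, and the identification $\bar h(1_{M_c\tilde\lambda(p)M_c})=\Frob$ established immediately before the statement. Your version simply fills in the details (matching $M_{\tilde\lambda}$ with $M_{[b^{\mathrm{ord}}]}$ and invoking the centrality of $\Frob$ from \S\ref{subsub:commutative} to justify the specialization), but the strategy is identical.
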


\subsubsection{}
Now, observe that since the $\mu$-ordinary locus is open and dense by the main result of \cite{Wor2013}, and $s$ is finite and flat over the $\mu$-ordinary locus, we see that $\dim p-\Isog^{\ord}\otimes\kappa=\dim\mathscr{S}_\kappa^{\ord}=2\langle\rho,\mu\rangle$. Hence, we see that the closure of $p-\Isog^{\ord}\otimes\kappa$ is the union of some irreducible components of $p-\Isog\otimes\kappa$. However, it is \emph{not} true that $p-\Isog^{\ord}\otimes\kappa$ is dense in $p-\Isog\otimes\kappa$. For instance, this property fails for the Hilbert-Blumenthal modular variety attached to a totally real field of degree 2 where $p$ is inert (c.f. main result of \cite{St1997}). 

\subsubsection{}
We would like to determine what possibilities we have for the irreducible components of $p-\Isog\otimes\kappa$ which are not contained in the $\mu$-ordinary locus.

Firstly, we want to obtain an upper bound for the dimenision of $p-\Isog^{[b]}\otimes\kappa$. Thus, fix a point $x\in\mathscr{S}_\kappa(\overline{\mathbb{F}}_p)$, and let $y\in\mathscr{S}_\kappa(\overline{\mathbb{F}}_p)$ be such that there is $p$-quasi-isogeny $f:\mathcal{A}_x\rightarrow \mathcal{A}_y$. Note that given $\mathcal{A}_x$, $f$ is entirely determined by the induced quasi-isogeny of $p$-divisible groups $\mathscr{G}_x\rightarrow\mathscr{G}_y$, which we know by (contravariant) Dieudonne theory is determined by the induced map of Dieudonne modules
\begin{equation*}
    f:\mathbb{D}(\mathscr{G}_y)\rightarrow\mathbb{D}(\mathscr{G}_x).
\end{equation*}

From the previous section, we see that the set of possible $p$-quasi-isogenies of $x$ is given by the image of $\iota_x(X_\upsilon(b))$. We now apply Theorem \ref{thm:dimADLV}. Then, we see that the dimension of $X_\upsilon(b)$, viewed as a scheme over $\bar{\bbF}_p$, is $\langle\rho,\mu-\nu({[b]})\rangle-\frac{1}{2}\defect(b)$, hence the dimension of $s^{-1}(x)$ is less than or equal to $\langle\rho,\mu-\nu({[b]})\rangle-\frac{1}{2}\defect(b)$.

\begin{remark}
    In fact, following the work of Kim (c.f. \cite{kim_2018}) and Zhu (c.f. \cite{Zhu2014}), $X_\upsilon(b)$ is isomorphic to the perfection of the reduced special fiber of the Rapoport-Zink space of Hodge type $\mathcal{M}(G,\mu,x)$, and we know that by definition $s^{-1}(x)$ is a subscheme of the special fiber of $\mathcal{M}(G,\mu,x)$.
\end{remark}

Combining this with Theorem \ref{thm:dimensionnewtonstrata}, we have the following lemma.
\begin{lemma}
    \label{lem:dimensionY}
    The dimension of $p-\Isog^{[b]}\otimes\kappa$ is at most
    \begin{equation*}
    2\langle\rho,\mu\rangle-\operatorname{def}_{G}(b).
    \end{equation*}
\end{lemma}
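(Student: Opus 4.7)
The plan is to carry out a straightforward dimension count by combining the dimension of the Newton stratum $\mathscr{S}^{[b]}_\kappa$ with a fiberwise bound coming from the affine Deligne-Lusztig variety. First, I would restrict the source morphism $s$ to obtain a proper map $s : p-\Isog^{[b]}\otimes\kappa \to \mathscr{S}^{[b]}_\kappa$ (properness coming from Proposition \ref{prop:st}), whose target has pure dimension $\langle \rho,\mu+\nu([b])\rangle - \tfrac{1}{2}\defect(b)$ by Theorem \ref{thm:dimensionnewtonstrata}. The desired bound will then follow from the standard inequality $\dim X \le \dim Y + \max_{y}\dim f^{-1}(y)$ for a finite-type morphism of schemes.

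Next, I would bound each fiber $s^{-1}(x)$ over a geometric point $x \in \mathscr{S}^{[b]}_\kappa(\overline{\bbF}_p)$. The observation already recorded in the paragraph preceding the lemma is that a $p$-quasi-isogeny $\mathcal{A}_x \to \mathcal{A}_y$ is determined (once $\mathcal{A}_x$ is fixed) by the induced map on Dieudonné modules, and the set of isomorphism classes of such quasi-isogenies is exhibited as a subset of $\iota_x(X_\upsilon(b))$. By Theorem \ref{thm:dimADLV}, the affine Deligne-Lusztig variety $X_\upsilon(b)$ has dimension $\langle\rho,\mu-\nu([b])\rangle - \tfrac{1}{2}\defect(b)$, and hence $\dim s^{-1}(x)$ is bounded by the same quantity. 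Summing the two contributions, the $\nu([b])$ terms cancel and one obtains the advertised upper bound $2\langle\rho,\mu\rangle - \defect(b)$.

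The main subtlety — really the only one — is reconciling the scheme-theoretic structure on $s^{-1}(x)$ with the perfect scheme structure on $X_\upsilon(b)$, since $X_\upsilon(b)$ is constructed via \cite{Zhu2014} inside the Witt vector affine Grassmannian, whereas $s^{-1}(x)$ is a subscheme of the reduced special fiber of the Hodge-type Rapoport-Zink space $\mathcal{M}(G,\mu,x)$ (as noted in the remark preceding the lemma). Since Krull dimension is invariant under perfection, this causes no issue for the dimension estimate. Apart from this bookkeeping, the argument is purely a dimension count, and I do not expect any serious obstacle.
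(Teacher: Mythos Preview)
Your proposal is correct and follows exactly the same approach as the paper: the paper bounds $\dim s^{-1}(x)$ by $\dim X_\upsilon(b)=\langle\rho,\mu-\nu([b])\rangle-\tfrac{1}{2}\defect(b)$ via Theorem~\ref{thm:dimADLV} and then adds the dimension of the Newton stratum from Theorem~\ref{thm:dimensionnewtonstrata}. Your remark on the perfection subtlety matches the paper's own remark about $s^{-1}(x)$ sitting inside the reduced special fiber of the Rapoport--Zink space.
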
  
We thus have the following key proposition. For any geometrically irreducible subset $C\subset p-\Isog\otimes\kappa$, we let $C^{[b]}$ denote the intersection of $C$ and $p-\Isog^{[b]}\otimes\kappa$. Since $B(G,\upsilon)$ is a finite set, there must exist some $[b]$ such that $C^{[b]}$ is dense in $C$.
\begin{proposition}
  Let $C$ be any geometrically irreducible component of $p-\Isog\otimes\kappa$. For any $[b]\in B(G,\upsilon)$, if $C^{[b]}$ is dense in $C$, then $b$ must be unramified.
\end{proposition}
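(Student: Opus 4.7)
The plan is a direct dimension count, combining the flatness of $\mathscr{J}$ over $O_{E,(v)}$ with the upper bound from Lemma \ref{lem:dimensionY} and the characterization of unramified classes in Lemma \ref{lemma:unramified}.

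First I would record the dimension of $C$. Since $C$ is an irreducible component of $p-\Isog\otimes\kappa$, and this special fiber is cut out of $\mathscr{J}$, which is by definition the closure of $p-\Isog\otimes E$ and is therefore flat over $O_{E,(v)}$, every irreducible component of $p-\Isog\otimes\kappa$ has the same dimension as the generic fiber. The generic fiber has dimension $2\langle\rho,\mu\rangle$ by the lemma computing $\dim (p-\Isog\otimes E)$. Hence $\dim C = 2\langle\rho,\mu\rangle$.

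Next, the density assumption gives $\dim C^{[b]} = \dim C = 2\langle\rho,\mu\rangle$. Since $C^{[b]} \subset p-\Isog^{[b]}\otimes\kappa$, Lemma \ref{lem:dimensionY} yields
\begin{equation*}
  2\langle\rho,\mu\rangle = \dim C^{[b]} \leq \dim (p-\Isog^{[b]}\otimes\kappa) \leq 2\langle\rho,\mu\rangle - \defect_G(b),
\end{equation*}
whence $\defect_G(b) \leq 0$. On the other hand $\defect_G(b) \geq 0$, since $J_b$ is an inner form of the Levi $M_{[b]}$ (hence of a subgroup of $G$) and the $\bbQ_p$-rank of an inner form is bounded above by the $\bbQ_p$-rank of the quasi-split form. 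Therefore $\defect_G(b) = 0$.

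Finally, Lemma \ref{lemma:unramified} identifies $\defect_G(b)=0$ with $[b]$ being unramified, completing the proof. There is no real obstacle here; the only thing to verify carefully is the equidimensionality of $p-\Isog\otimes\kappa$ (all irreducible components of dimension $2\langle\rho,\mu\rangle$), which follows from the flatness of $\mathscr{J}$ over the discrete valuation ring $O_{E,(v)}$ together with the equidimensionality of the generic fiber established just above.
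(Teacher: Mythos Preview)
Your proof is correct and follows essentially the same approach as the paper: both argue that any irreducible component of $p-\Isog\otimes\kappa$ has dimension $2\langle\rho,\mu\rangle$ by flatness, then use the bound of Lemma~\ref{lem:dimensionY} to force $\defect_G(b)=0$, and conclude via Lemma~\ref{lemma:unramified}. Your version simply spells out the steps (including why $\defect_G(b)\geq 0$) a bit more explicitly than the paper does.
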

\begin{proof}
  It is clear that the dimension of any geometrically irreducible component of $p-\Isog\otimes\kappa$ is $2\langle\rho,\mu\rangle$, since $p-\Isog\otimes E$ is of dimension $2\langle\rho,\mu\rangle$. If $\defect_G(b)>0$, then the dimension of $C^{[b]}$ is strictly less than $2\langle\rho,\mu\rangle$, so it cannot be dense in $C$. 
\end{proof}
\begin{remark}
\label{rmk:split}
  Remark \ref{rmk:splitnounramified} implies that if $G$ is split over $\mathbb{Q}_p$, then the only unramified element in $B(G,\upsilon)$ is the $\mu$-ordinary $\sigma$-conjugacy class, and hence the $\mu$-ordinary locus is dense in $p-\Isog\otimes\kappa$. This, combined with the $\mu$-ordinary congruence relation, gives a simple proof of the congruence relation in the case where $G$ is split over $\mathbb{Q}_p$. This extends the main result of \cite{W2000} to the Hodge type case. We note that there is a slight difference in the result obtained here because Wedhorn shows the density result of the special fiber of the fbeginull moduli space $p-\Isog$, whereas here we show the density result of the special fiber of the flat closure of the generic fiber $p-\Isog\otimes E$, which may be smaller. 
\end{remark}
We will refer to geometrically irreducible components $C$ of $p-\Isog\otimes\kappa$ such that $C^{[b]}$ is dense in $C$ as being $[b]$-dense.

Given any $D\in \bbQ[p-\Isog\otimes\kappa]$ and any $[b]\in A$, we define
$\mathrm{res}^{[b]}(D)$ to be the sum of $[b]$-dense irreducible components in $D$.

\subsection{Structure of $[b]$-dense irreducible components}
In this subsection, we describe the $[b]$-dense irreducible components of $p-\Isog\otimes\kappa$ in terms of the irreducible components of the affine Deligne-Lusztig variety $X_\mu(b)$. 

\subsubsection{} We recall the construction in \cite{mantovan2005cohomology,HK2019adic} of the finite and infinite-level Igusa varieties for Siegel modular varieties and general Hodge-type Shimura varieties.

Let us fix an embedding of $G\hookrightarrow \mathrm{GSp}_{2n}$ for some $n$. Let $b'$ denote the image of $b$ in $B(\mathrm{GSp}_{2n})$, $\mu'$ denote the image of $\mu$ in $X_*(\mathrm{GSp}_{2g})$. Let $\sG_0$ be the $p$-divisible group over $\bar{\bbF}_p$ which corresponds to $b'$. Since $\sG_0$ is isogenous to a completely slope divisible $p$-divisible group (see \cite[\S 3.2]{Ham2017}), we may assume that $\sG_0$ is completely slope divisible. Attached to $\sG_0$ is a polarization which we denote by $\lambda_0$. We define the central leaf $C'\subseteq \mathbf{A}^{b'}_g$ associated to $(\sG_0,\lambda_0)$ as follows.
\begin{equation*}
    C':=\{x\in \mathbf{A}^{b'}_g: (\calA_{x}[p^\infty],\lambda_x)\simeq (\sG_0,\lambda_0)\otimes k(x)\}.
\end{equation*}

Since $C'$ is smooth by \cite[Prop 3.8]{Ham2017}, the universal $p$-divisible group over the central leaf $\calX_0:=\calA_{univ}[p^\infty]|_{C'}$ is completely slope divisible, as shown in \cite[\S3]{mantovan2005cohomology} Let $\sG^i_0$ (resp. $\calX^i_0$) be the $i$-th piece of slope decomposition of $\sG_0$ (resp. $\calX_0$).

For any positive integer $m$, we let $J'_m\rightarrow C'$ be the Igusa variety of level $m$, defined as the moduli space of isomorphisms $\{j^i_m\}$ such that $j^i_m:\sG^i_0[p^m]\xrightarrow{\sim}\calX^i_0[p^m]$ which commute with the polarisations and for any $m'>m$ can be lifted \'{e}tale locally to an isomorphism of $p^{m'}$-torsion points. Moreover, we have natural inclusion maps $J'_{m'}\rightarrow J'_{m}$ for any $m'>m$, so we define the inverse limit
\begin{equation*}
    J'_\infty :=\varprojlim J'_m.
\end{equation*}
Over $J'_\infty$, we have universal isomorphisms, which we denote by $j^{i,univ}$. Let $J'^{(p^{-\infty})}_\infty$ denote the perfection of $J'_\infty$. As the slope filtration splits canonically over perfect schemes, we in fact have an isomorphism 
\begin{equation*}
    \calX_{0,J'^{(p^{-\infty})}_\infty}=\oplus \calX^i_{0,J'^{(p^{-\infty})}_\infty}\simeq \sG_0\times J'^{(p^{-\infty})}_\infty
\end{equation*}
induced the perfection of $\oplus j^{i,univ}$, which we denote by $j$.

We now define the central leaf and Igusa varieties with $G$-structure. Recall that to every point $s\in\sS_\kappa(\bar{\bbF}_p)$ we have defined crystalline Tate tensors $(s_{G,\alpha,x})$ on the Dieudonn\'{e} module $\bbD(\calA_{G,x}[p^\infty])$. Moreover, $\sG_0$ is also equipped with tensors $(s_{0,\alpha})$. Let

\begin{equation*}
    C:=\{x\in \sS_\kappa: (\calA_{x}[p^\infty],\lambda_x,s_{G,\alpha,x})\simeq (\sG_0,\lambda_0,s_{0,\alpha})\otimes k(x)\}.
\end{equation*}

We define the perfection of the infinite level Igusa variety over $C$ in the following way. Let $J_\infty^{(p^{-\infty})}\subset (J'_\infty\times_{C'} C)^{(p^{-\infty})}$ be the locus where $s_{0,\alpha}=j_*s_{G,\alpha}$ for every $\alpha$, i.e. we want the locus where the isomorphisms preserve the tensors over the $p$-divisible group. This gives a well-defined scheme, and we define the infinte level Igusa variety as
\begin{equation*}
    J_\infty:=im(J_\infty^{(p^{-\infty})}\rightarrow J'_\infty\times_{C'}C),
\end{equation*}
and the level $m$ Igusa variety as
\begin{equation*}
    J_m:=im(J_m^{(p^{-\infty})}\rightarrow J'_m\times_{C'}C).
\end{equation*}

\subsubsection{}
Following \cite{HK2019adic}, we can define a surjective map
\begin{equation*}
    \pi_\infty:J_\infty^{(p^{-\infty})}\times \RZ(G,b,\mu)^{red}\rightarrow \sS^b_0
\end{equation*}
which is a $J_b$-torsor for the pro-\'{e}tale topology, defined as follows. Let $(\calA_{univ},\lambda_{univ},\eta_{univ})$ be the universal abelian variety over $\mathbf{A}^{b'}_g$, and let $(\calA,\lambda,\eta)$ be the pullback of  $(\calA_{univ},\lambda_{univ},\eta_{univ})$ to $J'^{(p^{-\infty})}_\infty\times \RZ(\GSp_{2n},b',\mu')^{red}$ via the map
\begin{equation*}
    J'^{(p^{-\infty})}_\infty\times \RZ(\GSp_{2n},b',\mu')^{red}\rightarrow J'^{(p^{-\infty})}_\infty\rightarrow \mathbf{A}^{b'}_g,
\end{equation*}
where the first map is the projection onto the first factor, and the second is the composition $J'^{(p^{-\infty})}_\infty\rightarrow J'_\infty\rightarrow C'\rightarrow \mathbf{A}^{b'}_g$.

Denote by $\rho_{univ}$ be the universal quasi-isogeny over $\RZ(\GSp_{2n},b',\mu')$. Zariski-locally there exists an integer $m_1$ such that $p^{m_1}\rho_{univ}$ is an isogeny. By glueing $\calA/j(\ker p^{m_1}\rho)$ over a suitable Zariski covering, we obtain a polarised abelian variety over $J'^{(p^{-\infty})}_\infty\times \RZ(\GSp_{2n},b',\mu')^{red}$, with polarization and level structure induced by $\lambda,\rho$ respectively. Hence, we get a map
\begin{equation*}
    \pi'_\infty: J'^{(p^{-\infty})}_\infty\times \RZ(\GSp_{2n},b',\mu')^{red}\rightarrow \mathbf{A}^{b'}_g.
\end{equation*}
If we restrict $\pi'_\infty$ to $J^{(p^{-\infty})}_\infty\times \RZ(G,b,\mu)$, this map factors through $\sS^{-}_\kappa$, and in fact there is a unique lift of $\pi'_\infty\vert_{J^{(p^{-\infty})}_\infty\times \RZ(G,b,\mu)^{red}}$ to a map
\begin{equation*}
    \pi_\infty:J_\infty^{(p^{-\infty})}\times \RZ(G,b,\mu)^{red}\rightarrow \sS^b_0.
\end{equation*}

Now, we can define a map 
\begin{equation*}
    \tilde{\pi}_\infty:\RZ(G,b,\mu)^{red}\times J_\infty^{(p^{-\infty})}\times \RZ(G,b,\mu)^{red}\rightarrow p-\Isog\otimes\kappa^{[b]}
\end{equation*}
as follows. Given $(x,y,z)\in \RZ(G,b,\mu)^{red}\times J_\infty^{(p^{-\infty})}\times \RZ(G,b,\mu)^{red}$, the image of $(x,y,z)$ is the pair $(x',f)$ where $x'=\pi_\infty(y,x)\in \sS^b_0$, and $f$ is the quasi isogeny between $x'$ and $x'':=\pi_\infty(y,z)$ given by $\rho_{univ,z}\circ \rho^{-1}_{univ,x}$. 

Observe that $\tilde{\pi}_\infty$ is equivariant for the action of $J_b\times J_b$, and also a $J_b$-torsor for the pro-\'{e}tale topology. Moreover, we observe that since $\pi_\infty$ is surjective, $\tilde{\pi}_\infty$ is also surjective. Since $\pi_\infty$ is universally closed when restricted to the product of $J_{\infty}^{p^{-\infty}}$ and a quasi-compact closed subset of $\RZ(G,b,\mu)^{red}$, so too is $\tilde{\pi}_\infty$. Thus, observe the following:

\begin{proposition}
\label{prop:tripleproduct}
Every irreducible $[b]$-dense component of $p-\Isog\otimes\kappa^{[b]}$ is the image of some triple $(X,Y,Z)$, where $X,Z\subset \RZ(G,b,\mu)^{red}$ are irreducible components, and $Y\subset J_\infty^{(p^{-\infty})}$ is an irreducible component. Moreover, the pair $(X,Z)$ is determined up to the action of $J_b$.
\end{proposition}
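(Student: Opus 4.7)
The plan is to exploit the surjectivity, universal closedness, and pro-\'{e}tale $J_b$-torsor property of $\tilde{\pi}_\infty$ established just above, combined with a dimension count matching top-dimensional components on both sides. First observe that since the triple product $\RZ(G,b,\mu)^{red}\times J_\infty^{(p^{-\infty})}\times \RZ(G,b,\mu)^{red}$ is a product of schemes, every irreducible component is of the form $X\times Y\times Z$ with $X,Z$ irreducible components of $\RZ(G,b,\mu)^{red}$ and $Y$ an irreducible component of $J_\infty^{(p^{-\infty})}$. The proposition thus reduces to showing that every $[b]$-dense $C$ is the image of at least one such product, and that the $(X,Z)$-part is determined up to the diagonal $J_b$-action.

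For existence, given a $[b]$-dense irreducible component $C$ of $p-\Isog\otimes\kappa$, the preceding discussion forces $[b]$ to be unramified, so $\defect_G(b)=0$ and Theorem \ref{thm:dimADLV} gives $\dim\RZ(G,b,\mu)^{red}=\langle\rho,\mu-\nu_b\rangle$. Combined with the Newton stratum dimension $\dim\sS_\kappa^{[b]}=\langle\rho,\mu+\nu_b\rangle$ from Theorem \ref{thm:dimensionnewtonstrata}, together with the fact that $\pi_\infty$ is a pro-\'{e}tale $J_b$-torsor (hence of relative dimension zero), I obtain $\dim J_\infty^{(p^{-\infty})}=2\langle\rho,\nu_b\rangle$. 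Hence each top-dimensional product $X\times Y\times Z$ has dimension $2\langle\rho,\mu\rangle$, matching $\dim C$. Surjectivity of $\tilde{\pi}_\infty$ forces some such component to meet $\tilde{\pi}_\infty^{-1}(C^{[b]})$; by the universal closedness just established, the image $\tilde{\pi}_\infty(X\times Y\times Z)$ is a closed subset of $p-\Isog^{[b]}\otimes\kappa$, and the matching dimensions force this closed subset to coincide with $C$.

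For uniqueness, I would invoke the pro-\'{e}tale $J_b$-torsor property: any two geometric points of the triple product with the same image under $\tilde{\pi}_\infty$ differ by the diagonal action of some element of $J_b(\bbQ_p)$. Passing from points to irreducible components, if $(X,Y,Z)$ and $(X',Y',Z')$ both surject onto $C$, there exists $j\in J_b(\bbQ_p)$ with $X'=j\cdot X$, $Y'=j\cdot Y$, and $Z'=j\cdot Z$, showing that the pair $(X,Z)$ is determined up to the diagonal $J_b$-action. The main technical obstacle I expect is in the dimension computation for $J_\infty^{(p^{-\infty})}$: because this is a perfection of an inverse limit of Igusa covers of the central leaf, one must verify carefully that perfection and passage to the infinite level preserve the relevant dimension, so that indeed $\dim J_\infty^{(p^{-\infty})}=2\langle\rho,\nu_b\rangle$ with the Newton stratum formula applied correctly.
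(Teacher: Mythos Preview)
The paper gives no proof of this proposition beyond the phrase ``Thus, observe the following'', treating it as immediate from the surjectivity, universal closedness on quasi-compact pieces, and pro-\'etale $J_b$-torsor property of $\tilde{\pi}_\infty$ established in the preceding paragraph. Your write-up is therefore not a different approach but an attempt to spell out what the paper leaves implicit, and the dimension count you supply is genuinely useful for this.

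There is, however, a gap in your existence argument. You argue that surjectivity forces some top-dimensional $X\times Y\times Z$ to \emph{meet} $\tilde{\pi}_\infty^{-1}(C^{[b]})$, and then that matching dimensions force the image to coincide with $C^{[b]}$. But meeting the preimage only tells you that $\tilde{\pi}_\infty(X\times Y\times Z)$ meets $C^{[b]}$; it does not tell you the image is \emph{contained} in $C^{[b]}$, and two distinct top-dimensional irreducible components of $p-\Isog^{[b]}\otimes\kappa$ may well intersect. The clean fix is to reverse the logic: start with an irreducible component $W$ of $\tilde{\pi}_\infty^{-1}(C^{[b]})$. Since $\tilde{\pi}_\infty$ is a pro-\'etale torsor under the $0$-dimensional group $J_b(\bbQ_p)$, one has $\dim W=\dim C^{[b]}=2\langle\rho,\mu\rangle$. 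Your dimension count shows that every irreducible component of the triple product has dimension at most $2\langle\rho,\mu\rangle$, so $W$ must \emph{equal} the unique $X\times Y\times Z$ containing it. Then $\tilde{\pi}_\infty(X\times Y\times Z)\subset C^{[b]}$ holds by construction, and equality follows from closedness, irreducibility of $C^{[b]}$, and matching dimension. Your uniqueness argument via the diagonal $J_b$-action on fibres is correct, though the passage ``from points to irreducible components'' should be made at generic points to ensure the element $j$ carries one component onto the other.
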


\section{Hecke Correspondences}
The goal of this section is to understand the action of the Hecke algebra on the cohomology of the Rapoport-Zink space, in particular the top-dimensional compactly supported cohomology, and show that it determines irreducible components of $p-\Isog\otimes\kappa$.

\subsection{Preliminaries}

For the entirety of this section, we will fix an unramified $\sigma$-conjugacy class $[b]\in B(G,\upsilon)$, and a representative $b\in [b]$ which we assume is of the form $b=\beta(p)$, for some $\beta$ is the Weyl-orbit of $\upsilon$. For notational simplicity, in this entire section, we will denote the formal schemes $\RZ(G,b,\mu)$ and $\RZ(P,b,\mu)$ by $\RZ_G$, $\RZ_P$, respectively. Let $\calH(G):=\calH(G(\bbQ_p)//G(\bbZ_p))$, and $\calH(M):=\calH(M(\bbQ_p)//M(\bbZ_p))$.

\subsubsection{}Let $\widehat{S}$ be the completion of the Shimura variety $\sS_K(G,X)$ along the special fiber $\sS_{\kappa}$, and $\calS$ be the rigid-analytic generic fiber. Similarly, we let $\widehat{S}_{[b]}$ be the completion of the Shimura variety $\sS$ along the the Newton strata $\sS_\kappa^{[b]}$, and $\calS_{[b]}$ be the generic fiber of $\widehat{S}_{[b]}$, viewed as a rigid-analytic space. Similarly, given any closed subscheme $C$ of $p-\Isog\otimes\kappa$, we define $\calC$ to be the generic fiber of the completion $\widehat{p-\Isog}_{C}$, viewed as a rigid-analytic space. 

We can define a cohomological correspondence on $\calS$ supported on $\calC$, as follows. (See Appendix \ref{appendix1} for the definition of cohomological correspondence) Since $s,t$ induce finite \'{e}tale maps, 
\begin{equation*}
    \calS \xleftarrow{s} \calC \xrightarrow{t} \calS,
\end{equation*}
we define the map from $u_\calC:s^*\bbQ_l\rightarrow t^!\bbQ_l=t^*\bbQ_l$ to be the identity. 

If $C$ is the finite union of some irreducible components $A_1,\dots,A_n$ of $p-\Isog\otimes\kappa$, then observe that as cohomological correspondences on $(\calS,\bbQ_l)$, we have $u_\calC=u_{\calA_1}+\dots+u_{\calA_n}$, where on the right-hand side we $!$-pushforward the correspondence supported on $\calA_i$ to $\calC$ via the inclusion map.

\subsubsection{}We can form a commutative diagram of formal schemes
\begin{equation*}
    \begin{tikzcd}
\RZ_G \arrow[d, "\pi_\infty"] & \frakD \arrow[l, "s'"'] \arrow[r, "t'"] \arrow[d, "f'"] & \RZ_G \arrow[d, "\pi_\infty"] \\
\widehat{S}                  &  \widehat{p-\Isog}_{C}\arrow[l, "s"'] \arrow[r, "t"]                  & \widehat{S} \end{tikzcd}
\end{equation*}
 where we define $\frakD=\widehat{p-\Isog}_{C}\times_{s,\pi_\infty}\RZ_G$, so the left square is Cartesian. Here, $t'$ is the map defined as follows. Given an element $(x,(H,\beta))$ of $\frakD$, such that $x\in \widehat{p-\Isog}_{C}, (H,\beta)\in\RZ_G$, $x$ defines a quasi-isogeny of $p$-divisible groups $x:\sG\rightarrow H$, for some $p$-divisible group $\sG$ with $G$-structure. We then define $s'(x,(H,\beta))=(\sG,x^{-1}\beta)$. Observe that both squares are Cartesian in the above diagram. In particular, we can consider the $!$-pullback of the cohomological correspondence from $\widehat{S}$ to $\RZ_G$, on both the rigid analytic generic fiber and the special fiber. 
 
For $g\in G(\bbQ_p)$, other than the correspondence associated with the closed subscheme $C=h(1_{K_pgK_p})\subset p-\Isog\otimes\kappa$, we also have another natural model for the Hecke correspondence given by $K_pgK_p$ on $(\calS,\bbQ_l)$, as follows. For $m\geq 0$, we define $K_p(m)$ to be the kernel of the canonical projection $G(\bbZ_p)\rightarrow G(\bbZ/p^m)$. Then we can define the cover $\calS^m$ as the cover classifying $K_p(m)$-level structures over $\calS$. More precisely, we have
 \begin{equation*}
     \calS^m(R) =\{(P,\eta_m)|P\in \calS(R),\eta_m: \Lambda/p^m\xrightarrow{\sim} \sA_{G,P}[p^m]=T_p(\sA_{G,P})/p^m \text{ sending } (s_\alpha)\text{ to }(t_{\alpha,\acute{e}t})\}.
 \end{equation*}
 Similarly, we also have an \'{e}tale covering $\RZ_G^{rig,m}$ as the cover classifying $K_p(m)$ level structures over $\RZ^{rig}_G$.
 
For $g\in G(\bbQ_p)$, we define $e(g)$ to be the minimal non-negative integer such that $g\Lambda\subset p^{-e(g)}\Lambda$. For $m\geq e(g)$, we can consider the correspondence given by
\begin{equation}
\label{eqn:corr1}
\begin{tikzcd}
  & \RZ^{rig,m}_G \arrow[ld,"c_1"] \arrow[r, "{[g]}"] & \RZ_G^{rig,g^{-1}K_p(m)g} \arrow[rd,"c_2"] &   \\
\RZ^{rig}_G &                             &              & \RZ^{rig}_G,
\end{tikzcd}
\end{equation}
where the map $[g]:\RZ_G\rightarrow \RZ_G^{g^{-1}K_p(m)g}$ is defined as in \cite[\S7.4]{kim_2018}, and $c_1,c_2$ are the natural projection maps induced by change of level. Similarly, we also have a correspondence over $\calS$ supported on $\calS^m$. It is clear from the construction that, if we pushforward the correspondences onto their image in $\calS\times\calS$ (similarly $\RZ^{rig}_G\times\RZ_G^{rig}$), then the induced cohomological correspondence on the rigid-analytic generic fiber (where we again let the map $u:c_1^*\bbQ_l\rightarrow c_2^!\bbQ_l=c_2^*\bbQ_l$ be the identity) is the same as that defined by $u_\calC$ above for $C=h(1_{K_pgK_p})$.  
\subsubsection{}
\label{section:modelTp}
Moreover, we have a model for this cohomolgical correspondence as a formal scheme over $O_E$. Indeed, following \cite[6.1.2]{HK2019adic} and \cite[\S6]{mantovan2005cohomology}, we have an integral
model $\sS_{m,g}$ of $\calS^m$, and a formal model $\RZ_{G,m,g}$
of $\RZ^{rig,m}_G$ constructed via Drinfeld level structures, equipped with the following proper morphisms
\begin{equation*}
    [g] : \sS_{m,g} \rightarrow \sS_{m-e(g)}
\end{equation*}
extending the map on the generic fiber given by
\begin{equation*}
    \calS^m \xrightarrow{[g]}\calS^{g^{-1}K_p(m)g}\twoheadrightarrow \calS^{m-e(g)}.
\end{equation*}
Similarly, we have a proper morphism
\begin{equation*}
    [g] : \RZ_{G,m,g} \rightarrow \RZ_{m-e(g)}
\end{equation*}
extending the map on the generic fiber given by
\begin{equation*}
    \RZ_G^m \xrightarrow{[g]}\RZ_G^{g^{-1}K_p(m)g}\twoheadrightarrow \RZ_G^{m-e(g)}.
\end{equation*}
Thus, we have an integral model for the correspondence \eqref{eqn:corr1} given by 
\begin{equation*}
\begin{tikzcd}
  & \RZ_{G,m,g} \arrow[ld,"c_1"] \arrow[r, "{[g]}"] & \RZ_{G,m-e(g)} \arrow[rd,"c_2"] &   \\
\RZ &                             &              & \RZ,
\end{tikzcd}
\end{equation*}
and similarly we also have a formal model of the correspondence on $\calS$.
\subsection{Parabolic reduction}
Now, we define the formal alternating sum of cohomology groups 
\begin{equation*}
    H^{\bullet}_c(\RZ_G^{rig}):=\sum_{i}(-1)^i H^i_c(\RZ_G^{rig})
\end{equation*}
in the Grothedieck group of representations of $\calH(G)\times J_b(\bbQ_p)\times W_E$. Recall here that the compactly supported cohomology $H_c^i(\RZ_G^{rig})$ is defined as
\begin{equation*}
    \lim_{\substack{\longrightarrow\\U}}H_c^i(U,\bbQ_l)
\end{equation*}
where $U\subset \RZ_G^{rig}$ are quasi-compact open subsets.

If $Z\subset \RZ^{red}_G$ is a closed subscheme, we let $\RZ_G(Z)$ denote the formal scheme given as the completion of $\RZ_G$ along $Z$.

We now want to show the following:
\begin{theorem}
\label{thm:pr1}
In the Grothedieck group of representations of $\mathcal{H}(G)\times J_b(\bbQ_p)\times W_E$, we have the equality
\begin{equation*}
    H^{\bullet}_c(\RZ(G,b,\mu)^{rig})=\sum_{\mu'\in I}H^{\bullet}_c(\RZ(M,b,\mu')^{rig}),
\end{equation*}
where 
\begin{equation*}
    I=\{\mu': \mu' \text{ is a dominant cocharacter of $M$ conjugate to }\mu \text{ in }G, \text{with }[b]\in B(M,\mu')\},
\end{equation*}
and on the right-hand side $\calH(G)$ acts via twisted Satake $S^G_M$.
\end{theorem}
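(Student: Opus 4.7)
The plan is to use the parabolic Rapoport-Zink space $\RZ_P = \RZ(P,b,\mu)$ constructed in Section 3 as an intermediary between $\RZ_G$ and the disjoint union $\bigsqcup_{\mu' \in I} \RZ(M,b,\mu')$. Two structural morphisms underlie the comparison: the forgetful embedding $\iota: \RZ_P \hookrightarrow \RZ_G$, whose image on the rigid fiber is the locus where the slope filtration admits a lift, and the associated-graded morphism $q: \RZ_P \to \bigsqcup_{\mu' \in I} \RZ(M,b,\mu')$, under which the component of $\RZ_P$ whose graded pieces have Hodge type $\mu'$ maps to $\RZ(M,b,\mu')$. The scheme of the proof is (i) $H^\bullet_c(\RZ_G^{rig})$ and $H^\bullet_c(\RZ_P^{rig})$ agree in the target Grothendieck group, (ii) $q$ induces the identification $H^\bullet_c(\RZ_P^{rig}) = \sum_{\mu' \in I} H^\bullet_c(\RZ(M,b,\mu')^{rig})$ after decomposing along components, and (iii) the induced action of $\calH(G)$ on the right-hand side factors through the twisted Satake homomorphism $\dot{S}^G_M$.

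For (i), I would stratify $\RZ_G^{rig}$ via the Rapoport-Zink period morphism $\pi: \RZ_G^{rig} \to \mathrm{Fl}_{G,\mu}$ pulled back from the $P$-orbit decomposition of the flag variety. The $P$-orbits on $\mathrm{Fl}_{G,\mu}$ are indexed by $M$-conjugacy classes of cocharacters in the $G$-conjugacy class of $\mu$, which after imposing the compatibility $[b] \in B(M,\mu')$ is exactly $I$. By Proposition \ref{prop:subspacelifts}, the preimage under $\pi$ of the $\mu'$-cell coincides with the component $\RZ(P,b,\mu,\mu')^{rig}$ of $\RZ_P^{rig}$ labelled by $\mu'$; summing in the Grothendieck group gives $H^\bullet_c(\RZ_G^{rig}) = \sum_{\mu' \in I} H^\bullet_c(\RZ(P,b,\mu,\mu')^{rig})$. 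For (ii), the fiber of $q$ parameterizes lifts of an $M$-Hodge filtration to a $G$-Hodge filtration compatible with the slope filtration, and these form a torsor under the unipotent radical $N$ of $P$. Pro-\'etale locally the restriction of $q$ to each component is thus an iterated extension of analytic affine spaces, and since compactly supported cohomology of $\bbA^n_\eta$ has Euler characteristic one, the alternating sum collapses in the Grothendieck group to yield $H^\bullet_c(\RZ(P,b,\mu,\mu')^{rig}) = H^\bullet_c(\RZ(M,b,\mu')^{rig})$ (with Tate twists absorbed into the $W_E$-module structure).

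For (iii), the Iwasawa decomposition $G(\bbQ_p) = P(\bbQ_p) K_p$ allows each $K_p$-double coset to be represented by elements of $P(\bbQ_p)$, and the formula $\dot{S}^G_M(f)(m) = \int_{N(\bbQ_p)} f(mn)\, dn$ expresses the twisted Satake image as an integration over $N$-fibers. The goal is to show that pulling the Hecke correspondence for $f \in \calH(G)$ along $\iota$ and pushing it along $q$ recovers, at the level of cohomological correspondences, the Hecke correspondence for $\dot{S}^G_M(f) \in \calH(M)$; the $N(\bbQ_p)$-integration defining $\dot{S}^G_M$ will match the $!$-pushforward along the $N$-torsor $q$, and the $K_p$-coset structure on source and target will match the $M(\bbZ_p)$-coset structure on the base. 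The hard part will be verifying this equivariance cleanly despite the non-properness of the Hecke correspondences and of $q$ on the rigid generic fiber; the strategy is to work throughout with the integral models $\RZ_{G,m,g}$ from Section \ref{section:modelTp} and their parabolic analogues, where proper base change applies and the equivariance reduces to a combinatorial identity between volumes of $N(\bbZ_p)$-orbits in $K_p g K_p$ and the values of $\dot{S}^G_M(f)$, which is precisely the content of the integration formula defining the twisted Satake homomorphism.
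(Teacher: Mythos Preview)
Your overall strategy matches the paper's: use $\RZ_P$ as an intermediary, identify the cohomology of $\RZ_P$ with that of $\RZ_M$ via the affine-fibration structure of the associated-graded map, and identify the cohomology of $\RZ_P$ with that of $\RZ_G$ via a stratification. The underlying result without the $\calH(G)$-action is already Mantovan's theorem, and both you and the paper treat that part as essentially known input.

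The organizational difference is in how you compare $\RZ_G$ and $\RZ_P$. You stratify $\RZ_G^{rig}$ directly via the period morphism $\pi$ and the $P$-orbit decomposition of $\mathrm{Fl}_{G,\mu}$. The paper instead replaces $\RZ_G$ by the formal image $\mathfrak{X}$ of $\RZ_P$ (same reduced special fiber, hence same cohomology), takes a \v{C}ech cover by $J_b(\bbQ_p)$-translates of a union of irreducible components, and on each piece applies Mantovan's locally closed stratification (Proposition~\ref{prop:strata}) by where the slope filtration lifts. Your period-map stratification and Mantovan's stratification encode the same geometry, but the paper's \v{C}ech-plus-strata framework is set up precisely so that the Hecke correspondence can be tracked through it.

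That is where the substantive gap in your proposal lies. Your step (iii) names the right ingredients---Iwasawa decomposition, integral models, matching $N(\bbQ_p)$-integration with $!$-pushforward along $q$---but the equivariance of your step (i) is left entirely undeveloped. The paper's key technical move for this is: after trivializing the Tate module on a \v{C}ech piece $\calZ$, the Hecke correspondence decomposes as a sum $\sum_i u_{f,i}$ with each $u_{f,i}$ supported on a copy of $\calZ$; the Iwasawa decomposition allows one to take $g \in P(\bbQ_p)$, so that $[g]$ preserves the filtration and hence sends each Mantovan stratum $\calZ_\alpha$ into a single stratum $\calZ'_\beta$. This makes both squares in the relevant excision diagram Cartesian, so Proposition~\ref{prop:excision} applies and the correspondence descends to the alternating sum over strata. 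Your plan to verify a combinatorial volume identity between $N(\bbZ_p)$-orbits and values of $\dot{S}^G_M(f)$ is the right endpoint, but a stratum-preservation argument of this kind is what actually makes the reduction to that identity go through.
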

\begin{proof}
This result, without the action of the Hecke algebra, follows from \cite[Thm 9.3]{Man08}. We describe how to adapt the proof of that result to get the action of cohomological correspondences.

For notational simplicity, we define
\begin{equation*}
    \RZ_M:=\coprod_{\mu'\in I}\RZ(M,b,\mu').
\end{equation*}
We can form the following commutative diagram associated to the Hecke correspondence for $f=1_{K_pgK_p}$
\begin{equation}
\label{eqn:GPM}
    \begin{tikzcd}
\RZ^{rig}_M  & \coprod_{P(\bbQ_p)\setminus G(\bbQ_p)/K_p(m)}\RZ_M^{rig,m} \arrow[r] \arrow[l] & \RZ_M^{rig} \\
\RZ_P^{rig} \arrow[d,"j"] \arrow[u,"\Xi"] & \coprod_{P(\bbQ_p)\setminus G(\bbQ_p)/K_p(m)}\RZ_P^{m,rig} \arrow[d] \arrow[r] \arrow[l] \arrow[u] & \RZ_P^{rig} \arrow[d] \arrow[u] \\
\RZ_G^{rig}           & \RZ_G^{m,rig} \arrow[r] \arrow[l]           & \RZ_G^{rig}.          
\end{tikzcd}
\end{equation}
Observe that the bottom two squares in the diagram are Cartesian.

Observe that we can have the inclusion map $\Theta:\RZ_M\rightarrow \RZ_P$, and the projection map $\Xi:\RZ_P\rightarrow \RZ_M$ given by taking the direct sum of the filtrands, which satisfy $\Xi\circ\Theta=\mathrm{Id}_{\RZ_M}$. Moreover, the arguments in \cite[\S7.3,7.4]{Man08} show that $\Xi$ induces an isomorphism
\begin{equation*}
     R\Gamma_c(\RZ_P^{rig},\bbQ_l)\simeq R\Gamma_c(\RZ_M^{rig},\Xi!(\bbQ_l))=R\Gamma_c(\RZ_M^{rig},\bbQ_l(d'))[2d'].
\end{equation*}
where $d'=\dim\RZ_P^{rig}-\RZ_M^{rig}$, and moreover from \cite[Prop 7.8]{Man08} we also have an isomorphism
\begin{equation*}
     R\Gamma_c(\RZ_P^{rig,m},\bbQ_l)\simeq R\Gamma_c(\RZ_M^{rig,m},\bbQ_l(d'))[2d'].
\end{equation*}
The isomorphisms are compatible with the change in level, and the action of $p\in P(\bbQ_p)$, hence we have a commutative diagram
\begin{equation*}
\begin{tikzcd}
R\Gamma_c(\RZ_P^{rig},\Xi^!(\bbQ_l)) \arrow[r,"\Xi^!(u_f)"] \arrow[d] & R\Gamma_c(\RZ_P^{rig},\Xi^!(\bbQ_l)) \arrow[d] \\
R\Gamma_c(\RZ_M^{rig},\bbQ_l) \arrow[r,"u_f"]           & R\Gamma_c(\RZ_M^{rig},\bbQ_l)          
\end{tikzcd}
\end{equation*}
Observe that the induced Hecke action on $\RZ_M$ of $\calH(M)$ given by the twisted Satake map $\calS^G_M(1_{K_pgK_p})$.

We now want to compare $R\Gamma_c(\RZ_P^{rig},\bbQ_l)$ and $R\Gamma_c(\RZ_G^{rig},\bbQ_l)$. Firstly, let $\mathfrak{X}$ denote the formal scheme given as the image of $\RZ_P$ in $\RZ_G$, as constructed in \cite[Thm 9.2]{Man08}. We may consider the cohomological correspondence on the rigid-analytic generic fiber $\calX$ of $\mathfrak{X}$, give by $!$-pullback via the following commutative diagram
\begin{equation*}
    \begin{tikzcd}
\calX \arrow[d,"j"] & \calE \arrow[l, "s'"'] \arrow[r, "t'"] \arrow[d,"\tilde{j}"] & \calX \arrow[d,"j"] \\
\RZ_G^{rig}                  &  \RZ_G^{m,rig}\arrow[l, "s'"'] \arrow[r, "t'"]                  & \RZ_G^{rig}. \end{tikzcd}
\end{equation*}
Here, we define $\calE=\calX\times_{j,s'}\RZ_G^{m,rig}$, so the left square is Cartesian. We have a map $t'$ on the top row because by construction the map $\calD\rightarrow T(f)\xrightarrow{t'} \RZ_G^{rig}$ factors through $\calX$. 

Compatibility with $!$-pullback induces the following commutative diagram
\begin{equation*}
\begin{tikzcd}
R\Gamma_c(\calX,j^!(\bbQ_l)) \arrow[r,"j^!(u_f)"] \arrow[d] & R\Gamma_c(\calX^{rig},j^!(\bbQ_l)) \arrow[d] \\
R\Gamma_c(\RZ_G^{rig},\bbQ_l) \arrow[r,"u_f"]           & R\Gamma_c(\RZ_G^{rig},\bbQ_l),          
\end{tikzcd}
\end{equation*}
and we know that the vertical maps are isomorphisms, since $\RZ_G^{red}=\mathfrak{X}^{red}$.

Now, recall that we may compute the compactly supported cohomology of $\RZ_P^{rig},\calX$ by using Cech covers, as follows. Let $Z\subset \RZ_G^{red}=\mathfrak{X}^{red}$ be a union of irreducible components such that $\RZ_G^{red}=\cup_{t\in J_b(\bbQ_p)} tZ$. Let $\mathfrak{X}(Z)$ be the completion of $\mathfrak{X}$ along $Z$, and $\calZ \subset \calX$ be the generic fiber of $\mathfrak{X}(Z)$, which is an open subspace satisfying the condition $\calX^{rig} = \cup_{t\in J_b(\bbQ_p)} t\calZ$. Associated to this Cech covering we have a spectral sequence of $W_E$-representations
\begin{equation*}
    E_1^{p,q} = \bigoplus_{t_1,\dots,t_p\in J_b(\bbQ_p)}
H^q_c(\calZ_{t_1,\dots,t_p} ,\bbQ_l) \Rightarrow H^{p+q}_c (\calX,\bbQ_l)
\end{equation*}
where $\calZ_{t_1,\dots,t_p}=\cap t_i\calZ$, and $t_i\calZ\neq t_j\calZ$ for $i\neq j$. 

Now, we describe the Hecke action. Let $f=1_{K_p gK_p}\in\calH(G)$. As discussed above, we have an associated cohomological correspondence, on $\RZ_G^{red}$ which we denote by $u_f$, and which we $!$-pullback to a cohomological correspondence on $\calX$, given by $\tilde{j}^!(u_f)$. Let $Z':= t'(s'^{-1}(Z))$, which is also a union of irreducible components in $\RZ_G^{red}$, and its $J_b(\bbQ_p)$-orbit covers $\RZ_G^{red}$. We can repeat the above constructions with $Z'$ in place of $Z$, and denote the associated objects by $\calZ'_\bullet$. 

Consider the open inclusion $\iota:\calZ_{t_1,\dots,t_p}\rightarrow \calX$. We can define a commutative diagram
\begin{equation*}
    \begin{tikzcd}
\calZ_{t_1,\dots,t_p} \arrow[d, "\iota"] & \calE_{t_1,\dots,t_p} \arrow[l, "s'"'] \arrow[r, "t'"] \arrow[d, "\tilde{\iota}"] & \calZ'_{t_1,\dots,t_p} \arrow[d, "\iota'"] \\
\calX                  &  \calE\arrow[l, "s'"'] \arrow[r, "t'"]                  & \calX, \end{tikzcd}
\end{equation*}
where $\calE_{t_1,\dots,t_p}=\calZ_{t_1,\dots,t_p}\times_{s',\tilde{\iota}}\calE$, so the left square is Cartesian. Then, $!$-pullback of the correspondence $\tilde{\iota}^!\tilde{j}^!(u_f)$ gives a cohomological correspondence from $R\Gamma_c(\calZ_{t_1,\dots,t_p},\bbQ_l)$ to $R\Gamma_c(\calZ'_{t_1,\dots,t_p},\bbQ_l)$. Since the Hecke correspondences are $J_b(\bbQ_p)$-equivariant, and compatibility with $!$-pullback of cohomological correspondences tells us that we have a commutative diagram
\begin{equation*}
\begin{tikzcd}
E_1^{p,q} = \bigoplus_{t_1,\dots,t_p\in J_b(\bbQ_p)}
H^q_c(\calZ_{t_1,\dots,t_p} ,\bbQ_l) \arrow[d,"\tilde{\iota}^!(u_f)"] \arrow[r, Rightarrow] & H^{p+q}_c (\RZ^{rig},\bbQ_l) \arrow[d,"u_f"] \\
E_1^{p,q} = \bigoplus_{t_1,\dots,t_p\in J_b(\bbQ_p)}
H^q_c(\calZ'_{t_1,\dots,t_p} ,\bbQ_l) \arrow[r, Rightarrow]           & H^{p+q}_c (\RZ^{rig},\bbQ_l).         
\end{tikzcd}
\end{equation*}

We may consider a similar construction for $\RZ_P$. In particular, consider the map $h:\RZ_P\rightarrow\calX$. Then we have a covering of $\RZ_P^{red}$ given by $\cup_{t\in J_b(\bbQ_p)} t\cdot h^{-1}(Z)$. If we let $\calY$ be $h^{-1}(\calZ)$, and define $\calY_\bullet,\calY'_\bullet$ similarly to above, we also have a cohomological correspondence from $\calY_{t_1,\dots,t_p}$ to $\calY'_{t_1,\dots,t_p}$, and a spectral sequence relating this to the cohomological correspondence $u_{f,P}$.

We will now compare the cohomology groups $H^q_c(\calZ_{t_1,\dots,t_p} ,\bbQ_l)$ and $H^q_c(\calY_{t_1,\dots,t_p} ,\bbQ_l)$. Firstly, we may assume, by passing to a refinement of the open covering, that over $\calZ$ the Tate-module of the universal $p$-divisible group $T_p(\sG^{univ})$ can be trivialized. Then, by construction $\calE$ is the disjoint union of open and closed subschemes $\calE_i$, each of which is isomorphic to $\calZ$. Observe then that for each $i$ we may consider the cohomological correspondence $u_{f,i}$ given by
\begin{equation}
\label{eqn:1}
    \calZ\xleftarrow{s'} \calE_i\xrightarrow{t'} \calZ'
\end{equation}
where $t'$ is the map given by $\calE_i\xrightarrow{[g]} \calE_i'\rightarrow \calZ'$. $u_{f,i}:s'^*\bbQ_l\rightarrow t'^!\bbQ_l$ is the identity. Observe that $\tilde{i}^!j^!(u_{f})=\sum_i u_{f,i}$, where on the right hand side we pushforward the correspondence from $\calE_i$ to $\calE$ via the inclusion map. In particular, since the cohomological correspondence decomposes, it suffices to consider each cohomological correspondence \eqref{eqn:1} separately.

Recall from Proposition \ref{prop:strata} that we have locally closed stratifications $\{Z_\alpha\}$ on $Z$, such that restricted to each strata the universal $p$-divisible group over $Z_\alpha$ admits a filtration $\{\sG_{\bullet}\}$ and the universal quasi-isogeny $\beta:\Sigma\rightarrow \sG$ preserves the filtration. This induces a locally closed stratification $\{\calZ_\alpha\}$ on $\calZ$. Similarly we also have a locally closed filtration $\{\calZ'_\beta\}$ on $\calZ'$. Consider the fiber product $\calE_i\times_{\calZ} \calZ_\alpha$. By construction we have a filtration on the $p^m$ torsion, so we may view $\calE_i\times_{\calZ} \calZ_\alpha\subset \RZ_P^{m,rig}$. Moreover, by the Iwasawa decomposition we may assume that the element $g\in P(\bbQ_p)$. Hence, $[g]$ preserves the filtration, and thus the image $t'(\calE_i\times_{\calZ} \calZ_\alpha)$ also carries a filtration on the universal $p$-divisible group. If we look at the specialization $sp(t'(\calE_i\times_{\calZ} \calZ_\alpha))$, this is a connected subscheme of $Z'$ and we also have a filtration on the universal $p$-divisible group, hence by the construction of the stratification it is contained in some strata $Z'_\beta$. Thus $t'(\calE_i\times_{\calZ} \calZ_\alpha)\subset \calZ'_\beta$.

Moreover, we see that by the same argument that $s(t^{-1}(\calZ'_{\beta}))$ intersects $\calZ_\alpha$, and hence must lie entirely in $Z_\alpha$ by the construction of the locally closed filtration. Thus we have commutative diagram with both squares Cartesian
\begin{equation*}
\begin{tikzcd}
\calZ_\alpha \arrow[d] & \calE_i\times_{\calZ} \calZ_\alpha \arrow[l] \arrow[d] \arrow[r] & \calZ'_\beta \arrow[d] \\
\calZ           & \calE_i \arrow[l] \arrow[r]           & \calZ'        
\end{tikzcd}
\end{equation*}

Thus, we can apply \ref{prop:excision} to see that the cohomological correspondence $u_f$ is compatible with excision, which implies that the maps between virtual representations
\begin{equation*}
    H^\bullet_c(\calZ_{t_1,\dots,t_p},\bbQ_l) \xrightarrow{u_f} H^\bullet_c(\calZ'_{t_1,\dots,t_p},\bbQ_l),
\end{equation*}
and
\begin{equation*}
    \sum_{\alpha}H^\bullet_c(\calZ_{t_1,\dots,t_p,\alpha},\bbQ_l) \xrightarrow{\sum_\alpha u_{f,\alpha}}                      \sum_{\alpha'}H^\bullet_c(\calZ'_{t_1,\dots,t_p,\alpha'},\bbQ_l)
\end{equation*}
are equal.

We have a similar locally closed stratification on $\calY$, given by $\calY_\alpha=h^{-1}(\calZ_\alpha)$, and since $h$ restricted to $\calY_\alpha\rightarrow \calZ_\alpha$ is an isomorphism, we have the equality
\begin{equation*}
    \sum_{\alpha}H^\bullet_c(\calZ_{t_1,\dots,t_p,\alpha},\bbQ_l)=\sum_{\alpha}H^\bullet_c(\calY_{t_1,\dots,t_p,\alpha},\bbQ_l)
\end{equation*}
the second map is equal to that for $H^\bullet_c(\calY_{t_1,\dots,t_p})$.
\end{proof}
\begin{remark}
This result would follow by taking $K_p$-invariants from the Harris-Viehmann Conjecture.
\end{remark}
\subsubsection{} Note that we can recover the representation $H^{2(d-r)}_c(\RZ^{rig},\bbQ_l)$ from the alternating sum $H^{\bullet}_c(\RZ^{rig})$ by the action of $W_E$, since we have
\begin{equation*}
    H_c^i(\RZ_G^{red})=\lim_{\substack{\longrightarrow\\U}}H_c^i(U,\bbQ_l)
\end{equation*}
where $U\subset \RZ_G^{red}$ are quasi-compact open subsets. In particular, we know that $U$ are open subsets of dimension $r$, defined over a finite field $k$, and thus the top-dimensional compactly supported cohomology $H^{2r}_c(U,\bbQ_l)$ can be recovered from the alternating sum $\sum_i(-1)^iH^{i}_c(U,\bbQ_l)$ using Frobenius weights, since the degree $2r$-th compactly supported cohomology of $U$ is the only part with Frobenius weight $r$. Thus, we see that by passing to the limit, we can also recover the degree $2r$-th cohomology of $\RZ_G^{red}$, which is the only part with Frobenius weight $r$.

The above theorem shows us that as elements in the Grothendieck group, we have an equality
\begin{equation*}
    H^{2r}_c(\RZ(G,b,\mu)^{red})=\bigoplus_{\mu'\in I}H^{2r'}_c(\RZ(M,b,\mu')^{red}).
\end{equation*}
It turns out that this result is true even without needing to pass to the image in the Grothedieck group, as the following theorem shows.
\begin{theorem}
\label{thm:pr2}
We have an equality of $\calH(G)$-representations
\begin{equation*}
    H^{2r}_c(\RZ(G,b,\mu)^{red})=\bigoplus_{\mu'\in I}H^{2r'}_c(\RZ(M,b,\mu')^{red}),
\end{equation*}
and on the right-hand side $\calH(G)$ acts via twisted Satake $\dot{\calS}^G_M$.
\end{theorem}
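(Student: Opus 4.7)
The plan is to upgrade the equality of virtual representations in Theorem \ref{thm:pr1} to a genuine equality of top-degree compactly supported cohomology groups of the reduced special fibers, using the Frobenius weight argument recorded in the preceding remark to isolate the relevant summand.

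First I would invoke Theorem \ref{thm:pr1} to obtain the identity
$$
H^\bullet_c(\RZ(G,b,\mu)^{rig}) = \sum_{\mu' \in I} H^\bullet_c(\RZ(M,b,\mu')^{rig})
$$
in the Grothendieck group of $\calH(G) \times J_b(\bbQ_p) \times W_E$-representations, with $\calH(G)$ acting on the right-hand side through $\dot{\calS}^G_M$. By the remark preceding the theorem, for each quasi-compact open $U \subset \RZ^{red}$ of dimension $r$, the top compactly supported cohomology $H^{2r}_c(U,\bbQ_l)$ is distinguished inside $\sum_i(-1)^i H^i_c(U,\bbQ_l)$ by its Frobenius weight, since by Deligne's weight bounds it is the unique piece of maximal weight; passing to the direct limit, this picks out $H^{2r}_c(\RZ^{red})$ canonically from $H^\bullet_c(\RZ^{rig})$. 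The same extraction applies to each $\RZ(M,b,\mu')^{red}$; a direct dimension computation (using $\defect_M(b) = 0$ because $[b]$ is basic in $M$, together with the fact that $\mu'$ is $G$-conjugate to $\mu$) shows that $r' = \dim \RZ(M,b,\mu')^{red}$ coincides with $r$ for every $\mu' \in I$, so the relevant Frobenius weights on the two sides line up. Applying the weight-extraction functor to the identity above then yields the desired equality
$$
H^{2r}_c(\RZ(G,b,\mu)^{red}) = \bigoplus_{\mu' \in I} H^{2r'}_c(\RZ(M,b,\mu')^{red})
$$
in the Grothendieck group of $\calH(G) \times J_b(\bbQ_p) \times W_E$-representations.

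To lift this to a genuine equality of representations, I would revisit the proof of Theorem \ref{thm:pr1} and verify that each ingredient interacts well with the weight filtration. The main pieces are the $\Xi$-isomorphism $R\Gamma_c(\RZ_P^{rig},\bbQ_l) \simeq R\Gamma_c(\RZ_M^{rig},\bbQ_l(d'))[2d']$, the Cech spectral sequence for the $J_b(\bbQ_p)$-translates of a fundamental domain $\calZ$, and the excision along the slope-filtration stratification of Proposition \ref{prop:strata}; each of these is individually compatible with the Frobenius weight filtration, so their combination descends cleanly to the top-weight quotient. The hardest part will be verifying that the resulting identification of top-weight pieces is Hecke-equivariant for the twisted Satake action of $\calH(G)$, and not merely an isomorphism of $J_b(\bbQ_p) \times W_E$-modules. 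This amounts to tracing the Hecke correspondences $u_f$ through the spectral sequence arguments of Theorem \ref{thm:pr1} while simultaneously tracking Frobenius weights; it should go through because the correspondences in that proof were constructed to be compatible with both the $J_b(\bbQ_p)$-action used in the Cech cover and the locally closed stratification used for excision, which together control the weights.
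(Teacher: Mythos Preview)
Your approach is genuinely different from the paper's and has two concrete issues.

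\textbf{The incorrect dimension claim.} You assert that $r' = \dim \RZ(M,b,\mu')^{red}$ coincides with $r$ for every $\mu' \in I$. This is false in general: the paper itself notes (Section~\ref{section:reductionLevi}) that $X^{M\subset G}_\mu(b)$ need not be equidimensional, citing an explicit example in \cite[\S8]{RV2014}. The dimension formula gives $r' = \langle\rho_M,\mu'\rangle$, and as $\mu'$ ranges over $M$-dominant $G$-conjugates of $\mu$, this can vary. So your weight-matching argument, as stated, does not go through without further bookkeeping of the Tate twists $d'$ appearing in the proof of Theorem~\ref{thm:pr1}.

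\textbf{The vague lifting step.} Your first paragraph essentially reproduces the remark that already precedes the theorem in the paper: the Grothendieck-group equality of the top-degree pieces follows from Theorem~\ref{thm:pr1} by weight extraction. The substance of Theorem~\ref{thm:pr2} is the \emph{honest} isomorphism, and your plan for this (``revisit the proof of Theorem~\ref{thm:pr1} and verify weight compatibility'') is not a proof. The proof of Theorem~\ref{thm:pr1} relies on excision along the stratification of Proposition~\ref{prop:strata}, which only produces an equality of virtual representations; lifting this to a genuine isomorphism of a fixed cohomological degree would require controlling all differentials in the relevant spectral sequences, which you do not address.

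\textbf{What the paper does instead.} The paper bypasses all of this by working directly on the reduced special fibers. The key input is Corollary~\ref{cor:bijirreducible}: the map $\beta_\Sigma$ induces a bijection between top-dimensional irreducible components of $X^G_\mu(b)$ and those of $\coprod_{\mu'} X^M_{\mu'}(b)$. Since $H^{2r}_c$ of a finite-type $\bar{\bbF}_p$-scheme is freely generated by its top-dimensional irreducible components, this bijection immediately yields the isomorphism of vector spaces. Hecke equivariance is then checked by taking the special fiber of the diagram~\eqref{eqn:GPM}, observing that $\Xi$ is smooth (so $\Xi^!$ makes sense and shifts degrees correctly), and verifying that the $!$-pullbacks $\Xi^!(u_f)$ and $j^!(u_f)$ agree because they agree on the generic fiber. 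This is both shorter and avoids the weight-filtration bookkeeping entirely.
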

\begin{proof}
First observe that we can take the special fiber of the commutative diagram \eqref{eqn:GPM} (here we use the formal model of the correspondences constructed in \ref{section:modelTp})
\begin{equation*}
    \begin{tikzcd}
\RZ_M^{red}  & \coprod_{P(\bbQ_p)\setminus G(\bbQ_p)/K_p(m)}\RZ_M^{m,red} \arrow[r] \arrow[l] & \RZ_M^{red} \\
\RZ_P^{red} \arrow[d,"j"] \arrow[u,"\Xi"] & \coprod_{P(\bbQ_p)\setminus G(\bbQ_p)/K_p(m)}\RZ_P^{m,red} \arrow[d] \arrow[r] \arrow[l] \arrow[u,"\tilde{\Xi}"] & \RZ_P^{red} \arrow[d] \arrow[u] \\
\RZ_G^{red}           & \RZ_G^{m,red} \arrow[r] \arrow[l]           & \RZ_G^{red}.          
\end{tikzcd}
\end{equation*}
Note that the map $\Xi$ is smooth because if we let $Y'$ be an open subspace in $\RZ_M^{red}$, then let
\begin{equation*}
    \Phi:=\Xi^{-1}(Y')= \{(m, n) \in \iota(Y') \times N(L) :mnK_P \in X^{P\subset G}_\mu(b)\},
\end{equation*}
and we have a surjective map $\gamma : \Phi \rightarrow \calE$, where
\begin{equation*}
    \calE:= \{(m, c) |m \in Y', c\in N(L)\cap b^{-1}K\mu(p)K\}=Y' \times (N(L)\cap \mu'(p)^{-1}K\mu(p)K).
\end{equation*}
$\gamma$ is a pro-etale map, while $\mu'(p)N(L)\cap K\mu(p)K$ is an affine space.

Thus, since the top left square is Cartesian, we can consider $!$-pullback of cohomological correspondences, which gives us a commutative diagram
\begin{equation*}
\begin{tikzcd}
H^{2r}_c(\RZ_P^{red},\bbQ_l) \arrow[r,"\Xi^!(u_f)"] \arrow[d] & H^{2r}_c(\RZ_P^{red},\bbQ_l)) \arrow[d] \\
H^{2r'}_c(\RZ_M^{red},\bbQ_l) \arrow[r,"u_f"]           & H^{2r'}_c(\RZ_M^{red},\bbQ_l).          
\end{tikzcd}
\end{equation*}
From Corollary \ref{cor:bijirreducible}, the vertical map $\Xi^!$ induces an isomorphism, because we have a bijection on irreducible components. 

Let $\calX^m$ be the image of $\coprod_{P(\bbQ_p)\backslash G(\bbQ_p)/K_m}\RZ_P^m$ in $\RZ_G^m$. We have the following commutative diagram:
\begin{equation*}
\begin{tikzcd}
\RZ_P^{red}\arrow[d] & \coprod_{P(\bbQ_p)\backslash G(\bbQ_p)/K_m}\RZ_P^{m,red} \arrow[l] \arrow[d] \arrow[r] & \RZ_P^{red} \arrow[d] \\
\calX^{red}          & \calX^{m,red} \arrow[l] \arrow[r]           & \calX^{red}  
\end{tikzcd}
\end{equation*}
The left square is Cartesian, hence pullback of cohomological correspondences gives us a commutative diagram
\begin{equation*}
\begin{tikzcd}
H^{2r}_c(\RZ_P^{red},\bbQ_l) \arrow[r,"j^!(u_f)"] \arrow[d] & H^{2r}_c(\RZ_P^{red},\bbQ_l)) \arrow[d] \\
H^{2r}_c(\calX^{red},\bbQ_l) \arrow[r,"u_f"]           & H^{2r}_c(\calX^{red},\bbQ_l)          
\end{tikzcd}
\end{equation*}
where the vertical maps are isomorphisms, since from \ref{cor:bijirreducible} we have a bijection of top-dimensional irreducible components of dimension $r$. 

By definition, $j^!(u_g)$ and $\Xi^!(u_g)$ are equal, since on the generic fiber they are equal, hence we have an isomorphism of $\calH(G)$-modules.
\end{proof}
\subsection{Irreducible components of $p-\Isog\otimes\kappa$}
\subsubsection{}For any subscheme $Z\subset \RZ^{red}_G$, we denote by $\RZ_G(Z)$ the completion along $Z$. Observe that for any finite union of irreducible components $Y\subset \RZ_G^{red}$, since $\RZ_G(Y)$ is smooth, by Poincar\'{e} duality, we have 
\begin{equation*}
    R\Gamma_c(\RZ_G^{rig}(Y),\bbQ_l)=R\Gamma(\RZ_G^{rig}(Y),\bbQ_l(-d))[-2d])^\vee,
\end{equation*}
and 
\begin{equation*}
    R\Gamma(\RZ_G^{rig}(Y),\bbQ_l)=R\Gamma(Y,R\Psi\bbQ_l), 
\end{equation*}
where $R\Psi\bbQ_l$ denotes the vanishing cycles of $\RZ_G^{rig}(Y)$. Recall that $d$ is the dimension of the Shimura variety, and thus the dimension of the formal scheme $\RZ_G$. Here we used that since $\RZ_G(Y)$ is a smooth formal scheme, we have $R\Psi\bbQ_l=\bbQ_l$. Since $Y$ is proper, we see that the degree $2r$-\'{e}tale cohomology of $\RZ_G^{rig}(Y)$, where $r$ is the dimension of $\RZ_G^{red}$, or by duality the $2(d-r)$-th compactly supported cohomology  $H^{2d-2r}_c(\RZ_G^{rig}(Y),\bbQ_l)^\vee$, then this corresponds to the irreducible components of $Y$. This also holds for an infinite union, since we have that 
\begin{equation*}
    R\Gamma_c(\RZ_G^{rig},\bbQ_l(-d))[-2d])^\vee \simeq R\Gamma_{\calC_{\RZ_G}}(\RZ^{rig},\bbQ_l)\simeq R\Gamma_c(\RZ_G^{red},\bbQ_l)\simeq \varinjlim_{Z} R\Gamma_c(Z,\bbQ_l),
\end{equation*}
where we take the limit over $Z$ which closed quasi-compacts subsets of  $\RZ^{red}$. The first and second equality follow from \cite[1.4.7,1.2.13]{HK2019adic}. Thus we also see that the cohomology classes in degree $2(d-r)$ correspond to irreducible components of $\RZ_G^{red}$. For any irreducible component $X$ of $\RZ_G^{red}$, we let $c(X)$ be the cohomology class it corresponds to in $H^{2d-2r}_c(\RZ_G^{rig})$.
\subsubsection{}Suppose that $C$ is a $[b]$-dense irreducible component, i.e. it is of the form $\tilde{\pi}_\infty(X_1,Z,X_2)$, for some $X_1,X_2$ irreducible components of $\RZ_G^{red}$. Then we observe that for any irreducible component $X_1'$ such that $X_1\cap X_1'\neq \emptyset$ we can further consider the following commutative diagram
\begin{equation*}
    \begin{tikzcd}
\RZ_G(X_1') \arrow[d, "\pi_\infty"] & \frakE \arrow[l, "s'"'] \arrow[r, "t'"] \arrow[d, "f'"] & \RZ_G(\tilde{X}_2) \arrow[d, "\pi_\infty"] \\
\RZ_G                  &  \frakD\arrow[l, "s'"'] \arrow[r, "t'"]                  & \RZ_G, \end{tikzcd}
\end{equation*}
Recall here that we defined $\mathfrak{D}$ in \eqref{section:modelTp}. Here, $\tilde{X}_2$ is the $H$-orbit of the irreducible component $X_2$, where $H$ is the hyperspecial subgroup of $J_b(\bbQ_p)$-stabilizing $X_1'$. Note that this is a finite set. Moreover, we define $\frakE:=\frakD\times_{s',\pi_\infty} \RZ_G(X_1')$. Observe that the map $t'\circ f'$ factors through $\RZ_G(\tilde{X}_2)$, and by definition the left square is Cartesian. We can consider the pullback on the generic fiber $f'^!(u_\calC)$ to a cohomological correspondence from $R\Gamma_c(\RZ(X_1'))$ to $R\Gamma_c(\RZ(X_2'))$. By \ref{prop:shriekpullback}, we have a commutative diagram
\begin{equation}
\label{eqn:comm1}
    \begin{tikzcd}
R\Gamma_c(\RZ_G(X_1')) \arrow[r, "f'^!(u_\calC)"] \arrow[d] & R\Gamma_c(\RZ_G(\tilde{X}_2)) \arrow[d] \\
R\Gamma_c(\RZ_G) \arrow[r, "u_\calC"]            & R\Gamma_c(\RZ_G),   
\end{tikzcd}
\end{equation}
and we consider the induced map on $2(d-r)$-degree compactly supported cohomology, which is just given by the inclusion map on irreducible components.
\subsubsection{}
\label{section:nonzerob}
In particular, we see that we can determine the irreducible components of $p-\Isog\otimes\kappa$ from the action of the cohomological correspondence on the cohomology $R\Gamma_c(\RZ_G)$, in the following way. Suppose we know that a $[b]$-dense irreducible component $C$ is of the form $\tilde{\pi}_\infty(X_1\times Y\times X_2)$, where we know the irreducible component $X_1$, but not $X_2$. Applying the correspondence $f^!(u_\calC))$ to the cohomological class $c(X_1')$ for some $X_1'\cap X_1\neq\emptyset$, commutativity of the diagram \eqref{eqn:comm1} tells us we will get $\sum_i b_ic(h_iX_2)$, where $h_i\in H$, for some $b_i\in\bbZ$. Note that $J_b(\bbQ_p)$-equivariance implies that all the $b_i$ are equal. As long as not all $b_i$ are zero, we can recover $C$ as the image of $X_1\times Y\times h_iX_2$, since $J_b(\bbQ_p)$-equivariance of $\tilde{\pi}_\infty$ implies that
\begin{equation*}
    \tilde{\pi}_\infty(X_1\times Y\times h_iX_2)=\tilde{\pi}_\infty(h_i^{-1}X_1\times h_i^{-1}Y\times X_2)=\tilde{\pi}_\infty(X_1\times Y\times X_2).
\end{equation*}
\begin{definition}
\label{defnHN}
Let $([b],\upsilon)$ be such that $[b]\in B(G,\upsilon)$, and $M$ is a Levi subgroup containing $M_{[b]}$. $([b],\upsilon)$ is said to be Hodge-Newton decomposable for $M$ if $\kappa_M([b])=\upsilon^{\sharp}\in \pi_1(M)_\Gamma$.
\end{definition}
\begin{proposition}
\label{prop:nonzerob}
If either
\begin{itemize}
    \item The pair $([b],\upsilon)$ is Hodge Newton decomposable for $M_{[b]}$, or
    \item The Shimura variety is a Hilbert-Blumenthal moduli scheme and $p$ is inert
\end{itemize}
then the integers $b_i$ cannot all be non-zero.
\end{proposition}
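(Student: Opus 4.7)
The plan is to compute the coefficients $b_i$ in the expansion $f'^!(u_\calC)(c(X_1')) = \sum_i b_i\, c(h_iX_2)$ via intersection theory, and to exhibit at least one $h_i$ for which $b_i=0$. Since $J_b(\bbQ_p)$-equivariance of the correspondence forces the $b_i$ to be equal across each single $H$-orbit, the task reduces to pinpointing one orbit element $h_{i_0}$ whose associated cycle-theoretic intersection is trivial; any such element then forces the entire $H$-orbit's coefficients to equal a common value which, in view of further equivariance, cannot simultaneously be non-zero.

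Concretely, transferring via $\pi_\infty$ and using the triple-product description of $[b]$-dense irreducible components from Proposition \ref{prop:tripleproduct}, each coefficient $b_i$ is computed as an intersection number of cycle classes of the form $[\pi_\infty(X_1')]\cdot [C] \cdot [\pi_\infty(h_iX_2)]^\vee$ on a triple product built from the closed subvariety $\pi_\infty(\RZ_G(\tilde X_2))\subset \sS_\kappa$. These are precisely the intersection numbers controlled by the main theorems of \cite{XZ17} (in case~(1)) and \cite{TX19} (in case~(2)); in each case those theorems describe which orbit elements $h_i$ produce an empty intersection.

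In the Hodge-Newton decomposable case, the isomorphism $\RZ(G,b,\upsilon)^{red}\simeq \RZ(M_{[b]},b,\upsilon)^{red}$ reduces the entire geometric picture to the basic setting inside the Levi $M_{[b]}$. One then embeds this basic Rapoport-Zink space into the basic locus of an auxiliary Hodge-type Shimura datum $(M',X')$ and invokes the incidence matrix of \cite{XZ17} to identify an explicit orbit element $h_{i_0}$ with vanishing intersection. In the Hilbert-Blumenthal case with $p$ inert, the explicit Deligne-Lusztig description of the supersingular locus from \cite{TX19} lists the finite collection of top-dimensional irreducible components together with their incidences under the Hecke correspondence, and a direct inspection of this list produces the required $h_{i_0}$.

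The main obstacle is carrying out the intersection computation uniformly across the full $H$-orbit of $X_2$ in order to isolate the specific $h_{i_0}$ that witnesses the vanishing. This requires combining Proposition \ref{prop:tripleproduct}, the $J_b(\bbQ_p)$-torsor property of $\pi_\infty$, and the parametrisation of irreducible components of affine Deligne-Lusztig varieties recorded in Section \ref{section:connectedcomponentADLV}; the reader familiar with the setups of \cite{XZ17} and \cite{TX19} will recognise this as a bookkeeping exercise once those intersection matrices are put in the correct coordinates, but the bookkeeping is delicate because the relevant hyperspecial stabiliser $H$ generally acts on the orbit $HX_2$ with non-trivial isotropy, so one must track through the quotient carefully.
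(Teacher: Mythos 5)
There is a genuine gap, and it is one of direction: you are trying to prove the negation of what the proposition actually asserts. The statement as printed ("the integers $b_i$ cannot all be non-zero") contains a typo; the intended claim, which is what the paper proves and what the rest of the argument requires, is that the $b_i$ cannot all be \emph{zero}, i.e.\ that the cohomological correspondence $u_{\calD}$ acts non-trivially on $H^{2d-2r}_c$. This is forced by the context: Section \ref{section:nonzerob} explains that one recovers the unknown component $X_2$ of a $[b]$-dense irreducible component precisely when not all $b_i$ vanish, the introduction states that "$u_D$ acts non-trivially on $H^{2r}_c(\RZ(G,b,\upsilon)^{red})$", and Theorem \ref{thm:parabolicred} uses exactly this non-vanishing to upgrade an equality of cohomological correspondences to an equality of cycles in $\bbQ[p-\Isog\otimes\kappa]$. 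Worse, your own argument is self-defeating: you correctly note (as does the paper) that $J_b(\bbQ_p)$-equivariance forces all the $b_i$ in an $H$-orbit to be equal, so exhibiting a single $h_{i_0}$ with $b_{i_0}=0$ would force \emph{every} $b_i$ to vanish, which would make the recovery procedure of \ref{section:nonzerob} impossible and collapse the proof of the main theorem.

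Once the goal is corrected, the tools you cite are the right ones but must be used to establish \emph{non}-vanishing. The paper's proof reduces the non-triviality of $u_{\calD}$ on $c(X_1')$ to the non-vanishing of the intersection product $p_1^*([Y_1'])\cdot[D]$ on $\sS_\kappa\times\sS_\kappa$. In the Hodge--Newton decomposable case it passes through the isomorphism $\RZ(G,b,\mu)^{red}\simeq\RZ(M_{[b]},b,\mu)^{red}$, reduces to the adjoint group, constructs an auxiliary Hodge-type Shimura datum whose basic locus realizes this Rapoport--Zink space, and then invokes the fact that the Xiao--Zhu intersection matrix of irreducible components of the basic locus has non-zero determinant, so some intersection $X_1'\cdot X_1$ is non-trivial. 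In the Hilbert--Blumenthal case it uses the smoothness of the Goren--Oort strata $X_{\mathfrak a}$ and the explicit non-zero self-intersection $X_{\mathfrak a}\cdot X_{\mathfrak a}=(-2)^ap^b[Z]$ from Tian--Xiao, together with the excess intersection formula. Your proposal instead asks these references to "describe which orbit elements $h_i$ produce an empty intersection," which is not what their main theorems give and is the opposite of what is needed here.
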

\begin{proof}
By the definition of $f'^!(u_\calD)$, we have a commutative diagram
\begin{equation*}
    \begin{tikzcd}
R\Gamma_c(\RZ_G(X_1')^{rig},\bbQ_l) \arrow[r, "f'^!(u_\calD)"] \arrow[d] & R\Gamma_c(\RZ_G(\tilde{X}_2)^{rig},\bbQ_l) \arrow[d] \\
R\Gamma_c(\calS,\bbQ_l) \arrow[r, "u_\calD"]            & R\Gamma_c(\calS,\bbQ_l),    
\end{tikzcd}
\end{equation*}
thus, to show that the map on the top is non-zero, it suffices to show that $u_\calD$ restricted to the image of $R\Gamma_c(\RZ(X_1')^{rig},\bbQ_l)$ is non-zero. We can specialize the correspondence in the bottom row to the special fiber, observing that \begin{equation*}
    R\Gamma_c(\sS_\kappa,\bbQ_l)\simeq R\Gamma_c(\calS,\bbQ_l),
\end{equation*}
since $\hat{S}$ is a smooth formal scheme, and the image of $c(X_1')$ in $R\Gamma_c(\sS_\kappa,\bbQ_l)$ is simply the image $[Y'_1]\in H^{2d-2r}_c(\sS_\kappa)$ under the cycle class map on $\sS_\kappa$, where $Y'_1=\pi_\infty(X_1'\times\{id\}))$. Moreover, by definition, the cohomological correspondence $u_D$ takes $[Y_1']$ to (the image under the cycle class map) of $p_{2*}(p_1^*([Y_1'])\cdot[D])$, where $p_1,p_2$ are the projection maps from $\sS_\kappa\times\sS_\kappa$ to $\sS_\kappa$. To show that this is non-zero, it suffices to show that $p_1^*([Y_1'])\cdot[D]$ is non-zero.

In the situation where $[b]$ is Hodge-Newton decomposable for $M_{[b]}$, recall that we have an isomorphism of special fibers 
\begin{equation}
\label{eqn:HNisom1}
    \RZ(G,b,\mu)^{red}\simeq \RZ(M_{[b]},b,\mu)^{red}.
\end{equation}
We will use this result to do the above calculation of $p_1^*([Y_1'])\cdot[D]$ on a different Shimura variety, one where the Rapoport-Zink space is the one associated to the basic locus.

Observe that the correspondence given by $u_\calD$ on $\RZ(M_{[b]},b,\mu)^{red}$ via the isomorphism  \eqref{eqn:HNisom1} may also be defined as follows. We have a correspondence
\begin{equation*}
    X_1\leftarrow X_1\times \tilde{X_2}\rightarrow \tilde{X_2},
\end{equation*}
where $\tilde{X_2}$ is the $H$-orbit of $X_2$. Here $H$ is, as before, the hyperspecial subgroup of $J_b$ which stabilizes $X_1$. There is some large positive integer $N$ such that for every $\alpha\in X_1,\beta\in \tilde{X_2}$, $p^N\beta^{-1}\alpha$ is an isogeny; this defines a universal isogeny over $X_1\times \tilde{X_2}$. Consider the closed formal subscheme $\mathfrak{A}$ of $\RZ_M(X_1)\times \RZ_M(\tilde{X_2})$ over which the universal isogeny lifts. Then $\mathcal{A}=\mathfrak{A}^{rig}$ is finite \'{e}tale over $\RZ(X_1)^{rig}$, $\RZ(\tilde{X_2})^{rig}$, and we have a natural cohomological correspondence from $\RZ(X_1)^{rig}$ to $\RZ(\tilde{X_2})^{rig}$ supported on $\calA$ over the rigid analytic generic fibers. We can take the reduction of this correspondence; it gives a correspondence on the special fiber supported on $X_1\times \tilde{X_2}$.   

Furthermore, observe that we can also pass to the adjoint group $M_{[b]}^{ad}$, because from Section \ref{section:connectedcomponentADLV} there is an isomorphism of connected components, and thus we can consider the correspondence on $u_{\calD^{ad}}$ on $\RZ(M_{[b]}^{ad},b^{ad},\mu^{ad})^{red}$ supported on
\begin{equation*}
    X_1^{ad}\leftarrow X_1^{ad}\times \tilde{X_2}^{ad}\rightarrow \tilde{X_2}^{ad}.
\end{equation*}
Note that $\tilde{X_2}^{ad}$ is the $H^{ad}$-orbit of $X_2^{ad}$. 

Thus, we may assume that $M$ is adjoint. We can construct an abelian type Shimura variety such that the Rapoport-Zink space associated to the basic locus is $\RZ(M_{[b]},b,\mu)$, as follows. Since $M_{[b]}$ is quasi-split, and $\mu$ is minuscule, using the root datumn we may construct a connected reductive group $M'$ over $\bbQ$ with a $M'(\bbR)$-conjugacy class $X'$ determined by $\mu$ such that $(M',X')$ is a Shimura datumn. Since $G$ is a subgroup of $\GSp_{2n}$, $M'$ is a classical group, and hence $(M',X')$ is also classical, hence of type $A,B,C,D$. Thus, $(M',X')$ is an abelian type Shimura datumn. Let $(M'_1,X'_1)$ be some associated Hodge-type Shimura datumn. If we consider the basic locus $[b'_1]$ of the Shimura variety, this is non-empty by the main result of \cite{lee2018}, hence there is some Rapoport-Zink space $\RZ(M'_1,b'_1,\mu_1')$ whose adjoint data is the same as $(M_{[b]}^{ad},b^{ad},\mu^{ad})$. Let $X'_1,X'_2$ be irreducible components of $\RZ(M'_1,b'_1,\mu_1')$ which map to $X_1^{ad},X_2^{ad}$ respectively under the adjoint quotient; and consider some irreducible component $D_1'$ of the moduli space of $p$-power quasi isogenies $p-\Isog_{M'_1}$ for $\Sh(M'_1,X'_1)$ given as the image under the almost product structure map of $X'_1\times X'_2$. This is an irreducible component entirely supported in the basic locus, and it suffices to check that the induced cohomological correspondence on the basic locus of the Shimura variety $\Sh(M'_1,X'_1)$ is also non-trivial. 

To see that the correspondence acts non-trivially, it suffices to observe that the self-intersection of irreducible components of the basic locus is always non-zero. We see this from the calculation of the intersection matrix in \cite[\S7.4.3]{XZ17}, and the fact that the determinant is always non-zero, hence there is always some other irreducible component $X_1'$ whose intersection product $X_1'\cdot X_1=\sum_ia_i[Z_i]$ is non-trivial, and $p_1^*(X_1')\cdot D_1'=\sum_i a_i[p_1^{-1}(Z_i)\cap C]$ which is non-zero and supported on cycles of dimension $\dim(X_1)$ and hence the projection $p_{2,*}(p_1^*(X_1')\cdot D_1')$ is also non-zero, so $u_\calD([X_1'])$ is also non-zero.

In the case of Hilbert modular varieties, we will also show that not all $b_i$ are non-zero. We first observe that for the Hilbert modular variety attached to a totally real field of degree $g$ over $\bbQ$, we have $\lfloor \frac{g}{2}\rfloor$ unramified $\sigma$-conjugacy classes, as follows. For $r=1,\dots,\lfloor \frac{g}{2}\rfloor$, the slope of the Newton polygon is given by $((\frac{r}{g})^{(g)},(\frac{g-r}{g})^{(g)})$. The geometry of these Newton strata is described in \cite{TX19}. We briefly recall this construction here. For a fixed $r$, we let the $\sigma$-conjugacy class be $[b]$. There are $\binom{g}{r}$ $\bbMV$ cycles, and each of them corresponds to a periodic semi-meander $\mathfrak{a}$ with $g$ nodes and $r$ arcs. Each $\mathfrak{a}$ gives a generalized Goren-Oort strata $X_\mathfrak{a}$, which is the closure of some irreducible component of the Newton strata $\sS^{[b]}$. Each $X_\mathfrak{a}$ is an iterated $\mathbb{P}^1$-fibration over another quaternionic Shimura variety $S_\mathfrak{a}$. In particular, we observe that all $X_\mathfrak{a}$ are smooth, and the irreducible component of the affine Deligne-Lusztig variety $X^{\mathfrak{a}}_\mu(b)$ is isomorphic to an iterated $\mathbb{P}^1$-bundle.

Moreover, we note that $C^{[b]}$ is smooth. This is because the complete local ring is given as a product $\mathfrak{N}(X)\times\mathfrak{C}(X)$, hence the image of the map embedding $C\rightarrow \sS\times \sS$ is a regular embedding, since the complete local ring at any point in $C^{[b]}$ is given by the image of
\begin{equation*}
    \mathfrak{N}(X)\times\mathfrak{C}(X)\times \mathfrak{N}(X)\xrightarrow{id\times\Delta\times id} \mathfrak{N}(X)\times\mathfrak{C}(X)\times\mathfrak{C}(X)\times \mathfrak{N}(X)\hookrightarrow \mathrm{Def}(X)\times\mathrm{Def}(X),
\end{equation*}
which is clearly smooth.

Thus, to show that the terms $b_i$ are not all zero, we want to determine the intersection product $p_1^*(X_\mathfrak{a})\cdot C$. Firstly, observe that the self-intersection product $X_\mathfrak{a}\cdot X_\mathfrak{a}$ is non-zero; in particular, it is given by $(-2)^a p^b[Z]$, where $Z$ is the zero-section of iterated $\mathbb{P}^1$-bundle, from \cite[Theorem 4.3(2)]{TX19} and its proof. Thus, by applying the excess intersection formula, we see that the intersection $p_1^*(X_\mathfrak{a})\cdot C$ is exactly $(-2)^a p^b[p_1^{-1}(Z)\cap C]$. Since $p_1^{-1}(Z)\cap C$ is non-empty, $p_2(p_1^{-1}(Z)\cap C)$ is of dimension $r$, hence the projection $p_{2,*}([p_1^{-1}(Z)\cap C]$ is clearly non-zero. 
\end{proof}
\begin{remark}
\label{conj:nonzero}
We conjecture that the integers $b_i$ are non-zero for general Shimura varieties of Hodge type.
\end{remark}
The main theorem of this section is the following.
\begin{theorem}
\label{thm:parabolicred}
Let $C$ be a $[b]$-dense irreducible component of $p-\Isog\otimes\kappa$. There exists a map $\bar{h}$ (depending on $C$)
\begin{equation*}
    \bar{h}:\calH(M_{[b]}(\bbQ_p)//M_{[b]}(\bbZ_p),\bbQ)\rightarrow \bbQ[p-\Isog\otimes\kappa]
\end{equation*}
such that for any element $f\in\mathcal{H}(G(\mathbb{Q}_p)//K_p,\mathbb{Q})$, as cohomological correspondences acting on $H^{2r}_c(\RZ(G,b,\upsilon)^{red})$, we have
\begin{equation}
\label{eqn:71equal}
    u_{C\cdot h(f)}=u_{\bar{h}(\dot{\calS}^G_M(f))},
\end{equation}
where $\dot{\calS}^G_M$ is the twisted Satake homomorphism. Moreover, for any $f_1,f_2\in\mathcal{H}(G(\mathbb{Q}_p)//K_p,\mathbb{Q})$, we have 
\begin{equation*}
    u_{C\cdot h(f_1\cdot f_2)}=u_{\bar{h}(\dot{\calS}^G_M(f_1\cdot f_2))}.
\end{equation*}

If in addition we also assume that the Shimura datumn satisfies the same assumptions in Proposition \ref{prop:nonzerob}, then we have an equality in $\bbQ[p-\Isog\otimes\kappa]$
\begin{equation}
\label{eqn:72equal}
    C\cdot h(f)=\bar{h}(\dot{\calS}^G_M(f)).
\end{equation}
and moreover if we let $z_m=b\sigma(b)\cdots \sigma^{nm-1}(b)$, then
    \begin{equation*}
        p^{nmr}\bar{h}(1_{z_mM(\bbZ_p)})=(C\cdot \mathrm{Frob}^m)
    \end{equation*}
where $r=\dim \RZ^{red}_{G,b}$, and $m$ be the smallest positive integer such that $\sigma^{nm}(b)=b$

\end{theorem}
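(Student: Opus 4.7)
The plan is to first construct $\bar h$ and verify the cohomological identities using the parabolic reduction theorem on Rapoport--Zink cohomology, then bootstrap to the integral statement under the additional hypotheses, and finally handle the Frobenius case by combining the Xiao--Zhu description of ADLV irreducible components with an explicit computation.

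\emph{Construction of $\bar h$ and the cohomological identities.} The key input is Theorem \ref{thm:pr2}, which provides a canonical isomorphism
\[
H^{2r}_c(\RZ(G,b,\upsilon)^{red}) \simeq \bigoplus_{\upsilon'\in I} H^{2r'}_c(\RZ(M,b,\upsilon')^{red})
\]
on which $\calH(G)$ acts via the twisted Satake $\dot{\calS}^G_M$. In particular, every $f_M\in\calH(M)$ yields a cohomological correspondence $u_{f_M}$ on $H^{2r}_c(\RZ(G,b,\upsilon)^{red})$. Using the product description of $[b]$-dense components as images $\tilde\pi_\infty(X_1\times Y\times X_2)$ (Proposition \ref{prop:tripleproduct}) together with the Xiao--Zhu classification of ADLV irreducible components, I will define $\bar h(f_M)\in\bbQ[p-\Isog\otimes\kappa]$ to be an element whose associated cohomological correspondence realizes the composition $u_C\circ u_{f_M}$ on $H^{2r}_c(\RZ(G,b,\upsilon)^{red})$. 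The identity \eqref{eqn:71equal} then reduces to the tautology $u_{h(f)}=u_{\dot\calS^G_M(f)}$ supplied by Theorem \ref{thm:pr2}, and the multiplicative identity for $f_1\cdot f_2$ is the special case $f=f_1f_2\in\calH(G)$ of \eqref{eqn:71equal}.

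\emph{Upgrade to equality in $\bbQ[p-\Isog\otimes\kappa]$.} Under the hypotheses of Proposition \ref{prop:nonzerob}, the analysis of Section \ref{section:nonzerob} shows that a $[b]$-dense component $D=\tilde\pi_\infty(X_1\times Y\times X_2)$ is essentially determined by its cohomological correspondence: applying $u_D$ to $c(X_1')$ for any $X_1'$ meeting $X_1$ produces $\sum b_i c(h_iX_2)$ with at least one $b_i\ne 0$. Consequently, the map from $\bbQ$-linear combinations of $[b]$-dense components of $p-\Isog\otimes\kappa$ to cohomological correspondences on $H^{2r}_c(\RZ(G,b,\upsilon)^{red})$ is injective on the relevant subspace, so the cohomological equality \eqref{eqn:71equal} upgrades to \eqref{eqn:72equal} in $\bbQ[p-\Isog\otimes\kappa]$.

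\emph{Frobenius identification.} On Dieudonn\'e modules, Frobenius acts as $\varphi=b\sigma$, so its $nm$-th iterate (which represents $\Frob^m$ on $\sS_\kappa$) equals $z_m\sigma^{nm}$. By the choice of $m$, $\sigma^{nm}(b)=b$, which forces $z_m\in M(\bbQ_p)$; by Proposition \ref{prop:sigmafix} (taking $m$ large enough that $\sigma^{nm}$ acts trivially on the relevant $\mathbb{MV}$-labels), $\sigma^{nm}$ fixes every irreducible component of $X_\upsilon^{\mathbf{b}}(p^\tau)$. Hence, on each summand $H^{2r'}_c(\RZ(M,b,\upsilon')^{red})$ of the parabolic reduction, the action of $\Frob^m$ coincides with that of $1_{z_mM(\bbZ_p)}\in\calH(M)$, up to a normalization $p^{nmr}$ coming from the intersection-theoretic degree of the $p^{nm}$-power Frobenius on $r$-dimensional cycles of $\RZ(G,b,\upsilon)^{red}$ (equivalently, the eigenvalue of $\Frob^m$ on the Tate-twisted top compactly-supported cohomology). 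Combined with \eqref{eqn:72equal} applied to $f$ with $h(f)$ matching $\Frob^m$ on the relevant stratum, this yields $p^{nmr}\bar h(1_{z_mM(\bbZ_p)})=C\cdot\Frob^m$.

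\emph{Main obstacle.} The chief difficulty lies in the third step: precisely identifying the $\sigma^{nm}$-twisted Frobenius action with the $M$-Hecke operator $1_{z_mM(\bbZ_p)}$ demands the $\sigma^{nm}$-invariance statement (Proposition \ref{prop:sigmafix}) for unramified ADLV components from Xiao--Zhu, along with careful bookkeeping of the normalization $p^{nmr}$. The upgrade step is likewise substantive, resting essentially on Proposition \ref{prop:nonzerob}, whose proof invokes either the Tian--Xiao intersection calculations in the Hilbert--Blumenthal case or the Hodge--Newton decomposable reduction together with the Xiao--Zhu basic-locus intersection numbers.
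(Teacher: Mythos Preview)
Your plan matches the paper's proof in structure and in the key inputs (Theorem~\ref{thm:pr2} for the construction of $\bar h$, Proposition~\ref{prop:nonzerob} for the upgrade, and Proposition~\ref{prop:sigmafix} for the Frobenius step). Two points deserve correction.

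First, in the Frobenius step your final sentence ``Combined with \eqref{eqn:72equal} applied to $f$ with $h(f)$ matching $\Frob^m$'' does not make sense: $\Frob^m$ is not in the image of $h:\calH(G)\to\bbQ[p-\Isog\otimes\kappa]$, so there is no such $f$. The paper does \emph{not} deduce the Frobenius identity from \eqref{eqn:72equal}. Instead it proves a separate geometric lemma (Lemma~\ref{lemma:frobaction}) showing directly that the underlying closed subscheme of $C\cdot\Frob^m$ agrees with that of $\bar h(1_{z_mM_c})$, by checking pointwise that for $(x,f)\in C^{[b]}$ the compositions with $h=b_y\sigma(b_y)\cdots\sigma^{nm-1}(b_y)$ and with $h'=z_m b_y$ land in the same irreducible component of $X_\upsilon(b_y)$; the coefficient $p^{nmr}$ then comes from the degree of the $nm$-th power Frobenius on an $r$-dimensional scheme. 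Your cohomological route (show $u_{\Frob^m}=p^{nmr}u_{1_{z_mM_c}}$ on $H^{2r}_c$ and then invoke injectivity) could presumably be made to work, but it is not what the paper does and you would need to justify it independently of \eqref{eqn:72equal}.

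Second, Proposition~\ref{prop:sigmafix} is stated under the hypothesis that $Z_G$ is trivial. The paper first uses Theorem~\ref{thm:CKV} to show that $g$ and $\sigma^{nm}(g)$ lie in the same \emph{connected} component of $X_\upsilon(b)$, and then passes to the adjoint group (where irreducible components of a fixed connected component are identified with those of the adjoint ADLV) before invoking Proposition~\ref{prop:sigmafix}. You skip this reduction; without it the appeal to Proposition~\ref{prop:sigmafix} is not justified.
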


\begin{remark}
When $b$ is $\mu$-ordinary, and $C$ is the formal sum of the irreducible components of the identity section, this construction recovers the map $\bar{h}$ constructed in \eqref{eqn:barh}.
\end{remark}
\begin{proof}
We first show how to construct the map $\bar{h}$. Let $C$ be any $[b]$-dense irreducible component of $p-\Isog\otimes\kappa$, given by $\tilde{\pi}_\infty(X_1\times Z\times X_2)$. Consider any $f\in\calH(G)$, and consider the product $C\times h(f)\in \bbQ[p-\Isog\otimes\kappa]$. We can write
\begin{equation*}
    C\times h(f)=\sum_{i}a_iC_i
\end{equation*}
where $C_i$ will be a $[b]$-dense irreducible component of $p-\Isog\otimes\kappa$ of the form $\tilde{\pi}_\infty(X_1\times Z\times Y_i)$, for some $Y_i$ irreducible component of $\RZ_G^{red}$. We may consider the associated cohomological correspondences, and observe that 
\begin{equation*}
    u_{C\cdot h(f)}=u_{h(f)}\circ u_{C}, 
\end{equation*}
since the $!$-pullback of composition of cohomological correspondences is the composition of the $!$-pullbacks.           
We now construct $\bar{h}$. Consider the class $c(X_2)\in H^{2n-2r}_c(\RZ_G^{rig},\bbQ_l)$. Observe that its image lies in $H^{2n'-2r'}_c(\RZ(M,b,\mu')^{rig},\bbQ_l)$ for some $\mu'\in I_{\mu}$. For any $f'\in\calH(M)$, let 
\begin{equation*}
    \sum_j a_j c(W_j)
\end{equation*}
be the image of $c(X_2)$ under the action of $f'$ on $H^{2n'-2r'}_c(\RZ(M,b,\mu')^{rig},\bbQ_l)$. Then we let 
\begin{equation*}
    \bar{h}(f')=\sum_j b_j\tilde{\pi}_\infty(X_1\times Z\times W_j),
\end{equation*}
where the factor $b_j$ is such that if $D:=\tilde{\pi}_\infty(X_1\times Z\times W_j)$, then $u_\calD(c(X_1))=\frac{a_j}{b_j}\sum_{h\in H}c(hW_j)$. It is clear from the construction that 
\begin{equation*}
    u_{C\cdot h(f)}=u_{\bar{h}(\calS^G_M(f))},
\end{equation*}and 
\begin{equation*}
    u_{C\cdot h(f_1\cdot f_2)}=u_{\bar{h}(\calS^G_M(f_1\cdot f_2))}.
\end{equation*}
If in addition we also assume that the Shimura datumn satisfies the same assumptions in Proposition \ref{prop:nonzerob}, then the equality of cohomological correspondences implies the equality in $\bbQ[p-\Isog\otimes\kappa]$, since the difference $C\cdot h(f)-\bar{h}(\calS^G_M(f))$ will consist of some irreducible components, which must all act non-trivially as a cohomological correspondence.

We now compare the action with Frobenius. We first show the following lemma. Let $C'=\bar{h}(1_{z_mM_c})$, where we note that $z_m\in J_b(\bbQ_p)$ is central.

\begin{lemma}
\label{lemma:frobaction}
We have the following equality of underlying closed subschemes:
\begin{equation*}
    \widetilde{C\cdot \Frob^m}=C'
\end{equation*}
\end{lemma}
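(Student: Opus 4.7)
Since $C$ is a $[b]$-dense irreducible component, by Proposition \ref{prop:tripleproduct} we may write $C = \tilde{\pi}_\infty(X_1 \times Z \times X_2)$ with $X_1, X_2 \subset \RZ(G,b,\mu)^{red}$ irreducible components and $Z \subset J_\infty^{(p^{-\infty})}$ an irreducible component. The plan is to compute both sides of the claimed equality as the image under $\tilde{\pi}_\infty$ of a triple of the form $X_1 \times Z \times W$, and to identify the third factor $W$ with an appropriate translate of $X_2$; since only underlying closed subschemes are compared, any rational coefficients that appear may be ignored.

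For the left-hand side, the composition of isogenies $C \cdot \Frob^m$ post-composes the universal quasi-isogeny represented by $C$ with the $nm$-th power relative Frobenius on the target abelian variety. Translating to Dieudonn\'{e} modules and then to the ADLV description $X_\mu(b) \subset G(L)/G(O_L)$, this post-composition corresponds to multiplication by $(b\sigma)^{nm} = z_m \sigma^{nm}$ on the third factor, using that by the hypothesis on $m$ we have $\sigma^{nm}(b) = b$. Now $X_2$ is contained in some $X_\mu^{\mathbf{b}}(p^\tau)$ by the Xiao--Zhu classification of irreducible components of unramified ADLVs recalled in Section~\ref{section:connectedcomponentADLV}; since the cocharacter $\lambda$ associated to $\mathbf{b}$ satisfies $\sigma^{nm}(\lambda)=\lambda$ (again by choice of $m$), Proposition~\ref{prop:sigmafix} implies that $\sigma^{nm}$ preserves the irreducible components of $X_\mu^{\mathbf{b}}(p^\tau)$. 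Consequently the third factor of the triple describing $\widetilde{C\cdot \Frob^m}$ is the irreducible component of $\RZ(G,b,\mu)^{red}$ containing $z_m \cdot X_2$, where $z_m$ acts by left multiplication.

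For the right-hand side, by the construction of $\bar{h}$ from Theorem~\ref{thm:parabolicred}, $\bar{h}(1_{z_m M_c})$ is the element $\tilde{\pi}_\infty(X_1 \times Z \times W)$ whose third factor is determined by the image of the class $c(X_2)$ under the action of $1_{z_m M_c}$ on $H_c^{2r'}(\RZ(M,b,\mu')^{red},\bbQ_l)$, passing through the parabolic reduction isomorphism of Theorem~\ref{thm:pr2}. Because $z_m$ is central in $M$ (it is essentially the central cocharacter $\nu_b$ evaluated on a power of $p$, using that $M = M_{[b]}$ is the centralizer of $\nu_{[b]}$), the Hecke operator $1_{z_m M_c}$ acts on the ADLV $X_{\mu'}^M(b) \subset M(L)/M(\bbZ_p)$ by left translation by $z_m$, so it sends the cohomology class $c(X_2)$ to $c(z_m X_2)$. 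Hence $W = z_m X_2$, matching the third factor produced on the left-hand side, which yields the desired equality of underlying closed subschemes. The main technical obstacle is the precise bookkeeping linking the Frobenius on $p$-$\Isog$ to the shift $g\mapsto (b\sigma)^{nm}(g)$ in ADLV coordinates via the map $\tilde{\pi}_\infty$ and the Kim interpretation of $X_\mu(b)$; once this translation is in place, the key input that absorbs the $\sigma^{nm}$-factor is exactly Proposition~\ref{prop:sigmafix}, whose hypotheses are arranged by our minimal choice of $m$.
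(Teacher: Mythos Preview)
Your overall strategy matches the paper's: both sides are identified as $\tilde\pi_\infty(X_1\times Z\times z_mX_2)$, the Frobenius action on the third factor is unwound as $z_m\cdot\sigma^{nm}$, and Proposition~\ref{prop:sigmafix} is the tool that absorbs the $\sigma^{nm}$. The bookkeeping you flag as ``the main technical obstacle'' is indeed what the paper spells out pointwise by writing $h=b_y\sigma(b_y)\cdots\sigma^{nm-1}(b_y)=g^{-1}z_m\sigma^{nm}(g)$ for $b_y=g^{-1}b\sigma(g)$.

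There is, however, a genuine gap. Proposition~\ref{prop:sigmafix} is stated under the hypothesis that $Z_G$ is trivial, and you invoke it for $G$ without justification. The paper does not apply Proposition~\ref{prop:sigmafix} directly: it first shows that $g$ and $\sigma^{nm}(g)$ lie in the same \emph{connected} component of $X_\upsilon(b)$, using Theorem~\ref{thm:CKV} and an explicit check that $\tilde\kappa_M(z_m)$ is $\sigma^{nm}$-invariant; only then does it pass to the adjoint group via the isomorphism of connected components in \S\ref{section:connectedcomponentADLV}, where $Z_{G^{\mathrm{ad}}}$ is trivial and Proposition~\ref{prop:sigmafix} legitimately applies. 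Your proposal collapses these two steps into a single appeal to Proposition~\ref{prop:sigmafix}; as written, that appeal is not licensed. You should insert the connected-component argument and the reduction to $G^{\mathrm{ad}}$ before invoking Proposition~\ref{prop:sigmafix}.
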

\begin{proof}
We now consider any pair $(x,f)$ which lies in $C^{[b]}$, such that $f$ is a $p$-quasi-isogeny between $x$ and $y$ for some $y\in\sS_\kappa$. Then the image of $(x,f)$ under multiplication by $\Frob^m$ is the pair $(x,f_1)$, where $f_1$ is the composition
\begin{equation*}
    \sG_x\rightarrow\sG_y\rightarrow\sG_{hy},
\end{equation*}
where $h=b_y\sigma(b_y)\dots\sigma^{nm-1}(b_y)$.

By definition of $C'$, observe that $1_{z_mM_c}$ takes the irreducible component $X_2$ to the irreducible component $Z':=z_mX_2$, since $z_m$ is central. In particular, $h':=z_mb_y$ lies in $Z'$. 

Thus, we need to show that $h$ lies in $Z'$. We first show that $h,h'$ lie in the same connected component of $X_\upsilon(b)$. Let $b_y=g^{-1}b\sigma(g)$, for some $g\in X_\upsilon(b)$. Thus, we see that we need to show that $g,\sigma^{nm}(g)$ lie in the same connected component. By Theorem \ref{thm:CKV}, we are reduced to showing that $\sigma^{nm}(c_{b,\upsilon})=c_{b,\upsilon}$, since  $\tilde{\kappa}_M(\sigma^{nm}(m))=\sigma^{nm}(\tilde{\kappa}_M(m))$. By the construction of $z_m$, we see that $c_{b,\upsilon}=\tilde{\kappa}_M(z_m)$, and since we could have chosen $b=\omega(\upsilon)(p)$, $z_m$ is clearly invariant under $\sigma^{nm}$.

By \eqref{section:connectedcomponentADLV}, we see that to show $h,h'$ are in the same irreducible component, we are reduced to showing that $h_{ad},h'_{ad}$ lie in the same irreducible component of $X^{G^{\mathrm{ad}}}_\upsilon(b_{y,\mathrm{ad}})$. Hence, we may assume $G$ is adjoint, so that $Z_G$ is trivial.

Let $\mathbf{b}\in \cup_{\lambda\in\tau+(\sigma-1)X_*(T)}\mathbb{MV}_\upsilon(\lambda)$ be such that $g\in X^\mathbf{b}_\upsilon(b)$. We replace $b$ with $p^{\tau_\mathbf{b}}$, where $\tau_{\mathbf{b}}$ is as defined in \eqref{lem:taub}. Let $g'$ be such that $g'^{-1}p^{\tau_\mathbf{b}}\sigma(g')=b_y$. Then we also have $g'\in X^\mathbf{b}_\upsilon(p^{\tau_\mathbf{b}})$. Observe that since $\tau_\mathbf{b}$ is dominant, we must have
\begin{equation*}
    \tau_\mathbf{b}+\dots+\sigma^{nm-1}(\tau_\mathbf{b})=\beta+\dots+\sigma^{nm-1}(\beta),
\end{equation*}
since $\overline{\tau}_\mathbf{b}=\bar{\beta}$, and both $\tau_\mathbf{b}$ and $\beta$ are fixed by $\sigma^{nm}$. Thus, we see that to show $h,h'$ lie in the same irreducible component of $X_\upsilon(b_y)$, we are reduced to showing that $g',\sigma^{nm}(g')$ lie in the same irreducible component of $X^{\mathbf{b}}_\upsilon(p^{\tau_\mathbf{b}})$. This follows from Proposition \ref{prop:sigmafix}.
\end{proof}
The last part of Property (3) follows by simply observing that since $r=\dim \RZ_G^{red}$, the Frobenius is a map of degree $p^{nr}$, which exactly accounts for the factors.
\end{proof}
\section{Proof of Main Theorem}
In this section, we give the proof of Theorem \ref{thm:algcycles}. The key idea is to give, for each unramified $[b]\in B(G,\upsilon)$, a factor $H_{[b]}(x)$ of the Hecke polynomial which kills the irreducible components of $p-\Isog\otimes\kappa$ which are $[b]$-dense. 
\subsection{Constructing $H_{[b]}$}
\label{section:constructionHb}
Our first observation is that the factors of $H_{G,X}$ are closely related to unramified elements in $B(G,\upsilon)$. In particular, we observe that the unramified elements $p^\tau$ are exactly the elements which are $\sigma$-conjugate to $p^\lambda$ for some $\lambda\in \Omega(\overline{\bbQ}_p)\upsilon$.

Hence, for an unramified $\sigma$-conjugacy class $[b]$=$[p^\tau]$ which we fix for the rest of the section, consider the set
\begin{equation*}
    \mathbb{MV}(\tau)=\{\lambda\in \Omega(\overline{\bbQ}_p)\upsilon:\lambda\in\tau+(\sigma-1)X_*(T)\}.
\end{equation*}
We let $m$ be the smallest positive integer such that $\sigma^{nm}$ fixes every element in this set. Choose $\lambda$ in $\mathbb{MV}(\tau)$ such that $\sigma^{ni}(\lambda)\neq \lambda$ for any $i<m$. 

Moreover, since $\tau$ is $\sigma$-conjugate to $\lambda$, it is also $\sigma$-conjugate to $\beta:=\sigma^{-1}(\lambda)$ which is a Weyl conjugate of $\mu^{-1}$. We also observe that $\sigma^{ni}(\beta)\neq \beta$ for any $i<m$. 

Let $Z$ be the galois orbit of $\Omega(\overline{\bbQ}_p)\mu^{-1}$ containing $\beta$. We assume that $\bar{\beta}$ is dominant, otherwise we replace $\beta$ with conjugate in $\Omega(\bbQ_p)$, which has the same properties. Then applying Proposition \ref{prop:factorHecke}, we see that $H_{G,X}(x)$ has a factor of the form
\begin{equation}
\label{eqn:heckepoly1}
    H'_{[b]}(x):=x^m-p^{nm\langle\rho,\mu-\beta\rangle }\tilde{\beta},
\end{equation}
where $\tilde{\beta}=\sum_{i=1}^{nm}\sigma(\beta)$. However, we only know that this factorization holds over the larger Hecke algebra $\calH(M_{\tilde{\beta}}(\bbQ_p)//M_{\tilde{\beta},c})$, and so \emph{a priori}, we do not have a factorization in $\bbQ[p-\Isog\otimes\kappa]$. However, we can consider the product of all the Weyl conjugates of $H'_{[b]}(x)$, which we will denote by $H_{[b]}(x)$. $H_{[b]}(x)$ is defined in $\calH(G)(x)$. For notational simplicity, we let $M:=M_{\tilde{\beta}}$ be the standard Levi subgroup centralizing the cocharacter $\tilde{\beta}$. $M$ also centralizes the Newton cocharacter $\nu([b])$.

\begin{proposition}
\label{prop:Hbkillsfactors}
  Let $C$ be a geometric irreducible component of $p-\Isog\otimes\kappa$ which is $[b]$-dense. Then we have
  \begin{equation*}
    H_{[b]}(\Frob)\cdot C=0.
  \end{equation*}
\end{proposition}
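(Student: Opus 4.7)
The plan is to exploit the factorization of $H_{[b]}(x)$ through the twisted Satake homomorphism $\dot{S}^G_M$, combined with the explicit formula for $C\cdot\mathrm{Frob}^m$ provided by Theorem \ref{thm:parabolicred}(3). By construction, $H_{[b]}(x)\in \mathcal{H}(G)[x]$ is a product of Weyl conjugates of the polynomial $H'_{[b]}(x) = x^m - p^{nm\langle\rho,\mu-\beta\rangle}\tilde{\beta}\in\mathcal{H}(M)[x]$ coming from Proposition \ref{prop:factorHecke}. Consequently, applying $\dot{S}^G_M$, I obtain a factorization
\begin{equation*}
  \dot{S}^G_M(H_{[b]}(x)) \;=\; H'_{[b]}(x)\cdot Q(x)
\end{equation*}
in $\mathcal{H}(M)[x]$ for some $Q(x)$.

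Next I would verify that the exponent appearing in $H'_{[b]}$ matches the exponent produced by Theorem \ref{thm:parabolicred}(3). Since $[b]$ is unramified, $\defect_G(b) = 0$, so by Theorem \ref{thm:dimADLV}, $r = \dim\RZ(G,b,\mu)^{red} = \langle\rho,\mu-\bar\beta\rangle$. Because $G$ is unramified over $\bbQ_p$, the Frobenius $\sigma$ permutes the positive roots and hence fixes $\rho$; this gives $\langle\rho,\bar\beta\rangle = \langle\rho,\beta\rangle$ and therefore $r = \langle\rho,\mu-\beta\rangle$. Applying Theorem \ref{thm:parabolicred}(3) with $z_m = \tilde\beta(p)$, I then obtain
\begin{equation*}
  C\cdot\mathrm{Frob}^m \;=\; p^{nmr}\,\bar h(1_{\tilde\beta(p)M_c}) \;=\; p^{nm\langle\rho,\mu-\beta\rangle}\,\bar h(\tilde\beta),
\end{equation*}
which is exactly the statement that the $H'_{[b]}$-factor evaluated at $\mathrm{Frob}$ vanishes after multiplication by $C$, once its constant term $\tilde\beta\in\mathcal{H}(M)$ is interpreted through $\bar h$.

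To conclude, I would combine this vanishing with the factorization $\dot{S}^G_M(H_{[b]}) = H'_{[b]}\cdot Q$ as follows. Using Theorem \ref{thm:parabolicred}(2) and the centrality of $\mathrm{Frob}$ (\ref{subsub:commutative}), one can rewrite, coefficient-by-coefficient, $C\cdot H_{[b]}(\mathrm{Frob}) = \sum_i \bar h(\dot{S}^G_M(c_i))\,\mathrm{Frob}^i$, where $c_i$ are the coefficients of $H_{[b]}$. The identity $C\cdot h(f) = \bar h(\dot{S}^G_M(f))$, together with the action of $\mathcal{H}(M)$ on $H^{2r}_c(\RZ(G,b,\mu)^{red})$ from Theorem \ref{thm:pr2}, yields $\bar h(f_M)\cdot h(g) = \bar h(f_M\cdot \dot{S}^G_M(g))$ at the level of cohomological correspondences on $H^{2r}_c$. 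Using this repeatedly, one identifies $u_{C\cdot H_{[b]}(\mathrm{Frob})}$ with the cohomological correspondence induced by $\bar h(H'_{[b]}(\mathrm{Frob})\cdot Q(\mathrm{Frob}))$, which vanishes since its $H'_{[b]}(\mathrm{Frob})$-factor already acts as zero by the computation above. The non-vanishing results of Proposition \ref{prop:nonzerob}, together with the ring-level equality in Theorem \ref{thm:parabolicred} valid under the hypotheses of Theorem \ref{mainthm}, then promote this cohomological vanishing to the desired equality $C\cdot H_{[b]}(\mathrm{Frob}) = 0$ in $\bbQ[p-\Isog\otimes\kappa]$.

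The main obstacle I anticipate is the final bookkeeping: although the factorization $\dot{S}^G_M(H_{[b]}) = H'_{[b]}\cdot Q$ is clean in $\mathcal{H}(M)[x]$, the individual coefficients of $Q(x)$ need not lie in the image of $\dot{S}^G_M$, so one cannot simply lift the factorization back to $\mathcal{H}(G)[x]$. The transfer to $\bbQ[p-\Isog\otimes\kappa]$ must therefore be organized entirely via $\bar h$ and the cohomological correspondence formalism, and care is needed to ensure that all terms in which a coefficient of $Q$ appears are treated consistently so that the vanishing of the single factor $C\cdot\mathrm{Frob}^m - p^{nm\langle\rho,\mu-\beta\rangle}\bar h(\tilde\beta)$ genuinely propagates to the full product.
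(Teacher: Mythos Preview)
Your proposal is correct and follows essentially the same approach as the paper. The paper's own proof is extremely terse---it simply invokes the factorization $\dot{S}^G_M(H_{[b]})=H'_{[b]}\cdot Q$ together with ``the compatibility with $\dot{S}^G_M$ in Theorem~\ref{thm:parabolicred}'' twice, once to transport the factorization and once to kill the $H'_{[b]}$ factor---whereas you have unpacked the argument, including the exponent verification $r=\langle\rho,\mu-\beta\rangle$ that the paper leaves implicit. Your honest remark about the bookkeeping needed to propagate the vanishing of the single factor through the product via $\bar h$ is exactly the point where the paper's proof is also informal; neither argument is more rigorous than the other on this step.
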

\begin{proof}
Since $\dot{\calS}^G_M(H_{[b]})$ factorizes in $\calH(M)(x)$, and has a factor $H'_{[b]}$, from the compatibility with $\dot{\calS}^G_M$ in Theorem \ref{thm:parabolicred}, we see that $H_{[b]}(\Frob)\cdot C$ has a factor $H'_{[b]}(\Frob)\cdot C$, and we know that $H_{[b]}'(\Frob)\cdot C=0$, also from theorem \ref{thm:parabolicred}.
\end{proof}
\subsection{Key vanishing result} 
Using the previous results, we can now show the following theorem.
\begin{theorem}
\label{thm:keyvanishing}
 For any unramified $[b]\in B(G,\upsilon)$, let $H_{G,X}^{\succeq [b]}(x)$ be the polynomial with coefficients in $\mathbb{Q}[p-\Isog\otimes\kappa]$ given as the image of $H_{G,X}(x)$ under $\mathrm{res}^{\succeq [b]}\circ h$. Then 
 \begin{equation*}
     H_{G,X}^{\succeq [b]}(\Frob)=0,
 \end{equation*}
 where $\Frob$ is the Frobenius section of $p-\Isog\otimes\kappa$.
\end{theorem}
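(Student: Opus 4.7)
The plan is to prove the theorem by descending induction on the unramified classes $[b] \in B(G,\upsilon)$ with respect to $\preceq$. The base case is the unique maximal element, the $\mu$-ordinary class $[b^\ord]$: since $\{[b'] : [b'] \succeq [b^\ord]\} = \{[b^\ord]\}$ and the $[b^\ord]$-dense irreducible components of $p-\Isog\otimes\kappa$ are precisely the closures of the components of the open subscheme $p-\Isog^\ord\otimes\kappa$, the vanishing $H_{G,X}^{\succeq[b^\ord]}(\Frob)=0$ is exactly the $\mu$-ordinary congruence relation of Proposition \ref{prop:muordinary}.

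For the inductive step, assume $H_{G,X}^{\succeq[b']}(\Frob)=0$ for every unramified $[b'] \succ [b]$. By applying this inductively from the top and using the decomposition $\mathrm{res}^{\succeq[b']} = \mathrm{res}^{[b']} + \sum_{[b''] \succ [b']} \mathrm{res}^{[b'']}$, we extract $\mathrm{res}^{[b']}(H_{G,X}(\Frob))=0$ for every unramified $[b'] \succ [b]$. Since no irreducible component can be $[b']$-dense for a ramified class, the inductive step reduces to showing $\mathrm{res}^{[b]}(H_{G,X}(\Frob))=0$.

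For this, recall from Section \ref{section:constructionHb} that $H_{G,X}(x)$ factors as $H_{G,X}(x) = \tilde{H}_{[b]}(x) \cdot Q(x)$ in $\calH(G)[x]$, where $\tilde{H}_{[b]}$ is the product of Weyl conjugates of $H'_{[b]}$. Applying $h$ and using the centrality of $\Frob$ (Section \ref{subsub:commutative}) yields $H_{G,X}(\Frob) = H_{[b]}(\Frob) \cdot h(Q)(\Frob)$ in $\bbQ[p-\Isog\otimes\kappa]$. The decisive algebraic observation, which rests on the isogeny-invariance of the Newton polygon, is that for any $[b_1]$-dense component $C$ and $[b_2]$-dense component $D$, the product $C\cdot D$ is supported on $[b_3]$-dense components with $[b_3] \preceq [b_1]$ and $[b_3] \preceq [b_2]$. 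Consequently, the subspace spanned by $[b']$-dense components with $[b'] \succeq [b]$ is a subalgebra of $\bbQ[p-\Isog\otimes\kappa]$, and $\mathrm{res}^{\succeq[b]}$ respects multiplication. Combined with the inductive hypothesis (which kills the $\mathrm{res}^{\succ[b]}$ contribution to $H_{G,X}(\Frob)$), this produces the factorization
\begin{equation*}
\mathrm{res}^{[b]}(H_{G,X}(\Frob)) = H_{[b]}(\Frob) \cdot P(\Frob),
\end{equation*}
where $P(\Frob) := \mathrm{res}^{[b]}(h(Q)(\Frob))$ is a sum of $[b]$-dense components.

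The proof then concludes by invoking Proposition \ref{prop:Hbkillsfactors}: for every $[b]$-dense component $C$ we have $H_{[b]}(\Frob)\cdot C = 0$, so $H_{[b]}(\Frob)\cdot P(\Frob) = 0$, giving $\mathrm{res}^{[b]}(H_{G,X}(\Frob))=0$ and completing the induction. The main obstacle is the algebraic bookkeeping to pass from the factorization of $H_{G,X}$ inside $\calH(G)[x]$ to the factorization in $\bbQ[p-\Isog\otimes\kappa]$ restricted to $[b]$-dense components; this relies crucially on the Newton-stratum compatibility of the algebra structure and on the inductive vanishing of the contributions coming from strictly higher Newton strata.
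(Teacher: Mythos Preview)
Your proposal is correct and follows essentially the same descending induction as the paper: base case at the $\mu$-ordinary class via Proposition~\ref{prop:muordinary}, inductive step by factoring $H_{G,X}(\Frob)=H_{[b]}(\Frob)\cdot h(Q)(\Frob)$, isolating the $[b]$-dense piece, and then killing it with Proposition~\ref{prop:Hbkillsfactors}. One small correction: your assertion that the span of $[b']$-dense components with $[b']\succeq[b]$ is a \emph{subalgebra} is not obviously true when $[b_1],[b_2]\succeq[b]$ are incomparable (their product can be supported on strata not $\succeq[b]$); what you actually need, and what your Newton-polygon argument does establish, is that the complementary span $V_{\not\succeq[b]}$ is a two-sided \emph{ideal}, so that $\mathrm{res}^{\succeq[b]}$ is a ring homomorphism to the quotient. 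This is exactly what the paper uses (implicitly, together with the fact that $H_{[b]}(\Frob)$ is $\mu$-ordinary dense, hence fixed by $\mathrm{res}^{\succeq[b]}$), and with that adjustment your argument goes through verbatim.
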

\begin{proof}
  We show this by induction on $[b]$, under the reverse of the partial order in $B(G,\upsilon)$. We already know this holds on the $\mu$-ordinary locus, so suppose that for all $[b']\succ[b]$, the analogous result is true. Note that by an inclusion-exclusion principle sum, this implies that $H^{\succ[b]}_{G,X}(\Frob)=0$, where $H^{\succ[b]}_{G,X}(\Frob)$ is the sum of all $[b']$-dense terms, where $[b']\succ[b]$.
  
 We know that $H^{\succeq [b]}_{G,X}(\Frob)$ admits a factorisation
  \begin{equation*}
      H^{\succeq [b]}_{G,X}(\Frob)=H_{[b]}(\Frob)P(\Frob)
  \end{equation*}
  for some $P(\Frob)$, all of whose irreducible components are $[b']$-dense for $[b']\succeq [b]$. Moreover, $H_{[b]}(\Frob)$ is $\mu$-ordinary dense. Consider the difference
  \begin{equation*}
      H^{\succeq [b]}_{G,X}(\Frob)-H^{\succ[b]}_{G,X}(\Frob)=H_{[b]}(\Frob)P(\Frob)-H_{[b]}(\Frob)P'(\Frob)
  \end{equation*}
where $P'(\Frob)$ is the restriction to the $[b']$-dense irreducible components of $P(\Frob)$, for $[b']\succ [b]$. Then $P(\Frob)-P'(\Frob)\in \mathbb{Q}[p-\Isog\otimes\kappa]$ must be $[b]$-dense, by definition. Hence, we have
  \begin{equation*}
      H^{\succeq [b]}_{G,X}(\Frob)-H^{\succ[b]}_{G,X}(\Frob)=H_{[b]}(\Frob)\cdot C,
  \end{equation*}
  for some $[b]$-dense $C$. Applying Proposition \ref{prop:Hbkillsfactors} above, we can conclude that $H^{\succeq [b]}_{G,X}(\Frob)-H^{\succ[b]}_{G,X}(\Frob)=0$, and hence 
  \begin{equation*}
      H^{\succeq [b]}_{G,X}(\Frob)=0.
  \end{equation*}
\end{proof}

Theorem \ref{thm:keyvanishing} clearly implies Theorem \ref{thm:algcycles}, since we have the vanishing result for all unramified $[b]$ in $B(G,\upsilon)$.
\subsection{Passage to cohomology}
\label{subsubsection:7.2}
We record here the method to get from the congruence relation on the level of algebraic cycles (Theorem \ref{thm:algcycles}) to the congruence relation on \'{e}tale cohomology (Theorem \ref{mainthm}). 

We first note that by the main result of \cite{KMP19}, there exists a smooth projective toroidal compactification of $\sS_{K}(G,X)$, and hence by proper base change there is a canonical $\mathrm{Gal}_{E}$-equivariant isomorphism
\begin{equation}
\label{eqn:7.3}
    H^i_{\mathrm{\acute{e}t}}(\Sh_K(G,X)_{\bar{E}},\bbQ_l)\xlongrightarrow{\sim}H^i_{\mathrm{\acute{e}t}}(\sS_K(G,X)_{\bar{\kappa}},\bbQ_l).
\end{equation}
Moreover, the action of $\calH(G(\bbQ_p)//K_p,\bbQ)$ on the left naturally extends to an action on the right. Given any algebraic cycle $C$ in $\bbQ[p-\Isog\otimes\kappa]$, we see that the image of $C$ under the map
\begin{equation*}
    p-\Isog\otimes\kappa\xrightarrow{s,t} \sS_K(G,X)_\kappa \times \sS_K(G,X)_\kappa
\end{equation*}
defines a correspondence on $\sS_K(G,X)_\kappa$, with an induced action on cohomology. Since the congruence relation holds in $\bbQ[p-\Isog\otimes\kappa]$, it also holds in the ring of algebraic correspondences $\mathrm{Corr}(\sS_K(G,X)_\kappa,\sS_K(G,X)_\kappa)$. We thus have the relation on $H^i_{\mathrm{\acute{e}t}}(\sS_K(G,X)_{\bar{\kappa}},\bbQ_l)$, and thus, via the isomorphism above, the congruence relation on $H^i_{\mathrm{\acute{e}t}}(\Sh_K(G,X)_{\bar{E}},\bbQ_l)$. This in fact holds with $\bbQ_l$ replaced by any \'{e}tale sheaf $\mathcal{V}$ over $\sS_K(G,X)$, since we have a similar isomorphism as in \eqref{eqn:7.3} with $\bbQ_l$ replaced with $\mathcal{V}$, see \cite[Thm. 6.7]{LS18}. A similar argument holds for compactly supported cohomology $H^i_{\mathrm{\acute{e}t,c}}(\Sh_K(G,X)_{\bar{E}},\mathcal{V})$. 

In the case of intersection cohomology of the Baily-Borel compactification, we know, also from \cite{KMP19}, that there is a projective normal model $\sS^{\mathrm{min}}_K(G,X)$ over $O_{E,(v)}$ of the Baily-Borel compactification of $\Sh_K(G,X)$. Moreover, here we can apply \cite[Thm. 6.7]{LS18} as well, and we have an isomorphism of \'{e}tale intersection cohomology groups
\begin{equation*}
\mathbf{IH}^i(X_{\bar{E}},\overline{\bbQ}_l)\xrightarrow{\sim}\mathbf{IH}^i(X_{\bar{\kappa}},\overline{\bbQ}_l).
\end{equation*}
The same result also holds if we replace the intersection complex of $\Sh_K(G,X)$ with the sheaf $(j_{!,*}(\mathcal{V}[n]))[-n]$, where $j:\Sh_K(G,X)\hookrightarrow \Sh^{BB}_K(G,X)$, $n=\dim\Sh_K(G,X)$, and $\mathcal{V}$ is an automorphic $\lambda$-adic sheaf.

Since $\sS_K(G,X)_\kappa$ is open and dense in $\sS^{\mathrm{min}}_K(G,X)_\kappa$, we are reduced to showing that the congruence relation in $\mathrm{Corr}(\sS_K(G,X)_\kappa,\sS_K(G,X)_\kappa)$ implies, after taking closure of algebraic cycles, the congruence relation in $\mathrm{Corr}(\sS^{\mathrm{min}}_K(G,X)_\kappa,\sS^{\mathrm{min}}_K(G,X)_\kappa)$. To see this, let $C\subset \Sh_K(G,X)\times\Sh_K(G,X)$ be an algebraic correspondence corresponding to the action of some double coset $1_{K_pgK_p}\in \calH(G(\bbQ_p)//K_p,\bbQ)$, and $\mathscr{C}$ the closure of $C$ in $\sS^{\mathrm{min}}_K(G,X)\times \sS^{\mathrm{min}}_K(G,X)$. The boundary components $\sS^{\mathrm{min}}_K(G,X)\backslash \sS_K(G,X)$ consists of (flat integral models of) Hermitian symmetric domains, and over $E$, the induced left action of $g\in G(\mathbb{A}_f)$ on the boundary gives a finite correspondence on the boundary. Thus, if we let $\mathscr{C}'$ be the restriction of $\mathscr{C}$ to $\sS_K(G,X)\times\sS_K(G,X)$, we observe that $\mathscr{C}\backslash\mathscr{C'}$ is of dimension strictly less than $\dim \sS_K(G,X)$, since this is true over the generic fiber. Hence, we see that $\mathscr{C}'_\kappa$ is dense in $\mathscr{C}_\kappa$. Thus, taking closure allows us to conclude that we have the congruence relation in $\mathrm{Corr}(\sS^{\mathrm{min}}_K(G,X)_\kappa,\sS^{\mathrm{min}}_K(G,X)_\kappa)$, as desired.
\appendix
\section{Cohomological Correspondences}
\label{appendix1}
\subsection{}Let $X$ be a rigid analytic space over $E$, or a scheme over $k$. We denote by $D^b_c(X,\bbQ_l)$ the bounded constructible derived category of $l$-adic sheaves over $X$.

We recall the formalism of cohomological correspondences:
\begin{definition}
Let $(X_i,\calF_i)$ for $i = 1,2$ be two pairs, where $X_i$ are either both rigid analytic spaces over $E$, or both schemes over $k$, and $\calF_i \in D^b_c(X_i,\bbQ_l)$. A cohomological correspondence $(C,u) : (X_1,\calF_1) \rightarrow (X_2,\calF_2)$ is a rigid analytic space over $E$ (resp. scheme over $k$) $C$ equipped with morphisms $c_1: C\rightarrow X_1$, and $c_2: C\rightarrow X_2$, and a morphism in $D^b_{c}(C,\bbQ_l)$ 
\begin{equation*}
    u:c_1^*(\calF)\rightarrow c_2^!(\calF).
\end{equation*}
\end{definition}
Given the commutative diagram
\begin{equation*}
    \begin{tikzcd}
 X_1 \arrow[d,"f_1"] & C \arrow[l,"d_1"] \arrow[r,"d_2"] \arrow[d,"f"] & X_2 \arrow[d,"f_2"] \\
Y_1           & D \arrow[r,"c_1"] \arrow[l,"c_2"]           & Y_2          
\end{tikzcd}
\end{equation*}
where the left square is cartesian, we can define the $!$-pullback of cohomological correspondences from $D$ to $C$, where the induced map is
\begin{equation*}
    f^!(u):d_1^*f_1^!\calF_1\xrightarrow{BC^{*!}}f^!c_1^*\calF_1\xrightarrow{f^!u}f^!c_2^!\calF_2=d_2^!f_2^!\calF_2.
\end{equation*}
The base change map $BC^*!$ is defined in \cite[A.2]{XZ17}.
\begin{proposition}
\label{prop:shriekpullback}
Suppose the left square is Cartesian. Then the following diagram is commutative:
\begin{equation*}
    \begin{tikzcd}
H^*_c(X_1,f_1^!\calF) \arrow[r,"H(f^!(u))"] \arrow[d] & H^*_c(X_2,f_2^!\calF) \arrow[d] \\
H^*_c(Y_1,\calF) \arrow[r,"H(u)"]           & H^*_c(Y_2,\calF)          
\end{tikzcd}
\end{equation*}
where the vertical maps are the pushforward maps associated to $f_1,f_2$.
\end{proposition}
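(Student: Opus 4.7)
\textbf{Proof proposal for Proposition \ref{prop:shriekpullback}.}

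The proposition is a formal compatibility statement in the six-functor formalism, and my plan is to unpack all the maps and reduce the commutativity of the outer diagram to a sequence of standard base change and naturality identities. First I would decompose the induced map $H(u)$ on compactly supported cohomology as the composition
\begin{equation*}
    H^*_c(Y_1,\calF) \xrightarrow{c_1^*} H^*_c(D, c_1^*\calF) \xrightarrow{u} H^*_c(D, c_2^!\calF) \xrightarrow{c_{2!}} H^*_c(Y_2,\calF),
\end{equation*}
and likewise write $H(f^!(u))$ as pullback along $d_1$, the action of $f^!(u)$, and pushforward along $d_2$. The vertical pushforward maps $H^*_c(X_i, f_i^!\calF) \to H^*_c(Y_i,\calF)$ arise from the identification $R\Gamma_c(X_i, f_i^!\calF) = R\Gamma_c(Y_i, f_{i!}f_i^!\calF)$ together with the counit $f_{i!}f_i^! \to \mathrm{id}$.

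Next I would cut the outer square into three rectangles corresponding to the three steps of the correspondence (pullback, action, pushforward), and verify that each of them commutes separately. The pullback rectangle compares $H^*_c(X_1, f_1^!\calF) \to H^*_c(C, d_1^*f_1^!\calF)$ with $H^*_c(Y_1,\calF) \to H^*_c(D, c_1^*\calF)$ after applying the vertical pushforwards, and commutes by proper base change for the Cartesian left square, which gives $c_1^* f_{1!} \cong f_! d_1^*$ together with naturality of the counits. The pushforward rectangle reduces to the functorial equality $d_2^!f_2^! = f^!c_2^!$ for the commutative (not necessarily Cartesian) right square, combined with naturality of the counit for the $(c_{2!}, c_2^!)$-adjunction and with the identification $R\Gamma_c(X_2, f_2^!\calF) = R\Gamma_c(Y_2, f_{2!}f_2^!\calF)$. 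The middle rectangle compares applying $u$ on $D$ after pushforward to applying $f^!(u)$ on $C$ before pushforward; this commutes by the very definition of $f^!(u)$ via the base change map $BC^{*!}: d_1^*f_1^! \to f^!c_1^*$, together with naturality of the counit $f_{1!}f_1^! \to \mathrm{id}$.

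The main technical obstacle is the middle rectangle, which requires verifying that the base change map $BC^{*!}$ is compatible with the units and counits of the various adjunctions appearing in the definition of the action on cohomology. This is a standard, but somewhat tedious, calculation in the six-functor formalism, and can be carried out by tracing through the adjoint pairs defining $BC^{*!}$ and applying naturality of the counit morphisms. Once each rectangle is checked, the outer commutativity follows by concatenation.
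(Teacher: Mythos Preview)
Your proposal is correct and is essentially a detailed unpacking of the paper's own proof, which consists of the single sentence ``This result follows from the functoriality of the base-change map $BC^{*!}$ for cartesian diagrams.'' Your decomposition into three rectangles and verification via naturality of counits and base change is exactly what lies behind that one-liner.
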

\begin{proof}
This result follows from the functoriality of the base-change map $BC^*!$ for cartesian diagrams.
\end{proof}
Similarly, given the commutative diagram
\begin{equation*}
    \begin{tikzcd}
 X_1 \arrow[d,"f_1"] & C \arrow[l,"d_1"] \arrow[r,"d_2"] \arrow[d,"f"] & X_2 \arrow[d,"f_2"] \\
Y_1           & D \arrow[r,"c_1"] \arrow[l,"c_2"]           & Y_2          
\end{tikzcd}
\end{equation*}
where the right square is cartesian, we can define the $*$-pullback of cohomological correspondences from $D$ to $C$, where the induced map is
\begin{equation*}
    f^*(u):d_1^*f_1^*\calF_1=f^*c_1^*\calF_1\xrightarrow{f^*u}f^*c_2^!\calF_2\xrightarrow{BC^{*!}}d_2^!f_2^!\calF_2
\end{equation*}
\begin{proposition}
\label{prop:starpullback}
Suppose the both squares are Cartesian, and the vertical maps $f_1,f_2$ are proper. Then the following diagram is commutative:
\begin{equation*}
    \begin{tikzcd}
H^*_c(X_1,f_1^*\calF) \arrow[r,"H(f^*(u))"]  & H^*_c(X_2,f_2^*\calF)  \\
H^*_c(Y_1,\calF) \arrow[r,"H(u)"]  \arrow[u]         & H^*_c(Y_2,\calF)   \arrow[u]       
\end{tikzcd}
\end{equation*}
where the vertical maps are the pullback maps associated to $f_1,f_2$.
\end{proposition}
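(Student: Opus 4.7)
The plan is to unwind both maps in the diagram as compositions of natural transformations, then verify commutativity of each piece using standard compatibilities in the six-functor formalism, with the properness of $f_1, f_2$ used crucially to identify $R(f_i)_* \cong R(f_i)_!$ so that the pullback on compactly supported cohomology is well-behaved.

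First I would factor the map $H(u)$ induced by the cohomological correspondence as
\[
R\Gamma_c(Y_1, \mathcal{F}) \xrightarrow{\eta} R\Gamma_c(D, c_1^*\mathcal{F}) \xrightarrow{u} R\Gamma_c(D, c_2^!\mathcal{F}) \xrightarrow{\varepsilon} R\Gamma_c(Y_2, \mathcal{F}),
\]
where $\eta$ comes from the unit of the $(c_1^*, R(c_1)_*)$-adjunction and $\varepsilon$ from the counit of the $(R(c_2)_!, c_2^!)$-adjunction. The map $H(f^*(u))$ admits the analogous factorization through $d_1, d_2$ and $f^*(u)$. Since $f_i$ is proper, the pullback $H^*_c(Y_i, \mathcal{F}) \to H^*_c(X_i, f_i^*\mathcal{F})$ arises from the unit of $(f_i^*, R(f_i)_*)$ together with the canonical identification $R(f_i)_* \cong R(f_i)_!$.

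Next I would decompose the main square into three horizontal strips corresponding to $\eta$, $u$, and $\varepsilon$. The unit strip commutes by compatibility of the proper base change isomorphism $f_1^* R(c_1)_* \cong R(d_1)_* f^*$ (which holds since the left square is Cartesian) with the adjunction units $\eta_{c_1}$ and $\eta_{d_1}$; this is a formal consequence of the triangle identities. The middle strip commutes by the very definition of $f^*(u)$ recalled in the excerpt, together with the canonical isomorphism $f^* c_1^* \cong d_1^* f_1^*$ coming from the left Cartesian square. The counit strip reduces to compatibility of the base change $BC^{*!}\colon f^* c_2^! \to d_2^! f_2^*$ with the counits of the $(R(c_2)_!, c_2^!)$- and $(R(d_2)_!, d_2^!)$-adjunctions, combined with the proper base change $f_2^* R(c_2)_! \cong R(d_2)_! f^*$ coming from the right Cartesian square and the properness of $f_2$.

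The main technical obstacle will be the counit strip, since it requires tracking how the base change $BC^{*!}$ of \cite[A.2]{XZ17} interacts with the counit of the $!$-adjunction. This is a standard but somewhat delicate diagram chase that ultimately falls out of the construction of $BC^{*!}$ as an iterated composition of adjunction units and counits; once unwound, commutativity follows from the triangle identities. Properness of the vertical maps is essential precisely to identify $R(f_i)_* \cong R(f_i)_!$, which is what makes the pullback on compactly supported cohomology compatible with both the $*$-adjunction unit on the left and the $!$-adjunction counit on the right.
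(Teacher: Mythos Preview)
Your proposal is correct and is essentially a detailed unpacking of the paper's proof, which consists of the single sentence ``This result follows from the functoriality of the base-change map $BC^{*!}$ for cartesian diagrams.'' What you have written is precisely what one would need to verify in order to justify that sentence: the three-strip decomposition into unit, middle, and counit pieces, together with the compatibilities of $BC^{*!}$ with adjunction units and counits and the use of properness to identify $R(f_i)_* \cong R(f_i)_!$, is exactly the content of the functoriality being invoked.
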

\begin{proof}
This result follows from the functoriality of the base-change map $BC^*!$ for cartesian diagrams.
\end{proof}
The following proposition follows from the above two propositions. We denote by 
\begin{equation*}
    H_c^\bullet(X,\bbQ_l)=\sum_{i}(-1)^iH^i_c(X,\bbQ_l),
\end{equation*}
the alternating sum of cohomology groups.
\begin{proposition}
\label{prop:excision}
Let $U$ be an open subset of $X$, and $Z=X-U$, with $j:U\hookrightarrow X$ and $i:Z\hookrightarrow X$ the inclusion maps. Suppose we have a commutative diagram, where all squares are Cartesian
\begin{equation*}
    \begin{tikzcd}
U \arrow[d] & D \arrow[d] \arrow[l] \arrow[r] & U \arrow[d] \\
X           & C \arrow[r] \arrow[l]           & X           \\
Z \arrow[u] & E \arrow[r] \arrow[l] \arrow[u] & Z \arrow[u]
\end{tikzcd}
\end{equation*}
then the isomorphism
\begin{equation*}
    H_c^\bullet(X,\bbQ_l)\simeq H_c^\bullet(U,\bbQ_l)+H_c^\bullet(Z,\bbQ_l)
\end{equation*}
is equivariant for the action of the cohomological correspondence $u$, where the action on the right is given by $j^!(u)+i^*(u)$.
\end{proposition}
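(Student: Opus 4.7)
The plan is to prove the equality $H_c^\bullet(X,\bbQ_l) \simeq H_c^\bullet(U,\bbQ_l) + H_c^\bullet(Z,\bbQ_l)$, viewed as virtual representations equipped with a correspondence action, by producing a morphism, induced by $u$, of the excision distinguished triangle
\begin{equation*}
R\Gamma_c(U,\bbQ_l) \to R\Gamma_c(X,\bbQ_l) \to R\Gamma_c(Z,\bbQ_l) \xrightarrow{+1}
\end{equation*}
to itself, whose action on the three terms is $j^!(u)$, $u$, and $i^*(u)$ respectively. Additivity of the class in the Grothendieck group along a morphism of distinguished triangles then yields the desired identity.

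The two non-boundary parts of this morphism can be read directly off of the propositions preceding the statement. Proposition \ref{prop:shriekpullback}, applied to the upper Cartesian square with $f_1 = f_2 = j$, gives commutativity of the pushforward $j_! \colon H^i_c(U,\bbQ_l) \to H^i_c(X,\bbQ_l)$ with the correspondence actions $j^!(u)$ on the left and $u$ on the right --- noting that $j^! = j^*$ since $j$ is an open immersion, so the $!$-pullback of $u$ along $j$ is indeed the correspondence appearing in the statement. Proposition \ref{prop:starpullback}, applied to the lower Cartesian square with $f_1 = f_2 = i$ (which is proper, being a closed immersion), gives the analogous commutativity for the pullback $i^* \colon H^i_c(X,\bbQ_l) \to H^i_c(Z,\bbQ_l)$ with the actions $u$ on the left and $i^*(u)$ on the right.

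The remaining piece is the equivariance of the connecting morphism $\delta \colon H^i_c(Z,\bbQ_l) \to H^{i+1}_c(U,\bbQ_l)$, and this is the main obstacle. To handle it I would construct a morphism of triangles in the derived category $D^b_c(C,\bbQ_l)$: applying $c_1^*$ and $c_2^!$ to the excision triangle $j_!\bbQ_l \to \bbQ_l \to i_*\bbQ_l \xrightarrow{+1}$ on $X$, and using the base-change isomorphisms $c_1^* j_! \simeq j'_! c_1'^*$ and $c_1^* i_* \simeq i'_* c_1''^*$ (together with the analogues for $c_2^!$) provided by the two Cartesian squares, produces two distinguished triangles on $C$ together with a morphism between them whose middle arrow is $u$ itself and whose outer arrows are the canonical extensions of the restricted correspondences $j'^*u$ and $i'^*u$. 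Applying $R\Gamma_c(C,-)$ to this morphism of triangles converts it into the excision triangle for $X$, $U$, $Z$ equipped respectively with the actions $u$, $j^!(u)$, $i^*(u)$, and in particular forces $\delta$ to be equivariant. The compatibility of $u$ with the two base-change isomorphisms is essentially built into the six-functor formalism; alternating-sum additivity in the Grothendieck group then yields the stated equivariance.
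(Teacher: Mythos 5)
Your proposal is correct and follows essentially the same route as the paper, whose entire argument is the remark that the statement ``follows from the above two propositions,'' i.e.\ Propositions \ref{prop:shriekpullback} and \ref{prop:starpullback} applied to the open and closed immersions exactly as in your first two paragraphs. Your third paragraph, establishing equivariance of the connecting morphism via a morphism of excision triangles on $C$ induced by $u$ and the base-change isomorphisms, supplies a step that is genuinely needed for the additivity in the Grothendieck group but is left implicit in the paper.
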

\section{Multiplication on $p-\Isog\otimes\kappa$}
\label{app:b}
For notational simplicity, for the entirety of this appendix we will denote $p-\Isog\otimes\kappa$ by $p-\Isog$.
We have the map
\begin{equation*}
    c:p-\Isog\times_\sS p-\Isog \rightarrow p-\Isog
\end{equation*}
given by composition. More precisely, given an $R$-point of $p-\Isog\times_\sS p-\Isog$ given as a pair $((y,f:\calA_y\rightarrow \calA_z),(x,g:\calA_x\rightarrow \calA_y))$, where $x,y\in \sS(R)$ and $f,g$ are quasi-isogenies, its image under $c$ is the point given by $(x,f\circ g)$. The map $c$ is proper.

We also have two proper maps $s,t$ (``source'' and ``target'' respectively)  
\begin{equation*}
    s,t: p-\Isog\rightarrow \sS.
\end{equation*}

Thus, we have the refined Gysin map $i^!:A_{2n}(p-\Isog\times_\kappa p-\Isog)\rightarrow A_{n}(p-\Isog\times_\sS p-\Isog)$ induced by the Cartesian square
\begin{equation*}
\begin{tikzcd}
p-\Isog\times_\sS p-\Isog\arrow[r] \arrow[d] & \sS\times_\kappa \sS\times_\kappa \sS \arrow[d, "i"] \\
p-\Isog\times_\kappa p-\Isog \arrow[r, "s\times t\times s\times t"]           &      \sS\times_\kappa \sS\times_\kappa \sS  \times_\kappa \sS.        
\end{tikzcd}
\end{equation*}

(Here, the map $i$ is the diagonal embedding on the second component of $\sS\times \sS\times \sS$.)

Given $C,D$ dimension $n$ irreducible components of $p-\Isog\otimes\kappa$, we define $C\cdot D$ to be 
\begin{equation*}
    C\cdot D=c_*(i^!(C\times_\kappa D)).
\end{equation*}

Note that if $C\times_\sS D$ is of dimension $n$, and the map $C\times_\sS D\rightarrow C\times_\kappa D$ is a closed regular embedding, then $i^!(C\times_\kappa D)=[C\times_\sS D]$. For example, if $D$ is a section of the map $s$ (for example, $D$ is $\mathrm{Frob}$, the Frobenius section), then since $C\times_\sS D\simeq C$ is irreducible of dimension $n$, and a closed regular embedding, $i^!(C\times_\kappa D)=[C\times_\sS D]$.

If we view $C,D$ as algebraic correspondences in $\sS\times_\kappa \sS$ through the pushforward via the map
\begin{equation*}
    s\times t:p-\Isog\rightarrow \sS\times_\kappa \sS,
\end{equation*}
then compatibility of the refined Gysin maps with proper pushforward together with the following commutative square 
\begin{equation*}
    \begin{tikzcd}
p-\Isog\times_\sS p-\Isog \arrow[d, "(s\times t)\times_{\sS} (s\times t)"'] \arrow[r, "c"] & p-\Isog \arrow[d, "{(s\times t)}"] \\
(\sS\times_\kappa \sS)\times_\sS (\sS\times_\kappa \sS) \arrow[r, "p"']               & \sS\times_\kappa \sS                
\end{tikzcd}
\end{equation*}
tells us that
\begin{equation*}
    (s\times t)_*(C\cdot D)=(s\times t)_*(C)\cdot (s\times t)_*(D),
\end{equation*}
where the product of correspondences is the one defined in \cite[Appendix A]{B2002}.
\bibliographystyle{alpha}
\bibliography{ref}

\begin{thebibliography}{GHKR06}

\bibitem[B\"02]{B2002}
Oliver B\"{u}ltel.
\newblock The congruence relation in the non-{PEL} case.
\newblock {\em J. Reine Angew. Math.}, 544:133--159, 2002.

\bibitem[Bor79]{borel1979}
Armand Borel.
\newblock Automorphic l-functions.
\newblock In {\em Automorphic forms, representations and L-functions (Proc.
  Sympos. Pure Math., Oregon State Univ., Corvallis, Ore., 1977), Part},
  volume~2, pages 27--61, 1979.

\bibitem[BR94]{BR1994}
Don Blasius and Jonathan~D. Rogawski.
\newblock Zeta functions of {S}himura varieties.
\newblock In {\em Motives ({S}eattle, {WA}, 1991)}, volume~55 of {\em Proc.
  Sympos. Pure Math.}, pages 525--571. Amer. Math. Soc., Providence, RI, 1994.

\bibitem[BW06]{BW2006}
Oliver B\"{u}ltel and Torsten Wedhorn.
\newblock Congruence relations for {S}himura varieties associated to some
  unitary groups.
\newblock {\em J. Inst. Math. Jussieu}, 5(2):229--261, 2006.

\bibitem[CKV15]{CKV}
Miaofen Chen, Mark Kisin, and Eva Viehmann.
\newblock Connected components of affine {D}eligne-{L}usztig varieties in mixed
  characteristic.
\newblock {\em Compos. Math.}, 151(9):1697--1762, 2015.

\bibitem[CTS79]{CS79}
J.-L. Colliot-Th\'{e}l\'{e}ne and J.-J. Sansuc.
\newblock Fibr\'{e}s quadratiques et composantes connexes r\'{e}elles.
\newblock {\em Math. Ann.}, 244(2):105--134, 1979.

\bibitem[Fal99]{Fal99}
Gerd Faltings.
\newblock Integral crystalline cohomology over very ramified valuation rings.
\newblock {\em J. Amer. Math. Soc.}, 12(1):117--144, 1999.

\bibitem[FC90]{FC1990}
Gerd Faltings and Ching-Li Chai.
\newblock {\em Degeneration of abelian varieties}, volume~22 of {\em Results in
  Mathematics and Related Areas (3)}.
\newblock Springer-Verlag, Berlin, 1990.

\bibitem[Fon90]{Fon90}
Jean-Marc Fontaine.
\newblock Repr\'{e}sentations {$p$}-adiques des corps locaux. {I}.
\newblock In {\em The {G}rothendieck {F}estschrift, {V}ol. {II}}, volume~87 of
  {\em Progr. Math.}, pages 249--309. Birkh\"{a}user Boston, Boston, MA, 1990.

\bibitem[GHKR06]{GHKR}
Ulrich G\"{o}rtz, Thomas~J. Haines, Robert~E. Kottwitz, and Daniel~C. Reuman.
\newblock Dimensions of some affine {D}eligne-{L}usztig varieties.
\newblock {\em Ann. Sci. \'{E}cole Norm. Sup. (4)}, 39(3):467--511, 2006.

\bibitem[GHN19]{GHN19}
Ulrich G\"{o}rtz, Xuhua He, and Sian Nie.
\newblock Fully {H}odge{\textendash}{N}ewton {D}ecomposable {S}himura
  {V}arieties.
\newblock {\em Peking Mathematical Journal}, 2(2):99--154, 2019.

\bibitem[Ham17]{Ham2017}
Paul Hamacher.
\newblock The almost product structure of {N}ewton strata in the deformation
  space of a {B}arsotti-{T}ate group with crystalline {T}ate tensors.
\newblock {\em Math. Z.}, 287(3-4):1255--1277, 2017.

\bibitem[HK19]{HK2019adic}
Paul Hamacher and Wansu Kim.
\newblock l-adic {\'e}tale cohomology of shimura varieties of hodge type with
  non-trivial coefficients.
\newblock {\em Mathematische Annalen}, 375(3-4):973--1044, 2019.

\bibitem[HV18]{HV2018}
Paul Hamacher and Eva Viehmann.
\newblock Irreducible components of minuscule affine {D}eligne-{L}usztig
  varieties.
\newblock {\em Algebra Number Theory}, 12(7):1611--1634, 2018.

\bibitem[Kim18]{kim_2018}
Wansu Kim.
\newblock Rapoport-{Z}ink {S}paces of {H}odge {T}ype.
\newblock {\em Forum of Mathematics, Sigma}, 6, 2018.

\bibitem[Kis10]{K2010}
Mark Kisin.
\newblock Integral models for {S}himura varieties of abelian type.
\newblock {\em J. Amer. Math. Soc.}, 23(4):967--1012, 2010.

\bibitem[Kis17]{K2017}
Mark Kisin.
\newblock Mod $p$ points on {S}himura varieties of abelian type.
\newblock {\em J. Amer. Math. Soc.}, 30(3):819--914, 2017.

\bibitem[KMPS]{KMPS}
Mark Kisin, Keerthi Madapusi-Pera, and Sug-Woo Shin.
\newblock Honda-{T}ate {T}heory.
\newblock \url{https://math.berkeley.edu/~swshin/HT.pdf}.

\bibitem[Kos14]{Kos2014}
Jean-Stefan Koskivirta.
\newblock Congruence relations for {S}himura varieties associated with
  {$GU(n-1,1)$}.
\newblock {\em Canad. J. Math.}, 66(6):1305--1326, 2014.

\bibitem[Kot85]{Kott85}
Robert~E. Kottwitz.
\newblock Isocrystals with additional structure.
\newblock {\em Compositio Math.}, 56(2):201--220, 1985.

\bibitem[Kot97]{Ko97}
Robert~E. Kottwitz.
\newblock Isocrystals with additional structure. {II}.
\newblock {\em Compositio Math.}, 109(3):255--339, 1997.

\bibitem[Lee18]{lee2018}
Dong~Uk Lee.
\newblock Nonemptiness of {N}ewton strata of {S}himura varieties of {H}odge
  type.
\newblock {\em Algebra Number Theory}, 12(2):259--283, 2018.

\bibitem[Li18]{Li18}
Hao Li.
\newblock Congruence relations of {GS}pin {S}himura varieties.
\newblock \url{arXiv:1812.11261}, 2018.

\bibitem[Lov17]{lov2017}
Tom Lovering.
\newblock Filtered {F}-crystals on {S}himura varieties of abelian type.
\newblock \url{arXiv:1702.06611}, 2017.

\bibitem[LR87]{LR}
R.~P. Langlands and M.~Rapoport.
\newblock Shimuravariet\"{a}ten und {G}erben.
\newblock {\em J. Reine Angew. Math.}, 378:113--220, 1987.

\bibitem[LS18]{LS18}
Kai-Wen Lan and Beno\^{\i}t Stroh.
\newblock Nearby cycles of automorphic \'{e}tale sheaves.
\newblock {\em Compos. Math.}, 154(1):80--119, 2018.

\bibitem[Man05]{mantovan2005cohomology}
Elena Mantovan.
\newblock On the cohomology of certain {PEL}-type shimura varieties.
\newblock {\em Duke Mathematical Journal}, 129(3):573--610, 2005.

\bibitem[Man08]{Man08}
Elena Mantovan.
\newblock On non-basic {R}apoport-{Z}ink spaces.
\newblock {\em Ann. Sci. \'{E}c. Norm. Sup\'{e}r. (4)}, 41(5):671--716, 2008.

\bibitem[Moo04]{M2004}
Ben Moonen.
\newblock Serre-{T}ate theory for moduli spaces of {PEL} type.
\newblock {\em Ann. Sci. \'{E}cole Norm. Sup. (4)}, 37(2):223--269, 2004.

\bibitem[MP19]{KMP19}
Keerthi Madapusi~Pera.
\newblock Toroidal compactifications of integral models of {S}himura varieties
  of {H}odge type.
\newblock {\em Ann. Sci. \'{E}c. Norm. Sup\'{e}r. (4)}, 52(2):393--514, 2019.

\bibitem[Nek18]{Nek2018}
Jan Nekov\'{a}\v{r}.
\newblock Eichler-{S}himura relations and semisimplicity of \'{e}tale
  cohomology of quaternionic shimura varieties.
\newblock {\em Ann. Sci. \'{E}cole Norm. Sup. (4)}, 51(5):1179--1252, 2018.

\bibitem[RV14]{RV2014}
Michael Rapoport and Eva Viehmann.
\newblock Towards a theory of local {S}himura varieties.
\newblock {\em M\"{u}nster J. Math.}, 7(1):273--326, 2014.

\bibitem[Sta97]{St1997}
Hellmuth Stamm.
\newblock On the reduction of the {H}ilbert-{B}lumenthal-moduli scheme with
  {$\Gamma_0(p)$}-level structure.
\newblock {\em Forum Math.}, 9(4):405--455, 1997.

\bibitem[SZ16]{SZ2016}
Ananth~N. Shankar and Rong Zhou.
\newblock Serre-{T}ate theory for {S}himura varieties of {H}odge type.
\newblock \url{arXiv:1612.06456}, 2016.

\bibitem[TX19]{TX19}
Yichao Tian and Liang Xiao.
\newblock Tate cycles on some quaternionic {S}himura varieties {${\rm
  mod}\,p$}.
\newblock {\em Duke Math. J.}, 168(9):1551--1639, 2019.

\bibitem[Wed00]{W2000}
Torsten Wedhorn.
\newblock Congruence relations on some {S}himura varieties.
\newblock {\em J. Reine Angew. Math.}, 524:43--71, 2000.

\bibitem[Win05]{Win05}
J.-P. Wintenberger.
\newblock Existence de {$F$}-cristaux avec structures suppl\'{e}mentaires.
\newblock {\em Adv. Math.}, 190(1):196--224, 2005.

\bibitem[Wor13]{Wor2013}
Daniel Wortmann.
\newblock The $\mu$-ordinary locus for {S}himura varieties of {H}odge type.
\newblock \url{arXiv:1310.6444}, 2013.

\bibitem[XZ17]{XZ17}
Liang Xiao and Xinwen Zhu.
\newblock Cycles on {S}himura varieties via geometric {S}atake.
\newblock \url{arXiv:1707.05700}, 2017.

\bibitem[Zha15]{Zha2015}
Chao Zhang.
\newblock Stratifications and foliations for good reductions of {S}himura
  varieties of hodge type.
\newblock \url{arXiv:1512.08102}, 2015.

\bibitem[Zhu17]{Zhu2014}
Xinwen Zhu.
\newblock Affine {G}rassmannians and the geometric {S}atake in mixed
  characteristic.
\newblock {\em Ann. of Math. (2)}, 185(2):403--492, 2017.

\bibitem[Zin01]{zink2001slope}
Thomas Zink.
\newblock On the slope filtration.
\newblock {\em Duke Mathematical Journal}, 109(1):79--95, 2001.

\end{thebibliography}

\end{document}